\newcommand{\Longlra}{\ensuremath{\Longleftrightarrow}}
\newcommand{\longra}{\ensuremath{\longrightarrow}}
\newcommand{\B}{\varmathbb{B}}
\newcommand{\C}{\varmathbb{C}}
\newcommand{\R}{\varmathbb{R}}
\newcommand{\N}{\varmathbb{N}}
\newcommand{\F}{\varmathbb{F}}
\newcommand{\G}{\varmathbb{G}}
\newcommand{\Z}{\varmathbb{Z}}
\newcommand{\E}{\varmathbb{E}}
\newcommand{\D}{\varmathbb{D}}
\newcommand{\U}{\varmathbb{U}}
\newcommand{\I}{\varmathbb{I}}
\newcommand{\norm}[1]{||#1||}
\newcommand{\normB}[1]{\Big|\Big|#1\Big|\Big|}
\newcommand{\normb}[1]{\big|\big|#1\big|\big|}
\newcommand{\ip}[2]{\langle #1, #2 \rangle}
\DeclareMathAlphabet{\mathpzc}{OT1}{pzc}{m}{it}
\renewcommand{\vec}[1]{\bm{#1}}
\DeclareMathOperator{\supp}{supp}
\theoremstyle{plain}
\newtheorem{thm}{Theorem}[section]
\newtheorem{lemma}[thm]{Lemma}
\newtheorem{prop}[thm]{Proposition}
\newtheorem{cor}[thm]{Corollary}
\theoremstyle{definition}
\newtheorem{definitie}[thm]{Definition}
\newtheorem{ex}[thm]{Example}
\theoremstyle{remark}
\newtheorem{remark}[thm]{Remark}
\begin{document}

\title[Parabolic Problems with Inhomogeneous Boundary Conditions]{Maximal Regularity with Weights for Parabolic Problems with Inhomogeneous Boundary Conditions}

\author{Nick Lindemulder}
\address{Delft Institute of Applied Mathematics \\
Delft University of Technology \\
P.O. Box 5031 \\
2600 GA Delft \\
The Netherlands}
\email{N.Lindemulder@tudelft.nl}

\subjclass[2010]{Primary: 35K50, 46E35, 46E40; Secondary: 42B15, 42B25}

\keywords{anisotropic spaces, Besov, Bessel potential, inhomogeneous boundary conditions, maximal regularity, mixed-norms, parabolic initial-boundary value problems, Sobolev, traces, Triebel-Lizorkin, vector-valued, weights}
\date{\today}

\thanks{The author is supported by the Vidi subsidy 639.032.427 of the Netherlands Organisation for Scientific Research (NWO)}

\begin{abstract}
In this paper we establish weighted $L^{q}$-$L^{p}$-maximal regularity for linear vector-valued parabolic initial-boundary value problems with inhomogeneous boundary conditions of static type. The weights we consider are power weights in time and in space, and yield flexibility in the optimal regularity of the initial-boundary data and allow to avoid compatibility conditions at the boundary.
The novelty of the followed approach is the use of weighted anisotropic mixed-norm Banach space-valued function spaces of Sobolev, Bessel potential, Triebel-Lizorkin and Besov type, whose trace theory is also subject of study.
\end{abstract}

\maketitle

\section{Introduction}\label{PBIBVP:sec:intro}

This paper is concerned with weighted maximal $L^{q}$-$L^{p}$-regularity for vector-valued parabolic initial-boundary value problems of the form
\begin{equation}\label{intro:pibvp}
\begin{array}{rllll}
\partial_{t}u(x,t) + \mathcal{A}(x,D,t)u(x,t) &= f(x,t), & x \in \mathscr{O}, & t \in J,  \\
\mathcal{B}_{j}(x',D,t)u(x',t) &= g_{j}(x',t), & x' \in \partial\mathscr{O}, & t \in J, & j=1,\ldots,n, \\
u(x,0) &= u_{0}(x), & x \in \mathscr{O}.
\end{array}
\end{equation}
Here, $J$ is a finite time interval, $\mathscr{O} \subset \R^{d}$ is a $C^{\infty}$-domain with a compact boundary $\partial\mathscr{O}$ and the coefficients of the differential operator $\mathcal{A}$ and the boundary operators $\mathcal{B}_{1},\ldots,\mathcal{B}_{n}$ are $\mathcal{B}(X)$-valued, where $X$ is a UMD Banach space.
One could for instance take $X=\C^{N}$, describing a system of $N$ initial-boundary value problems.
Our structural assumptions on $\mathcal{A},\mathcal{B}_{1},\ldots,\mathcal{B}_{n}$ are an ellipticity condition and a condition of Lopatinskii-Shapiro type.
For homogeneous boundary data (i.e. $g_{j}=0$, $j=1,\ldots,n$) these problems  include linearizations of reaction-diffusion systems and of phase field models with Dirichlet, Neumann and Robin conditions. However, if one wants to use linearization techniques to treat such problems with non-linear boundary conditions, it is crucial to have a sharp theory for the fully inhomogeneous problem.

During the last 25 years, the theory of maximal regularity turned out to be an important tool in the theory of nonlinear PDEs. Maximal regularity means that there is an isomorphism between the data and the solution of the problem in suitable function spaces.
Having established maximal regularity for the linearized problem, the nonlinear problem can be treated with tools as the contraction principle and the implicit function theorem.
Let us mention \cite{Angenent1990,Clement&Simonett2001} for approaches in spaces of continuous functions,
\cite{Acquistapace&Terreni,Lunardi_book1995} for approaches in H\"older spaces and
\cite{Amann_Lin_and_quasilinear_parabolic,Amann_Maximal_regularity_and_quasilinear_pbvps_problems,Clement_Li,
Clement&Pruss1992,EPS,Pruss2002,Pruess&Simonett2016_book} for approaches in $L^{p}$-spaces (with $p \in (1,\infty)$).

As an application of his operator-valued Fourier multiplier theorem, Weis \cite{Weis2001} characterized maximal $L^{p}$-regularity for abstract Cauchy problems in UMD Banach spaces in terms of an $\mathcal{R}$-boundedness condition on the operator under consideration. A second approach to the maximal $L^{p}$-regularity problem is via the operator sum method, as initiated by Da Prato $\&$ Grisvard \cite{PG_intro} and extended by Dore $\&$ Venni \cite{Dore_Veni} and Kalton $\&$ Weis \cite{Kalton_Weis}. For more details on these approaches and for more information on (the history of) the maximal $L^{p}$-regularity problem in general, we refer to \cite{DHP1,Kunstmann&Weis}.

In the maximal $L^{q}$-$L^{p}$-regularity approach to \eqref{intro:pibvp} one is looking for solutions $u$ in the "maximal regularity space"
\begin{equation}\label{PBIVB:eq:max-reg_space}
W^{1}_{q}(J;L^{p}(\mathscr{O};X)) \cap L^{q}(J;W^{2n}_{p}(\mathscr{O};X)).
\end{equation}
To be more precise, the problem \eqref{intro:pibvp} is said to enjoy the property of \emph{maximal $L^{q}$-$L^{p}$-regularity} if there exists a (necessarily unique) space of initial-boundary data $\mathscr{D}_{i.b.} \subset L^{q}(J;L^{p}(\partial\mathscr{O};X))^{n} \times L^{p}(\mathscr{O};X)$ such that for every $f \in L^{q}(J;L^{p}(\mathscr{O};X))$ it holds that \eqref{intro:pibvp} has a unique solution $u$ in \eqref{PBIVB:eq:max-reg_space}
if and only if $(g=(g_{1},\ldots,g_{n}),u_{0}) \in \mathscr{D}_{i.b.}$.
In this situation there exists a Banach norm on $\mathscr{D}_{i.b.}$, unique up to equivalence, with
\[
\mathscr{D}_{i.b.} \hookrightarrow L^{q}(J;L^{p}(\partial\mathscr{O};X))^{n} \oplus L^{p}(\mathscr{O};X),
\]
which makes the associated solution operator a topological linear isomorphism between the data space $L^{q}(J;L^{p}(\mathscr{O};X)) \oplus \mathscr{D}_{i.b.}$ and the solution space $W^{1}_{q}(J;L^{p}(\mathscr{O};X)) \cap L^{q}(J;W^{2n}_{p}(\mathscr{O};X))$.
The \emph{maximal $L^{q}$-$L^{p}$-regularity problem} for \eqref{intro:pibvp} consists of establishing maximal $L^{q}$-$L^{p}$-regularity
for \eqref{intro:pibvp} and explicitly determining the space $\mathscr{D}_{i.b.}$.

The maximal $L^{q}$-$L^{p}$-regularity problem for \eqref{intro:pibvp} was solved by Denk, Hieber $\&$ Pr\"uss \cite{DHP2}, who used operator sum methods in combination with tools from vector-valued harmonic analysis.
Earlier works on this problem are \cite{LSU} ($q=p$) and \cite{Weidemaier_intro} ($p \leq q$) for scalar-valued 2nd order problems with Dirichlet and Neumann boundary conditions. Later, the results of \cite{DHP2} for the case that $q=p$ have been extended by Meyries $\&$ Schnaubelt \cite{MeySchnau2} to the setting of temporal power weights $v_{\mu}(t)=t^{\mu}$, $\mu \in [0,q-1)$; also see \cite{Mey_PHD-thesis}.
Works in which maximal $L^{q}$-$L^{p}$-regularity of other problems with inhomogeneous boundary conditions are studied, include \cite{DPZ,Denk&Saal&Seiler,Denk&Schnaubelt,EPS,MeySchnau2} (the case $q=p$) and \cite{Meyries&Veraar_Traces,Shibata&Shimuzu} (the case $q \neq p$).

It is desirable to have maximal $L^{q}$-$L^{p}$-regularity for the full range $q,p \in (1,\infty)$, as this enables one to treat more nonlinearities.
For instance, one often requires large $q$ and $p$ due to better Sobolev embeddings, and $q \neq p$ due to scaling invariance of PDEs (see e.g. \cite{Giga}). However, for \eqref{intro:pibvp} the case $q \neq p$ is more involved than the case $q=p$ due to the inhomogeneous boundary conditions.
This is not only reflected in the proof, but also in the space of initial-boundary data (\cite[Theorem~2.3]{DHP2} versus \cite[Theorem~2.2]{DHP2}).
Already for the heat equation with Dirichlet boundary conditions, the boundary data $g$ has to be in the intersection space
\begin{equation}\label{PIBVP:eq:intro;int_space}
F^{1-\frac{1}{2p}}_{q,p}(J;L^{p}(\partial\mathscr{O})) \cap L^{q}(J;B^{2-\frac{1}{p}}_{p,p}(\partial\mathscr{O})),
\end{equation}
which in the case $q=p$ coincides with $W^{1-\frac{1}{2p}}_{p}(J;L^{p}(\partial\mathscr{O})) \cap L^{p}(J;W^{2-\frac{1}{p}}_{p}(\partial\mathscr{O}))$;
here $F^{s}_{q,p}$ denotes a Triebel-Lizorkin space and $W^{s}_{p}=B^{s}_{p,p}$ a non-integer order Sobolev-Slobodeckii space.

In this paper we will extend the results of \cite{DHP2,MeySchnau2}, concerning the maximal $L^{q}$-$L^{p}$-regularity problem for \eqref{intro:pibvp}, to the setting of power weights in time and in space for the full range $q,p \in (1,\infty)$.
In contrast to \cite{DHP2,MeySchnau2}, we will not only view the spaces \eqref{PBIVB:eq:max-reg_space} and \eqref{PIBVP:eq:intro;int_space} as intersection spaces, but also as anisotropic mixed-norm function spaces on $J \times \mathscr{O}$ and $J \times \partial\mathscr{O}$, respectively.
Identifications of intersection spaces of the type \eqref{PIBVP:eq:intro;int_space} with anisotropic mixed-norm Triebel-Lizorkin spaces have been considered in a previous paper \cite{Lindemulder_Intersection}, all in a generality including the weighted vector-valued setting.
The advantage of these identifications is that they allows us to use weighted vector-valued versions of trace results of Johnsen $\&$ Sickel \cite{JS_traces}. These trace results will be studied in their own right in the present paper.

The weights we consider are the power weights
\begin{equation}\label{PIBVP:eq:intro:weights}
v_{\mu}(t) = t^{\mu} \:\quad (t \in J)  \quad\quad \mbox{and} \quad\quad
w^{\partial\mathscr{O}}_{\gamma}(x) = \mathrm{dist}(\,\cdot\,,\partial\mathscr{O})^{\gamma} \:\quad (x \in \mathscr{O}),
\end{equation}
where $\mu \in (-1,q-1)$ and $\gamma \in (-1,p-1)$.
These weights yield flexibility in the optimal regularity of the initial-boundary data and allow to avoid compatibility conditions at the boundary,
which is nicely illustrated by the result (see Example~\ref{PIBVP:ex:thm:main_result;example}) that the corresponding version of \eqref{PIBVP:eq:intro;int_space} becomes
\begin{equation*}
F^{1-\frac{1}{2p}(1+\gamma)}_{q,p}(J,v_{\mu};L^{p}(\partial\mathscr{O})) \cap L^{q}(J,v_{\mu};B^{2-\frac{1}{p}(1+\gamma)}_{p,p}(\partial\mathscr{O})).
\end{equation*}
Note that one requires less regularity of $g$ by increasing $\gamma$.

The idea to work in weighted spaces equipped with weights like \eqref{PIBVP:eq:intro:weights} has already proven to be very useful in several situations. In an abstract semigroup setting temporal weights were introduced by Cl\'ement $\&$ Simonett \cite{Clement&Simonett} and Pr\"uss $\&$ Simonett \cite{pruss_simonett}, in the context of maximal continous regularity and maximal $L^{p}$-regularity, respectively.
Other works on maximal temporally weighted $L^{p}$-regularity are \cite{Koehne&Pruess&Wilke,LeCrone&Pruess&Wilke} for quasilinear parabolic evolution equations and \cite{MeySchnau2} for parabolic problems with inhomogeneous boundary conditions.
Concerning the use of spatial weights, we would like to mention
\cite{Brewster&Mitrea_BVP_weighted_Sobolev_spaces_Lipschitz_manifolds,Maz'ya&Shaposhnikova2005,
Mitrea&Taylor_Poisson_problem_weighted_Sobolev_spaces_Lipschitz_domains}
for boundary value problems and
\cite{Alos&Bonaccorsi_stability_spde_Dirichlet_white-noise_boundary_condition,
Brezniak&Goldys&Peszat&Russo_2nd-order_PDE_Dirichlet_white_noise_boundary_cond,
Fabri&Goldys_halfline_Diricjlet_boundary_control_and_noise,
Roberts_PhD-thesis,
Sowers_mult-dim_reaction-diffusion_eqns_white_noise_boundary_perturbations}
for problems with boundary noise.

\emph{The paper is organized as follows.} In Section~\ref{PIBVP:sec:prelim} we discuss the necessary preliminaries, in Section~\ref{PIBVP:sec:main_result} we state the main result of this paper, Theorem~\ref{PIBVP:thm:main_result}, in Section~\ref{PIBVP:sec:traces} we establish the necessary trace theory, in Section~\ref{PIBVP:sec:Sobolev_embedding_Besov} we consider a Sobolev embedding theorem, and in Section~\ref{pbvp:sec:pibvp} we finally prove Theorem~\ref{PIBVP:thm:main_result}.

\emph{Acknowledgements.} The author would like to thank Mark Veraar for the supervision of his master thesis \cite{Lindemulder_master-thesis}, which led to the present paper.

\section{Preliminaries}\label{PIBVP:sec:prelim}

\subsection{Weighted Mixed-norm Lebesgue Spaces}\label{PIBVP:subsec:sec:prelim;mixed-norm}

A weight on $\R^{d}$ is a measurable function $w:\R^{d} \longra [0,\infty]$ that takes its values almost everywhere in $(0,\infty)$. We denote by $\mathcal{W}(\R^{d})$ the set of all weights on $\R^{d}$. For $p \in (1,\infty)$ we denote by $A_{p} = A_{p}(\R^{d})$ the class of all Muckenhoupt $A_{p}$-weights, which are all the locally integrable weights for which the $A_{p}$-characteristic $[w]_{A_{p}}$ is finite.
Here
\[
[w]_{A_{p}} = \sup_{Q} \left(\fint_{Q}w \right) \left( \fint_{Q}w^{-p'/p} \right)^{p/p'}
\]
with the supremum taken over all cubes $Q \subset \R^{d}$ with sides parallel to the coordinate axes.
We furthermore set $A_{\infty} := \bigcup_{p \in (1,\infty)}A_{p}$. For more information on Muckenhoupt weights we refer to \cite{Grafakos_modern}.

Important for this paper are the power weights of the form $w=\mathrm{dist}(\,\cdot\,,\partial\mathscr{O})^{\gamma}$, where $\mathscr{O}$ is a $C^{\infty}$-domain in $\R^{d}$ and where $\gamma \in (-1,\infty)$.
If $\gamma \in (-1,\infty)$ and $p \in (1,\infty)$, then (see \cite[Lemma~2.3]{Farwig&Sohr_Weighted_Lq-theory_Stokes_resolvent_exterior_domains} or \cite[Lemma~2.3]{Mitrea&Taylor_Poisson_problem_weighted_Sobolev_spaces_Lipschitz_domains})
\begin{equation}\label{DBVP:eq:sec:prelim:power_weight_Ap}
w_{\gamma}^{\partial\mathscr{O}} := \mathrm{dist}(\,\cdot\,,\partial\mathscr{O})^{\gamma} \in A_{p} \quad \Longlra \quad \gamma \in (-1,p-1).
\end{equation}
For the important model problem case $\mathscr{O} = \R^{d}_{+}$ we simply write $w_{\gamma}:= w_{\gamma}^{\partial\R^{d}_{+}} = \mathrm{dist}(\,\cdot\,,\partial\R^{d}_{+})^{\gamma}$.

Replacing cubes by rectangles in the definition of the $A_{p}$-characteristic $[w]_{A_{p}} \in [1,\infty]$ of a weight $w$ gives rise to the $A_{p}^{rec}$-characteristic $[w]_{A_{p}^{rec}} \in [1,\infty]$ of $w$.
We denote by $A_{p}^{rec} = A_{p}^{rec}(\R^{d})$ the class of all weights with $[w]_{A_{p}^{rec}} < \infty$.
For $\gamma \in (-1,\infty)$ it holds that $w_{\gamma} \in A_{p}^{rec}$ if and only if $\gamma \in (-1,p-1)$.

Let $d = |\mathpzc{d}|_{1} = \mathpzc{d}_{1} + \ldots + \mathpzc{d}_{l}$ with $\mathpzc{d} = (\mathpzc{d}_{1},\ldots,\mathpzc{d}_{l}) \in (\Z_{\geq 1})^{l}$. The decomposition
\[
\R^{d} = \R^{\mathpzc{d}_{1}} \times \ldots \times \R^{\mathpzc{d}_{l}}.
\]
is called the $\mathpzc{d}$-\emph{decomposition} of $\R^{d}$.
For $x \in \R^{d}$ we accordingly write $x = (x_{1},\ldots,x_{l})$ and $x_{j}=(x_{j,1},\ldots,x_{j,\mathpzc{d}_{j}})$, where $x_{j} \in \R^{\mathpzc{d}_{j}}$ and $x_{j,i} \in \R$ $(j=1,\ldots,l; i=1,\ldots,\mathpzc{d}_{j})$. We also say that we view $\R^{d}$ as being
$\mathpzc{d}$-\emph{decomposed}. Furthermore, for each $k \in \{1,\ldots,l\}$ we define the inclusion map
\begin{equation*}
\iota_{k} = \iota_{[\mathpzc{d};k]} : \R^{\mathpzc{d}_{k}} \longra \R^{d},\, x_{k} \mapsto (0,\ldots,0,x_{k},0,\ldots,0),
\end{equation*}
and the projection map
\begin{equation*}
\pi_{k} = \pi_{[\mathpzc{d};k]} :  \R^{d} \longra \R^{\mathpzc{d}_{k}},\, x = (x_{1},\ldots,x_{l}) \mapsto x_{k}.
\end{equation*}

Suppose that $\R^{d}$ is $\mathpzc{d}$-decomposed as above.
Let $\vec{p} = (p_{1},\ldots,p_{l}) \in [1,\infty)^{l}$ and $\vec{w}=(w_{1},\ldots,w_{l}) \in \prod_{j=1}^{l}\mathcal{W}(\R^{\mathpzc{d}_{j}})$.
We define the \emph{weighted mixed-norm space} $L^{\vec{p},\mathpzc{d}}(\R^{d},\vec{w})$ as the space of all $f\in L^{0}(\R^{d})$ satisfying
\[
\norm{f}_{L^{\vec{p},\mathpzc{d}}(\R^{d},\vec{w})} :=
 \left( \int_{\R^{\mathpzc{d}_{l}}} \ldots \left(\int_{\R^{\mathpzc{d}_{1}}}|f(x)|^{p_{1}}w_{1}(x_{1})dx_{1} \right)^{p_{2}/p_{1}} \ldots w_{l}(x_{l})dx_{l} \right)^{1/p_{l}} < \infty.
\]
We equip $L^{\vec{p},\mathpzc{d}}(\R^{d},\vec{w})$ with the norm $\norm{\,\cdot\,}_{L^{\vec{p},\mathpzc{d}}(\R^{d},\vec{w})}$, which turns it into a Banach space.
Given a Banach space $X$, we denote by $L^{\vec{p},\mathpzc{d}}(\R^{d},\vec{w};X)$ the associated Bochner space
\[
L^{\vec{p},\mathpzc{d}}(\R^{d},\vec{w};X) := L^{\vec{p},\mathpzc{d}}(\R^{d},\vec{w})[X] = \{ f \in L^{0}(\R^{d};X) : \norm{f}_{X} \in L^{\vec{p},\mathpzc{d}}(\R^{d},\vec{w}) \}.
\]

\subsection{Anisotropy}

Suppose that $\R^{d}$ is $\mathpzc{d}$-decomposed as in Section~\ref{PIBVP:subsec:sec:prelim;mixed-norm}.
Given $\vec{a} \in (0,\infty)^{l}$, we define the $(\mathpzc{d},\vec{a})$-anisotropic dilation $\delta^{(\mathpzc{d},\vec{a})}_{\lambda}$ on $\R^{d}$ by $\lambda > 0$ to be the mapping $\delta^{(\mathpzc{d},\vec{a})}_{\lambda}$ on $\R^{d}$ given by the formula
\[
\delta^{(\mathpzc{d},\vec{a})}_{\lambda}x := (\lambda^{a_{1}}x_{1},\ldots,\lambda^{a_{l}}x_{l}), \quad\quad x \in \R^{d}.
\]

A $(\mathpzc{d},\vec{a})$-anisotropic distance function on $\R^{d}$ is a function $u:\R^{d} \longra [0,\infty)$ satisfying
\begin{itemize}
\item[(i)] $u(x)=0$ if and only if $x=0$.
\item[(ii)] $u(\delta^{(\mathpzc{d},\vec{a})}_{\lambda}x) = \lambda u(x)$ for all $x \in \R^{d}$ and $\lambda > 0$.
\item[(iii)] There exists a $c>0$ such that $u(x+y) \leq c(u(x)+u(y))$ for all $x,y \in \R^{d}$.
\end{itemize}
All $(\mathpzc{d},\vec{a})$-anisotropic distance functions on $\R^{d}$ are equivalent: Given two $(\mathpzc{d},\vec{a})$-anisotropic distance functions $u$ and $v$ on $\R^{d}$, there exist constants $m,M>0$ such that $m u(x) \leq v(x) \leq M u(x)$ for all $x \in \R^{d}$

In this paper we will use the $(\mathpzc{d},\vec{a})$-anisotropic distance function $|\,\cdot\,|_{\mathpzc{d},\vec{a}}:\R^{d} \longra [0,\infty)$ given by the formula
\begin{equation*}\label{functieruimten:eq:anisotropic_distance}
|x|_{\mathpzc{d},\vec{a}} := \left(\sum_{j=1}^{l}|x_{j}|^{2/a_{j}}\right)^{1/2} \quad\quad (x \in \R^{d}).
\end{equation*}

\subsection{Fourier Multipliers}

Let $X$ be a Banach space. The space of $X$-valued tempered distributions on $\R^{d}$ is defined as $\mathcal{S}'(\R^{d};X) := \mathcal{L}(\mathcal{S}(\R^{d};X))$; for the theory of vector-valued distributions we refer to \cite{Amann_distributions} (and \cite[Section~III.4]{Amann_Lin_and_quasilinear_parabolic}).
We write $\widehat{L^{1}}(\R^{d};X) := \mathscr{F}^{-1}L^{1}(\R^{d};X) \subset \mathcal{S}'(\R^{d};X)$.
To a symbol $m \in L^{\infty}(\R^{d};\mathcal{B}(X))$ we associate the Fourier multiplier operator
\[
T_{m}: \widehat{L^{1}}(\R^{d};X) \longra \widehat{L^{1}}(\R^{d};X),\, f \mapsto \mathscr{F}^{-1}[m\hat{f}].
\]
Given $\vec{p} \in [1,\infty)^{l}$ and  $\vec{w} \in \prod_{j=1}^{l}A_{\infty}(\R^{\mathpzc{d}_{j}})$, we call $m$ a \emph{Fourier multiplier} on $L^{\vec{p},\mathpzc{d}}(\R^{d},\vec{w};X)$ if $T_{m}$ restricts to an operator on $\widehat{L^{1}}(\R^{d};X) \cap L^{\vec{p},\mathpzc{d}}(\R^{d},\vec{w};X)$ which is bounded with respect to $L^{\vec{p},\mathpzc{d}}(\R^{d},\vec{w};X)$-norm.
In this case $T_{m}$ has a unique extension to a bounded linear operator on $L^{\vec{p},\mathpzc{d}}(\R^{d},\vec{w};X)$ due to denseness of
$\mathcal{S}(\R^{d};X)$ in $L^{\vec{p},\mathpzc{d}}(\R^{d},\vec{w};X)$, which we still denote by $T_{m}$.
We denote by $\mathcal{M}_{\vec{p},\mathpzc{d},\vec{w}}(X)$ the set of all Fourier multipliers $m \in L^{\infty}(\R^{d};\mathcal{B}(X))$ on $L^{\vec{p},\mathpzc{d}}(\R^{d},\vec{w};X)$.
Equipped with the norm $\norm{m}_{\mathcal{M}_{\vec{p},\mathpzc{d},\vec{w}}(X)} := \norm{T_{m}}_{\mathcal{B}(L^{\vec{p},\mathpzc{d}}(\R^{d},\vec{w};X)}$, $\mathcal{M}_{\vec{p},\mathpzc{d},\vec{w}}(X)$ becomes a Banach algebra (under the natural pointwise operations) for which the natural inclusion $\mathcal{M}_{\vec{p},\mathpzc{d},\vec{w}}(X) \hookrightarrow \mathcal{B}(L^{\vec{p},\mathpzc{d}}(\R^{d},\vec{w};X))$ is an isometric Banach algebra homomorphism; see \cite{Kunstmann&Weis} for the unweighted non-mixed-norm setting.

For each $\vec{a} \in (0,\infty)^{l}$ and $N \in \N$ we define $\mathcal{M}^{(\mathpzc{d},\vec{a})}_{N}$ as the space of all $m \in C^{N}(\R^{d})$ for which
\[
\norm{m}_{\mathcal{M}^{(\mathpzc{d},\vec{a})}_{N}} :=  \sup_{|\alpha| \leq N}\sup_{\xi \in \R^{d}} (1+|\xi|_{\mathpzc,\vec{a}})^{\vec{a} \cdot_{\mathpzc{d}} \alpha}|D^{\alpha}m(\xi)| < \infty.
\]
We furthermore define $\mathscr{RM}(X)$ as the space of all operator-valued symbols $m \in C^{1}(\R \setminus \{0\};\mathcal{B}(X))$ for which we have the $\mathcal{R}$-bound
\[
\norm{m}_{\mathscr{RM}_(X)} :=  \mathcal{R}\big\{ tm^{[k]}(t) : t \neq 0, k=0,1 \,\big\} < \infty;
\]
see e.g. \cite{DHP1,boek} for the notion of $\mathcal{R}$-boundedness.

If $X$ is a UMD space, $\vec{p} \in (1,\infty)^{l}$,
\[
\vec{w} \in \left\{\begin{array}{ll}
\prod_{j=1}^{l}A_{p_{j}}^{rec}(\R^{\mathpzc{d}_{j}}), & l \geq 2, \\
A_{p}(\R^{d}), & l=1,
\end{array}\right.
\]
and $\vec{a} \in (0,\infty)^{l}$, then there exists an $N \in \N$ for which
\begin{equation}\label{PIBVP:eq:prelim:anisotrope_mixed-norm_fm}
\mathcal{M}^{(\mathpzc{d},\vec{a})}_{N} \hookrightarrow \mathcal{M}_{\vec{p},\mathpzc{d},\vec{w}}(X).
\end{equation}
If $X$ is a UMD space, $p \in (1,\infty)$ and $w \in A_{p}(\R)$, then
\begin{equation}\label{PIBVP:eq:prelim:operator-valued_FM}
\mathscr{RM}(X) \hookrightarrow \mathcal{M}_{p,w}(X).
\end{equation}
For these results we refer to \cite{Fackler&Hytonen&Lindemulder2018} and the references given there.

\subsection{Function Spaces}

For the theory of vector-valued distributions we refer to \cite{Amann_distributions} (and \cite[Section~III.4]{Amann_Lin_and_quasilinear_parabolic}). For vector-valued function spaces we refer to \cite{MV_pointwise_mp} (weighted setting) and the references given therein.
Anisotropic spaces can be found in \cite{Amann09,JS_traces,Lindemulder_master-thesis}; for the statements below on weighted anisotropic vector-valued function space we refer to \cite{Lindemulder_master-thesis}.

Suppose that $\R^{d}$ is $\mathpzc{d}$-decomposed as in Section~\ref{PIBVP:subsec:sec:prelim;mixed-norm}.
Let $X$ be a Banach space and let $\vec{a} \in (0,\infty)^{l}$.
For $0 < A < B < \infty$ we define $\Phi^{\mathpzc{d},\vec{a}}_{A,B}(\R^{d})$ as the set of all sequences $\varphi = (\varphi_{n})_{n \in \N} \subset \mathcal{S}(\R^{d})$ which are constructed in the following way: given a $\varphi_{0} \in \mathcal{S}(\R^{d})$ satisfying
\begin{equation*}
0 \leq \hat{\varphi}_{0} \leq 1,\: \hat{\varphi}_{0}(\xi) = 1 \:\mbox{if}\: |\xi|_{\mathpzc{d},\vec{a}} \leq A,\:
\hat{\varphi}_{0}(\xi) = 0 \:\mbox{if}\: |\xi|_{\mathpzc{d},\vec{a}} \geq B,
\end{equation*}
$(\varphi_{n})_{n \geq 1} \subset \mathcal{S}(\R^{d})$ is defined via the relations
\begin{equation*}
\hat{\varphi}_{n}(\xi) = \hat{\varphi}_{1}(\delta^{(\mathpzc{d},\vec{a})}_{2^{-n+1}}\xi) = \hat{\varphi}_{0}(\delta^{(\mathpzc{d},\vec{a})}_{2^{-n}}\xi) - \hat{\varphi}_{0}(\delta^{(\mathpzc{d},\vec{a})}_{2^{-n+1}}\xi),
\quad \xi \in \R^{d}, n \geq 1.
\end{equation*}
Observe that
\begin{equation*}\label{functieruimten:eq:support_LP-sequence}
\supp \hat{\varphi}_{0} \subset \{ \xi \mid |\xi|_{\mathpzc{d},\vec{a}} \leq B \} \quad \mbox{and} \quad
\supp \hat{\varphi}_{n} \subset \{ \xi \mid 2^{n-1}A \leq |\xi|_{\mathpzc{d},\vec{a}} \leq 2^{n}B \}, \quad  n \geq 1.
\end{equation*}

We put $\Phi^{\mathpzc{d},\vec{a}}(\R^{d}) := \bigcup_{0<A<B<\infty}\Phi^{\mathpzc{d},\vec{a}}_{A,B}(\R^{d})$.
In case $l=1$ we write $\Phi^{a}(\R^{d}) = \Phi^{\mathpzc{d},a}(\R^{d})$, $\Phi(\R^{d}) = \Phi^{1}(\R^{d})$, $\Phi^{a}_{A,B}(\R^{d}) = \Phi^{\mathpzc{d},a}_{A,B}(\R^{d})$, and $\Phi_{A,B}(\R^{d}) = \Phi^{1}_{A,B}(\R^{d})$.

To $\varphi \in \Phi^{\mathpzc{d},a}(\R^{d})$ we associate the family of convolution operators
$(S_{n})_{n \in \N} = (S_{n}^{\varphi})_{n \in \N}   \subset \mathcal{L}(\mathcal{S}'(\R^{d};X),\mathscr{O}_{M}(\R^{d};X)) \subset \mathcal{L}(\mathcal{S}'(\R^{d};X))$ given by
\begin{equation}\label{functieruimten:eq:convolutie_operatoren}
S_{n}f = S_{n}^{\varphi}f := \varphi_{n} * f = \mathscr{F}^{-1}[\hat{\varphi}_{n}\hat{f}] \quad\quad (f \in \mathcal{S}'(\R^{d};X)).
\end{equation}
Here $\mathscr{O}_{M}(\R^{d};X)$ denotes the space of slowly increasing $X$-valued smooth functions on $\R^{d}$.
It holds that $f = \sum_{n=0}^{\infty}S_{n}f$ in $\mathcal{S}'(\R^{d};X)$ respectively in $\mathcal{S}(\R^{d};X)$ whenever $f \in \mathcal{S}'(\R^{d};X)$ respectively $f \in \mathcal{S}(\R^{d};X)$.

Given $\vec{a} \in (0,\infty)^{l}$, $\vec{p} \in [1,\infty)^{l}$, $q \in [1,\infty]$, $s \in \R$, and $\vec{w} \in \prod_{j=1}^{l}A_{\infty}(\R^{\mathpzc{d}_{j}})$,
the Besov space $B^{s,\vec{a}}_{\vec{p},q,\mathpzc{d}}(\R^{d},\vec{w};X)$ is defined as the space of all $f \in \mathcal{S}'(\R^{d};X)$ for which
\[
\norm{f}_{B^{s,\vec{a}}_{\vec{p},q,\mathpzc{d}}(\R^{d},\vec{w};X)}
:= \norm{(2^{ns}S_{n}^{\varphi}f)_{n \in \N}}_{\ell^{q}(\N)[L^{\vec{p},\mathpzc{d}}(\R^{d},\vec{w})](X)} < \infty
\]	
and the Triebel-Lizorkin space $F^{s,\vec{a}}_{\vec{p},q,\mathpzc{d}}(\R^{d},\vec{w};X)$ is defined as the space of all $f \in \mathcal{S}'(\R^{d};X)$ for which
\[
\norm{f}_{F^{s,\vec{a}}_{\vec{p},q,\mathpzc{d}}(\R^{d},\vec{w};X)}
:= \norm{(2^{ns}S_{n}^{\varphi}f)_{n \in \N}}_{L^{\vec{p},\mathpzc{d}}(\R^{d},\vec{w})[\ell^{q}(\N)](X)} < \infty.
\]	
Up to an equivalence of extended norms on $\mathcal{S}'(\R^{d};X)$, $\norm{\,\cdot\,}_{B^{s,\vec{a}}_{\vec{p},q,\mathpzc{d}}(\R^{d},\vec{w};X)}$ and $\norm{\,\cdot\,}_{F^{s,\vec{a}}_{\vec{p},q,\mathpzc{d}}(\R^{d},\vec{w};X)}$ do not depend on the particular choice of $\varphi \in \Phi^{\mathpzc{d},\vec{a}}(\R^{d})$.

Let us note some basic relations between these spaces.
Monotonicity of $\ell^{q}$-spaces yields that, for $1 \leq q_{0} \leq q_{1} \leq \infty$,
\begin{equation}
B^{s,\vec{a}}_{\vec{p},q_{0},\mathpzc{d}}(\R^{d},\vec{w};X) \hookrightarrow B^{s,\vec{a}}_{\vec{p},q_{1},\mathpzc{d}}(\R^{d},\vec{w};X), \quad\quad
F^{s,\vec{a}}_{\vec{p},q_{0},\mathpzc{d}}(\R^{d},\vec{w};X) \hookrightarrow F^{s,\vec{a}}_{\vec{p},q_{1},\mathpzc{d}}(\R^{d},\vec{w};X).
\end{equation}
For $\epsilon > 0$ it holds that
\begin{equation}\label{PIBVP:prelim:eq:elem_embd_epsilon}
B^{s,\vec{a}}_{\vec{p},\infty,\mathpzc{d}}(\R^{d},\vec{w};X) \hookrightarrow B^{s-\epsilon,\vec{a}}_{\vec{p},1,\mathpzc{d}}(\R^{d},\vec{w};X).
\end{equation}
Furthermore, Minkowksi's inequality gives
\begin{equation}\label{PIBVP:prelim:eq:elem_embd_BF_rel}
B^{s,\vec{a}}_{\vec{p},\min\{p_{1},\ldots,p_{l},q\},\mathpzc{d}}(\R^{d},\vec{w};X) \hookrightarrow B^{s,\vec{a}}_{\vec{p},q,\mathpzc{d}}(\R^{d},\vec{w};X) \hookrightarrow
B^{s,\vec{a}}_{\vec{p},\max\{p_{1},\ldots,p_{l},q\},\mathpzc{d}}(\R^{d},\vec{w};X).
\end{equation}

Let $\vec{a} \in (0,\infty)^{l}$.
A normed space $\E \subset \mathcal{S}'(\R^{d};X)$ is called $(\mathpzc{d},\vec{a})$-admissible if there exists an $N \in \N$ such that
\[
m(D)f \in \E \quad \mbox{with} \quad \norm{m(D)f}_{\E} \lesssim \norm{m}_{\mathcal{M}^{(\mathpzc{d},\vec{a})}_{N}}\norm{f}_{\E},
\quad\quad (m,f) \in \mathscr{O}_{M}(\R^{d}) \times \E,
\]
where $m(D)f=\mathscr{F}^{-1}[m\hat{f}]$.
The Besov space $B^{s,\vec{a}}_{\vec{p},q,\mathpzc{d}}(\R^{d},\vec{w};X)$ and the Triebel-Lizorkin space $F^{s,\vec{a}}_{\vec{p},q,\mathpzc{d}}(\R^{d},\vec{w};X)$ are examples of  $(\mathpzc{d},\vec{a})$-admissible Banach spaces.

To each $\sigma \in \R$ we associate the operators $\mathcal{J}^{[\mathpzc{d};j]}_{\sigma} \in \mathcal{L}(\mathcal{S}'(\R^{d};X))$ and $\mathcal{J}^{\mathpzc{d},\vec{a}}_{\sigma} \in \mathcal{L}(\mathcal{S}'(\R^{d};X))$ given by
\[
\mathcal{J}^{[\mathpzc{d};j]}_{\sigma}f := \mathscr{F}^{-1}[(1+|\pi_{[\mathpzc{d};j]}|^{2})^{\sigma/2}\hat{f}]
\quad\quad \mbox{and} \quad\quad \mathcal{J}^{\mathpzc{d},\vec{a}}_{\sigma}f := \sum_{k=1}^{l}\mathcal{J}^{[\mathpzc{d};k]}_{\sigma/a_{k}}f.
\]
We call $\mathcal{J}^{\mathpzc{d},\vec{a}}_{\sigma}$ the $(\mathpzc{d},\vec{a})$-anisotropic Bessel potential operator of order $\sigma$.

Let $\E \hookrightarrow \mathcal{S}'(\R^{d};X)$ be a Banach space. Write
\[
J_{\vec{n},\mathpzc{d}} := \left\{\, \alpha \in \bigcup_{j=1}^{l}\iota_{[\mathpzc{d};j]}\N^{\mathpzc{d}_{j}} \,:\, |\alpha_{j}| \leq n_{j} \,\right\}, \quad\quad \vec{n} \in \left( \Z_{\geq1} \right)^{l}.
\]
Given $\vec{n} \in \left( \Z_{\geq1} \right)^{l}$, $\vec{s}, \vec{a} \in (0,\infty)^{l}$, and $s \in \R$,
we define the Banach spaces
$\mathcal{W}^{\vec{n}}_{\mathpzc{d}}[\E], \mathcal{H}^{\vec{s}}_{\mathpzc{d}}[\E], \mathcal{H}^{s,\vec{a}}_{\mathpzc{d}}[\E] \hookrightarrow \mathcal{S}'(\R^{d};X)$ as follows:
\[
\begin{array}{ll}
\mathcal{W}^{\vec{n}}_{\mathpzc{d}}[\E] &:= \{ f \in \mathcal{S}'(\R^{d}) : D^{\alpha}f \in \E, \alpha \in J_{\vec{n},\mathpzc{d}} \}, \\
\mathcal{H}^{\vec{s}}_{\mathpzc{d}}[\E] &:=  \{ f \in \mathcal{S}'(\R^{d}) : \mathcal{J}^{[\mathpzc{d};j]}_{s_{j}}f \in \E, j=1,\ldots,l \},  \\
\mathcal{H}^{s,\vec{a}}_{\mathpzc{d}}[\E] &:= \{ f \in \mathcal{S}'(\R^{d}) : \mathcal{J}^{\mathpzc{d},\vec{a}}_{s}f \in \E \},
\end{array}
\]
with the norms
\[
\norm{f}_{\mathcal{W}^{\vec{n}}_{\mathpzc{d}}[\E]} = \sum_{\alpha \in J_{\vec{n},\mathpzc{d}}}\norm{D^{\alpha }f}_{E}, \quad\quad
\norm{f}_{\mathcal{H}^{\vec{s}}_{\mathpzc{d}}[\E]} =  \sum_{j=1}^{l}\norm{\mathcal{J}^{[\mathpzc{d};j]}_{s_{j}}f}_{\E}, \quad\quad
\norm{f}_{\mathcal{H}^{s,\vec{a}}_{\mathpzc{d}}[\E]} = \norm{ \mathcal{J}^{\mathpzc{d},\vec{a}}_{s}f }_{\E}.
\]
Note that $\mathcal{H}^{\vec{s}}_{\mathpzc{d}}[\E] \hookrightarrow \mathcal{H}^{s,\vec{a}}_{\mathpzc{d}}[\E]$ contractively in case that $\vec{s} = (s/a_{1},\ldots,s/a_{l})$. Furthermore, note that if $\F \hookrightarrow \mathcal{S}'(\R^{d};X)$ is another Banach space, then
\begin{equation}
\E \hookrightarrow \F \quad \mbox{implies} \quad
\mathcal{W}^{\vec{n}}_{\mathpzc{d}}[\E] \hookrightarrow \mathcal{W}^{\vec{n}}_{\mathpzc{d}}[\F],
\mathcal{H}^{\vec{s}}_{\mathpzc{d}}[\E] \hookrightarrow \mathcal{H}^{\vec{s}}_{\mathpzc{d}}[\F],
\mathcal{H}^{s,\vec{a}}_{\mathpzc{d}}[\E] \hookrightarrow \mathcal{H}^{s,\vec{a}}_{\mathpzc{d}}[\F].
\end{equation}

If $\E \hookrightarrow \mathcal{S}'(\R^{d};X)$ is a $(\mathpzc{d},\vec{a})$-admissible Banach space for a given $\vec{a} \in (0,\infty)^{l}$, then
\begin{equation}\label{PIBVP:eq:prelim:identities_Sobolev&Bessel-potential}
\mathcal{W}^{\vec{n}}_{\mathpzc{d}}[\E] = \mathcal{H}^{\vec{n}}_{\mathpzc{d}}[\E] = \mathcal{H}^{s,\vec{a}}_{\mathpzc{d}}[\E], \quad\quad  s \in \R, \vec{n} = s \vec{a}^{-1} \in \left( \Z_{\geq1} \right)^{l},
\end{equation}
and
\begin{equation}
\mathcal{H}^{\vec{s}}_{\mathpzc{d}}[\E] = \mathcal{H}^{s,\vec{a}}_{\mathpzc{d}}[\E], \quad\quad s >0, \vec{s} = s \vec{a}^{-1}.
\end{equation}
Furthermore,
\begin{equation}\label{PIBVP:eq:prelim:differential}
D^{\alpha} \in \mathcal{B}(\mathcal{H}^{s,\vec{a}}_{\mathpzc{d}}[\E],\mathcal{H}^{s- \vec{a} \cdot_{\mathpzc{d}} \alpha,\vec{a}}_{\mathpzc{d}}[\E]), \quad\quad s \in \R, \alpha \in \N^{d}.
\end{equation}

Let $\vec{a} \in (0,\infty)^{l}$, $\vec{p} \in [1,\infty)^{l}$, $q \in [1,\infty]$, and $\vec{w} \in \prod_{j=1}^{l}A_{\infty}(\R^{\mathpzc{d}_{j}})$. For $s,s_{0} \in \R$ it holds that
\begin{equation*}
B^{s+s_{0},\vec{a}}_{\vec{p},q,\mathpzc{d}}(\R^{d},\vec{w};X) = \mathcal{H}^{s,\vec{a}}_{\mathpzc{d}}[B^{s_{0},\vec{a}}_{\vec{p},q,\mathpzc{d}}(\R^{d},\vec{w};X)],
\quad
F^{s+s_{0},\vec{a}}_{\vec{p},q,\mathpzc{d}}(\R^{d},\vec{w};X) = \mathcal{H}^{s,\vec{a}}_{\mathpzc{d}}[F^{s_{0},\vec{a}}_{\vec{p},q,\mathpzc{d}}(\R^{d},\vec{w};X)].
\end{equation*}

Let $X$ be a Banach space, $\vec{a} \in (0,\infty)^{l}$, $\vec{p} \in (1,\infty)^{l}$, $\vec{w} \in \prod_{j=1}^{l}A_{p_{j}}(\R^{\mathpzc{d}_{j}})$, $s \in \R$, $\vec{s} \in (0,\infty)^{l}$ and $\vec{n} \in (\N_{>0})^{l}$. We define
\[
\begin{array}{ll}
W^{\vec{n}}_{\vec{p},\mathpzc{d}}(\R^{d},\vec{w};X) &:= \mathcal{W}^{\vec{n}}_{\mathpzc{d}}[L^{\vec{p},\mathpzc{d}}(\R^{d},\vec{w};X)], \\
H^{\vec{s}}_{\vec{p},\mathpzc{d}}(\R^{d},\vec{w};X) &:= \mathcal{H}^{\vec{s}}_{\mathpzc{d}}[L^{\vec{p},\mathpzc{d}}(\R^{d},\vec{w};X)],  \\ H^{s,\vec{a}}_{\vec{p},\mathpzc{d}}(\R^{d},\vec{w};X)  &:= \mathcal{H}^{s,\vec{a}}_{\mathpzc{d}}[L^{\vec{p},\mathpzc{d}}(\R^{d},\vec{w};X)]. \\
\end{array}
\]
If
\begin{itemize}
\item $\E = W^{\vec{n}}_{\vec{p},\mathpzc{d}}(\R^{d},\vec{w};X)$, $\vec{n} \in (\Z_{\geq 1})^{l}$, $\vec{n}=s\vec{a}^{-1}$; or
\item $\E = H^{s,\vec{a}}_{\vec{p},\mathpzc{d}}(\R^{d},\vec{w};X)$; or
\item $\E = H^{\vec{a}}_{\vec{p},\mathpzc{d}}(\R^{d},\vec{w};X)$, $\vec{a} \in (0,1)^{l}$, $\vec{a}=s\vec{a}^{-1}$,
\end{itemize}
then we have the inclusions
\begin{equation}\label{PIBVP:eq:elem_embedding_FEF}
F^{s,\vec{a}}_{\vec{p},1,\mathpzc{d}}(\R^{d},\vec{w};X) \hookrightarrow  \E
\hookrightarrow F^{s,\vec{a}}_{\vec{p},\infty,\mathpzc{d}}(\R^{d},\vec{w};X).
\end{equation}

\begin{thm}[\cite{Lindemulder_Intersection}]\label{functieruimten:thm:aTL_rep_intersection}
Let $X$ be a Banach space, $l=2$, $\vec{a} \in (0,\infty)^{2}$, $p,q \in (1,\infty)$, $s > 0$,
and $\vec{w} \in A_{p}(\R^{\mathpzc{d}_{1}}) \times A_{q}(\R^{\mathpzc{d}_{2}})$. Then
\begin{equation}\label{functieruimten:eq:thm;aTL_rep_intersection}
F^{s,\vec{a}}_{(p,q),p,\mathpzc{d}}(\R^{d},\vec{w};X) =
F^{s/a_{2}}_{q,p}(\R^{\mathpzc{d}_{2}},w_{2};L^{p}(\R^{\mathpzc{d}_{1}},w_{1};X)) \cap
L^{q}(\R^{\mathpzc{d}_{2}},w_{2};F^{s/a_{1}}_{p,p}(\R^{\mathpzc{d}_{1}},w_{1};X))
\end{equation}
with equivalence of norms.
\end{thm}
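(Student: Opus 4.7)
My plan is to exploit the special parameter constellation---namely that the second $F$-index $p$ coincides with the first component of $\vec{p}=(p,q)$---which via the Fubini identification $L^{p}(\R^{\mathpzc{d}_{1}},w_{1};\ell^{p}(X))=\ell^{p}(L^{p}(\R^{\mathpzc{d}_{1}},w_{1};X))$ will rewrite the left-hand norm as
\[
\|f\|_{F^{s,\vec{a}}_{(p,q),p,\mathpzc{d}}(\R^{d},\vec{w};X)} \simeq \left\| \, \left\|(2^{ns}S^{\varphi}_{n}f(\,\cdot\,,x_{2}))_{n\in\N}\right\|_{\ell^{p}(L^{p}(\R^{\mathpzc{d}_{1}},w_{1};X))} \, \right\|_{L^{q}(\R^{\mathpzc{d}_{2}},w_{2})}.
\]
Both inclusions will then be established by comparing the anisotropic Littlewood--Paley sequence $(S^{\varphi}_{n})_{n}$ with product sequences $(\tilde{S}^{(1)}_{k_{1}})_{k_{1}}$, $(\tilde{S}^{(2)}_{k_{2}})_{k_{2}}$ attached to isotropic resolutions $\psi^{(i)}\in\Phi(\R^{\mathpzc{d}_{i}})$.

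For the embedding $\hookrightarrow$, I will observe that $\supp\hat{\psi}^{(1)}_{k_{1}}\subset\{|\xi_{1}|\sim 2^{k_{1}}\}$ forces $|\xi|_{\mathpzc{d},\vec{a}}\gtrsim 2^{k_{1}/a_{1}}$, so $\tilde{S}^{(1)}_{k_{1}}=\sum_{n:\,2^{n}\gtrsim 2^{k_{1}/a_{1}}}\tilde{S}^{(1)}_{k_{1}}S^{\varphi}_{n}$, and the composed symbols will satisfy uniform $\mathcal{M}^{(\mathpzc{d},\vec{a})}_{N}$-bounds after anisotropic rescaling. Combining the weighted anisotropic multiplier embedding \eqref{PIBVP:eq:prelim:anisotrope_mixed-norm_fm} with a discrete Hardy inequality in $\ell^{p}$ (valid because $s/a_{1}>0$) I will obtain
\[
\left\|(2^{k_{1}s/a_{1}}\tilde{S}^{(1)}_{k_{1}}f(\,\cdot\,,x_{2}))_{k_{1}}\right\|_{\ell^{p}(L^{p}(w_{1};X))} \lesssim \left\|(2^{ns}S^{\varphi}_{n}f(\,\cdot\,,x_{2}))_{n}\right\|_{\ell^{p}(L^{p}(w_{1};X))}
\]
pointwise in $x_{2}$, and $L^{q}(w_{2})$-integration will deliver the embedding into the second factor of the intersection; the first factor follows symmetrically after swapping the roles of $\R^{\mathpzc{d}_{1}}$ and $\R^{\mathpzc{d}_{2}}$ in the Fubini rewriting.

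For the reverse embedding, I will fix $f$ in the intersection and write $\mathrm{Id}=\sum_{k_{1},k_{2}}\tilde{S}^{(1)}_{k_{1}}\tilde{S}^{(2)}_{k_{2}}$. Since $\supp\hat{\varphi}_{n}\subset\{|\xi|_{\mathpzc{d},\vec{a}}\sim 2^{n}\}$, only pairs with $\max(2^{k_{1}/a_{1}},2^{k_{2}/a_{2}})\sim 2^{n}$ contribute to $S^{\varphi}_{n}f$. I will split this finite index set along $\{k_{1}/a_{1}\geq k_{2}/a_{2}\}$ and its complement: on the first half $k_{1}$ is essentially pinned to $\sim na_{1}$ while the $\tilde{S}^{(2)}_{k_{2}}$-pieces with $k_{2}\leq na_{2}$ sum into a smooth Fourier cutoff that is uniformly bounded in $n$, so this contribution is controlled by the second factor $L^{q}(w_{2};F^{s/a_{1}}_{p,p}(w_{1};X))$; dually, the second half is controlled by $F^{s/a_{2}}_{q,p}(w_{2};L^{p}(w_{1};X))$. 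The multiplier estimates are again provided by \eqref{PIBVP:eq:prelim:anisotrope_mixed-norm_fm}.

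The main obstacle will be the Fourier-multiplier bookkeeping: verifying that all composed symbols appearing---products of $\hat{\psi}^{(1)}_{k_{1}}$, $\hat{\psi}^{(2)}_{k_{2}}$ and $\hat{\varphi}_{n}$ with smooth cutoffs---satisfy anisotropic Mikhlin-type estimates in $\mathcal{M}^{(\mathpzc{d},\vec{a})}_{N}$ uniformly in all scale parameters, and that the discrete Hardy-type inequalities redistributing the regularity weight $2^{(\cdot)s}$ between the $n$-scale and the $k_{i}$-scales remain effective when the underlying Banach target is the vector-valued $L^{p}(\R^{\mathpzc{d}_{1}},w_{1};X)$. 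The latter is automatic once the UMD hypothesis is used through $(\mathpzc{d},\vec{a})$-admissibility of the $F$-spaces.
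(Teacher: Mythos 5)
First, a point of reference: this paper does not prove Theorem~\ref{functieruimten:thm:aTL_rep_intersection} at all; it is imported verbatim from \cite{Lindemulder_Intersection}, so there is no in-paper proof to compare against. Your overall strategy --- using the coincidence of the microscopic parameter with the inner exponent $p$ to Fubini the norm into $L^{q}(w_{2})[\ell^{p}(\N;L^{p}(w_{1};X))]$, and then comparing the anisotropic Littlewood--Paley decomposition $(S^{\varphi}_{n})_{n}$ with the two one-variable decompositions via the relation $|\xi|_{\mathpzc{d},\vec{a}}\sim\max(|\xi_{1}|^{1/a_{1}},|\xi_{2}|^{1/a_{2}})$, low-pass resummation of the off-diagonal blocks, and discrete Hardy inequalities (using $s>0$) --- is the natural route to such an intersection representation and is in the expected spirit of the cited result.

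There is, however, a concrete gap in how you propose to run the estimates: you route every operator bound through the multiplier embedding \eqref{PIBVP:eq:prelim:anisotrope_mixed-norm_fm} and, at the end, explicitly invoke ``the UMD hypothesis''. The theorem assumes only that $X$ is a Banach space, and \eqref{PIBVP:eq:prelim:anisotrope_mixed-norm_fm} is not available under its hypotheses for two reasons: it requires $X$ to be UMD, and for $l\geq 2$ it requires rectangular weights $\vec{w}\in\prod_{j}A^{rec}_{p_{j}}(\R^{\mathpzc{d}_{j}})$, whereas here only $\vec{w}\in A_{p}(\R^{\mathpzc{d}_{1}})\times A_{q}(\R^{\mathpzc{d}_{2}})$ (cube classes, strictly larger when $\mathpzc{d}_{j}\geq 2$) is assumed. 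As written, your argument therefore proves a strictly weaker statement; moreover a scalar multiplier bound does not by itself give the uniform $\ell^{p}$-valued estimates you need across the families $(\tilde{S}^{(i)}_{k_{i}})$, and your closing claim that $(\mathpzc{d},\vec{a})$-admissibility of the $F$-spaces rests on UMD is incorrect (it holds for arbitrary $X$). The fix is to avoid Mikhlin-type theorems altogether: all operators you use (the blocks $\tilde{S}^{(i)}_{k_{i}}$, the low-pass cutoffs, and $S^{\varphi}_{n}$) are convolutions with dilated Schwartz kernels, hence pointwise dominated, after Minkowski's inequality in the passive variable, by the iterated Hardy--Littlewood maximal operators of $\norm{\,\cdot\,}_{X}$; the weighted mixed-norm Fefferman--Stein inequality of Lemma~\ref{PIBVP:lemma:appendix:part_HL-max-op_weighted_mixed_norm_space} (together with the series estimates of Lemma~\ref{PIBVP:lemma:appendix:conv_series_dyadic_corona;TL} and the maximal inequality of Lemma~\ref{PBIBV:lemma:appendix:Peetre-Fefferman-Stein_maximal_ineq}) then yields all uniform bounds for arbitrary Banach spaces $X$ and $A_{p}\times A_{q}$ weights. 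With that substitution, and keeping your Hardy-inequality bookkeeping, the argument closes under the stated hypotheses.
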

This intersection representation is actually a corollary of a more general intersection representation in \cite{Lindemulder_Intersection}. In the above form it can also be found in \cite[Theorem~5.2.35]{Lindemulder_master-thesis}. For the case $X=\C$, $\mathpzc{d}_{1}=1$, $\vec{w}=\vec{1}$ we refer to \cite[Proposition~3.23]{Denk&Kaip}.

\section{The Main Result}\label{PIBVP:sec:main_result}

\subsection{Maximal $L^{q}_{\mu}$-$L^{p}_{\gamma}$-regularity}

In order to give a precise description of the maximal weighted $L^{q}$-$L^{p}$-regularity approach for \eqref{intro:pibvp},
let $\mathscr{O}$ be either $\R^{d}_{+}$ or a smooth domain in $\R^{d}$ with a compact boundary $\partial\mathscr{O}$.
Furthermore, let $X$ be a Banach space, let
\[
q \in (1,\infty),\: \mu \in (-1,q-1)  \quad \mbox{and} \quad p \in (1,\infty),\: \gamma \in (-1,p-1),
\]
let $v_{\mu}$ and $w^{\partial\mathscr{O}}_{\gamma}$ be as in \eqref{PIBVP:eq:intro:weights},
put
\begin{equation}\label{PIBVP:eq:basic_spaces_max-reg}
\begin{split}
\U^{p,q}_{\gamma,\mu} &:=  W^{1}_{q}(J,v_{\mu};L^{p}(\mathscr{O},w^{\partial\mathscr{O}}_{\gamma};X)) \cap L^{q}(J,v_{\mu};W^{2n}_{p}(\mathscr{O},w^{\partial\mathscr{O}}_{\gamma};X)), \quad \mbox{(space of solutions $u$)} \\
\F^{p,q}_{\gamma,\mu} &:= L^{q}(J,v_{\mu};L^{p}(\mathscr{O},w^{\partial\mathscr{O}}_{\gamma};X)),
 \quad\mbox{(space of domain inhomogeneities $f$)}\\
\B^{p,q}_{\mu} &:= L^{q}(J,v_{\mu};L^{p}(\partial\mathscr{O};X)), \quad\mbox{(boundary space)}
\end{split}
\end{equation}
and let $n,n_{1},\ldots,n_{n} \in \N$ be natural numbers with $n_{j} \leq 2n - 1$ for each $j \in \{1,\ldots,n\}$.
Suppose that for each $\alpha \in \N^{d}, |\alpha| \leq 2n$,
\[
a_{\alpha} \in \mathcal{D}'(\mathscr{O} \times J;\mathcal{B}(X)) \quad \mbox{with} \quad a_{\alpha}D^{\alpha} \in \mathcal{B}(\U^{p,q}_{\gamma,\mu},\F^{p,q}_{\gamma,\mu})
\]
and that for each $j \in \{1,\ldots,n\}$ and $\beta \in \N^{d}, |\beta| \leq n_{j}$,
\[
b_{j,\beta} \in \mathcal{D}'(\partial\mathscr{O} \times J;\mathcal{B}(X)) \quad \mbox{with} \quad b_{j,\beta}\mathrm{tr}_{\partial\mathscr{O}}D^{\beta} \in \mathcal{B}(\U^{p,q}_{\gamma,\mu},\B^{p,q}_{\mu}),
\]
where the conditions $a_{\alpha}D^{\alpha} \in \mathcal{B}(\U^{p,q}_{\gamma,\mu},\F^{p,q}_{\gamma,\mu})$ and $b_{j,\beta}\mathrm{tr}_{\partial\mathscr{O}}D^{\beta} \in \mathcal{B}(\U^{p,q}_{\gamma,\mu},\B^{p,q}_{\mu})$ have to be interpreted in the sense of bounded extension from the space of $X$-valued compactly supported smooth functions.
Define $\mathcal{A}(D) \in \mathcal{B}(\U^{p,q}_{\gamma,\mu},\F^{p,q}_{\gamma,\mu})$ and $\mathcal{B}_{1}(D),\ldots,\mathcal{B}_{n}(D) \in \mathcal{B}(\U^{p,q}_{\gamma,\mu},\B^{p,q}_{\mu})$ by
\begin{equation}\label{pbvp:eq:intro;diff_operators}
\begin{split}
\mathcal{A}(D) &:= \sum_{|\alpha| \leq 2n}a_{\alpha}D^{\alpha}, \\
\mathcal{B}_{j}(D) &:= \sum_{|\beta| \leq n_{j}}b_{j,\beta}\mathrm{tr}_{\partial\mathscr{O}}D^{\beta}, \quad\quad j=1,\ldots,n.
\end{split}
\end{equation}

In the above notation, given $f \in \F^{p,q}_{\gamma,\mu}$ and $g=(g_{1},\ldots,g_{n}) \in  [\B^{p,q}_{\mu}]^{n}$, one can ask the question whether the initial-boundary value problem
\begin{equation}\label{PIBVP:eq:rigorous_problem}
\begin{array}{rll}
\partial_{t}u + \mathcal{A}(D)u &= f,  \\
\mathcal{B}_{j}(D)u &= g_{j}, & j=1,\ldots,n, \\
\mathrm{tr}_{t=0}u &= u_{0}.
\end{array}
\end{equation}
has a unique solution $u \in \U^{p,q}_{\gamma,\mu}$.

\begin{definitie}\label{PIBVP:def:max-reg_Lq-Lp_gewichten}
We say that the problem \eqref{PIBVP:eq:rigorous_problem} enjoys the property of \emph{maximal $L^{q}_{\mu}$-$L^{p}_{\gamma}$-regularity} if there exists a (necessarily unique) linear space $\mathscr{D}_{i.b.} \subset [\B^{p,q}_{\mu}]^{n} \times L^{p}(\mathscr{O},w^{\partial\mathscr{O}}_{\gamma};X)$
such that \eqref{PIBVP:eq:rigorous_problem} admits a unique solution
$u \in \U^{p,q}_{\gamma,\mu}$ if and only if $(f,g,u_{0})  \in \mathscr{D} = \F^{p,q}_{\gamma,\mu} \times \mathscr{D}_{i.b.}$. In this situation we call $\mathscr{D}_{i.b.}$ the \emph{optimal space of initial-boundary data} and $\mathscr{D}$  the \emph{optimal space of data}.
\end{definitie}

\begin{remark}\label{PIBVP:rmk:def:max-reg_Lq-Lp_gewichten;top}
Let the notations be as above.
If the problem \eqref{PIBVP:eq:rigorous_problem} enjoys the property of \emph{maximal $L^{q}_{\mu}$-$L^{p}_{\gamma}$-regularity}, then there exists a unique Banach topology on the space of initial-boundary data $\mathscr{D}_{i.b.}$ such that
$\mathscr{D}_{i.b.} \hookrightarrow [\B^{p,q}_{\mu}]^{n} \times L^{p}(\mathscr{O},w^{\partial\mathscr{O}}_{\gamma};X)$.
Moreover, if $\mathscr{D}_{i.b.}$ has been equipped with a Banach norm generating such a topology, then the solution operator
\[
\mathscr{S}: \mathscr{D} = \F^{p,q}_{\gamma,\mu} \oplus \mathscr{D}_{i.b.} \longra \U^{p,q}_{\gamma,\mu},\,
(f,g,u_{0}) \mapsto \mathscr{S}(f,g,u_{0}) = u
\]
is an isomorphism of Banach spaces, or equivalently,
\[
\norm{u}_{\U^{p,q}_{\gamma,\mu}} \eqsim \norm{f}_{\F^{p,q}_{\gamma,\mu}} + \norm{(g,u_{0})}_{\mathscr{D}_{i.b.}},  \quad\quad u=\mathscr{S}(f,g,u_{0}), (f,g,u_{0}) \in \mathscr{D}.
\]
\end{remark}

The \emph{maximal $L^{q}_{\mu}$-$L^{p}_{\gamma}$-regularity problem} for \eqref{PIBVP:eq:rigorous_problem} consists of establishing maximal $L^{q}_{\mu}$-$L^{p}_{\gamma}$-regularity for \eqref{PIBVP:eq:rigorous_problem} and explicitly determining the space $\mathscr{D}_{i.b.}$ together with a norm as in Remark~\ref{PIBVP:rmk:def:max-reg_Lq-Lp_gewichten;top}.
As the main result of this paper, Theorem~\ref{PIBVP:thm:main_result}, we will solve the maximal $L^{q}_{\mu}$-$L^{p}_{\gamma}$-regularity problem for \eqref{PIBVP:eq:rigorous_problem} under the assumption that $X$ is a UMD space and under suitable assumptions on the operators $\mathcal{A}(D),\mathcal{B}_{1}(D),\ldots,\mathcal{B}_{n}(D)$.

\subsection{Assumptions on $(\mathcal{A},\mathcal{B}_{1},\ldots,\mathcal{B}_{n})$}\label{PIBVP:subsec:assumptions_operators}

As in \cite{DHP2,MeySchnau2}, we will pose two type of conditions on the operators $\mathcal{A},\mathcal{B}_{1},\ldots,\mathcal{B}_{n}$ for which we can solve the maximal $L^{q}_{\mu}$-$L^{p}_{\gamma}$-regularity problem for \eqref{PIBVP:eq:rigorous_problem}: smoothness assumptions on the coefficients and structural assumptions.

In order to describe the smoothness assumptions on the coefficients,
let $q,p \in (1,\infty)$, $\mu \in (-1,q-1)$, $\gamma \in (-1,p-1)$ and put
\begin{equation}\label{PBIBVP:eq:kappa}
\kappa_{j,\gamma} := 1-\frac{n_{j}}{2n}-\frac{1}{2np}(1+\gamma) \in (0,1), \quad\quad j=1,\ldots,n.
\end{equation}

\begin{itemize}
\item[\textbf{$(\mathrm{SD})$}] For $|\alpha| = 2n$ we have $a_{\alpha} \in BUC(\mathscr{O} \times J;\mathcal{B}(X))$ and for $|\alpha| < 2n$ we have $a_{\alpha} \in L^{\infty}(\mathscr{O} \times J ;\mathcal{B}(X))$. If $\mathscr{O}$ is unbounded, the limits $a_{\alpha}(\infty,t) := \lim_{|x| \to \infty}a_{\alpha}(x,t)$ exist uniformly with respect to $t \in J$, $|\alpha|=2n$.
\item[\textbf{$(\mathrm{SB})$}]
For each $j \in \{1,\ldots,m\}$ and $|\beta| \leq n_{j}$ there exist $s_{j,\beta} \in [q,\infty)$ and $r_{j,\beta} \in [p,\infty)$ with
\[
\kappa_{j,\gamma} > \frac{1}{s_{j,\beta}} + \frac{d-1}{2nr_{j,\beta}} + \frac{|\beta|-n_{j}}{2n}
\quad \mbox{and} \quad \mu > \frac{q}{s_{j,\beta}}-1
\]
such that
\[
b_{j,\beta} \in F^{\kappa_{j,\gamma}}_{s_{j,\beta},p}(J;L^{r_{j,\beta}}(\partial\mathscr{O};\mathcal{B}(X))) \cap L^{s_{j,\beta}}(J;B^{2n\kappa_{j,\gamma}}_{r_{j,\beta},p}(\partial\mathscr{O};\mathcal{B}(X))).
\]
If $\mathscr{O}=\R^{d}_{+}$, the limits $b_{j,\beta}(\infty,t) := \lim_{|x'| \to \infty}b_{j,\beta}(x',t)$ exist uniformly with respect to $t \in J$, $j \in \{1,\ldots,n\}$, $|\beta|=n_{j}$.
\end{itemize}

\begin{remark}\label{PBIVB:rmk:lower_order}
For the lower order parts of $(\mathcal{A},\mathcal{B}_{1},\ldots,\mathcal{B}_{n})$ we only need $a_{\alpha}D^{\alpha}$, $|\alpha| < 2n$, and $b_{j,\beta}\mathrm{tr}_{\partial\mathscr{O}}D^{\beta}$, $|\beta_{j}|<n_{j}$, $j=1,\ldots,n$, to act as lower order perturbations in the sense that there exists $\sigma \in [2n-1,2n)$ such that
$a_{\alpha}D^{\alpha}$ respectively $b_{j,\beta}\mathrm{tr}_{\partial\mathscr{O}}D^{\beta}$ is bounded from
\[
H^{\frac{\sigma}{2n}}_{q}(J,v_{\mu};L^{p}(\mathscr{O},w^{\partial\mathscr{O}}_{\gamma};X)) \cap L^{q}(J,v_{\mu};H^{\sigma}_{p}(\mathscr{O},w^{\partial\mathscr{O}}_{\gamma};X))
\]
to $L^{q}(J,v_{\mu};L^{p}(\mathscr{O},w^{\partial\mathscr{O}}_{\gamma};X)))$ respectively $F^{\kappa_{j,\gamma}}_{q,p}(J,v_{\mu};L^{p}(\partial\mathscr{O};X)) \cap L^{q}(J,v_{\mu};F^{2n\kappa_{j,\gamma}}_{p,p}(\partial\mathscr{O};X))$.
Here the latter space is the optimal space of boundary data, see the statement of the main result.
\end{remark}

Let us now turn to the two structural assumptions on $\mathcal{A},\mathcal{B}_{1},\ldots,\mathcal{B}_{n}$.
For each $\phi \in [0,\pi)$ we introduce the conditions $(\mathrm{E})_{\phi}$ and $(\mathrm{LS})_{\phi}$.

The condition $(\mathrm{E})_{\phi}$ is parameter ellipticity.
In order to state it, we denote by the subscript $\#$ the principal part of a differential operator: given a differential operator $P(D)=\sum_{|\gamma| \leq k}p_{\gamma}D^{\gamma}$ of order $k \in \N$, $P_{\#}(D) = \sum_{|\gamma| = k}p_{\gamma}D^{\gamma}$.

\begin{itemize}
\item[\textbf{$(\mathrm{E})_{\phi}$}] For all $t \in \overline{J}$, $x \in \overline{\mathscr{O}}$ and $|\xi|=1$ it holds that $\sigma(\mathcal{A}_{\#}(x,\xi,t)) \subset \Sigma_{\phi}$. If $\mathscr{O}$ is unbounded, then it in addition holds that $\sigma(\mathcal{A}_{\#}(\infty,\xi,t)) \subset \C_{+}$ for all $t \in \overline{J}$ and $|\xi|=1$.
\end{itemize}

The condition $(\mathrm{LS})_{\phi}$ is a condition of Lopatinskii-Shapiro type. Before we can state it, we need to introduce some notation. For each $x \in \partial\mathscr{O}$ we fix an orthogonal matrix $O_{\nu(x)}$ that rotates the outer unit normal $\nu(x)$ of $\partial\mathscr{O}$ at $x$ to $(0,\ldots,0,-1) \in \R^{d}$, and define the rotated operators $(\mathcal{A}^{\nu},\mathcal{B}^{\nu})$ by
\[
\mathcal{A}^{\nu}(x,D,t) := \mathcal{A}(x,O_{\nu(x)}^{T}D,t), \quad
\mathcal{B}^{\nu}(x,D,t) := \mathcal{B}(x,O_{\nu(x)}^{T}D,t).
\]

\begin{itemize}
\item[\textbf{$(\mathrm{LS})_{\phi}$}] For each $t \in \overline{J}$, $x \in \partial\mathscr{O}$, $\lambda \in \overline{\Sigma}_{\pi-\phi}$ and $\xi' \in \R^{d-1}$ with $(\lambda,\xi') \neq 0$ and all $h \in X^{n}$, the ordinary initial value problem
    \begin{equation*}\label{pbvp:eqOIVP.(I)}
\begin{array}{rlll}
\lambda w(y) + \mathcal{A}^{\nu}_{\#}(\xi',D_{y},t)w(y) &= 0, & y > 0 & \\
\mathcal{B}^{\nu}_{j,\#}(\xi',D_{y},t)w(y)|_{y=0} &= h_{j}, & j=1,\ldots,n.
\end{array}
\end{equation*}
has a unique solution $w \in C^{\infty}([0,\infty);X)$ with $\lim_{y \to \infty}w(y)=0$.
\end{itemize}

\subsection{Statement of the Main Result}
Let $\mathscr{O}$ be either $\R^{d}_{+}$ or a $C^{\infty}$-domain in $\R^{d}$ with a compact boundary $\partial\mathscr{O}$.
Let $X$ be a Banach space, $q,p \in (1,\infty)$, $\mu \in (-1,q-1)$, $\gamma \in (-1,p-1)$ and $n,n_{1},\ldots,n_{n} \in \N$ natural numbers with $n_{j} \leq 2n - 1$ for each $j \in \{1,\ldots,n\}$, and $\kappa_{1,\gamma},\ldots,\kappa_{n,\gamma} \in (0,1)$ as defined in \eqref{PBIBVP:eq:kappa}.
Put
\begin{equation}
\begin{split}
\I^{p,q}_{\gamma,\mu} &:= B^{2n(1-\frac{1+\mu}{q})}_{p,q}(\mathscr{O},w^{\partial\mathscr{O}}_{\gamma};X),
\quad\mbox{(initial data space)} \\
\G^{p,q}_{\gamma,\mu,j} & := F^{\kappa_{j,\gamma}}_{q,p}(J,v_{\mu};L^{p}(\partial\mathscr{O};X)) \cap L^{q}(J,v_{\mu};F^{2n\kappa_{j,\gamma}}_{p,p}(\partial\mathscr{O};X)), \quad j=1,\ldots,n, \\
\G^{p,q}_{\gamma,\mu}  &:= \G^{p,q}_{1,\mu,\gamma} \oplus \ldots \oplus \G^{p,q}_{n,\mu,\gamma}. \quad\mbox{(space of boundary data $g$)}
\end{split}
\end{equation}
Furthermore, let $\U^{p,q}_{\gamma,\mu}$ and $\F^{p,q}_{\gamma,\mu}$ be as in \eqref{PIBVP:eq:basic_spaces_max-reg}.

\begin{thm}\label{PIBVP:thm:main_result}
Let the notations be as above. Suppose that $X$ is a UMD space, that $\mathcal{A}(D),\mathcal{B}_{1}(D),\ldots,\mathcal{B}_{n}(D)$ satisfy the conditions $(\mathrm{SD})$, $(\mathrm{SB})$, $(\mathrm{E})_{\phi}$ and $(\mathrm{LS})_{\phi}$ for some $\phi \in (0,\frac{\pi}{2})$, and that
$\kappa_{j,\gamma} \neq \frac{1+\mu}{q}$ for all $j \in \{1,\ldots,n\}$.
Put
\[
\D^{p,q}_{\gamma,\mu} := \left\{ (g,u_{0}) \in \G^{p,q}_{\gamma,\mu} \oplus \I^{p,q}_{\gamma,\mu} : \mathrm{tr}_{t=0}g_{j} - \mathcal{B}^{t=0}_{j}(D)u_{0} = 0 \:\:\mbox{when}\:\: \kappa_{j,\gamma} > \frac{1+\mu}{q} \right\},
\]
where $\mathcal{B}^{t=0}_{j}(D) := \sum_{|\beta| \leq n_{j}}b_{j,\beta}(0,\,\cdot\,)\mathrm{tr}_{\partial\mathscr{O}}D^{\beta}$.
Then the problem \eqref{PIBVP:eq:rigorous_problem} enjoys the property of maximal $L^{q}_{\mu}$-$L^{p}_{\gamma}$-regularity with $\D^{p,q}_{\gamma,\mu}$
as the optimal space of initial-boundary data, i.e., the problem \eqref{PIBVP:eq:rigorous_problem}
admits a unique solution $u \in \U^{p,q}_{\gamma,\mu}$ if and only if $(f,g,u_{0}) \in \F^{p,q}_{\gamma,\mu} \oplus \D^{p,q}_{\gamma,\mu}$.
Moreover, the corresponding solution operator $\mathscr{S}: \F^{p,q}_{\gamma,\mu} \oplus \D^{p,q}_{\gamma,\mu} \longra \U^{p,q}_{\gamma,\mu}$ is an isomorphism of Banach spaces.
\end{thm}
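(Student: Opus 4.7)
The plan is to follow the standard three-step reduction scheme of \cite{DHP2,MeySchnau2}, but systematically exploit the spatial power weight $w^{\partial\mathscr{O}}_\gamma$ through the anisotropic mixed-norm Triebel--Lizorkin framework built up in Section~\ref{PIBVP:sec:prelim} and to be supplemented by the weighted traces of Section~\ref{PIBVP:sec:traces} and the embedding of Section~\ref{PIBVP:sec:Sobolev_embedding_Besov}. First I would prove \emph{necessity}: any solution $u\in\U^{p,q}_{\gamma,\mu}$ forces $(g,u_{0})\in\D^{p,q}_{\gamma,\mu}$. The time trace $\mathrm{tr}_{t=0}u\in\I^{p,q}_{\gamma,\mu}$ is standard weighted real interpolation in time with values in $L^{p}(\mathscr{O},w^{\partial\mathscr{O}}_{\gamma};X)$. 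For the boundary traces, one identifies $\U^{p,q}_{\gamma,\mu}$ (via Theorem~\ref{functieruimten:thm:aTL_rep_intersection} applied with $\vec{a}=(1,2n)$ in the variables $(x,t)$) as an anisotropic weighted Bessel potential/Triebel--Lizorkin space on $J\times\mathscr{O}$ and applies the weighted vector-valued trace results to the hyperplane $J\times\partial\mathscr{O}$; the exact loss $\frac{1+\gamma}{2np}$ in normal regularity produced by $w^{\partial\mathscr{O}}_{\gamma}$ matches the exponent $\kappa_{j,\gamma}$ from \eqref{PBIBVP:eq:kappa}, which pins down $\G^{p,q}_{\gamma,\mu,j}$. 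The compatibility condition $\mathrm{tr}_{t=0}g_{j}=\mathcal{B}^{t=0}_{j}(D)u_{0}$ is forced precisely when $\kappa_{j,\gamma}>(1+\mu)/q$, i.e.\ when the time trace of elements of $\G^{p,q}_{\gamma,\mu,j}$ is well defined.

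For \emph{sufficiency} I would carry out the usual three reductions. Step A, reduction to $u_{0}=0$: given admissible $u_{0}$ satisfying the compatibility conditions, construct an extension $\tilde u\in\U^{p,q}_{\gamma,\mu}$ with $\mathrm{tr}_{t=0}\tilde u=u_{0}$, for instance via the analytic semigroup generated by an auxiliary $(\mathcal{B}_{j}^{t=0})$-realization of a shifted principal part, and subtract it. Step B, reduction to $f=0$: the operator $-\mathcal{A}(D)$ endowed with homogeneous boundary conditions $(\mathcal{B}_{j}(D))$ is $\mathcal{R}$-sectorial on $L^{p}(\mathscr{O},w^{\partial\mathscr{O}}_{\gamma};X)$, so Weis's theorem in the $L^{q}(J,v_{\mu})$-setting (available since $v_{\mu}\in A_{q}(\R)$ and via \eqref{PIBVP:eq:prelim:operator-valued_FM}) yields weighted maximal regularity for the homogeneous-boundary problem. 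Step C is the genuine boundary problem with $f=0$, $u_{0}=0$.

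For Step C, I would localize by a finite partition of unity on $\partial\mathscr{O}$ and straighten the boundary; the bulk coefficients are then perturbations by Remark~\ref{PBIVB:rmk:lower_order}, the boundary coefficients perturbations via $(\mathrm{SB})$ and the Sobolev embedding of Section~\ref{PIBVP:sec:Sobolev_embedding_Besov}. What remains is the \emph{model problem} on $\R^{d}_{+}$ with constant, top-order, $t$-independent coefficients. Laplace transform in $t$ and Fourier transform in the tangential variables $x'$ reduce the problem to a parameter-dependent ODE in the normal variable; by $(\mathrm{E})_{\phi}$ and $(\mathrm{LS})_{\phi}$ this ODE has a unique stable solution, yielding an explicit Poisson-type representation $u=\sum_{j}\mathcal{P}_{j}g_{j}$ whose symbols $(\lambda,\xi')\mapsto M_{j}(\lambda,\xi',x_{d})$ are $\mathcal{B}(X)$-valued, holomorphic, and $(\vec{a},\vec{d})$-homogeneous of the correct degrees in $(\lambda^{1/2n},\xi',x_{d})$.

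The main obstacle, where the weights genuinely enter, is to show that each $\mathcal{P}_{j}$ is bounded from $\G^{p,q}_{\gamma,\mu,j}$ into $\U^{p,q}_{\gamma,\mu}$. Rewriting both spaces via Theorem~\ref{functieruimten:thm:aTL_rep_intersection} as anisotropic mixed-norm spaces on $\R^{d-1}\times\R_{t}\times\R_{+}$ reduces the bound to operator-valued anisotropic Mikhlin multiplier estimates of the form \eqref{PIBVP:eq:prelim:anisotrope_mixed-norm_fm}, combined with $\mathcal{R}$-boundedness of the one-parameter families in the normal direction as in \eqref{PIBVP:eq:prelim:operator-valued_FM}; here the admissibility of the weight $w_{\gamma}\in A_{p}^{rec}$ in the normal direction (by \eqref{DBVP:eq:sec:prelim:power_weight_Ap}) is crucial. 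Once the multiplier estimates are established, uniqueness follows from Step~B applied to the difference of two solutions, and the open mapping theorem upgrades the a priori estimate to the isomorphism claim of Remark~\ref{PIBVP:rmk:def:max-reg_Lq-Lp_gewichten;top}. I expect the Mikhlin/$\mathcal{R}$-bound verification for the Poisson symbols, uniformly across the anisotropic scalings induced by the three weights $(v_{\mu},w^{\partial\mathscr{O}}_{\gamma},1)$, to be the technically hardest point.
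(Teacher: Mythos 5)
Your skeleton (necessity via trace theory, localization/perturbation to a constant-coefficient model problem on $\R^{d}_{+}$, Fourier analysis in time, the Lopatinskii--Shapiro elliptic solution operators, $\mathcal{R}$-boundedness) is the same as the paper's, and the necessity half as you describe it is essentially correct. The genuine gap is at the point you yourself flag as hardest, and the mechanism you propose there does not work as stated. You want to bound the Poisson operators $\mathcal{P}_{j}:\G^{p,q}_{\gamma,\mu,j}\to\U^{p,q}_{\gamma,\mu}$ by ``operator-valued anisotropic Mikhlin estimates'' with the $A_{p}^{rec}$-membership of $w_{\gamma}$ in the normal direction as the crucial input. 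But the boundary datum $g_{j}$ lives on $\R^{d-1}\times J$ and has no normal variable at all, and $\mathcal{P}_{j}$ is not a Fourier multiplier in $y$; so the mixed-norm multiplier theorem \eqref{PIBVP:eq:prelim:anisotrope_mixed-norm_fm} is simply not applicable to it, and the weight in the normal direction cannot enter through that route. Producing the weighted normal integrability together with the regularity gain $\frac{1+\gamma}{p}$, simultaneously in the mixed $L^{q}(v_{\mu})$-$L^{p}(w_{\gamma})$ norm with $q\neq p$, is exactly where \cite{DHP2} needed a long separate argument even unweighted (their shortcut $\mathcal{E}_{\lambda}=e^{-\,\cdot\,L^{1/2n}_{\lambda}}$ only works for $q=p$). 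The paper closes this gap by a different device: the representation formula \eqref{PIBVP:eq:prop;opl_elliptisch_probleem_randwaarden;rep_formula}, $D^{\alpha}\tilde{\mathcal{S}}_{j}(\lambda)=\mathcal{T}^{1}_{j,\alpha}(\lambda)L^{1-\frac{n_{j}}{2n}}_{\lambda}+\mathcal{T}^{2}_{j,\alpha}(\lambda)L^{1-\frac{n_{j}+1}{2n}}_{\lambda}D_{y}$ with the $\mathcal{R}$-bounds \eqref{PIBVP:eq:prop;opl_elliptisch_probleem_randwaarden;R-bounds}, combined with a $\lambda$-independent coretraction $\mathcal{E}_{j}$ for $\mathrm{tr}_{y=0}$ supplied by the weighted anisotropic trace theorem (Theorem~\ref{PIBVP:thm:trace_TL}, Corollary~\ref{PIBVP:cor:thm:trace_TL}); this absorbs the normal variable into operators acting on $W^{2n-n_{j}}_{p}(\R^{d}_{+},w_{\gamma};X)$ and reduces everything to an operator-valued multiplier estimate in the time variable alone via \eqref{PIBVP:eq:prelim:operator-valued_FM} (Lemma~\ref{PIBVP:lemma:lemma:pbvp_sol_for_just_boundary_data;real_line}; see Remark~\ref{PIBVP:rmk:prop:opl_elliptisch_probleem_randwaarden;ext_operator}). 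Your sketch leaves precisely this step unspecified, and it is the new content of the theorem rather than a routine verification.

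Two smaller points. Your Step B invokes $\mathcal{R}$-sectoriality of the variable-coefficient realization with homogeneous boundary conditions on $L^{p}(\mathscr{O},w^{\partial\mathscr{O}}_{\gamma};X)$ before any localization; establishing that is of the same order of difficulty as the theorem itself, so the reduction should be organized the other way around (the paper treats the $f$-part last, at the model-problem level, via the bounded $H^{\infty}$-calculus of $A_{B}$ on $L^{p}(\R^{d}_{+},w_{\gamma})$ and then extrapolates the temporal weight $\mu=0$ to all $\mu\in(-1,q-1)$). Also, in the localization step the boundary coefficients $b_{j,\beta}$ are only assumed to satisfy $(\mathrm{SB})$, so you additionally need the paraproduct/multiplication estimates in the weighted anisotropic Triebel--Lizorkin scale (Lemmas~\ref{PIBVP:lemma:appendix:sec:localization;pre-variant_(2.3.13)}--\ref{PIBVP:appendix:lemma:lower_order_term}), which do not follow from the Sobolev embedding of Section~\ref{PIBVP:sec:Sobolev_embedding_Besov} alone.
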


\begin{remark}\label{PIBVP:rmk:thm:main_result;compatability_cond}
The compatibility condition $\mathrm{tr}_{t=0}g_{j} - \mathcal{B}^{t=0}_{j}(D)u_{0} = 0$ in the definition of $\D^{p,q}_{\gamma,\mu}$ is basically imposed when $(g_{j},u_{0}) \mapsto \mathrm{tr}_{t=0}g_{j} - \mathcal{B}^{t=0}_{j}(D)u_{0}$ makes sense as a continuous linear operator from $\G^{p,q}_{\gamma,\mu,j} \oplus \I^{p,q}_{\gamma,\mu}$ to some topological vector space $V$.
That it is indeed a well-defined continuous linear operator from $\G^{p,q}_{\gamma,\mu,j} \oplus \I^{p,q}_{\gamma,\mu}$ to $L^{0}(\partial\mathscr{O};X)$ when $\kappa_{j,\gamma} > \frac{1+\mu}{q}$ can be seen by combining the following two points:
\begin{itemize}
\item[(i)] Suppose $\kappa_{j,\gamma} > \frac{1+\mu}{q}$. Then the condition $(\mathrm{SB})$ yields $b_{j,\beta} \in F^{\kappa_{j,\gamma}}_{s_{j,\beta},p}(J;L^{r_{j,\beta}}(\mathscr{O};\mathcal{B}(X)))$ with $\kappa_{j,\gamma} > \frac{1+\mu}{q} > \frac{1}{s_{j,\beta}}$. By \cite[Proposition~7.4]{Meyries&Veraar_sharp_embd_power_weights},
    \[
    F^{\kappa_{j,\gamma}}_{s_{j,\beta},p}(J;L^{r_{j,\beta}}(\mathscr{O};\mathcal{B}(X))) \hookrightarrow BUC(J;L^{r_{j,\beta}}(\mathscr{O};\mathcal{B}(X))).
    \]
    Furthermore, it holds that $2n(1-\frac{1+\mu}{q}) > n_{j} + \frac{1+\gamma}{q}$, so each $\mathrm{tr}_{\partial\mathscr{O}}D^{\beta}$, $|\beta| \leq n_{j}$, is a continuous linear operator from $\I^{p,q}_{\gamma,\mu}$ to
    $B^{2n(1-\frac{1+\mu}{q})-n_{j}-\frac{1+\gamma}{p}}_{p,q}(\partial\mathscr{O};X) \hookrightarrow L^{p}(\partial\mathscr{O};X)$ by the trace theory from Section~\ref{PIBVP:subsec:sec:traces;isotropic}.
    Therefore,
    $\mathcal{B}^{t=0}_{j}(D) = \sum_{|\beta| \leq n_{j}}b_{j,\beta}(0,\,\cdot\,)\mathrm{tr}_{\partial\mathscr{O}}D^{\beta}$ makes sense as a continuous linear operator from $\I^{p,q}_{\gamma,\mu}$ to $L^{0}(\partial\mathscr{O};X)$.
\item[(ii)] Suppose $\kappa_{j,\gamma} > \frac{1+\mu}{q}$. The observation that
\[
\G^{p,q}_{\gamma,\mu,j}  \hookrightarrow
F^{\kappa_{j,\gamma}}_{q,p}(J,v_{\mu};L^{p}(\partial\mathscr{O};X))
\]
 in combination with the trace theory from Section~\ref{PIBVP:subsec:sec:traces;isotropic} yields that $\mathrm{tr}_{t=0}$ is a well-defined continuous linear operator from $\G^{p,q}_{\gamma,\mu,j}$ to $L^{p}(\partial\mathscr{O};X) \hookrightarrow L^{0}(\partial\mathscr{O};X)$.
\end{itemize}
\end{remark}

Notice the dependence of the space of initial-boundary data on the weight parameters $\mu$~and~$\gamma$.  For fixed $q,p \in  (1,\infty)$ we can roughly speaking decrease the required smoothness (or regularity) of $g$ and $u_{0}$ by increasing $\gamma$ and $\mu$, respectively. Furthermore, compatibility conditions can be avoided by choosing $\mu$ and $\gamma$ big enough. So the weights make it possible to solve \eqref{PIBVP:eq:rigorous_problem} for more initial-boundary data (compared to the unweighed setting). On the other hand, by choosing $\mu$ and $\gamma$ closer to $-1$ (depending on the initial-boundary data) we can find more information about the behavior of $u$ near the initial-time and near the boundary, respectively.

The dependence on the weight parameters $\mu$ and $\gamma$ is illustrated in the following example of the heat equation with Dirichlet and Neumann boundary conditions:
\begin{ex}\label{PIBVP:ex:thm:main_result;example}
Let $N \in \N$ and let $p,q,\gamma,\mu$ be as above.
\begin{itemize}
\item[(i)] \emph{The heat equation with Dirichlet boundary condition:}

If $2-\frac{2}{q}(1+\mu) \neq \frac{1}{p}(1+\gamma)$, then the problem
\[
\begin{array}{rll}
\partial_{t}u -\Delta u &= f,  \\
\mathrm{tr}_{\partial\mathscr{O}}u &= g,  \\
u(0) &= u_{0},
\end{array}
\]
has a unique solution $u \in W^{1}_{q}(J,v_{\mu};L^{p}(\mathscr{O},w^{\partial\mathscr{O}}_{\gamma};\C^{N})) \cap L^{q}(J,v_{\mu};W^{2}_{p}(\mathscr{O},w^{\partial\mathscr{O}}_{\gamma};\C^{N}))$ if and only the data $(f,g,u_{0})$ satisfy:
\begin{itemize}
\item[$\bullet$] $f \in L^{q}(J,v_{\mu};L^{p}(\mathscr{O},w^{\partial\mathscr{O}}_{\gamma};\C^{N}))$;
\item[$\bullet$] $g \in F^{1-\frac{1}{2p}(1+\gamma)}_{q,p}(J,v_{\mu};L^{p}(\partial\mathscr{O};\C^{N})) \cap L^{q}(J,v_{\mu};F^{2-\frac{1}{p}(1+\gamma)}_{p,p}(\partial\mathscr{O};\C^{N}))$;
\item[$\bullet$] $u_{0} \in B^{2-\frac{2}{q}(1+\mu)}_{p,q}(\mathscr{O},w^{\partial\mathscr{O}}_{\gamma};\C^{N})$;
\item[$\bullet$] $\mathrm{tr}_{t=0}g = \mathrm{tr}_{\partial\mathscr{O}}u_{0}$ when $2-\frac{2}{q}(1+\mu) > \frac{1}{p}(1+\gamma)$.
\end{itemize}

\item[(ii)] \emph{The heat equation with Neumann boundary condition:}

If $1-\frac{2}{q}(1+\mu) \neq \frac{1}{p}(1+\gamma)$, then the problem
\[
\begin{array}{rll}
\partial_{t}u  -\Delta u &= f,  \\
\partial_{\nu}u &= g,  \\
u(0) &= u_{0},
\end{array}
\]
has a unique solution $u \in W^{1}_{q}(J,v_{\mu};L^{p}(\mathscr{O},w^{\partial\mathscr{O}}_{\gamma};\C^{N})) \cap L^{q}(J,v_{\mu};W^{2}_{p}(\mathscr{O},w^{\partial\mathscr{O}}_{\gamma};\C^{N}))$ if and only the data $(f,g,u_{0})$ satisfy:
\begin{itemize}
\item[$\bullet$] $f \in L^{q}(J,v_{\mu};L^{p}(\mathscr{O},w^{\partial\mathscr{O}}_{\gamma};\C^{N}))$;
\item[$\bullet$] $g \in F^{\frac{1}{2}-\frac{1}{2p}(1+\gamma)}_{q,p}(J,v_{\mu};L^{p}(\partial\mathscr{O};\C^{N})) \cap L^{q}(J,v_{\mu};F^{1-\frac{1}{p}(1+\gamma)}_{p,p}(\partial\mathscr{O};\C^{N}))$;
\item[$\bullet$] $u_{0} \in B^{2-\frac{2}{q}(1+\mu)}_{p,q}(\mathscr{O},w^{\partial\mathscr{O}}_{\gamma};\C^{N})$;
\item[$\bullet$] $\mathrm{tr}_{t=0}g = \mathrm{tr}_{\partial\mathscr{O}}u_{0}$ when $1-\frac{2}{q}(1+\mu) > \frac{1}{p}(1+\gamma)$.
\end{itemize}
\end{itemize}
\end{ex}

\section{Trace Theory}\label{PIBVP:sec:traces}

In this section we establish the necessary trace theory for the maximal $L^{q}_{\mu}$-$L^{p}_{\gamma}$-regularity problem for \eqref{PIBVP:eq:rigorous_problem}.

\subsection{Traces of Isotropic Spaces}\label{PIBVP:subsec:sec:traces;isotropic}

In this subsection we state trace results for the isotropic spaces, for which we refer to \cite{LMV_interpolation_boundary_cond} (also see the references there).
Note that these are of course special cases of the more general anisotropic mixed-norm spaces, for which trace theory (for the model problem case of a half-space) can be found in the next subsections and in \cite{Lindemulder_master-thesis}.

The following notation will be convenient:
\[
\partial B^{s}_{p,q,\gamma}(\partial\mathscr{O};X) := B^{s-\frac{1+\gamma}{p}}_{p,q}(\partial\mathscr{O};X)
\quad\quad \mbox{and} \quad\quad
\partial F^{s}_{p,q,\gamma}(\partial\mathscr{O};X) := F^{s-\frac{1+\gamma}{p}}_{p,p}(\partial\mathscr{O};X).
\]

\begin{prop}\label{PIBVP:prop:trace_isotropic_B&F}
Let $X$ be a Banach space, $\mathscr{O} \subset \R^{d}$ either $\R^{d}_{+}$ or a $C^{\infty}$-domain in $\R^{d}$ with a compact boundary $\partial\mathscr{O}$, $\mathscr{A} \in \{B,F\}$, $p \in [1,\infty)$, $q \in [1,\infty]$, $\gamma \in (-1,\infty)$ and $s>\frac{1+\gamma}{p}$.
Then
\[
\mathcal{S}(\R^{d};X) \longra \mathcal{S}(\partial\mathscr{O};X),\,
f \mapsto f_{|\partial\mathscr{O}},
\]
uniquely extends to a retraction $\mathrm{tr}_{\partial\mathscr{O}}$ from $\mathscr{A}^{s}_{p,q}(\R^{d},w^{\partial\mathscr{O}}_{\gamma};X)$ onto $\partial \mathscr{A}^{s}_{p,q,\gamma}(\partial\mathscr{O};X)$.
There is a universal coretraction in the sense that there exists an operator $\mathrm{ext}_{\partial\mathscr{O}} \in \mathcal{L}(\mathcal{S}'(\partial\mathscr{O};X),\mathcal{S}'(\R^{d};X))$ (independent of $\mathscr{A},p,q,\gamma,s$) which restricts to a coretraction for the operator $\mathrm{tr}_{\partial\mathscr{O}} \in \mathcal{B}(\mathscr{A}^{s}_{p,q}(\R^{d},w^{\partial\mathscr{O}}_{\gamma};X),\partial \mathscr{A}^{s}_{p,q,\gamma}(\partial\mathscr{O};X))$.
The same statements hold true with $\R^{d}$ replaced by $\mathscr{O}$.
\end{prop}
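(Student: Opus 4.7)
The plan is to first establish the result in the model case $\mathscr{O}=\R^{d}_{+}$, where $w^{\partial\mathscr{O}}_{\gamma}(x)=x_{d}^{\gamma}$, and then transfer to a general $C^{\infty}$-domain with compact boundary by the standard localization and coordinate-straightening argument.

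For the half-space, the analytic heart of the matter is a weighted Nikolskii-Plancherel-Polya type inequality for band-limited distributions: if $f\in\mathcal{S}'(\R^{d};X)$ with $\supp\hat{f}\subset\{|\xi|\le 2^{n}\}$, then
\[
\norm{f(\,\cdot\,,0)}_{L^{p}(\R^{d-1};X)}^{p}\;\lesssim\;2^{n(1+\gamma)}\,\norm{f}_{L^{p}(\R^{d},w_{\gamma};X)}^{p}.
\]
I would derive this by writing $f(\,\cdot\,,0)=\psi*f(\,\cdot\,,0)$ for a fixed Schwartz $\psi$ whose Fourier transform equals $1$ on the relevant spectral ball, then using the Peetre maximal function together with the $A_{p}$-property $w_{\gamma}\in A_{p}(\R)$ from \eqref{DBVP:eq:sec:prelim:power_weight_Ap} (applied in the $x_{d}$-variable, uniformly in $x'$) to trade the weight for the factor $2^{n(1+\gamma)}$. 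Feeding this into the Littlewood-Paley decomposition $f=\sum_{n}S_{n}^{\varphi}f$ and setting $\sigma:=s-(1+\gamma)/p>0$ yields, for the Besov scale,
\[
\norm{\mathrm{tr}_{\R^{d-1}}f}_{B^{\sigma}_{p,q}(\R^{d-1};X)}
=\bignorm{(2^{n\sigma}S_{n}^{\varphi}f(\,\cdot\,,0))_{n}}_{\ell^{q}(L^{p})}
\lesssim\bignorm{(2^{ns}S_{n}^{\varphi}f)_{n}}_{\ell^{q}(L^{p}(w_{\gamma}))}
=\norm{f}_{B^{s}_{p,q}(\R^{d},w_{\gamma};X)},
\]
where the convergence of the series defining $\mathrm{tr}f$ is ensured by $\sigma>0$. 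For the Triebel-Lizorkin scale one swaps the order of $\ell^{q}$ and $L^{p}$ and applies Fefferman-Stein maximal inequalities in weighted mixed-norm form; the collapse of the microscopic parameter to $p$ in the target space $F^{\sigma}_{p,p}(\partial\R^{d}_{+};X)$ stems precisely from the fact that, after performing the trace and absorbing the factor $2^{n(1+\gamma)/p}$, the remaining $x_{d}$-integration contributes an $\ell^{p}$-sum rather than an $\ell^{q}$-sum.

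For the universal coretraction, I would use a Rychkov-style construction: fix $\phi_{0}\in\mathcal{S}(\R^{d-1})$ with $\hat{\phi}_{0}\equiv 1$ near the origin and of compact support, set $\phi_{n}(x'):=2^{(n-1)(d-1)}\phi_{0}(2^{n-1}x')-2^{n(d-1)}\phi_{0}(2^{n}x')$ for $n\ge 1$, and pick a smooth $\eta\in C^{\infty}_{c}(\R)$ with $\eta(0)=1$; then define
\[
(\mathrm{ext}_{\partial\R^{d}_{+}}g)(x',x_{d}):=\sum_{n\ge 0}\eta(2^{n}x_{d})\,(\phi_{n}*'g)(x'),\qquad g\in\mathcal{S}'(\R^{d-1};X).
\]
This operator is defined independently of $\mathscr{A},p,q,\gamma,s$, and a direct Littlewood-Paley computation shows it lands in $\mathscr{A}^{s}_{p,q}(\R^{d},w_{\gamma};X)$ with the correct norm bound whenever $s>(1+\gamma)/p$; moreover $\mathrm{tr}_{\R^{d-1}}\circ\,\mathrm{ext}_{\partial\R^{d}_{+}}=\mathrm{id}$ by the choice $\eta(0)=1$ and a telescoping argument for the $\phi_{n}$'s.

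To pass to a general $C^{\infty}$-domain with compact boundary, I would use a finite atlas $(U_{i},\Phi_{i})$ of boundary charts straightening $\partial\mathscr{O}$ into $\partial\R^{d}_{+}$ together with a subordinate partition of unity $(\chi_{i})$ and one interior patch. Pullback by $\Phi_{i}$ is a homeomorphism on the relevant $B$- and $F$-spaces (a consequence of $(\mathpzc{d},\vec{a})$-admissibility and the fact that $\mathrm{dist}(\,\cdot\,,\partial\mathscr{O})\circ\Phi_{i}^{-1}\simeq x_{d}$ in each chart, preserving the $A_{p}$-property of the weight), and pointwise multiplication by $\chi_{i}$ preserves the spaces by a standard paraproduct argument; so the half-space result yields local traces/extensions, which can be patched using the same partition of unity to produce a global retraction/coretraction. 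The main technical obstacle I anticipate is the Triebel-Lizorkin case: paramultiplication by a smooth cutoff on weighted $F$-spaces is classical but requires care because the microscopic parameter $q$ is unrelated to the integrability $p$, so the multiplication estimates must be checked via the Rychkov local means characterization rather than the usual Fourier-support decomposition — this is also what ensures the \emph{single} operator $\mathrm{ext}_{\partial\mathscr{O}}$ on $\mathcal{S}'(\partial\mathscr{O};X)$ works simultaneously for all four parameters $\mathscr{A},p,q,s$ and all admissible $\gamma$.
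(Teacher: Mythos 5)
Your overall route is the one the paper itself has in mind: Proposition~\ref{PIBVP:prop:trace_isotropic_B&F} is only quoted there (with reference to \cite{LMV_interpolation_boundary_cond}), and the half-space case is exactly the isotropic specialization of Theorem~\ref{PIBVP:thm:trace_TL} and Proposition~\ref{PIBVP:thm:trace_Besov}, whose proofs use precisely your Littlewood--Paley working definition of the trace, the dyadic-slab integration in $x_{d}$, and Peetre--Fefferman--Stein maximal functions. The genuine gap is in your coretraction. You take $\eta\in C^{\infty}_{c}(\R)$ with $\eta(0)=1$; such an $\eta$ cannot be band-limited, so the summands $\eta(2^{n}x_{d})\,(\phi_{n}\ast' g)(x')$ have Fourier support $\R\times\{|\xi'|\eqsim 2^{n}\}$, i.e.\ no localization whatsoever in the $\xi_{d}$-variable. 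Consequently they lie neither in dyadic coronas nor in balls of radius $\eqsim 2^{n}$, and the series lemmas on which any ``direct Littlewood--Paley computation'' must rest (Lemmas~\ref{PIBVP:lemma:appendix:prop;trace_TL_conv_series}, \ref{PIBVP:lemma:appendix:conv_series_dyadic_corona;TL} and \ref{PIBVP:lemma:appendix:conv_series_dyadic_corona;Besov}, or their isotropic analogues) do not apply; the boundedness of your $\mathrm{ext}$ into $\mathscr{A}^{s}_{p,q}(\R^{d},w_{\gamma};X)$ does not follow as stated. The paper's choice is the opposite one: $\rho\in\mathcal{S}(\R)$ with $\rho(0)=1$ and $\supp\hat{\rho}\subset[1,2]$ (Lemma~\ref{PIBVP:prop:right_inverse_distr_trace}), which puts every summand in a dyadic corona and makes both the $\mathcal{S}'$-convergence and the weighted norm estimates routine. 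Your construction can be repaired either by adopting that choice, or by additionally decomposing $\eta$ into frequency blocks in the normal variable and exploiting their rapid decay, but that is an extra argument you would have to supply. (Two smaller slips in the same part: as written your $\phi_{n}$ is the negative of the usual annular piece, so $\sum_{n}\phi_{n}\ast' g\neq g$ and $\mathrm{tr}\circ\mathrm{ext}\neq\mathrm{id}$; and the first identity in your Besov chain should be an inequality obtained from a ball-support series lemma, valid because $\sigma>0$, since the $(S_{n}f)(\cdot,0)$ are not Littlewood--Paley blocks of the trace.)

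A second point concerns the parameter range. The proposition allows $\gamma\in(-1,\infty)$ and $p\in[1,\infty)$, and for $\gamma\geq p-1$ (in particular for $p=1$, $\gamma>0$) one has $w_{\gamma}\notin A_{p}(\R)$, so the appeal to \eqref{DBVP:eq:sec:prelim:power_weight_Ap} is not available; what the trace estimate actually needs, and what the paper uses in Theorem~\ref{PIBVP:thm:trace_TL}, is $w_{\gamma}\in A_{p/r}(\R)$ for some sufficiently small $r\in(0,1)$, combined with the Peetre maximal function of parameter $r$ and the weighted maximal inequality of Lemma~\ref{PIBVP:lemma:appendix:part_HL-max-op_weighted_mixed_norm_space}/Lemma~\ref{PBIBV:lemma:appendix:Peetre-Fefferman-Stein_maximal_ineq}; with that replacement your Nikolskii-type inequality and both scales go through. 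Also note that for $q=\infty$ the Schwartz class is not dense, so ``uniquely extends'' needs the extra observation of the remark following the proposition (embed into $B^{s-\epsilon}_{p,1}$ and use the $q<\infty$ case). Finally, the localization to a general $C^{\infty}$-domain is not carried out in this paper at all (it is deferred to \cite{LMV_interpolation_boundary_cond}); your patching sketch is the standard route, but the diffeomorphism invariance and smooth-multiplier bounds for the weighted $B$- and $F$-scales over the full range $\gamma\in(-1,\infty)$, $p\in[1,\infty)$, $q\in[1,\infty]$ are nontrivial ingredients you are assuming rather than proving.
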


\begin{remark}\label{PIBVP:rmk:prop:trace_isotropic_B&F}
Recall that $\mathcal{S}(\R^{d};X)$ is dense in $\mathscr{A}^{s}_{p,q}(\R^{d},w^{\partial\mathscr{O}}_{\gamma};X)$ for $q<\infty$ but not for $q=\infty$. For $q=\infty$ uniqueness of the extension follows from the trivial embedding $\mathscr{A}^{s}_{p,\infty}(\R^{d},w^{\partial\mathscr{O}}_{\gamma};X) \hookrightarrow B^{s-\epsilon}_{p,1}(\R^{d},w^{\partial\mathscr{O}}_{\gamma};X)$, $\epsilon > 0$.
\end{remark}

\begin{cor}\label{PIBVP:cor:prop:trace_isotropic_B&F;W&H}
Let $X$ be a Banach space, $\mathscr{O} \subset \R^{d}$ either $\R^{d}_{+}$ or a $C^{\infty}$-domain in $\R^{d}$ with a compact boundary $\partial\mathscr{O}$, $p \in (1,\infty)$, $\gamma \in (-1,p-1)$, $n \in \N_{>0}$ and $s > \frac{1+\gamma}{p}$. Then \[
\mathcal{S}(\R^{d};X) \longra \mathcal{S}(\partial\mathscr{O};X),\,
f \mapsto f_{|\partial\mathscr{O}},
\]
uniquely extends to retractions $\mathrm{tr}_{\partial\mathscr{O}}$ from $W^{n}_{p}(\R^{d},w^{\partial\mathscr{O}}_{\gamma};X)$ onto $F^{n-\frac{1+\gamma}{p}}_{p,p}(\partial\mathscr{O};X)$
and from $W^{s}_{p}(\R^{d},w^{\partial\mathscr{O}}_{\gamma};X)$ onto $F^{s-\frac{1+\gamma}{p}}_{p,p}(\partial\mathscr{O};X)$.
The same statement holds true with $\R^{d}$ replaced by $\mathscr{O}$.
\end{cor}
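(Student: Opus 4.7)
The plan is to bootstrap the Besov--Triebel--Lizorkin trace result of Proposition~\ref{PIBVP:prop:trace_isotropic_B&F} to the Sobolev and Sobolev--Slobodeckii scales by sandwiching them between suitable Triebel--Lizorkin spaces via the embedding \eqref{PIBVP:eq:elem_embedding_FEF}.

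For the Sobolev--Slobodeckii assertion, the case of non-integer $s$ is immediate from the convention $W^{s}_{p} = B^{s}_{p,p}$ recalled in the introduction, together with the coincidence $B^{s-\frac{1+\gamma}{p}}_{p,p} = F^{s-\frac{1+\gamma}{p}}_{p,p}$: the statement is then nothing more than Proposition~\ref{PIBVP:prop:trace_isotropic_B&F} with $\mathscr{A} = B$ and $q = p$. For integer $s = n$ the assertion reduces to the integer Sobolev statement handled next.

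For the integer case $W^{n}_{p}$, I would specialise \eqref{PIBVP:eq:elem_embedding_FEF} to $l = 1$, $\vec{a} = 1$, $\vec{n} = n$ to obtain the sandwich
\[
F^{n}_{p,1}(\R^{d},w^{\partial\mathscr{O}}_{\gamma};X) \hookrightarrow W^{n}_{p}(\R^{d},w^{\partial\mathscr{O}}_{\gamma};X) \hookrightarrow F^{n}_{p,\infty}(\R^{d},w^{\partial\mathscr{O}}_{\gamma};X).
\]
Composing the right inclusion with the trace operator on $F^{n}_{p,\infty}(\R^{d},w^{\partial\mathscr{O}}_{\gamma};X)$ supplied by Proposition~\ref{PIBVP:prop:trace_isotropic_B&F} yields a bounded extension $\mathrm{tr}_{\partial\mathscr{O}}: W^{n}_{p}(\R^{d},w^{\partial\mathscr{O}}_{\gamma};X) \to F^{n-\frac{1+\gamma}{p}}_{p,p}(\partial\mathscr{O};X)$ of the restriction map $f \mapsto f_{|\partial\mathscr{O}}$; here we use crucially that the target space in Proposition~\ref{PIBVP:prop:trace_isotropic_B&F} is insensitive to the micro-parameter $q$. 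Uniqueness of the extension follows either from density of $\mathcal{S}(\R^{d};X)$ in the weighted $W^{n}_{p}$-space or, alternatively, from Remark~\ref{PIBVP:rmk:prop:trace_isotropic_B&F} via the chain $W^{n}_{p} \hookrightarrow F^{n}_{p,\infty} \hookrightarrow B^{n-\epsilon}_{p,1}$ coming from \eqref{PIBVP:prelim:eq:elem_embd_epsilon}.

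The retraction property is witnessed by the \emph{same} universal coretraction $\mathrm{ext}_{\partial\mathscr{O}}$ from Proposition~\ref{PIBVP:prop:trace_isotropic_B&F}: applied to an element of $F^{n-\frac{1+\gamma}{p}}_{p,p}(\partial\mathscr{O};X) = \partial F^{n}_{p,1,\gamma}(\partial\mathscr{O};X)$, that proposition guarantees $\mathrm{ext}_{\partial\mathscr{O}}$ lands boundedly inside $F^{n}_{p,1}(\R^{d},w^{\partial\mathscr{O}}_{\gamma};X)$ and is a right inverse of the $F^{n}_{p,1}$-trace; the left sandwich inclusion then upgrades it to a bounded right inverse of $\mathrm{tr}_{\partial\mathscr{O}}$ on $W^{n}_{p}$, the right-inverse identity persisting because all the traces involved are consistent on $\mathcal{S}(\R^{d};X)$. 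The case with $\R^{d}$ replaced by $\mathscr{O}$ is handled verbatim via the $\mathscr{O}$-version of Proposition~\ref{PIBVP:prop:trace_isotropic_B&F} and the corresponding sandwich. I see no substantial obstacle; the only delicate point is the observation that the universal extension actually hits the smaller endpoint space $F^{n}_{p,1}$, which is precisely what the universality clause of Proposition~\ref{PIBVP:prop:trace_isotropic_B&F} provides.
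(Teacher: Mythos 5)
Your argument is correct and is precisely the mechanism the paper intends: the corollary is deduced from Proposition~\ref{PIBVP:prop:trace_isotropic_B&F} via the sandwich \eqref{PIBVP:eq:elem_embedding_FEF} and the $q$-independence of the trace space, exactly as the paper itself spells out in the discussion preceding Corollary~\ref{PIBVP:cor:thm:trace_TL}, with the non-integer case being the Besov case $\mathscr{A}=B$, $q=p$. No gaps; the use of the universal coretraction landing in $F^{n}_{p,1}$ and the density/consistency argument for uniqueness match the paper's reasoning.
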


\subsection{Traces of Intersection Spaces}

For the maximal $L^{q}_{\mu}$-$L^{p}_{\gamma}$-regularity problem for \eqref{PIBVP:eq:rigorous_problem} we need to determine the temporal and spatial trace spaces of Sobolev and Bessel potential spaces of intersection type.
As the temporal trace spaces can be obtained from the trace results in  \cite{Meyries&Veraar_Traces}, we will focus on the spatial traces.

By the trace theory of the previous subsection, the trace operator $\mathrm{tr}_{\partial\mathscr{O}}$ can be defined pointwise in time on the intersection spaces in the following theorem.
It will be convenient to use the notation $\mathrm{tr}_{\partial\mathscr{O}}[\E]=\F$ to say that $\mathrm{tr}_{\partial\mathscr{O}}$ is a retraction from $\E$ onto $\F$.

\begin{thm}\label{PIBVP:thm:traces_main_result}
Let $\mathscr{O}$ be either $\R^{d}_{+}$ or a $C^{\infty}$-domain in $\R^{d}$ with a compact boundary $\partial\mathscr{O}$.
Let $X$ be a Banach space, $Y$ a UMD Banach space, $p,q \in (1,\infty)$, $\mu \in (-1,q-1)$ and $\gamma \in (-1,p-1)$.
If $n,m \in \Z_{>0}$ and $r,s \in (0,\infty)$ with $s > \frac{1+\gamma}{p}$, then
\begin{equation}\label{PIBVP:eq:traces_int_space;Sobolev}
\begin{split}
\mathrm{tr}_{\partial\mathscr{O}}\left[ W^{n}_{q}(J,v_{\mu};L^{p}(\mathscr{O},w^{\partial\mathscr{O}}_{\gamma};X)) \cap L^{q}(J,v_{\mu};W^{m}_{p}(\mathscr{O},w^{\partial\mathscr{O}}_{\gamma};X)) \right] = \\
\quad\quad\quad\quad\quad\quad F^{n-\frac{n}{m}\frac{1+\gamma}{p}}_{q,p}(J,v_{\mu};L^{p}(\partial\mathscr{O};X)) \cap L^{q}(J,v_{\mu};F^{m-\frac{1+\gamma}{p}}_{p,p}(\partial\mathscr{O};X))
\end{split}
\end{equation}
and
\begin{equation}\label{PIBVP:eq:traces_int_space;Bessel_potential}
\begin{split}
\mathrm{tr}_{\partial\mathscr{O}}\left[ H^{r}_{q}(J,v_{\mu};L^{p}(\mathscr{O},w^{\partial\mathscr{O}}_{\gamma};Y)) \cap L^{q}(J,v_{\mu};H^{s}_{p}(\mathscr{O},w^{\partial\mathscr{O}}_{\gamma};Y)) \right] = \\
\quad\quad\quad\quad\quad\quad F^{r-\frac{r}{s}\frac{1+\gamma}{p}}_{q,p}(J,v_{\mu};L^{p}(\partial\mathscr{O};Y)) \cap L^{q}(J,v_{\mu};F^{s-\frac{1+\gamma}{p}}_{p,p}(\partial\mathscr{O};Y)).
\end{split}
\end{equation}
\end{thm}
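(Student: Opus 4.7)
The plan is to reduce both identities to the model case $\mathscr{O}=\R^d_+$ by localization, sandwich the intersection spaces on the left-hand sides between two anisotropic mixed-norm Triebel-Lizorkin spaces whose traces on $\partial\R^d_+$ can be computed uniformly via a weighted vector-valued version of the Johnsen--Sickel trace theorem, and finally identify the resulting common trace space with the intersection on the right-hand side via Theorem~\ref{functieruimten:thm:aTL_rep_intersection}.

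First I would perform the standard reduction to the half-space: a finite atlas of boundary charts, a subordinate $C^\infty$-partition of unity and boundary-flattening diffeomorphisms render $w^{\partial\mathscr{O}}_\gamma$ comparable to $w_\gamma$ on $\R^d_+$, and all norms, the trace $\mathrm{tr}_{\partial\mathscr{O}}$ and any coretraction transform compatibly. Working on $J\times\R^d$ (after extension in $x_d$), view the ambient space as $\mathpzc{d}$-decomposed with $\mathpzc{d}=(d,1)$, Lebesgue exponents $\vec{p}=(p,q)$, weights $\vec{w}=(w_\gamma,v_\mu)$, and an anisotropy $\vec{a}$ scaled so that the orders $m$ in space and $n$ in time (or $s$ and $r$ in the Bessel potential case) correspond to a common anisotropic smoothness $\sigma$. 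The identities \eqref{PIBVP:eq:prelim:identities_Sobolev&Bessel-potential} then re-express the intersection space on the left of \eqref{PIBVP:eq:traces_int_space;Sobolev} as an anisotropic Sobolev space $\mathcal{W}^{(m,n)}_\mathpzc{d}[L^{\vec{p},\mathpzc{d}}]$ and the one on the left of \eqref{PIBVP:eq:traces_int_space;Bessel_potential} as an anisotropic Bessel potential space $\mathcal{H}^{(s,r)}_\mathpzc{d}[L^{\vec{p},\mathpzc{d}}]$. The UMD assumption on $Y$ enters in the Bessel potential case through the Fourier multiplier embedding \eqref{PIBVP:eq:prelim:anisotrope_mixed-norm_fm}, which makes $L^{\vec{p},\mathpzc{d}}$ $(\mathpzc{d},\vec{a})$-admissible and thereby validates the equality $\mathcal{H}^{(s,r)}_\mathpzc{d}=\mathcal{H}^{\sigma,\vec{a}}_\mathpzc{d}$.

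The sandwich \eqref{PIBVP:eq:elem_embedding_FEF} then pinches both anisotropic spaces between $F^{\sigma,\vec{a}}_{\vec{p},1,\mathpzc{d}}$ and $F^{\sigma,\vec{a}}_{\vec{p},\infty,\mathpzc{d}}$, which differ only in the microscopic index. I would then apply a weighted Banach-space-valued Johnsen--Sickel trace theorem for anisotropic mixed-norm Triebel-Lizorkin spaces, to be established in the remaining subsections of Section~\ref{PIBVP:sec:traces} along the lines of \cite{JS_traces,Lindemulder_master-thesis}. Its key feature is that the trace onto $\{x_d=0\}$ of $F^{\sigma,\vec{a}}_{\vec{p},q,\mathpzc{d}}$ is an anisotropic Triebel-Lizorkin space on $J\times\R^{d-1}$ whose microscopic index is forced to equal $p$ (the Lebesgue exponent in the traced-out coordinate), \emph{independently} of the original microscopic index $q$. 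Consequently both ends of the sandwich carry the same trace space, which by pinching must equal the trace of each of the intersections in question, and a universal coretraction is inherited from the trace theorem.

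Finally, tracing out the $x_d$-direction with weight $w_\gamma$ lowers the anisotropic smoothness by $a_{\mathrm{sp}}(1+\gamma)/p$ (with $a_{\mathrm{sp}}$ the anisotropic scale attached to the spatial block); dividing the resulting combined smoothness by the anisotropic scales attached to space and time produces precisely the exponents $m-\frac{1+\gamma}{p}$ in space and $n-\frac{n}{m}\frac{1+\gamma}{p}$ in time appearing on the right of \eqref{PIBVP:eq:traces_int_space;Sobolev} and \eqref{PIBVP:eq:traces_int_space;Bessel_potential}. One final application of Theorem~\ref{functieruimten:thm:aTL_rep_intersection} on $J\times\R^{d-1}$ converts the lower-dimensional anisotropic Triebel-Lizorkin space into the target intersection. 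I expect the main obstacle to lie in the third step: establishing the weighted vector-valued Johnsen--Sickel trace theorem with a power weight acting precisely on the traced-out coordinate, which demands constructing a universal extension operator and proving uniform anisotropic Littlewood--Paley estimates compatible with both Muckenhoupt weights and UMD-valued targets.
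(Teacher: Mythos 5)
Your proposal follows essentially the same route as the paper's proof: localize to the half-space and pass to the full space, rewrite the intersections as weighted anisotropic mixed-norm Sobolev/Bessel potential spaces, pinch them between $F^{\sigma,\vec{a}}_{\vec{p},1,\mathpzc{d}}$ and $F^{\sigma,\vec{a}}_{\vec{p},\infty,\mathpzc{d}}$ via \eqref{PIBVP:eq:elem_embedding_FEF}, invoke the weighted vector-valued Johnsen--Sickel trace theorem (Theorem~\ref{PIBVP:thm:trace_TL} through Corollary~\ref{PIBVP:cor:thm:trace_TL}), whose trace space is independent of the microscopic parameter, and conclude with Theorem~\ref{functieruimten:thm:aTL_rep_intersection}, with the same smoothness bookkeeping $1-\frac{1}{m}\frac{1+\gamma}{p}$ rescaling to $m-\frac{1+\gamma}{p}$ and $n-\frac{n}{m}\frac{1+\gamma}{p}$. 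The only cosmetic difference is where the UMD assumption on $Y$ is attributed: the paper uses it for the localization of Bessel potential spaces (omittable when $\mathscr{O}=\R^{d}_{+}$), whereas you place it in the admissibility/Fourier-multiplier step; either way the hypothesis covers it and the argument is correct.
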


The main idea behind the proof of Theorem~\ref{PIBVP:thm:traces_main_result} is, as in \cite{Scharf&Schmeisser&Sickel_Traces_vector-valued_Sobolev}, to exploit the independence of the trace space of a Triebel-Lizorkin space on its microscopic parameter.
As in \cite{Scharf&Schmeisser&Sickel_Traces_vector-valued_Sobolev}, our approach does not require any restrictions on the Banach space $X$.

The UMD restriction on $Y$ comes from the localization procedure for Bessel potential spaces used in the proof, which can be omitted in the case $\mathscr{O} = \R^{d}_{+}$. This localization procedure for Bessel potential spaces could be replaced by a localization procedure for weighted anisotropic mixed-norm Triebel-Lizorkin spaces, which would not require any restrictions on the Banach space $Y$. However, we have chosen to avoid this as localization of such Triebel-Lizorkin spaces has not been considered in the literature before while we do not need that generality anyway. For localization in the scalar-valued isotropic non-mixed norm case we refer to \cite{LMV_interpolation_boundary_cond}.

\begin{proof}[Proof of Theorem~\ref{PIBVP:thm:traces_main_result}]
By standard techniques of localization, it suffices to consider the case $\mathscr{O} = \R^{d}_{+}$ with boundary $\partial\mathscr{O} = \R^{d-1}$.
Moreover, using a standard restriction argument, we may turn to the corresponding trace problem on the full space $\mathscr{O} \times J = \R^{d} \times \R$.

From the natural identifications
\[
W^{n}_{q,\mu}(L^{p}_{\gamma}) \cap L^{q}_{\mu}(W^{m}_{p,\gamma}) = W^{(m,n)}_{(p,q),(d,1)}(\R^{d+1},(w_{\gamma},v_{\mu});X)
\]
and
\[
H^{r}_{q,\mu}(L^{p}_{\gamma}) \cap L^{q}_{\mu}(H^{s}_{p,\gamma}) = H^{(s,r)}_{(p,q),(d,1)}(\R^{d+1},(w_{\gamma},v_{\mu});Y),
\]
\eqref{PIBVP:eq:elem_embedding_FEF} and Corollary~\ref{PIBVP:cor:thm:trace_TL} it follows that
\[
\mathrm{tr}\,[W^{n}_{q,\mu}(L^{p}_{\gamma}) \cap L^{q}_{\mu}(W^{m}_{p,\gamma})] = F^{1-\frac{1}{m}\frac{1+\gamma}{p},(\frac{1}{m},\frac{1}{n})}_{(p,q),p,(d-1,1)}(\R^{d},(1,v_{\mu});X)
\]
and
\[
\mathrm{tr}\,[H^{r}_{q,\mu}(L^{p}_{\gamma}) \cap L^{q}_{\mu}(H^{s}_{p,\gamma}) ] = F^{1-\frac{1}{s}\frac{1+\gamma}{p},(\frac{1}{s},\frac{1}{r})}_{(p,q),p,(d-1,1)}(\R^{d},(1,v_{\mu});Y).
\]
An application of Theorem~\ref{functieruimten:thm:aTL_rep_intersection} finishes the proof.
\end{proof}

\subsection{Traces of Anisotropic Mixed-Norm Spaces}\label{PIBVP:subsec:traces_anisotropic}
The goal of this subsection is to prove the trace result Theorem~\ref{PIBVP:thm:trace_TL},
which is a weighted vector-valued version of \cite[Theorem~2.2]{JS_traces}.

In contrast to Theorem~\ref{PIBVP:thm:trace_TL},
the trace result \cite[Theorem~2.2]{JS_traces} is formulated for the distributional trace operator; see Remark~\ref{PIBVP:rmk:thm:trace_TL;distr_trace} for more information.
However, all estimates in the proof of that result are carried out for the 'working definition of the trace'.
The proof of Theorem~\ref{PIBVP:thm:trace_TL} presented below basically consists of modifications of these estimates to our setting.
As this can get quite technical at some points, we have decided to give the proof in full detail.

\subsubsection{The working definition of the trace}
Let $\varphi \in \Phi^{\mathpzc{d},a}(\R^{d})$ with associated family of convolution operators $(S_{n})_{n \in \N} \subset \mathcal{L}(\mathcal{S}'(\R^{d};X))$ be fixed.
In order to motivate the definition to be given in a moment, let us first recall that $f = \sum_{n=0}^{\infty}S_{n}f$ in $\mathcal{S}(\R^{d};X)$ (respectively in $\mathcal{S}'(\R^{d};X)$) whenever $f \in \mathcal{S}(\R^{d};X)$ (respectively $f \in \mathcal{S}'(\R^{d};X)$), from which it is easy to see that
\[
f_{|\{0\} \times \R^{d-1}} = \sum_{n=0}^{\infty}(S_{n}f)_{|\{0\} \times \R^{d-1}} \:\:\mbox{in}\:\:\mathcal{S}(\R^{d-1};X), \quad\quad f \in \mathcal{S}(\R^{d};X).
\]
Furthermore, given a general tempered distribution $f \in \mathcal{S}'(\R^{d};X)$, recall that $S_{n}f \in \mathscr{O}_{M}(\R^{d};X)$; in particular, each $S_{n}f$ has a well defined classical trace with respect to $\{0\}  \times \R^{d-1}$.
This suggests to define the trace operator $\tau = \tau^{\varphi}: \mathcal{D}(\gamma^{\varphi}) \subset \mathcal{S}'(\R^{d};X) \longra \mathcal{S}'(\R^{d-1};X)$ by
\begin{equation}\label{PIBVP:eq:working_def_trace}
\tau^{\varphi}f := \sum_{n=0}^{\infty}(S_{n}f)_{|\{0\} \times \R^{d-1}}
\end{equation}
on the domain $\mathcal{D}(\tau^{\varphi})$ consisting of all $f \in \mathcal{S}'(\R^{d};X)$ for which this defining series
converges in $\mathcal{S}'(\R^{d-1};X)$. Note that $\mathscr{F}^{-1}\mathcal{E}'(\R^{d};X)$ is a subspace of $\mathcal{D}(\tau^{\varphi})$ on which $\tau^{\varphi}$ coincides with the classical trace of continuous functions with respect to $\{0\} \times \R^{d-1}$; of course, for an $f$ belonging to $\mathscr{F}^{-1}\mathcal{E}'(\R^{d};X)$ there are only finitely many $S_{n}f$ non-zero.

\subsubsection{The distributional trace operator}
Let us now introduce the concept of distributional trace operator.
The reason for us to introduce it is the right inverse from Lemma~\ref{PIBVP:prop:right_inverse_distr_trace}.

The distributional trace operator $r$ (with respect to the hyperplane $\{0\} \times \R^{d-1}$) is defined as follows.
Viewing $C(\R;\mathcal{D}'(\R^{d-1};X))$ as subspace of
$\mathcal{D}'(\R^{d};X) = \mathcal{D}'(\R \times \R^{d-1};X)$ via the canonical identification $\mathcal{D}'(\R;\mathcal{D}'(\R^{d-1};X)) = \mathcal{D}'(\R \times \R^{d-1};X)$ (arising from the Schwartz kernel theorem),
\[
C(\R;\mathcal{D}'(\R^{d-1};X)) \hookrightarrow  \mathcal{D}'(\R;\mathcal{D}'(\R^{d-1};X)) = \mathcal{D}'(\R \times \R^{d-1};X),
\]
we define $r \in \mathcal{L}(C(\R;\mathcal{D}'(\R^{d-1};X)),\mathcal{D}'(\R^{d-1};X))$ as the 'evaluation in $0$ map'
\begin{equation*}
r: C(\R;\mathcal{D}'(\R^{d-1};X)) \longra \mathcal{D}'(\R^{d-1};X),\,f \mapsto \mathrm{ev}_{0}f.
\end{equation*}
Then, in view of
\[
C(\R^{d};X) = C(\R \times \R^{d-1};X) = C(\R;C(\R^{d-1};X)) \hookrightarrow C(\R;\mathcal{D}'(\R^{d-1};X)),
\]
we have that the distributional trace operator $r$ coincides on $C(\R^{d};X)$ with the classical trace operator with respect to the hyperplane $\{0\} \times \R^{d-1}$, i.e.,
\begin{equation*}
r: C(\R^{d};X) \longra C(\R^{d-1};X),\,f \mapsto f_{| \{0\} \times \R^{d-1}}.
\end{equation*}

The following lemma can be established as in \cite[Section~4.2.1]{JS_traces}.
\begin{lemma}\label{PIBVP:prop:right_inverse_distr_trace}
Let $\rho \in \mathcal{S}(\R)$ such that $\rho(0) = 1$ and $\supp \hat{\rho} \subset [1,2]$, $a_{1} \in \R$, $\tilde{\mathpzc{d}} \in (\Z_{>0})^{l-1}$ with $\mathpzc{d}=(1,\tilde{\mathpzc{d}})$, $\tilde{\vec{a}} \in (0,\infty)^{l-1}$, and
$(\phi_{n})_{n \in \N} \in \Phi^{\tilde{\mathpzc{d}},\tilde{\vec{a}}}(\R^{d-1})$.
Then, for each $g \in \mathcal{S}'(\R^{d-1};X)$,
\begin{equation}\label{functieruimten:eq:prop;right_inverse_distr_trace_formula}
\mathrm{ext}\,g := \sum_{n=0}^{\infty} \rho(2^{na_{1}}\,\cdot\,) \otimes [\phi_{n}*g]
\end{equation}
defines a convergent series in $\mathcal{S}'(\R^{d};X)$ with
\begin{equation}\label{functieruimten:eq:prop;right_inverse_distr_trace_supports}
\begin{array}{l}
\supp \mathscr{F}[\rho \otimes [\phi_{0}*g]] \subset \{ \xi \mid |\xi|_{\mathpzc{d},a} \leq c \} \\
\supp \mathscr{F}[\rho(2^{na_{1}}\,\cdot\,) \otimes [\phi_{n}*g]] \subset \{ \xi \mid c^{-1}2^{n} \leq |\xi|_{\mathpzc{d},a} \leq c2^{n} \}  \:\:, n \geq 1,
\end{array}
\end{equation}
for some constant $c>0$ independent of $g$.
Moreover, the operator $\mathrm{ext}$ defined via this formula is a linear operator
\[
\mathrm{ext}:\mathcal{S}'(\R^{d-1};X) \longra C_{b}(\R;\mathcal{S}'(\R^{d-1};X))
\]
which acts as a right inverse of $r:C(\R;\mathcal{S}'(\R^{d-1};X)) \longra \mathcal{S}'(\R^{d-1};X)$.
\end{lemma}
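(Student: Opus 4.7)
The plan is to verify in turn: (i) each individual term $\rho(2^{na_{1}}\,\cdot\,) \otimes [\phi_{n}*g]$ is a well-defined $X$-valued tempered distribution; (ii) its Fourier support obeys the bounds in \eqref{functieruimten:eq:prop;right_inverse_distr_trace_supports}; (iii) the series \eqref{functieruimten:eq:prop;right_inverse_distr_trace_formula} converges not merely in $\mathcal{S}'(\R^{d};X)$ but in $C_{b}(\R;\mathcal{S}'(\R^{d-1};X))$; and (iv) the right-inverse identity $r\circ\mathrm{ext}=\mathrm{id}$ holds.

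For (i), since $\phi_{n}\in\mathcal{S}(\R^{d-1})$ and $g\in\mathcal{S}'(\R^{d-1};X)$, standard convolution theory gives $\phi_{n}*g\in\mathscr{O}_{M}(\R^{d-1};X)$, and tensoring with the Schwartz function $\rho(2^{na_{1}}\,\cdot\,)\in\mathcal{S}(\R)$ produces an element of $\mathcal{S}'(\R^{d};X)$. For (ii), the scaling relation $\mathscr{F}[\rho(2^{na_{1}}\,\cdot\,)](\xi_{1})=2^{-na_{1}}\hat{\rho}(2^{-na_{1}}\xi_{1})$ together with $\supp\hat{\rho}\subset[1,2]$ localises the $x_{1}$-factor's spectrum in $|\xi_{1}|\in[2^{na_{1}},2^{na_{1}+1}]$, so $|\xi_{1}|^{1/a_{1}}\asymp 2^{n}$ for $n\geq1$; combined with the annular support $\supp\hat{\phi}_{n}\subset\{2^{n-1}A\leq|\xi'|_{\tilde{\mathpzc{d}},\tilde{\vec{a}}}\leq 2^{n}B\}$ and the equivalence of all $(\mathpzc{d},\vec{a})$-anisotropic distance functions, this yields the claimed anisotropic dyadic localisation. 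The case $n=0$ gives the compact-support estimate directly.

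The heart of the argument is (iii). The key ingredients are: (a) for $g\in\mathcal{S}'(\R^{d-1};X)$ there exist $M,N\in\N$ such that $\|(\phi_{n}*g)(x')\|_{X}\lesssim 2^{nN}(1+|x'|)^{M}$, a standard consequence of the pairing estimates for tempered distributions; and (b) because $\supp\hat{\rho}\subset[1,2]$ implies $\rho$ has all moments vanishing and is Schwartz, $\rho(2^{na_{1}}\,\cdot\,)$ contributes, when tested against any Schwartz function $\psi$ of $x_{1}$, a factor of order $2^{-nK}$ for arbitrarily large $K$. Pairing against $\psi\in\mathcal{S}(\R^{d})$ and choosing $K>N$ then gives absolute convergence of $\sum_{n}\langle\rho(2^{na_{1}}\,\cdot\,)\otimes[\phi_{n}*g],\psi\rangle$, hence convergence in $\mathcal{S}'(\R^{d};X)$. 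For the upgrade to $C_{b}(\R;\mathcal{S}'(\R^{d-1};X))$, I would fix $\psi'\in\mathcal{S}(\R^{d-1})$ and rerun the estimate in the $x'$-variable only, using $|\rho(2^{na_{1}}x_{1})|\lesssim(1+2^{na_{1}}|x_{1}|)^{-K}$ in place of moment cancellation; the partial sums then form a uniformly Cauchy sequence of continuous bounded functions of $x_{1}$, whose limit is therefore a continuous bounded $\mathcal{S}'(\R^{d-1};X)$-valued function.

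Finally (iv) follows from $\rho(0)=1$ together with $\sum_{n}\hat{\phi}_{n}\equiv 1$, which itself is a telescoping calculation based on the defining relation $\hat{\phi}_{n}(\xi)=\hat{\phi}_{0}(\delta^{(\tilde{\mathpzc{d}},\tilde{\vec{a}})}_{2^{-n}}\xi)-\hat{\phi}_{0}(\delta^{(\tilde{\mathpzc{d}},\tilde{\vec{a}})}_{2^{-n+1}}\xi)$ combined with $\hat{\phi}_{0}(0)=1$; evaluating $\mathrm{ext}\,g$ at $x_{1}=0$ then yields $r(\mathrm{ext}\,g)=\sum_{n}\phi_{n}*g=g$ in $\mathcal{S}'(\R^{d-1};X)$. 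The principal technical obstacle is step~(iii): one has to balance the possible $2^{nN}$-growth of the tempered-distribution seminorms of $\phi_{n}*g$ against the rapid decay of $\rho$ in a way that is uniform in $x_{1}\in\R$, which is what distinguishes convergence in $C_{b}(\R;\mathcal{S}'(\R^{d-1};X))$ from the weaker convergence in $\mathcal{S}'(\R^{d};X)$.
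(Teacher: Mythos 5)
Your outline follows the natural direct route (the paper itself gives no details here, deferring to Johnsen--Sickel, Section 4.2.1), and steps (i), (ii), the convergence of the series in $\mathcal{S}'(\R^{d};X)$, and the telescoping identity $\sum_{n}\hat{\phi}_{n}\equiv 1$ behind (iv) are sound (note only that your scaling and moment-cancellation computations implicitly require $a_{1}>0$). The genuine gap lies in the mechanism you propose for the decisive step, the upgrade to $C_{b}(\R;\mathcal{S}'(\R^{d-1};X))$. You want uniform-in-$x_{1}$ geometric decay in $n$ from $|\rho(2^{na_{1}}x_{1})|\lesssim(1+2^{na_{1}}|x_{1}|)^{-K}$, paired with the bound $\norm{(\phi_{n}*g)(x')}_{X}\lesssim 2^{nN}(1+|x'|)^{M}$ from your step (a). This fails exactly where it matters: for $|x_{1}|\lesssim 2^{-na_{1}}$, and in particular at $x_{1}=0$, the very point at which $r$ is evaluated, the factor $(1+2^{na_{1}}|x_{1}|)^{-K}$ is of size one while the $x'$-side bound grows like $2^{nN}$, so no uniformly Cauchy estimate follows. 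Indeed, at $x_{1}=0$ the series is literally $\sum_{n}\phi_{n}*g$, which converges only by telescoping, never by termwise bounds of the kind you invoke.

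The summability must come from the other factor. Fix $\chi\in\mathcal{S}(\R^{d-1})$ and write $\ip{\phi_{n}*g}{\chi}=\ip{g}{\check{\phi}_{n}*\chi}$ with $\check{\phi}_{n}=\phi_{n}(-\,\cdot\,)$. For $n\geq 1$, $\mathscr{F}[\check{\phi}_{n}*\chi]=\hat{\phi}_{n}(-\,\cdot\,)\hat{\chi}$ is supported in the anisotropic annulus $\{c^{-1}2^{n}\leq|\xi'|_{\tilde{\mathpzc{d}},\tilde{\vec{a}}}\leq c2^{n}\}$, on which $\hat{\chi}$ and all its derivatives are rapidly decreasing while the derivatives of $\hat{\phi}_{n}(-\,\cdot\,)$ are bounded uniformly in $n$; hence every Schwartz seminorm of $\check{\phi}_{n}*\chi$ is $O(2^{-nK})$ for every $K$, with constants controlled by finitely many seminorms of $\chi$. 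Consequently $\sup_{x_{1}\in\R}\norm{\rho(2^{na_{1}}x_{1})\ip{\phi_{n}*g}{\chi}}_{X}\leq\norm{\rho}_{\infty}\,C_{K}2^{-nK}$, so the series converges absolutely, uniformly in $x_{1}$ and uniformly over bounded subsets of $\mathcal{S}(\R^{d-1})$; its sum is therefore a bounded continuous $\mathcal{S}'(\R^{d-1};X)$-valued function of $x_{1}$, and termwise evaluation at $x_{1}=0$, which your step (iv) needs, is justified. The same estimate, integrated against $\psi(x_{1},\cdot)$ in $x_{1}$, also reproves the convergence in $\mathcal{S}'(\R^{d};X)$, and it does so for arbitrary $a_{1}\in\R$, since only $\norm{\rho}_{\infty}$ and the spectral localization of $\phi_{n}$ are used, which is consistent with how the lemma is stated.
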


\subsubsection{Trace spaces of Triebel-Lizorkin, Sobolev and Bessel potential spaces}

\begin{thm}\label{PIBVP:thm:trace_TL}
Let $X$ be a Banach space, $\mathpzc{d}_{1}=1$, $\vec{a} \in (0,\infty)^{l}$, $\vec{p} \in [1,\infty)^{l}$, $q \in [1,\infty]$, $\gamma \in (-1,\infty)$ and $s > \frac{a_{1}}{p_{1}}(1+\gamma)$. Let $\vec{w} \in \prod_{j=1}^{l}A_{\infty}(\R^{\mathpzc{d}_{j}})$ be such that
$w_{1}(x_{1}) = w_{\gamma}(x_{1}) = |x_{1}|^{\gamma}$ and $\vec{w}'' \in \prod_{j=2}^{l}A_{p_{j}/r_{j}}(\R^{\mathpzc{d}_{j}})$ for some $\vec{r}''=(r_{2},\ldots,r_{l}) \in (0,1)^{l-1}$ satisfying $s-\frac{a_{1}}{p_{1}}(1+\gamma) > \sum_{j=2}^{l}a_{j}\mathpzc{d}_{j}(\frac{1}{r_{j}}-1)$.\footnote{This technical condition on $\vec{w}''$ is in particular satisfied when $\vec{p}'' \in (1,\infty)^{l-1}$ and $\vec{w}'' \in \prod_{j=2}^{l}A_{p_{j}}(\R^{\mathpzc{d}_{j}})$.}
Then the trace operator $\tau = \tau^{\varphi}$ \eqref{PIBVP:eq:working_def_trace} is well-defined on
$F_{\vec{p},q,\mathpzc{d}}^{s,\vec{a}}(\R^{d},(w_{\gamma},\vec{w}'');X)$, where it is independent of $\varphi$, and
restricts to a retraction
\begin{equation}\label{PIBVP:eq:thm:trace_TL;mapping_prop}
\tau: F_{\vec{p},q,\mathpzc{d}}^{s,\vec{a}}(\R^{d},(w_{\gamma},\vec{w}'');X) \longra
F_{\vec{p}'',p_{1},\mathpzc{d}''}^{s-\frac{a_{1}}{p_{1}}(1+\gamma),\vec{a}''}(\R^{d-1},\vec{w}'';X)
\end{equation}
for which the extension operator $\mathrm{ext}$ from Lemma \ref{PIBVP:prop:right_inverse_distr_trace} (with $\tilde{\mathpzc{d}} = \mathpzc{d}''$ and $\tilde{\vec{a}}= \vec{a}''$) restricts to a corresponding coretraction.
\end{thm}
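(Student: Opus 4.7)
The plan is to follow the approach of Johnsen--Sickel, adapted to the weighted vector-valued mixed-norm setting, splitting the argument into: (i) boundedness of $\tau$, (ii) independence of the choice of $\varphi$, and (iii) $\mathrm{ext}$ as coretraction. Monotonicity in the microscopic parameter, $F^{s,\vec{a}}_{\vec{p},q,\mathpzc{d}}\hookrightarrow F^{s,\vec{a}}_{\vec{p},\infty,\mathpzc{d}}$, reduces (i) to the worst case $q=\infty$; once established, this automatically yields convergence of $\tau f=\sum_n(S_nf)(0,\cdot)$ in $\mathcal{S}'(\R^{d-1};X)$ and the desired norm bound for all $q\in[1,\infty]$.

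For the core estimate, I use the Fourier-support property of the Littlewood--Paley blocks: each $(S_nf)(0,\cdot)$ has $(d-1)$-dimensional Fourier support in an anisotropic ball of radius $\sim 2^n$ with respect to $(\mathpzc{d}'',\vec{a}'')$, and therefore plays the role of an $n$-th Littlewood--Paley block on $\R^{d-1}$. In the normal variable, a weighted Plancherel--Polya / Peetre-maximal estimate (legitimate since $\gamma>-1$ and $S_nf(\,\cdot\,,x'')$ has one-dimensional Fourier support of size $\sim 2^{na_1}$) gives, pointwise in $x''$,
\[
2^{n\sigma p_1}\bigl|(S_nf)(0,x'')\bigr|^{p_1}\,\lesssim\,2^{nsp_1}\!\int_{\R}\bigl|S_nf(y_1,x'')\bigr|^{p_1}|y_1|^{\gamma}\,dy_1,
\qquad \sigma:=s-\tfrac{a_1}{p_1}(1+\gamma),
\]
with implicit constant independent of $n$. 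Summing in $n$, replacing tangential pointwise values by anisotropic Peetre maximal functions to compensate for the absence of an ordering among the $p_j$, pulling the $x_1$-integration outside the outer $L^{\vec{p}''/p_1}$-norm by Tonelli, and finally applying the weighted Fefferman--Stein $\ell^{p_1}$-valued maximal inequality on $\R^{d-1}$, closes the estimate to $\|\tau f\|_{F^{\sigma,\vec{a}''}_{\vec{p}'',p_1,\mathpzc{d}''}(\vec{w}'';X)}\lesssim\|f\|_{F^{s,\vec{a}}_{\vec{p},\infty,\mathpzc{d}}((w_\gamma,\vec{w}'');X)}$. The hypotheses $\vec{w}''\in\prod_{j\ge 2}A_{p_j/r_j}$ and $s-\tfrac{a_1}{p_1}(1+\gamma)>\sum_{j\ge 2}a_j\mathpzc{d}_j(\tfrac{1}{r_j}-1)$ enter exactly at these two steps: the former to legitimize the vector-valued maximal inequality, the latter to absorb the loss in anisotropic regularity from the Peetre-maximal reduction in the tangential variables.

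Independence of $\varphi$ then follows because any two working definitions are bounded operators agreeing on the dense subspace $\mathscr{F}^{-1}\mathcal{E}'(\R^d;X)\cap F^{s,\vec{a}}_{\vec{p},q,\mathpzc{d}}$, where both defining series are finite and reduce to the classical trace. For the coretraction, I apply Lemma~\ref{PIBVP:prop:right_inverse_distr_trace} with $\tilde{\mathpzc{d}}=\mathpzc{d}''$ and $\tilde{\vec{a}}=\vec{a}''$: the support property \eqref{functieruimten:eq:prop;right_inverse_distr_trace_supports} identifies the series defining $\mathrm{ext}\,g$ as a quasi-Littlewood--Paley decomposition with respect to $(\mathpzc{d},\vec{a})$ on $\R^d$, so computing the $F^{s,\vec{a}}_{\vec{p},q,\mathpzc{d}}$-norm block by block, the innermost $L^{p_1}(|x_1|^\gamma)$-integration of $|\rho(2^{na_1}x_1)|^{p_1}$ produces precisely the factor $2^{-na_1(1+\gamma)/p_1}$, shifting the regularity index by the required $a_1(1+\gamma)/p_1$, while the remaining tangential norms recover the trace-space norm of $g$. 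The identity $\tau\circ\mathrm{ext}=\mathrm{id}$ is then immediate from $\rho(0)=1$ and $\sum_n\phi_n*g=g$ in $\mathcal{S}'(\R^{d-1};X)$. The main technical obstacle is the core estimate: conducting the combined Plancherel--Polya and Fefferman--Stein arguments simultaneously in the weighted anisotropic mixed-norm vector-valued setting, where the lack of a convenient Minkowski ordering among the $p_j$'s forces the careful use of Peetre maximal functions in all tangential directions before the $\ell^{p_1}$-valued maximal inequality can be invoked.
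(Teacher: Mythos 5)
Your overall architecture (reduction of the trace bound to the extreme microscopic parameter, Paley--Wiener support of the blocks $(S_{n}f)(0,\cdot)$, a series lemma for ball-supported blocks on $\R^{d-1}$, Peetre--Fefferman--Stein maximal inequalities, and $\mathrm{ext}$ as coretraction) is the same as the paper's, but your core estimate does not prove what you claim. Writing $\sigma:=s-\frac{a_{1}}{p_{1}}(1+\gamma)$, your pointwise-in-$x''$ inequality $2^{n\sigma p_{1}}\norm{(S_{n}f)(0,x'')}^{p_{1}}\lesssim 2^{nsp_{1}}\int_{\R}\norm{S_{n}f(y_{1},x'')}^{p_{1}}|y_{1}|^{\gamma}\,dy_{1}$ is fine, but summing it over $n$ puts the $\ell^{p_{1}}$-sum \emph{inside} the $y_{1}$-integral, so after taking tangential norms you only get $\norm{\tau f}\lesssim\norm{f}_{F^{s,\vec{a}}_{\vec{p},p_{1},\mathpzc{d}}}$, i.e.\ a bound by the $q=p_{1}$ norm, not by $\norm{f}_{F^{s,\vec{a}}_{\vec{p},\infty,\mathpzc{d}}}$ as asserted; this leaves every $q>p_{1}$, in particular the case $q=\infty$ you reduced to, unproven. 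The device you are missing is the paper's localization: dominate $\norm{(S_{n}f)(0,x'')}$ by the Peetre maximal function $f_{n}^{*}$ at points $x_{1}$ of the $n$-th dyadic interval $[2^{-na_{1}},2^{(1-n)a_{1}}]$ and integrate only over that interval; since these intervals are pairwise disjoint, summing over $n$ produces a single weighted $x_{1}$-integral of $\sup_{k}2^{ks}f_{k}^{*}$, which is exactly the $\ell^{\infty}$-structure needed before applying the maximal inequality (this is where $\vec{w}''\in\prod_{j\ge2}A_{p_{j}/r_{j}}$ enters; the condition $\sigma>\sum_{j\ge2}a_{j}\mathpzc{d}_{j}(\tfrac{1}{r_{j}}-1)$ is then used in the series lemma on $\R^{d-1}$, needed because the traced blocks have ball-shaped, not annular, Fourier supports -- they are not genuine Littlewood--Paley blocks). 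A further small point: your density argument for independence of $\varphi$ fails for $q=\infty$, where $\mathcal{S}(\R^{d};X)$ is not dense; one has to pass through $F^{s,\vec{a}}_{\vec{p},\infty,\mathpzc{d}}\hookrightarrow F^{s-\epsilon,\vec{a}}_{\vec{p},1,\mathpzc{d}}$ first.

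The coretraction step has an analogous gap. ``Computing the $F^{s,\vec{a}}_{\vec{p},q,\mathpzc{d}}$-norm block by block'' is only legitimate for the Besov norm, where the sum over blocks stands outside $L^{\vec{p},\mathpzc{d}}$ (this is precisely the ``simple computation'' in the paper's Besov trace result); in the Triebel--Lizorkin norm the sum over $n$ sits inside the innermost weighted $x_{1}$-integral, and after reducing to the strongest case $q=1$ one must show that $\int_{\R}\bigl(\sum_{n}2^{sn}|\rho(2^{na_{1}}x_{1})|\,\norm{g_{n}(x'')}\bigr)^{p_{1}}|x_{1}|^{\gamma}\,dx_{1}$ is controlled by $\norm{(2^{\sigma n}g_{n}(x''))_{n}}_{\ell^{p_{1}}(\N)}^{p_{1}}$. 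A block-by-block (triangle inequality in $L^{\vec{p},\mathpzc{d}}$) computation instead bounds $\mathrm{ext}\,g$ by the $B^{\sigma,\vec{a}''}_{\vec{p}'',1,\mathpzc{d}''}$-norm of $g$, which does not dominate the trace space $F^{\sigma,\vec{a}''}_{\vec{p}'',p_{1},\mathpzc{d}''}$, so the mapping property of $\mathrm{ext}$, and hence the retraction statement, is not established. The interchange of the $n$-sum with the $x_{1}$-integral is the crux of the paper's proof of this half: it uses the rapid decay of $\rho$ with an exponent $N>\frac{1}{p_{1}}(1+\gamma)$, a splitting of the $x_{1}$-integral into $|x_{1}|\ge1$ and the dyadic shells $2^{-(k+1)a_{1}}\le|x_{1}|\le2^{-ka_{1}}$, and the discrete Hardy-type inequalities of Lemma~\ref{PIBVP:lemma:elementaire_afschatting_rijtjes}. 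You would need to supply this argument (or an equivalent one) to close the proof.
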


\begin{remark}\label{PIBVP:rmk:thm:trace_TL;trace_density}
In the situation of Theorem~\ref{PIBVP:thm:trace_TL}, suppose that $q < \infty$. Then $\mathcal{S}(\R^{d};X)$ is a dense linear subspace of $F_{\vec{p},q,\mathpzc{d}}^{s,\vec{a}}(\R^{d},(w_{\gamma},\vec{w}'');X)$ and $\tau$ is just the unique extension of the classical trace operator
\[
\mathcal{S}(\R^{d};X) \longra \mathcal{S}(\R^{d-1};X),\, f \mapsto f_{|\{0\} \times \R^{d-1}},
\]
to a bounded linear operator \eqref{PIBVP:eq:thm:trace_TL;mapping_prop}.
\end{remark}

\begin{remark}\label{PIBVP:rmk:thm:trace_TL;distr_trace}
In contrary to the unweighted case considered in \cite{JS_traces}, one cannot use translation arguments to show that
\[
F_{\vec{p},q,\mathpzc{d}}^{s,\vec{a}}(\R^{d},(w_{\gamma},\vec{w}'');X) \hookrightarrow C(\R;\mathcal{D}'(\R^{d-1};X))
\]
for $s> \frac{a_{1}}{p_{1}}(1+\gamma)$. However, for $s> \frac{a_{1}}{p_{1}}(1+\gamma_{+})$, $\vec{p} \in (1,\infty)^{l}$ and $\vec{w}'' \in \prod_{j=2}^{l}A_{p_{j}}(\R^{\mathpzc{d}_{j}})$, the inclusion
\[
F_{\vec{p},q,\mathpzc{d}}^{s,\vec{a}}(\R^{d},(w_{\gamma},\vec{w}'');X) \hookrightarrow C(\R;\mathcal{S}'(\R^{d-1};X))
\]
can be obtained as follows: picking $\tilde{s}$ with $s > \tilde{s} > \frac{a_{1}}{p_{1}}(1+\gamma_{+})$, there holds the chain of inclusions
\begin{eqnarray*}
F_{\vec{p},q,\mathpzc{d}}^{s,\vec{a}}(\R^{d},(w_{\gamma},\vec{w}'');X)
&\hookrightarrow&
B_{\vec{p},1,\mathpzc{d}}^{\tilde{s},\vec{a}}(\R^{d},(w_{\gamma},\vec{w}'');X) \\
&\stackrel{\eqref{PIBVP:eq:thm:trace_Besov:distr_trace}}{\hookrightarrow}&
C_{b}(\R,\rho_{p_{1},\gamma};
B_{\vec{p}^{''},1,\mathpzc{d}^{''}}^{\tilde{s}-\frac{a_{1}}{p_{1}}(1+\gamma_{+}),
\vec{a}^{''}}(\R^{d-1},\vec{w}^{''};X)) \\
&\hookrightarrow& C(\R;\mathcal{S}'(\R^{d-1};X)).
\end{eqnarray*}
Here the restriction $s >  \frac{a_{1}}{p_{1}}(1+\gamma_{+})$ when $\gamma < 0$ is natural in view of the necessity of $s>\frac{a_{1}}{p_{1}}$ in the unweighted case with $p_{1}>1$ (cf.\ \cite[Theorem~2.1]{JS_traces}).
\end{remark}

Note that the trace space of the weighted anisotropic Triebel-Lizorkin space is independent of the microscopic parameter $q \in [1,\infty]$.
As a consequence, if $\E$ is a normed space with
\[
F_{\vec{p},1,\mathpzc{d}}^{s,\vec{a}}(\R^{d},(w_{\gamma},\vec{w}'');X) \hookrightarrow \E \hookrightarrow
F_{\vec{p},\infty,\mathpzc{d}}^{s,\vec{a}}(\R^{d},(w_{\gamma},\vec{w}'');X),
\]
then the trace result of Theorem~\ref{PIBVP:thm:trace_TL} also holds for $\E$ in place of $F_{\vec{p},q,\mathpzc{d}}^{s,\vec{a}}(\R^{d},(w_{\gamma},\vec{w}'');X)$.
In particular, we have:

\begin{cor}\label{PIBVP:cor:thm:trace_TL}
Let $X$ be a Banach space, $\mathpzc{d}_{1}=1$, $\vec{a} \in (0,\infty)^{l}$, $\vec{p} \in (1,\infty)^{l}$, $\gamma \in (-1,p_{1}-1)$ and $s > \frac{a_{1}}{p_{1}}(1+\gamma)$. Let $\vec{w} \in \prod_{j=1}^{l}A_{p_{j}}(\R^{\mathpzc{d}_{j}})$ be such that
$w_{1}(x_{1}) = w_{\gamma}(x_{1}) = |x_{1}|^{\gamma}$. Suppose that either
\begin{itemize}
\item $\E = W^{\vec{n}}_{\vec{p},\mathpzc{d}}(\R^{d},(w_{\gamma},\vec{w}'');X)$, $\vec{n} \in (\Z_{\geq 1})^{l}$, $\vec{n}=s\vec{a}^{-1}$; or
\item $\E = H^{s,\vec{a}}_{\vec{p},\mathpzc{d}}(\R^{d},(w_{\gamma},\vec{w}'');X)$; or
\item $\E = H^{\vec{s}}_{\vec{p},\mathpzc{d}}(\R^{d},(w_{\gamma},\vec{w}'');X)$, $\vec{s} \in (0,\infty)^{l}$, $\vec{s}=s\vec{a}^{-1}$.
\end{itemize}
Then the trace operator $\tau = \tau^{\varphi}$ \eqref{PIBVP:eq:working_def_trace} is well-defined on
$\E$, where it is independent of $\varphi$, and restricts to a retraction
\[
\tau: \E \longra
F_{\vec{p}'',p_{1},\mathpzc{d}''}^{s-\frac{a_{1}}{p_{1}}(1+\gamma),\vec{a}''}(\R^{d-1},\vec{w}'';X)
\]
for which the extension operator $\mathrm{ext}$ from Lemma \ref{PIBVP:prop:right_inverse_distr_trace} (with $\tilde{\mathpzc{d}} = \mathpzc{d}''$ and $\tilde{\vec{a}}= \vec{a}''$) restricts to a corresponding coretraction.
\end{cor}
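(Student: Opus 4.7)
The plan is to reduce the result directly to Theorem~\ref{PIBVP:thm:trace_TL} by using the sandwich embedding \eqref{PIBVP:eq:elem_embedding_FEF}, exactly as foreshadowed in the paragraph preceding the corollary. First I would observe that in each of the three listed cases for $\E$, \eqref{PIBVP:eq:elem_embedding_FEF} gives
\[
F^{s,\vec{a}}_{\vec{p},1,\mathpzc{d}}(\R^{d},(w_{\gamma},\vec{w}'');X) \hookrightarrow \E \hookrightarrow F^{s,\vec{a}}_{\vec{p},\infty,\mathpzc{d}}(\R^{d},(w_{\gamma},\vec{w}'');X).
\]
I would also verify at the outset that the hypotheses of Theorem~\ref{PIBVP:thm:trace_TL} are in force under the corollary's assumptions: $\mathpzc{d}_{1}=1$, $\vec{a} \in (0,\infty)^{l}$, $w_{1}=w_{\gamma}$, and $s > \frac{a_{1}}{p_{1}}(1+\gamma)$ are explicit, while the technical constraint on $\vec{w}''$ is automatic from $\vec{p}'' \in (1,\infty)^{l-1}$ and $\vec{w}'' \in \prod_{j=2}^{l}A_{p_{j}}(\R^{\mathpzc{d}_{j}})$ via the footnote.

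Next, well-definedness of $\tau=\tau^{\varphi}$ on $\E$ and its independence of $\varphi$ are transferred from the ambient space: since $\E$ embeds into $F^{s,\vec{a}}_{\vec{p},\infty,\mathpzc{d}}(\R^{d},(w_{\gamma},\vec{w}'');X)$, Theorem~\ref{PIBVP:thm:trace_TL} applied with $q=\infty$ guarantees that the defining series \eqref{PIBVP:eq:working_def_trace} converges in $\mathcal{S}'(\R^{d-1};X)$ for every $f\in\E$ and that the limit does not depend on the choice of $\varphi$.

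The boundedness of
\[
\tau:\E \longra F^{s-\frac{a_{1}}{p_{1}}(1+\gamma),\vec{a}''}_{\vec{p}'',p_{1},\mathpzc{d}''}(\R^{d-1},\vec{w}'';X)
\]
then follows by composing the right-hand embedding with Theorem~\ref{PIBVP:thm:trace_TL} at $q=\infty$. For the coretraction, I would invoke Theorem~\ref{PIBVP:thm:trace_TL} at $q=1$ to see that $\mathrm{ext}$ from Lemma~\ref{PIBVP:prop:right_inverse_distr_trace} maps the target boundary space boundedly into $F^{s,\vec{a}}_{\vec{p},1,\mathpzc{d}}(\R^{d},(w_{\gamma},\vec{w}'');X)$, and then post-compose with the left-hand embedding of \eqref{PIBVP:eq:elem_embedding_FEF} to land in $\E$. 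The identity $\tau \circ \mathrm{ext} = \mathrm{id}$ on the boundary space is inherited directly from Theorem~\ref{PIBVP:thm:trace_TL}, because it already holds pointwise at the level of the ambient Triebel--Lizorkin spaces.

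There is no substantive obstacle beyond bookkeeping: the heavy lifting has been done in Theorem~\ref{PIBVP:thm:trace_TL}, and the microscopic-parameter independence of the trace space is what makes the sandwich argument work. The only point warranting a brief check is the third bullet, where $\E = H^{\vec{s}}_{\vec{p},\mathpzc{d}}$ with $\vec{s} = s\vec{a}^{-1} \in (0,1)^{l}$; this case is explicitly covered by the third bullet of \eqref{PIBVP:eq:elem_embedding_FEF}, so no separate argument is needed.
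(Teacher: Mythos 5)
Your proposal is correct and follows essentially the same route as the paper: the paper derives the corollary precisely from the sandwich embeddings \eqref{PIBVP:eq:elem_embedding_FEF} together with the independence of the trace space of Theorem~\ref{PIBVP:thm:trace_TL} on the microscopic parameter, applying the theorem with $q=\infty$ for the trace and $q=1$ for the coretraction, exactly as you do. Your extra checks (the footnote hypothesis on $\vec{w}''$ and the reading of the third bullet as $\vec{s}=s\vec{a}^{-1}\in(0,1)^{l}$) are consistent with the paper's intent.
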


\subsubsection{Traces by duality for Besov spaces}

Let $i \in \{1,\ldots,l\}$. For $b \in \R^{\mathpzc{d}_{i}}$ we define the hyperplane
\[
\Gamma_{[\mathpzc{d};i],b} := \R^{\mathpzc{d}_{1}} \times \R^{\mathpzc{d}_{i-1}} \times \{b\} \times \R^{\mathpzc{d}_{i+1}} \times \R^{\mathpzc{d}_{l}}
\]
and we simply put $\Gamma_{[\mathpzc{d};i]} := \Gamma_{[\mathpzc{d};i],0}$.
Furthermore, given sets $S_{1},\ldots,S_{l}$ and $x = (x_{1},\ldots,x_{l}) \in \prod_{j=1}^{l}S_{j}$, we write $\vec{x}^{[i]} = (x_{1},\ldots,x_{i-1},x_{i+1},\ldots,x_{l})$.

\begin{prop}\label{PIBVP:thm:trace_Besov}
Let $X$ be a Banach space, $i \in \{1,\ldots,l\}$, $\vec{a} \in (0,\infty)^{l}$, $\vec{p} \in (1,\infty)^{l}$, $q \in [1,\infty)$, $\gamma \in (-\mathpzc{d}_{i},\infty)$ and $s > \frac{a_{i}}{p_{i}}(\mathpzc{d}_{i}+\gamma)$. Let $\vec{w} \in \prod_{j=1}^{l}A_{\infty}(\R^{\mathpzc{d}_{j}})$ be such that
$w_{i}(x_{i}) = w_{\gamma}(x_{i}) = |x_{i}|^{\gamma}$ and $w_{j} \in A_{p_{j}}$ for each $j \neq i$.
Then the trace operator
\[
\mathrm{tr}_{[\mathpzc{d};i],b} : \mathcal{S}(\R^{d};X) \longra \mathcal{S}(\R^{d-\mathpzc{d}_{i}};X),\, f \mapsto f_{|\Gamma_{[\mathpzc{d};i]} },
\]
extends to a retraction
\begin{equation}\label{PIBVP:eq:thm:trace_Besov:trace_estimate}
\mathrm{tr}_{[\mathpzc{d};i],b}: B_{\vec{p},q,\mathpzc{d}}^{s,\vec{a}}(\R^{d},\vec{w};X) \longra
B_{\vec{p}^{[i]},q,\mathpzc{d}^{[i]}}^{s-\frac{a_{i}}{p_{i}}(\mathpzc{d}_{i}+\gamma),\vec{a}^{[i]}}(\R^{d-\mathpzc{d}_{i}},
\vec{w}^{[i]};X)
\end{equation}
for which the extension operator $\mathrm{ext}$ from Lemma \ref{PIBVP:prop:right_inverse_distr_trace} (with $\tilde{\mathpzc{d}} = \mathpzc{d}^{[i]}$ and $\tilde{\vec{a}}= \vec{a}^{[i]}$, modified in the obvious way to the $i$-th multidimensional coordinate) restricts to a corresponding coretraction.
Furthermore, if $s > \frac{a_{i}}{p_{i}}(\mathpzc{d}_{i}+\gamma_{+})$, then
\begin{equation}\label{PIBVP:eq:thm:trace_Besov:distr_trace}
B_{\vec{p},q,\mathpzc{d}}^{s,\vec{a}}(\R^{d},\vec{w};X) \hookrightarrow
C_{b}(\R^{\mathpzc{d}_{i}},\rho_{p_{i},\gamma};
B_{\vec{p}^{[i]},q,\mathpzc{d}^{[i]}}^{s-\frac{a_{i}}{p_{i}}(\mathpzc{d}_{i}+\gamma_{+}),
\vec{a}^{[i]}}(\R^{d-1},\vec{w}^{[i]};X))
\hookrightarrow C(\R^{\mathpzc{d}_{i}};\mathcal{S}'(\R^{d-\mathpzc{d}_{i}};X)),
\end{equation}
where $\rho_{p_{i},\gamma} := \max\{|\,\cdot\,|,1\}^{-\frac{\gamma_{-}}{p_{i}}}$.
\end{prop}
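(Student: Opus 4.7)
By permuting the components of the $\mathpzc{d}$-decomposition, we may assume $i=1$. My plan is to reduce both the trace estimate \eqref{PIBVP:eq:thm:trace_Besov:trace_estimate} and the boundedness of $\mathrm{ext}$ to a single weighted anisotropic Nikolskii-type pointwise estimate: if $\mathrm{supp}\,\hat{f}\subset\{|\xi|_{\mathpzc{d},\vec{a}}\le R\}$, then, uniformly in $R>0$, $b\in\R^{\mathpzc{d}_{1}}$ and $f$,
\begin{equation*}
\|f(b,\,\cdot\,)\|_{L^{\vec{p}^{[1]},\mathpzc{d}^{[1]}}(\vec{w}^{[1]};X)}\;\lesssim\;R^{\,a_{1}(\mathpzc{d}_{1}+\gamma)/p_{1}}\,\|f\|_{L^{\vec{p},\mathpzc{d}}(\vec{w};X)}.
\end{equation*}
I would prove this by rescaling to $R=1$ via the anisotropic dilation $\delta^{(\mathpzc{d},\vec{a})}_{R}$, using the homogeneity $w_{\gamma}(R^{a_{1}}\,\cdot\,)=R^{a_{1}\gamma}w_{\gamma}$ together with the Jacobian $R^{a_{1}\mathpzc{d}_{1}}$ to generate the sharp power of $R$, then representing the band-limited function by convolution against a fixed Schwartz reproducing kernel and estimating pointwise at $b$ by a weighted vector-valued Hardy--Littlewood maximal function in the tangential variables. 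The $A_{p_{j}}$-hypothesis on $w_{j}$ for $j\ge 2$ yields the required maximal bound on $L^{\vec{p}^{[1]},\mathpzc{d}^{[1]}}(\vec{w}^{[1]};X)$ and is exactly what powers this step.

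For the trace estimate, fix $\varphi\in\Phi^{\mathpzc{d},\vec{a}}(\R^{d})$ with associated operators $S_{n}$. Applying the Nikolskii lemma to $S_{n}f$ with $R\sim 2^{n}$ and noting that $((S_{n}f)(b,\,\cdot\,))_{n}$ has the correct dyadic Fourier support on $\R^{d-\mathpzc{d}_{1}}$ to serve as a $(\mathpzc{d}^{[1]},\vec{a}^{[1]})$-anisotropic Littlewood--Paley decomposition of $\tau f = \sum_{n}(S_{n}f)(b,\,\cdot\,)$, taking $\ell^{q}$-norms with weights $2^{n(s-a_{1}(\mathpzc{d}_{1}+\gamma)/p_{1})}$ yields \eqref{PIBVP:eq:thm:trace_Besov:trace_estimate}, simultaneously giving convergence of the defining series in the target space and $\varphi$-independence by a routine comparison. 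For the extension, the Fourier-support information \eqref{functieruimten:eq:prop;right_inverse_distr_trace_supports} on the summands of $\mathrm{ext}\,g$ lets us compute its Besov norm from the $\ell^{q}$-norm of their weighted $L^{\vec{p},\mathpzc{d}}$-norms; Fubini in the mixed norm together with the identity $\|\rho(2^{na_{1}}\,\cdot\,)\|_{L^{p_{1}}(w_{\gamma})}=2^{-na_{1}(\mathpzc{d}_{1}+\gamma)/p_{1}}\|\rho\|_{L^{p_{1}}(w_{\gamma})}$ produces exactly the factor matching the target Besov norm of $g$ with smoothness $s-a_{1}(\mathpzc{d}_{1}+\gamma)/p_{1}$. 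The identity $\tau\circ\mathrm{ext}=\mathrm{id}$ is then inherited from Lemma~\ref{PIBVP:prop:right_inverse_distr_trace} on continuous representatives.

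For the continuous embedding \eqref{PIBVP:eq:thm:trace_Besov:distr_trace}, I would run the same pointwise estimate at a general $b$: for $\gamma\ge 0$ the weight $|x_{1}+b|^{\gamma}$ is controlled by $|x_{1}|^{\gamma}$ uniformly, while for $\gamma<0$ and $|b|\ge 1$ a shift of the weight's singularity generates an additional factor of order $\max\{|b|,1\}^{-\gamma_{-}/p_{1}}=\rho_{p_{1},\gamma}(b)^{-1}$, producing the $C_{b}(\R^{\mathpzc{d}_{1}},\rho_{p_{1},\gamma};\,\cdot\,)$-valued bound via embedding of $F$-spaces into $B^{s}_{\vec{p},1,\mathpzc{d}}$ and summing in $\ell^{1}$ (replacing $\gamma$ by $\gamma_{+}$). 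The second inclusion is then the standard embedding $B^{\sigma,\vec{a}^{[1]}}_{\vec{p}^{[1]},q,\mathpzc{d}^{[1]}}(\vec{w}^{[1]};X)\hookrightarrow\mathcal{S}'(\R^{d-\mathpzc{d}_{1}};X)$ for $\sigma>0$. The main obstacle I anticipate is proving the weighted anisotropic Nikolskii inequality with the sharp exponent uniformly in $b$: because $|x_{1}|^{\gamma}$ is non-translation-invariant and may be singular at the point of evaluation, the interplay between this $A_{\infty}$-weight in the normal direction and the $A_{p_{j}}$-weights in the tangential directions has to be handled via a careful vector-valued Fefferman--Stein type argument, and this interaction is ultimately what forces the precise weight hypothesis in the statement and the mild asymmetry with Theorem~\ref{PIBVP:thm:trace_TL}.
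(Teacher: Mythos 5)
Your route is genuinely different from the paper's. You argue directly, in the spirit of Johnsen--Sickel and of the paper's own proof of the Triebel--Lizorkin trace result (Theorem~\ref{PIBVP:thm:trace_TL}): a weighted Nikolskii-type bound for band-limited functions applied to each dyadic piece $S_nf$, plus the scaling identity for $\norm{\rho(2^{na_i}\,\cdot\,)}_{L^{p_i}(w_\gamma)}$ for the coretraction. The paper instead proves this proposition by duality: after reducing to $\gamma \in (-\mathpzc{d}_i,p_i-1)$ via the Sobolev embedding Proposition~\ref{PIBVP:Sobolev_embedding_Besov}, it identifies the adjoint of $\mathrm{tr}_{[\mathpzc{d};i],b}$ with $f \mapsto \delta_b \otimes_{[\mathpzc{d};i]} f$ and bounds that map between dual Besov spaces (Lemma~\ref{PIBVP:lemma:prop:trace_on_dual;adjoint_trace}, resting on Lemmas~\ref{PIBVP:lemma:prop:trace_on_dual;adjoint_trace;afschatting_translatie} and~\ref{PIBVP:lemma:elementaire_afschatting_rijtjes}); the extension estimate is essentially the same computation in both approaches. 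Your method avoids the duality identification $[B^{s,\vec{a}}_{\vec{p},q,\mathpzc{d}}(\R^{d},\vec{w};X)]^{*}=B^{-s,\vec{a}}_{\vec{p}',q',\mathpzc{d}}(\R^{d},\vec{w}';X^{*})$ altogether (and would in fact cover $q=\infty$ directly), at the price of having to prove the uniform Nikolskii estimate yourself; note also that the traced pieces $(S_nf)(b,\cdot)$ have spectra in anisotropic balls, not coronas, so the resummation is not a literal Littlewood--Paley characterization but the ``spectra in balls plus positive smoothness'' lemma --- harmless here since $s-\frac{a_i}{p_i}(\mathpzc{d}_i+\gamma)>0$.

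The genuine gap lies in the key estimate, which you leave unproven and which, as stated and as sketched, is not adequate for the full hypotheses. First, the displayed inequality cannot hold uniformly in $b$ when $\gamma<0$: testing with a bump of anisotropic width $R^{-a_i}$ centred at $x_i=b$ shows the optimal constant exceeds $R^{a_i(\mathpzc{d}_i+\gamma)/p_i}$ by a factor of order $(1+R^{a_i}|b|)^{\gamma_-/p_i}$; this is precisely why \eqref{PIBVP:eq:thm:trace_Besov:distr_trace} carries the exponent $\gamma_+$ and the weight $\rho_{p_i,\gamma}$, and your third paragraph only partially repairs this (with a sign slip: the extra factor is $\rho_{p_i,\gamma}(b)^{-1}=\max\{|b|,1\}^{+\gamma_-/p_i}$, a growth factor). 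For $b=0$ the claimed exponent is correct. Second, and more seriously, the proof you describe --- a fixed Schwartz reproducing kernel plus maximal bounds in the tangential variables --- must at some point convert $\int(1+R^{a_i}|y_i|)^{-N}\norm{f(y_i,\cdot)}\,dy_i$ into the weighted norm, i.e.\ apply H\"older against $|y_i|^{-\gamma p_i'/p_i}$, and this requires $\gamma<\mathpzc{d}_i(p_i-1)$; the proposition allows every $\gamma\in(-\mathpzc{d}_i,\infty)$, and the obstruction for large $\gamma$ is not a Fefferman--Stein issue in the tangential variables but the failure of $w_\gamma$ to be an $A_{p_i}$-weight in the normal variable. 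The missing ingredient is the paper's opening move: reduce to $\gamma\in(-\mathpzc{d}_i,p_i-1)$ by Proposition~\ref{PIBVP:Sobolev_embedding_Besov}, whose smoothness loss matches the change in the trace exponent exactly; alternatives would be a reproducing kernel chosen to vanish to high order on $\Gamma_{[\mathpzc{d};i]}$, or Peetre-type maximal functions in the $x_i$-variable integrated over a dyadic annulus $|x_i|\sim 2^{-na_i}$, which is the device used in the proof of Theorem~\ref{PIBVP:thm:trace_TL} together with Lemma~\ref{PBIBV:lemma:appendix:Peetre-Fefferman-Stein_maximal_ineq}. Without one of these, your argument proves the proposition only for $w_\gamma\in A_{p_i}$, not in the stated generality.
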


\begin{cor}\label{PIBVP:cor:thm:trace_Besov:distr_trace}
Let $X$ be a Banach space, $\vec{a} \in (0,\infty)^{l}$, $\vec{p} \in (1,\infty)^{l}$, $q \in [1,\infty)$, $\vec{\gamma} \in \prod_{j=1}^{l}(-\mathpzc{d}_{j},\infty)$ and $s > \sum_{j=1}^{l}\frac{a_{j}}{p_{j}}(\mathpzc{d}_{j}+\gamma_{j,+})$.
Let $\vec{w} \in \prod_{j=1}^{l}A_{\infty}(\R^{\mathpzc{d}_{j}})$ be such that $w_{j}(x_{j}) = w_{\gamma}(x_{j}) = |x_{j}|^{\gamma}$ for each $j \in \{1,\ldots,l\}$.
Then
\[
B_{\vec{p},q,\mathpzc{d}}^{s,\vec{a}}(\R^{d},\vec{w};X) \hookrightarrow
C_{b}(\R^{\mathpzc{d}_{1}},\rho_{p_{l},\gamma_{l}}; \ldots C_{b}(\R^{\mathpzc{d}_{l}},\rho_{p_{1},\gamma_{1}};X) \ldots ).
\]
\end{cor}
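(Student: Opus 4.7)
The natural strategy is to establish the embedding by induction on $l$, iterating the embedding \eqref{PIBVP:eq:thm:trace_Besov:distr_trace} of Proposition~\ref{PIBVP:thm:trace_Besov} once for each coordinate block, peeling off one block at a time.

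For the base case $l = 1$, the corollary is precisely the second inclusion in \eqref{PIBVP:eq:thm:trace_Besov:distr_trace} (interpreting the inner $(l-1)$-block Besov space with $d - \mathpzc{d}_{1} = 0$ coordinates simply as $X$). For the induction step, I would apply Proposition~\ref{PIBVP:thm:trace_Besov} with $i = 1$ to obtain
\begin{equation*}
B^{s, \vec{a}}_{\vec{p}, q, \mathpzc{d}}(\R^{d}, \vec{w}; X) \hookrightarrow C_{b}\bigl(\R^{\mathpzc{d}_{1}}, \rho_{p_{1}, \gamma_{1}};\, B^{s_{1}, \vec{a}^{[1]}}_{\vec{p}^{[1]}, q, \mathpzc{d}^{[1]}}(\R^{d-\mathpzc{d}_{1}}, \vec{w}^{[1]}; X)\bigr),
\end{equation*}
where $s_{1} := s - \frac{a_{1}}{p_{1}}(\mathpzc{d}_{1} + \gamma_{1,+})$. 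The hypothesis $s > \sum_{j=1}^{l} \frac{a_{j}}{p_{j}}(\mathpzc{d}_{j} + \gamma_{j,+})$ gives $s_{1} > \sum_{j=2}^{l} \frac{a_{j}}{p_{j}}(\mathpzc{d}_{j} + \gamma_{j,+})$, so the inductive hypothesis applies to the inner Besov space on $\R^{d - \mathpzc{d}_{1}}$ with $l-1$ coordinate blocks. Combining that inner embedding with the elementary monotonicity $E \hookrightarrow F \Rightarrow C_{b}(\R^{\mathpzc{d}_{1}}, \rho_{p_{1}, \gamma_{1}}; E) \hookrightarrow C_{b}(\R^{\mathpzc{d}_{1}}, \rho_{p_{1}, \gamma_{1}}; F)$ of the vector-valued $C_{b}$-functor then yields the desired nested $C_{b}$-chain.

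The bookkeeping is routine: the regularity budget is spent one coordinate at a time, and the assumption on $s$ is precisely the total cost, so at every intermediate stage there is enough regularity left to invoke Proposition~\ref{PIBVP:thm:trace_Besov} again. The one point requiring care, and the main technical obstacle, is verifying that the remaining weights meet the Muckenhoupt hypothesis of Proposition~\ref{PIBVP:thm:trace_Besov} at each iteration, namely $w_{j} \in A_{p_{j}}(\R^{\mathpzc{d}_{j}})$ for each block $j$ not yet traced out. Since each $w_{j} = |\cdot|^{\gamma_{j}}$ is a power weight, this amounts to the condition $\gamma_{j} \in (-\mathpzc{d}_{j}, \mathpzc{d}_{j}(p_{j}-1))$, which should be read as an implicit strengthening of the stated hypothesis $\gamma_{j} \in (-\mathpzc{d}_{j}, \infty)$; once this is in hand, all other parameters (anisotropy $\vec{a}^{[1]}$, integrability $\vec{p}^{[1]}$, microscopic index $q$) just restrict trivially to the remaining blocks and the induction closes.
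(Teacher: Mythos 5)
Your induction is the right mechanism, and it is exactly the second half of the paper's argument (the paper's proof is: reduce to $A_{p_j}$-weights, then apply Proposition~\ref{PIBVP:thm:trace_Besov} $l$ times). The problem is how you dispose of the Muckenhoupt obstruction. The corollary is stated for all $\gamma_j\in(-\mathpzc{d}_j,\infty)$, so the weights are in general only $A_\infty$; whenever $\gamma_j\geq \mathpzc{d}_j(p_j-1)$ for some $j$, the hypothesis $w_j\in A_{p_j}$ of Proposition~\ref{PIBVP:thm:trace_Besov} fails for that block (and in your iteration every block except the one currently being traced must satisfy it). Declaring the condition $\gamma_j\in(-\mathpzc{d}_j,\mathpzc{d}_j(p_j-1))$ to be ``an implicit strengthening of the stated hypothesis'' is not a repair: it changes the statement, and you end up proving only a strictly weaker special case. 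This is a genuine gap.

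The paper closes it with a preliminary reduction via the Sobolev embedding of Proposition~\ref{PIBVP:Sobolev_embedding_Besov}: for each block with $\gamma_j\geq\mathpzc{d}_j(p_j-1)$ (necessarily $\gamma_j>0$), lower the exponent to some $\tilde\gamma_j\in[0,\mathpzc{d}_j(p_j-1))$, keeping $\vec{p}$ and $q$, at the cost of lowering $s$ to a suitable $\tilde{s}$. This is affordable because the assumption $s>\sum_{j}\frac{a_j}{p_j}(\mathpzc{d}_j+\gamma_{j,+})$ is precisely the budget the embedding consumes, leaving $\tilde{s}>\sum_j\frac{a_j}{p_j}(\mathpzc{d}_j+\tilde\gamma_{j,+})$; and it is harmless for the conclusion because $\rho_{p_j,\gamma_j}=\max\{|\cdot|,1\}^{-\gamma_{j,-}/p_j}$ depends only on the negative part $\gamma_{j,-}$, which is $0$ for the offending exponents, so the nested $C_b$-target is unchanged under the reduction. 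Once all weights lie in $\prod_j A_{p_j}(\R^{\mathpzc{d}_j})$, your block-by-block iteration of \eqref{PIBVP:eq:thm:trace_Besov:distr_trace} (including the degenerate base case and the monotonicity of the weighted $C_b$-functor) is exactly the paper's proof.
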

\begin{proof}
Thanks to the Sobolev embedding of Proposition~\ref{PIBVP:Sobolev_embedding_Besov} it is enough to treat the case
$\vec{w} \in \prod_{j=1}^{l}A_{p_{j}}(\R^{\mathpzc{d}_{j}})$, which can be obtained by $l$ iterations of Proposition~\ref{PIBVP:thm:trace_Besov}.
\end{proof}

\begin{remark}
The above proposition and its corollary remain valid for $q=\infty$.
In this case the norm estimate corresponding to \eqref{PIBVP:eq:thm:trace_Besov:trace_estimate} can be obtained in a similar way, from which the unique extendability to a bounded linear operator \eqref{PIBVP:eq:thm:trace_Besov:trace_estimate} can be derived via the Fatou property,  \eqref{PIBVP:prelim:eq:elem_embd_epsilon} and the case $q=1$. The remaining statements can be established in the same way as for the case $q<\infty$.
\end{remark}

\begin{remark}
Note that if $\vec{\gamma} \in [0,\infty)^{l}$ in the situation of the above corollary, then
\[
B_{\vec{p},q,\mathpzc{d}}^{s,\vec{a}}(\R^{d},\vec{w};X) \hookrightarrow BUC(\R^{d};X)
\]
by density of the Schwartz space $\mathcal{S}(\R^{d};X) \subset BUC(\R^{d};X)$ in $B_{\vec{p},q,\mathpzc{d}}^{s,\vec{a}}(\R^{d},\vec{w};X)$.
This could also be established in the standard way by the Sobolev embedding Proposition~\ref{PIBVP:Sobolev_embedding_Besov}, see for instance \cite[Proposition~7.4]{Meyries&Veraar_sharp_embd_power_weights}.
\end{remark}

Let $X$ be a Banach space. Then
\[
[\mathcal{S}'(\R^{d};X)]' = \mathcal{S}(\R^{d};X^{*}) \quad\quad \mbox{and} \quad\quad
[\mathcal{S}(\R^{d};X)]' = \mathcal{S}'(\R^{d};X^{*})
\]
via the pairings induced by
\[
\ip{f \otimes x^{*}}{g \otimes x} = \ip{\ip{f}{x^{*}}}{\ip{g}{x}};
\]
see \cite[Corollary~1.4.10]{Amann_distributions}.

Let $i \in \{1,\ldots,l\}$ and $b \in \R^{\mathpzc{d}_{i}}$.
Let $\mathrm{tr}_{[\mathpzc{d};i],b} \in \mathcal{L}(\mathcal{S}(\R^{d};X),\mathcal{S}(\R^{d-1};X))$ be given by $\mathrm{tr}_{[\mathpzc{d};i],b}\,f := f_{|\Gamma_{[\mathpzc{d};i],b}}$.
Then the adjoint operator $T_{[\mathpzc{d};i],b} := [\mathrm{tr}_{[\mathpzc{d};i],b}]' \in \mathcal{L}(\mathcal{S}'(\R^{d-1};X^{*}),\mathcal{S}'(\R^{d};X^{*}))$ is given by $T_{[\mathpzc{d};i],b}f = \delta_{b} \otimes_{[\mathpzc{d};i]} f$, which can be seen by testing on the dense subspace $\mathcal{S}(\R^{\mathpzc{d}_{i}}) \otimes_{[\mathpzc{d};i]} \mathcal{S}(\R^{d-\mathpzc{d}_{i}})$ of $\mathcal{S}(\R^{d})$.
Now suppose that $\E$ is a locally convex space with $\mathcal{S}(\R^{d};X) \stackrel{d}{\hookrightarrow} \E$ and that $\F$ is a complete locally convex space with $\mathcal{S}(\R^{d-\mathpzc{d}_{i}};X) \stackrel{d}{\hookrightarrow} \F$.
Then $\E' \hookrightarrow \mathcal{S}'(\R^{d};X^{*})$ and $\F' \hookrightarrow \mathcal{S}'(\R^{d-\mathpzc{d}_{i}};X^{*})$ under the natural identifications, and $\mathrm{tr}_{[\mathpzc{d};i],b}$ extends to a continuous linear operator $\mathrm{tr}_{\E \to \F}$ from $\E$ to $\F$ if and only if $T_{[\mathpzc{d};i],b}$ restricts to a continuous linear operator $T_{\F' \to \E'}$ from $\F'$ to $\E'$, in which case $[\mathrm{tr}_{\E \to \F}]' = T_{\F' \to \E'}$.

Estimates in the classical Besov and Triebel-Lizorkin spaces for the tensor product with the one-dimensional delta-distribution $\delta_{0}$ can be found in \cite[Proposition~2.6]{Johnsen1996_Boutet_de_Monvel}, where a different proof is given than the one below.

\begin{lemma}\label{PIBVP:lemma:prop:trace_on_dual;adjoint_trace}
Let $X$ be a Banach space, $i \in \{1,\ldots,l\}$, $\vec{a} \in (0,\infty)^{l}$, $\vec{p} \in [1,\infty)^{l}$, $q \in [1,\infty]$, $\gamma \in (-\mathpzc{d}_{i},\infty)$.
Let $\vec{w} \in \prod_{j=1}^{l}A_{\infty}(\R^{\mathpzc{d}_{j}})$ be such that
$w_{i}(x_{i}) = w_{\gamma}(x_{i}) = |x_{i}|^{\gamma}$.
For each $b \in \R^{\mathpzc{d}_{i}}$ consider the linear operator
\[
T_{[\mathpzc{d};i],b}: \mathcal{S}'(\R^{d-\mathpzc{d}_{i}};X) \longra \mathcal{S}'(\R^{d};X),\, f \mapsto \delta_{b} \otimes_{[\mathpzc{d};i]} f.
\]
\begin{itemize}
\item[(i)] If $s \in (-\infty,a_{i}\left[\frac{\mathpzc{d}_{i}+\gamma}{p_{i}}-\mathpzc{d}_{i}\right])$, then $T_{[\mathpzc{d};i],0}$ is bounded from $B^{s+a_{i}\left(\mathpzc{d}_{i}-\frac{\mathpzc{d}_{i}+\gamma}{p_{i}}\right),\vec{a}^{[i]}}_{\vec{p}^{[i]},q,\mathpzc{d}}
    (\R^{d-\mathpzc{d}_{i}},\vec{w}^{[i]};X)$ to $B^{s,\vec{a}}_{\vec{p},q,\mathpzc{d}}(\R^{d},\vec{w};X)$.
\item[(ii)] If $s \in (-\infty,a_{i}\left[\frac{\mathpzc{d}_{i}+\gamma_{-}}{p_{i}}-\mathpzc{d}_{i}\right])$, then $T_{[\mathpzc{d};i],b}$ is bounded from $B^{s+a_{i}\left(\mathpzc{d}_{i}-\frac{\mathpzc{d}_{i}+\gamma_{-}}{p_{i}}\right),\vec{a}^{[i]}}_{\vec{p}^{[i]},q}
    (\R^{d-\mathpzc{d}_{i}},\vec{w}^{[i]};X)$ to $B^{s,\vec{a}}_{\vec{p},q,\mathpzc{d}}(\R^{d},\vec{w};X)$ with norm estimate
    \[
    \norm{T_{[\mathpzc{d};i],b}}_{\mathcal{B}(B^{s+a_{i}\left(\mathpzc{d}_{i}-\frac{\mathpzc{d}_{i}+\gamma}{p_{i}}\right),
    \vec{a}^{[i]}}_{\vec{p}^{[i]},q}(\R^{d-\mathpzc{d}_{i}},\vec{w}^{[i]};X),B^{s}_{p,q}(\R^{d},w_{\gamma}))}
    \lesssim \max\{|b|,1\}^{\frac{\gamma_{+}}{p}}.
    \]
\end{itemize}
\end{lemma}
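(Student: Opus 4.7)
The plan is to analyze $T_{[\mathpzc{d};i],b}g$ dyadically and reduce everything to a scalar integral against the power weight $|x_i|^\gamma$. The key Fourier identity is $\widehat{T_{[\mathpzc{d};i],b}g}(\xi)=e^{-ib\cdot\xi_i}\hat g(\vec\xi^{[i]})$, from which, for any $\varphi=(\varphi_n)\in\Phi^{\mathpzc{d},\vec{a}}(\R^d)$, one obtains the convolution representation
\[
S_n^{\varphi}T_{[\mathpzc{d};i],b}g(\vec x)=\big[\varphi_n(\,\cdot\,;\,x_i-b)\ast^{[i]}g\big](\vec x^{[i]}),
\]
where $\varphi_n(\vec y^{[i]};z_i)$ denotes $\varphi_n$ with its $i$-th slot frozen at $z_i$, and $\ast^{[i]}$ stands for convolution in the $\R^{d-\mathpzc{d}_i}$-variables. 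Thus the whole task reduces to estimating such convolution blocks in the weighted mixed-norm.

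For the pointwise convolution estimate, I would exploit the anisotropic scaling $\varphi_n=2^{(n-1)\sum_{j}a_j\mathpzc{d}_j}\varphi_1(\delta^{(\mathpzc{d},\vec a)}_{2^{n-1}}\,\cdot\,)$ together with the rapid decay of $\varphi_1\in\mathcal S(\R^d)$ to control $\|\varphi_n(\cdot;z_i)\|_{L^1_{\vec y^{[i]}}}$ by $2^{(n-1)a_i\mathpzc{d}_i}(1+2^{n-1}|z_i|^{1/a_i})^{-N}$ for any $N$. Combined with boundedness of the anisotropic mixed-norm strong maximal function on $L^{\vec p^{[i]},\mathpzc{d}^{[i]}}(\R^{d-\mathpzc{d}_i},\vec w^{[i]})$ (guaranteed by $\vec w^{[i]}\in\prod_{j\neq i}A_{p_j}$), this yields
\[
\|\varphi_n(\,\cdot\,;z_i)\ast^{[i]}h\|_{L^{\vec p^{[i]},\mathpzc{d}^{[i]}}(\vec w^{[i]})}\lesssim 2^{(n-1)a_i\mathpzc{d}_i}\bigl(1+2^{n-1}|z_i|^{1/a_i}\bigr)^{-N}\,\|h\|_{L^{\vec p^{[i]},\mathpzc{d}^{[i]}}(\vec w^{[i]})}.
\]
Setting $z_i=x_i-b$ and taking the $L^{p_i}(\R^{\mathpzc{d}_i},w_\gamma)$-norm in $x_i$ reduces to the scalar integral $I_n(b):=\int(1+2^{n-1}|x_i-b|^{1/a_i})^{-Np_i}|x_i|^\gamma\,dx_i$. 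For $b=0$ an anisotropic dilation gives $I_n(0)\simeq 2^{-(n-1)a_i(\mathpzc{d}_i+\gamma)}$ (with $Np_i>\mathpzc{d}_i+\gamma$), producing the amplification factor $2^{(n-1)a_i(\mathpzc{d}_i-(\mathpzc{d}_i+\gamma)/p_i)}$ of (i). For $b\neq 0$ with $\gamma\geq 0$, one applies $|x_i|^\gamma\lesssim |x_i-b|^\gamma+|b|^\gamma$ and splits $I_n(b)$ into two corresponding pieces, the $|b|^\gamma$-term contributing the factor $|b|^{\gamma_+/p_i}\cdot 2^{-(n-1)a_i\mathpzc{d}_i/p_i}$ of (ii); for $\gamma<0$, I would instead split $|x_i|\lessgtr|b|/2$ to separate the integrable singularity of $|x_i|^\gamma$ near the origin from the LP-kernel's concentration at $x_i\sim b$, yielding the same type of bound.

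To pass from the $L^{\vec p,\mathpzc{d}}$-estimate on $S_n^\varphi T_{[\mathpzc{d};i],b}g$ to the Besov estimate, I write $g=\sum_{k\geq 0}S_k^{\tilde\varphi}g$ with $\tilde\varphi\in\Phi^{\mathpzc{d}^{[i]},\vec a^{[i]}}(\R^{d-\mathpzc{d}_i})$; a Fourier-support argument shows $S_n^\varphi T_{[\mathpzc{d};i],b}S_k^{\tilde\varphi}g=0$ for $k>n+C$. Multiplying by $2^{ns}$ and taking $\ell^q_n$, the sum over $k\leq n+C$ takes the form of a discrete convolution with the geometric tail $(2^{m\tilde s})_{m\geq -C}$, where $\tilde s$ is the gap between the target Besov exponent and the source exponent (so $\tilde s=s+a_i(\mathpzc{d}_i-(\mathpzc{d}_i+\gamma)/p_i)$ in case~(i) and $\tilde s=s+a_i(\mathpzc{d}_i-(\mathpzc{d}_i+\gamma_-)/p_i)$ in case~(ii)). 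The hypotheses on $s$ translate to $\tilde s<0$, so this tail lies in $\ell^1$ and Young's inequality closes the argument, delivering the required bound in terms of $\|g\|_{B^{\tilde s,\vec a^{[i]}}_{\vec p^{[i]},q,\mathpzc{d}^{[i]}}(\vec w^{[i]})}$. The main obstacle is the uniform-in-$(n,b)$ bookkeeping of $I_n(b)$ and extracting the correct $\max\{|b|,1\}^{\gamma_+/p_i}$-dependence across the sign cases of $\gamma$; as a clean alternative for case~(i) one may invoke duality from Proposition~\ref{PIBVP:thm:trace_Besov}, noting that $T_{[\mathpzc{d};i],0}$ is the adjoint of $\mathrm{tr}_{[\mathpzc{d};i],0}$ and that the elementary identity $a_i(\mathpzc{d}_i(p_i-1)-\gamma)/p_i=a_i(\mathpzc{d}_i-(\mathpzc{d}_i+\gamma)/p_i)$ matches the exponents under the dualization $(\vec p,\gamma,q)\mapsto(\vec p',-\gamma/(p_i-1),q')$.
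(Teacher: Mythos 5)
Your overall architecture — dyadic analysis of $\delta_b\otimes_{[\mathpzc{d};i]}f$, a translated rescaled kernel estimate in $L^{p_i}(\R^{\mathpzc{d}_i},|\,\cdot\,|^{\gamma})$, the observation that only $k\le n+C$ contributes, and a discrete Hardy/Young argument in $\ell^q$ exploiting $\tilde{s}<0$ — is the same as the paper's, and the scalar integrals $I_n(b)$ and the $\max\{|b|,1\}^{\gamma_+/p_i}$ bookkeeping are in effect the paper's Lemma~\ref{PIBVP:lemma:prop:trace_on_dual;adjoint_trace;afschatting_translatie}. The genuine gap is the step where you estimate the frozen-kernel convolution in the outer variables: a Young-type bound $\|\varphi_n(\,\cdot\,;z_i)\ast^{[i]}h\|_{L^{\vec{p}^{[i]},\mathpzc{d}^{[i]}}(\vec{w}^{[i]})}\lesssim\|\varphi_n(\,\cdot\,;z_i)\|_{L^1}\,\|h\|_{L^{\vec{p}^{[i]},\mathpzc{d}^{[i]}}(\vec{w}^{[i]})}$ fails for general weights, and your substitute — domination by the strong/iterated maximal function — needs precisely the hypotheses you parenthetically assert, namely $p_j\in(1,\infty)$ and $w_j\in A_{p_j}$ for $j\ne i$. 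The lemma only assumes $\vec{p}\in[1,\infty)^l$ and $A_\infty$ weights outside the $i$-th block, so as written your argument proves a strictly weaker statement. Two repairs are available: (a) since you only convolve against the band-limited blocks $S_k^{\tilde{\varphi}}g$, replace the maximal function by the $M_{[\mathpzc{d};j],r_j}$-device with $r_j<1$ chosen so that $w_j\in A_{p_j/r_j}$, as in Lemma~\ref{PIBVP:lemma:appendix:prop:ineq_needed_for_series_conv_TL}, which covers all $A_\infty$ weights and $p_j=1$; or (b) do what the paper does: take the Littlewood--Paley family in tensor form $\varphi_0=\phi_0\otimes_{[\mathpzc{d};i]}\psi_0$, so that $S_n^{\varphi}(\delta_b\otimes_{[\mathpzc{d};i]}f)$ is \emph{exactly} a tensor product of $2^{na_i\mathpzc{d}_i}\phi_0(2^{na_i}[\,\cdot\,-b])$ with the partial sum $\sum_{j\le n}S_j^{\psi}f$; then the mixed norm factorizes identically, no convolution estimate in the outer variables is needed at all, and only the triangle inequality plus Lemma~\ref{PIBVP:lemma:elementaire_afschatting_rijtjes} remain (your Young-in-$\ell^q$ step is the same inequality in disguise).

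Two further points. Your proposed ``clean alternative'' for case (i) — dualizing Proposition~\ref{PIBVP:thm:trace_Besov} — is circular in this paper: that proposition is proved \emph{from} the present lemma by duality, so it cannot be used as an input here; keep the direct argument. And when you ``take the $L^{p_i}(w_\gamma)$-norm in $x_i$'', note that $x_i$ may sit in the middle of the iterated mixed norm, so you should record a pointwise bound of the form (rapid decay in $x_i-b$ at scale $2^{-na_i}$) times (a maximal-type operator applied to $h$, independent of $x_i$) before integrating; with such a tensor-product pointwise domination the mixed norm separates correctly, whereas taking the outer norm first and the $x_i$-norm afterwards is not literally the norm you need to control.
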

In order to perform all the estimates in Lemma~\ref{PIBVP:lemma:prop:trace_on_dual;adjoint_trace} we need the following two lemmas.

\begin{lemma}\label{PIBVP:lemma:prop:trace_on_dual;adjoint_trace;afschatting_translatie}
Let $\psi:\R^{d} \longra \C$ be a rapidly decreasing measurable function and put $\psi_{R}:=R^{d}\psi(R\,\cdot\,)$ for each $R>0$.
Let $p \in [1,\infty)$ and $\gamma \in (-1,\infty)$. For every $R>0$ and $a \in \R^{d}$ the following estimate holds true:
\[
\norm{\psi_{R}(\,\cdot\,-a)}_{L^{p}(\R^{d},|\,\cdot\,|^{\gamma})} \lesssim R^{d-\frac{d+\gamma}{p}}(|a|R+1)^{\gamma_{+}/p}
\]
\end{lemma}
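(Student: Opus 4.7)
The plan is to reduce, via the natural scaling substitution, to a single dilation-free estimate and then split into cases according to the sign of $\gamma$.

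First, substitute $y=R(x-a)$ in the integral defining the $p$th power of the left-hand side. Since $\psi_R(x-a)=R^d\psi(R(x-a))$ and $dx=R^{-d}dy$, one obtains
\[
\norm{\psi_R(\,\cdot\,-a)}_{L^p(\R^d,|\,\cdot\,|^\gamma)}^p = R^{dp-d-\gamma}\int_{\R^d}|\psi(y)|^p\,|y+Ra|^\gamma\,dy.
\]
Setting $b := Ra$, so that $|b|=|a|R$, and taking $p$th roots, the desired bound is equivalent to
\[
I(b) := \int_{\R^d}|\psi(y)|^p\,|y+b|^\gamma\,dy \;\lesssim\; (|b|+1)^{\gamma_+}
\]
with implicit constant independent of $b \in \R^d$.

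For $\gamma \geq 0$ (so $\gamma_+=\gamma$), apply the elementary inequality $|y+b|^\gamma \leq 2^\gamma(|y|^\gamma+|b|^\gamma)$. The rapid decrease of $\psi$ makes $\int_{\R^d}|\psi(y)|^p(1+|y|^\gamma)\,dy$ finite, and hence $I(b) \lesssim 1+|b|^\gamma \lesssim (1+|b|)^\gamma = (1+|b|)^{\gamma_+}$. For $\gamma \in (-1,0)$ (so $\gamma_+=0$), the task reduces to proving $I(b)$ is uniformly bounded in $b$. Split the integration domain according to whether $|y+b|>1$ or $|y+b|\leq 1$. On the outer region $|y+b|^\gamma \leq 1$, so that contribution is bounded by $\norm{\psi}_{L^p}^p<\infty$. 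On the inner region, estimate $|\psi(y)|^p \leq \norm{\psi}_\infty^p$ (finite by rapid decrease) to reduce the contribution to $\norm{\psi}_\infty^p\int_{|z|\leq 1}|z|^\gamma\,dz$, which is finite because $\gamma>-1\geq -d$ ensures the local integrability of $|\,\cdot\,|^\gamma$ at the origin.

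The only (minor) obstacle is the singular case $\gamma\in(-1,0)$: the weight $|y+b|^\gamma$ is unbounded near $y=-b$ and cannot be absorbed by a pointwise bound on $\psi$ uniformly in $b$. The resolution is to quarantine the singularity into a ball of fixed radius and use the hypothesis $\gamma>-1$ to ensure its local integrability, while away from the singularity the weight is simply $\leq 1$ and the rapid decrease of $\psi$ provides $L^p$-integrability of $|\psi|^p$. Combining the two cases yields the required estimate and completes the proof.
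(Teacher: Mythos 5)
Your proof is correct, but it follows a genuinely different route from the paper. You rescale to the shift-invariant inequality $\int_{\R^{d}}|\psi(y)|^{p}\,|y+b|^{\gamma}\,dy\lesssim(1+|b|)^{\gamma_{+}}$ with $b=Ra$ and then argue by hand: for $\gamma\geq 0$ via $|y+b|^{\gamma}\lesssim|y|^{\gamma}+|b|^{\gamma}$ and rapid decrease, and for $\gamma\in(-1,0)$ by isolating the singularity at $y=-b$ in a unit ball, where local integrability of $|\,\cdot\,|^{\gamma}$ (valid since $\gamma>-1\geq-d$) does the work, while off that ball the weight is $\leq 1$. The paper instead performs the same scaling inside the norm, dominates $|\psi|$ by a power $(1+|\,\cdot\,|)^{-dq/p}$ for a $q$ chosen so that $|\,\cdot\,|^{\gamma}\in A_{q}$, and invokes the condition-$(B_{p})$ inequality of Bui (proved in Meyries--Veraar), which bounds the resulting weighted integral by $\int_{B(|a|R,1)}|y|^{\gamma}\,dy\lesssim(|a|R+1)^{\gamma_{+}}$. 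The trade-off: your argument is elementary and self-contained, avoids the $A_{q}$ machinery and the external reference, and in fact works for the whole range $\gamma\in(-d,\infty)$; the paper's argument is shorter once the $(B_{p})$ estimate is quoted and transfers verbatim to an arbitrary $A_{q}$ weight $w$, giving a bound in terms of $\int_{B(|a|R,1)}w(y)\,dy$, with the power-weight structure used only in the final evaluation of that integral. (Minor point, affecting neither proof: the $L^{p}(\R,|\,\cdot\,|^{\gamma})$ in the lemma statement should read $L^{p}(\R^{d},|\,\cdot\,|^{\gamma})$, as you implicitly assumed.)
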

\begin{proof}
By \cite[Condition~$B_{p}$]{Bui_Weighted_Beov&Triebel-Lizorkin_spaces:interpolation...} (see \cite[Lemma~4.5]{Meyries&Veraar_sharp_embd_power_weights} for a proof), if $w$ is an $A_{q}$-weight on $\R^{d}$ with $q \in (1,\infty)$, then
\begin{equation}\label{DBVP:eq:lemma:prop:trace_on_dual;adjoint_trace;afschatting_translatie}
\int_{\R^{d}}(1+|x-y|)^{-dq}\,dy \lesssim_{[w]_{A_{q}},q} \int_{B(x,1)}w(y)\,dy.
\end{equation}
So let us pick $q \in (1,\infty)$ so that $|\,\cdot\,|^{\gamma} \in A_{q}$.
Then, as $\psi$ is rapidly decreasing, there exists $C>0$ such that $|\psi(x)| \leq C (1+|x|)^{-q/p}$ for every $x \in \R^{d}$.
We can thus estimate
\begin{align*}
\norm{\psi_{R}(\,\cdot\,-a)}_{L^{p}(\R^{d},|\,\cdot\,|^{\gamma})}
&= R^{d-\frac{d+\gamma}{p}}\norm{\psi(\,\cdot\,-Ra)}_{L^{p}(\R^{d},|\,\cdot\,|^{\gamma} )} \\
&\leq C R^{d-\frac{d+\gamma}{p}} \norm{t \mapsto (1+|t-Ra|)^{-q/p}}_{L^{p}(\R^{d},|\,\cdot\,|^{\gamma} )} \\
& \stackrel{\eqref{DBVP:eq:lemma:prop:trace_on_dual;adjoint_trace;afschatting_translatie}}{\lesssim}
R^{d-\frac{d+\gamma}{p}}\left( \int_{B(|a|R,1)}|y|^{\gamma}\,dy \right)^{1/p} \\
&\lesssim R^{d-\frac{d+\gamma}{p}}(|a|R+1)^{\gamma_{+}/p}. \qedhere
\end{align*}
\end{proof}

\begin{lemma}\label{PIBVP:lemma:elementaire_afschatting_rijtjes}
For every $r \in [1,\infty]$ and $t>0$ there exists a constant $C > 0$ such that, for all sequences $(b_{k})_{k \in \N} \in \C^{\N}$, the following two inequalities hold true:
\[
\begin{array}{rl}
\norm{\left( 2^{tk}\sum_{n=k+1}^{\infty}|b_{n}| \right)_{k \in \N}}_{\ell^{r}(\N)} &\leq
C\norm{(2^{tk}b_{k})_{k \in \N}}_{\ell^{r}(\N)}, \\
\norm{\left( 2^{-tk}\sum_{n=0}^{k}|b_{n}| \right)_{k \in \N}}_{\ell^{r}(\N)} &\leq
C\norm{(2^{-tk}b_{k})_{k \in \N}}_{\ell^{r}(\N)}. \\
\end{array}
\]
\end{lemma}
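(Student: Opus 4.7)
The plan is to recognize both inequalities as discrete Hardy-type inequalities and reduce them to Minkowski's (generalized triangle) inequality on $\ell^r(\N)$ together with a geometric sum. The key observation is that the weight $2^{\pm tk}$ with $t>0$ makes the natural convolution kernel summable, so the sums are bounded against shifted copies of the weighted sequence.

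For the first inequality, I would re-index by $m := n - k \geq 1$ inside the inner sum to obtain
\[
2^{tk}\sum_{n=k+1}^{\infty}|b_{n}|
= \sum_{m=1}^{\infty} 2^{-tm}\bigl(2^{t(k+m)}|b_{k+m}|\bigr).
\]
Applying Minkowski's inequality in $\ell^{r}(\N)$ in the $k$-variable and then using translation invariance of $\ell^{r}(\N)$ (the shifted sequence $(2^{t(k+m)}|b_{k+m}|)_{k \in \N}$ has $\ell^{r}$-norm at most that of $(2^{tk}|b_{k}|)_{k \in \N}$, since we are discarding the first $m$ terms), one gets
\[
\Bigl\|\bigl(2^{tk}\textstyle\sum_{n>k}|b_{n}|\bigr)_{k}\Bigr\|_{\ell^{r}}
\leq \sum_{m=1}^{\infty} 2^{-tm}\bigl\|(2^{tk}b_{k})_{k}\bigr\|_{\ell^{r}}
= \frac{2^{-t}}{1-2^{-t}}\bigl\|(2^{tk}b_{k})_{k}\bigr\|_{\ell^{r}},
\]
which gives the first estimate with $C = 2^{-t}/(1-2^{-t})$. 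The case $r=\infty$ is included since Minkowski's inequality holds for the $\ell^{\infty}$-norm as well.

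For the second inequality the procedure is symmetric: re-index by $m := k - n \geq 0$ to write
\[
2^{-tk}\sum_{n=0}^{k}|b_{n}|
= \sum_{m=0}^{k} 2^{-tm}\bigl(2^{-t(k-m)}|b_{k-m}|\bigr)
= \sum_{m=0}^{\infty} 2^{-tm}\,\mathbf{1}_{\{k \geq m\}}\bigl(2^{-t(k-m)}|b_{k-m}|\bigr),
\]
and apply Minkowski followed by the translation estimate $\|(2^{-t(k-m)}|b_{k-m}|)_{k \geq m}\|_{\ell^{r}} \leq \|(2^{-tk}|b_{k}|)_{k \in \N}\|_{\ell^{r}}$, yielding the desired bound with $C = 1/(1-2^{-t})$.

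There is essentially no obstacle here; both estimates are elementary and follow from the summability of $\sum_{m} 2^{-tm}$ for $t>0$. The only mild care required is to treat $r = \infty$ (handled by the same Minkowski/triangle argument) and to verify that the shifts do not increase the weighted $\ell^{r}$-norms, which is immediate because the weights $2^{\pm tk}$ are monotone and only non-negative terms are added or removed.
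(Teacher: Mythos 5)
Your argument is correct. Note that the paper itself does not prove this lemma at all: its ``proof'' is only a citation to \cite[Lemma~4.2]{JS_traces} (and the references given there). What you have written is precisely the standard proof of these discrete Hardy-type inequalities that underlies such references: rewrite each sum as a convolution of the weighted sequence with the summable kernel $(2^{-tm})_{m}$, apply the countable Minkowski/triangle inequality in $\ell^{r}(\N)$ (valid for all $r\in[1,\infty]$, including $r=\infty$), observe that the shifted sequences are (sub)sequences of the original weighted sequence and hence have $\ell^{r}$-norm at most (in the second case exactly) equal to it, and sum the geometric series, giving $C=2^{-t}/(1-2^{-t})$ respectively $C=1/(1-2^{-t})$. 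The only cosmetic point is your closing remark about ``monotone weights''; the justification you actually use -- that the shift by $m$ turns the weighted sequence into $(2^{tj}|b_{j}|)_{j\geq m}$, respectively $(2^{-tj}|b_{j}|)_{j\geq 0}$ restricted by the indicator $\mathbf{1}_{\{k\geq m\}}$ -- is the correct one and needs no monotonicity. So your proposal supplies a complete elementary proof of a statement the paper leaves to the literature.
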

\begin{proof}
See \cite[Lemma~4.2]{JS_traces} (and the references given there).
\end{proof}

\begin{proof}[Proof of Lemma~\ref{PIBVP:lemma:prop:trace_on_dual;adjoint_trace}]
Take $\varphi = (\varphi_{n})_{n \in \N} \in \Phi^{(\mathpzc{d},\vec{a})}(\R^{d})$ with $\varphi_{0} = \phi_{0} \otimes_{[\mathpzc{d};i]} \psi_{0}$, where $\phi = (\phi_{n})_{n \in \N} \in \Phi^{a_{i}}(\R^{\mathpzc{d}_{i}})$ and $\psi = (\psi_{n})_{n} \in \Phi^{(\mathpzc{d}^{[i]},\vec{a}^{[i]})}(\R^{d-\mathpzc{d}_{i}})$.
For $f \in \mathcal{S}'(\R^{d-\mathpzc{d}_{i}};X)$ we then have
\[
S^{\varphi}_{0}(\delta_{b} \otimes_{[\mathpzc{d};i]} f) = S^{\phi}_{0}\delta_{b} \otimes_{[\mathpzc{d};i]} S^{\psi}_{0}f = [\phi_{0}*\delta_{b}] \otimes_{[\mathpzc{d};i]}  [S^{\psi}_{0}f] = \phi_{0}(\,\cdot\,-b) \otimes_{[\mathpzc{d};i]}  S^{\psi}_{0}f
\]
and, for $n \geq 1$,
\begin{eqnarray*}
S^{\varphi}_{n}(\delta_{b} \otimes_{[\mathpzc{d};i]}  f)
&=& \varphi_{n}*(\delta_{b} \otimes_{[\mathpzc{d};i]}  f)
 = 2^{n \vec{a}\cdot\mathpzc{d}}\varphi_{0}(\delta^{[\mathpzc{d},\vec{a}]}_{2^{n}}\,\cdot\,)*(\delta_{b} \otimes_{[\mathpzc{d};i]}  f) - 2^{(n-1)\vec{a}\cdot\mathpzc{d}}\varphi_{0}(\delta^{[\mathpzc{d},\vec{a}]}_{2^{n-1}}\,\cdot\,)*(\delta_{b} \otimes_{_{[\mathpzc{d};i]} } f) \\
&=& \left[2^{na_{i}\mathpzc{d}_{i}}\phi_{0}(2^{na_{i}}\,\cdot\,)*\delta_{b} \right] \otimes_{[\mathpzc{d};i]}  \left[ 2^{n \vec{a}^{[i]}\cdot\mathpzc{d}^{[i]}}\psi_{0}(\delta^{[\mathpzc{d}^{[i]},\vec{a}^{[i]}]}_{2^{n}}\,\cdot\,)*f \right] \\
&& - \left[2^{(n-1)a_{i}\mathpzc{d}_{i}}\phi_{0}(2^{(n-1)a_{i}}\,\cdot\,)*\delta_{b} \right] \otimes_{[\mathpzc{d};i]}  \left[ 2^{(n-1) \vec{a}^{[i]}\cdot\mathpzc{d}^{[i]}}\psi_{0}(\delta^{[\mathpzc{d}^{[i]},\vec{a}^{[i]}]}_{2^{n-1}}\,\cdot\,)*f \right] \\
&=& 2^{na_{i}\mathpzc{d}_{i}}\phi_{0}(2^{na_{i}}[\,\cdot\,-b]) \otimes_{[\mathpzc{d};i]}  \left[ 2^{n \vec{a}_{i}\cdot\mathpzc{d}^{[i]}}\psi_{0}(\delta^{[\mathpzc{d}^{i},\vec{a}^{i}]}_{2^{n}}\,\cdot\,)*f \right] \\
&&  - 2^{(n-1)a_{i}\mathpzc{d}_{i}}\phi_{0}(2^{(n-1)a_{i}}[\,\cdot\,-b]) \otimes_{[\mathpzc{d};i]}  \left[ 2^{(n-1) \vec{a}^{[i]}\cdot\mathpzc{d}^{[i]}}\psi_{0}(\delta^{[\mathpzc{d}^{[i]},\vec{a}^{[i]}]}_{2^{n-1}}\,\cdot\,)*f \right] \\
&=& 2^{na_{i}\mathpzc{d}_{i}}\phi_{0}(2^{na_{i}}[\,\cdot\,-b]) \otimes_{[\mathpzc{d};i]}  \sum_{j=0}^{n}S^{\psi}f  - 2^{(n-1)a_{i}\mathpzc{d}_{i}}\phi_{0}(2^{(n-1)a_{i}}[\,\cdot\,-b]) \otimes \sum_{j=0}^{n-1}S^{\psi}_{j}f.
\end{eqnarray*}
Applying Lemma~\ref{PIBVP:lemma:prop:trace_on_dual;adjoint_trace;afschatting_translatie} we obtain the estimate
\begin{align}\label{DBVP:eq:lemma:prop:trace_on_dual;adjoint_trace;estimate}
\norm{S^{\varphi}_{n}(\delta_{b} \otimes_{[\mathpzc{d};i]} f)}_{L^{\vec{p},\mathpzc{d}}(\R^{d},\vec{w};X)}
& \lesssim 2^{na_{i}\left(\mathpzc{d}_{i}-\frac{\mathpzc{d}_{i}+\gamma}{p}\right)}(|b|2^{na_{i}}+1)^{\frac{\gamma_{+}}{p_{i}}} \\
& \: \cdot\:
\left\{\begin{array}{ll}
\norm{S^{\psi}_{0}f}_{L^{\vec{p}^{[i]},\mathpzc{d}^{[i]}}(\R^{d-1},\vec{w}^{[i]};X)}, & n=0; \\
\norm{\sum_{j=0}^{n}S^{\psi}_{j}f}_{L^{\vec{p}^{[i]},\mathpzc{d}^{[i]}}(\R^{d-1},\vec{w}^{[i]};X)} + \norm{\sum_{j=0}^{n-1}S^{\psi}_{j}f}_{L^{\vec{p}^{[i]},\mathpzc{d}^{[i]}}(\R^{d-1},\vec{w}^{[i]};X)}, & n \geq 1.
\end{array}\right. \nonumber
\end{align}

(i) Using \eqref{DBVP:eq:lemma:prop:trace_on_dual;adjoint_trace;estimate}, we can estimate
\begin{eqnarray*}
\norm{\delta \otimes_{[\mathpzc{d};i]} f}_{B^{s,\vec{a}}_{\vec{p},q,\mathpzc{d}}(\R^{d},\vec{w};X)}
&=& \normb{ \left( 2^{sn}\norm{S^{\varphi}_{n}(\delta \otimes_{[\mathpzc{d};i]} f)}_{L^{\vec{p},\mathpzc{d}}(\R^{d},\vec{w};X)} \right)_{n \in \N} }_{\ell^{q}} \\
&\lesssim& \normB{ \Big( 2^{\left(s+a_{i}\left[\mathpzc{d}_{i}- \frac{\mathpzc{d}_{i}+\gamma}{p_{i}}\right] \right)n}\normb{\sum_{j=0}^{n}S^{\psi}_{j}f}_{L^{\vec{p}^{[i]},\mathpzc{d}^{[i]}}(\R^{d-1},\vec{w}^{[i]};X)} \Big)_{n \geq 0} }_{\ell^{q}}.
\end{eqnarray*}
As $s+a_{i}\left(\mathpzc{d}_{i}- \frac{\mathpzc{d}_{i}+\gamma}{p_{i}}\right) < 0$, we obtain the desired estimate by an application of the triangle inequality in $L^{\vec{p}^{[i]},\mathpzc{d}^{[i]}}(\R^{d-1},\vec{w}^{[i]};X)$ followed by Lemma~\ref{PIBVP:lemma:prop:trace_on_dual;adjoint_trace;afschatting_translatie}.

(ii) Observing that
\[
2^{na_{i}\left(\mathpzc{d}_{i}-\frac{\mathpzc{d}_{i}+\gamma}{p}\right)}(|b|2^{na_{i}}+1)^{\frac{\gamma_{+}}{p}} \lesssim
2^{na_{i}\left(\mathpzc{d}_{i}-\frac{\mathpzc{d}_{i}+\gamma_{-}}{p}\right)}\max\{|b|,1\}^{\frac{\gamma_{+}}{p}},
\]
the desired estimate can be derived in the same way as in (i). \qedhere

\end{proof}

\begin{proof}[Proof of Proposition~\ref{PIBVP:thm:trace_Besov}]
Let us first establish \eqref{PIBVP:eq:thm:trace_Besov:trace_estimate} and \eqref{PIBVP:eq:thm:trace_Besov:distr_trace}.
Thanks to the Sobolev embedding Proposition~\ref{PIBVP:Sobolev_embedding_Besov} we may restrict ourselves to the case $\gamma \in (-1,p-1)$, so that $\vec{w} \in \prod_{j=1}^{l}A_{p_{j}}(\R^{\mathpzc{d}_{j}})$.
As $\mathcal{S}(\R^{d};X) \stackrel{d}{\hookrightarrow} B_{\vec{p},q,\mathpzc{d}}^{s,\vec{a}}(\R^{d},\vec{w};X)$ and $\mathcal{S}(\R^{d-\mathpzc{d}_{i}};X) \stackrel{d}{\hookrightarrow} B_{\vec{p}^{[i]},q,\mathpzc{d}^{[i]}}^{t,\vec{a}^{[i]}}(\R^{d-\mathpzc{d}_{i}},
\vec{w}^{[i]};X)$ ($s,t \in \R$), we have
\[
[B_{\vec{p},q,\mathpzc{d}}^{s,\vec{a}}(\R^{d},\vec{w};X)]^{*} \hookrightarrow \mathcal{S}'(\R^{d};X^{*})
\quad \mbox{and} \quad
[B_{\vec{p}^{[i]},q,\mathpzc{d}^{[i]}}^{t,\vec{a}^{[i]}}(\R^{d-\mathpzc{d}_{i}},
\vec{w}^{[i]};X)]^{*} \hookrightarrow \mathcal{S}'(\R^{d-\mathpzc{d}_{i}};X^{*})
\]
under the natural identifications; also see the discussion preceding Lemma~\ref{PIBVP:lemma:prop:trace_on_dual;adjoint_trace}.
In this way we explicitly have
\[
[B_{\vec{p},q,\mathpzc{d}}^{s,\vec{a}}(\R^{d},\vec{w};X)]^{*} = B_{\vec{p}',q',\mathpzc{d}}^{-s,\vec{a}}(\R^{d},\vec{w}';X^{*})
\]
and
\[
[B_{\vec{p}^{[i]},q,\mathpzc{d}^{[i]}}^{t,\vec{a}^{[i]}}(\R^{d-\mathpzc{d}_{i}},
\vec{w}^{[i]};X)]^{*} =
B_{\vec{p}'^{[i]},q',\mathpzc{d}^{[i]}}^{-t,\vec{a}^{[i]}}(\R^{d-\mathpzc{d}_{i}},
\vec{w}'^{[i]};X)
\]
by \cite{Lindemulder_Intersection} as $\vec{w} \in \prod_{j=1}^{l}A_{p_{j}}(\R^{\mathpzc{d}_{j}})$, where $\vec{p}'=(p_{1}',\ldots,p_{l}')$ and $\vec{w} = (w_{1}^{-\frac{1}{p_{1}-1}},\ldots,w_{l}^{-\frac{1}{p_{l}-1}})$.
Note here that $\vec{w}'_{i}(x_{i}) = |x_{i}|^{\gamma'}$ with $\gamma'= -\frac{\gamma}{p_{i}-1}$.
Since $-[s-\frac{a_{i}}{p_{i}}(\mathpzc{d}_{i}+\gamma)] = -s + a_{i}\left( \mathpzc{d}_{i}-\frac{\mathpzc{d}_{i}+\gamma'}{p_{i}'} \right)$ and $-[s-\frac{a_{i}}{p_{i}}(\mathpzc{d}_{i}+\gamma_{+})] = -s + a_{i}\left( \mathpzc{d}_{i}-\frac{\mathpzc{d}_{i}+(\gamma')_{-}}{p_{i}'} \right)$,
it follows from Lemma~\ref{PIBVP:lemma:prop:trace_on_dual;adjoint_trace} and the discussion preceding that
\[
\norm{\mathrm{tr}_{[\mathpzc{d};i]}\,f}
_{B_{\vec{p}^{[i]},q,\mathpzc{d}^{[i]}}^{s-\frac{a_{i}}{p_{i}}(\mathpzc{d}_{i}+\gamma),\vec{a}^{[i]}}(\R^{d-\mathpzc{d}_{i}},
\vec{w}^{[i]};X)} \lesssim \norm{f}_{B_{\vec{p},q,\mathpzc{d}}^{s,\vec{a}}(\R^{d},\vec{w};X)}, \quad\quad f \in \mathcal{S}(\R^{d};X),
\]
and, if $s>\frac{a_{i}}{p_{i}}(\mathpzc{d}_{i}+\gamma_{+})$,
\[
\norm{\mathrm{tr}_{[\mathpzc{d};i],b}\,f}_{
B_{\vec{p}^{[i]},q,\mathpzc{d}^{[i]}}^{s-\frac{a_{i}}{p_{i}}(\mathpzc{d}_{i}+\gamma_{+}),\vec{a}^{[i]}}
(\R^{d-\mathpzc{d}_{i}},\vec{w}^{[i]};X)} \lesssim \rho_{p_{i},\gamma}(b) \norm{f}_{B_{\vec{p},q,\mathpzc{d}}^{s,\vec{a}}(\R^{d},\vec{w};X)}, \quad\quad f \in \mathcal{S}(\R^{d};X),
b \in \R^{\mathpzc{d}_{i}}.
\]
These two inequalities imply \eqref{PIBVP:eq:thm:trace_Besov:trace_estimate} and \eqref{PIBVP:eq:thm:trace_Besov:distr_trace}, respectively.

Let us finally show that the extension operator $\mathrm{ext}$ from Lemma \ref{PIBVP:prop:right_inverse_distr_trace} (with $\tilde{\mathpzc{d}} = \mathpzc{d}^{[i]}$ and $\tilde{\vec{a}}= \vec{a}^{[i]}$, modified in the obvious way to the $i$-th multidimensional coordinate) restricts to a coretraction for $\mathrm{tr}_{[\mathpzc{d};i]}$.
To this end we fix $g \in B_{\vec{p}^{[i]},q,\mathpzc{d}^{[i]}}^{s-\frac{a_{i}}{p_{i}}(\mathpzc{d}_{i}+\gamma),\vec{a}^{[i]}}(\R^{d-\mathpzc{d}_{i}},
\vec{w}^{[i]};X)$. In view of (the modified version of)  \eqref{functieruimten:eq:prop;right_inverse_distr_trace_supports} and Lemma~\ref{PIBVP:lemma:appendix:conv_series_dyadic_corona;Besov}, it suffices to estimate
\[
\norm{(2^{ns}\rho(2^{na_{i}}\,\cdot\,) \otimes_{[\mathpzc{d};i]} [\psi_{n}*g])_{n \in\N}}_{\ell_{q}(\N;L^{\vec{p},\mathpzc{d}}(\R^{d},\vec{w};X))} \lesssim
\norm{g}_{B_{\vec{p}^{[i]},q,\mathpzc{d}^{[i]}}^{s-\frac{a_{i}}{p_{i}}(\mathpzc{d}_{i}+\gamma),\vec{a}^{[i]}}(\R^{d-\mathpzc{d}_{i}},
\vec{w}^{[i]};X)}.
\]
A simple computation even shows that
\[
\norm{(2^{ns}\rho(2^{na_{1}}\,\cdot\,) \otimes_{[\mathpzc{d};i]} [\psi_{n}*g])_{n \in\N }}_{\ell_{q}(\N;L^{\vec{p},\mathpzc{d}}(\R^{d},\vec{w};X))}
= \norm{\rho}_{L^{p_{i}}(\R^{\mathpzc{d}_{i}},|\,\cdot\,|^{\gamma})}
\norm{g}_{B_{\vec{p}^{[i]},q,\mathpzc{d}^{[i]}}^{s-\frac{a_{i}}{p_{i}}(\mathpzc{d}_{i}+\gamma),\vec{a}^{[i]}}(\R^{d-\mathpzc{d}_{i}},
\vec{w}^{[i]};X)}.
\]
\end{proof}

\subsubsection{The proof of Theorem~\ref{PIBVP:thm:trace_TL}}

For the proof of Theorem~\ref{PIBVP:thm:trace_TL} we need three lemmas. Two lemmas concern estimates in Triebel-Lizorkin spaces for series satisfying certain Fourier support conditions, which can be found in Appendix~\ref{PIBVP:appendix:series_estimates}.
The other lemma is Lemma~\ref{PIBVP:lemma:elementaire_afschatting_rijtjes}.

\begin{proof}[Proof of Theorem~\ref{PIBVP:thm:trace_TL}.]
Let the notations be as in Proposition \ref{PIBVP:prop:right_inverse_distr_trace}.
We will show that, for an arbitrary $\varphi \in \Phi^{\mathpzc{d},a}(\R^{d})$,
\begin{itemize}
\item[(I)] $\tau^{\varphi}$ exists on $F_{\vec{p},q,\mathpzc{d}}^{s,\vec{a}}(\R^{d},(w_{\gamma},\vec{w}'');X)$ and defines a continuous operator
\[
\tau^{\varphi}:F_{\vec{p},q,\mathpzc{d}}^{s,\vec{a}}(\R^{d},(w_{\gamma},\vec{w}'');X) \longra F_{\vec{p}'',p_{1},\mathpzc{d}''}^{s-\frac{a_{1}}{p_{1}}(1+\gamma),\vec{a}''}(\R^{d-1},\vec{w}'';X);
\]
\item[(II)] The extension operator $\mathrm{ext}$ from Proposition \ref{PIBVP:prop:right_inverse_distr_trace} (with $\tilde{\mathpzc{d}} = \mathpzc{d}''$ and $\tilde{\vec{a}}=\vec{a}''$) restricts to a continuous operator
\[
\mathrm{ext}:F_{\vec{p}'',p_{1},\mathpzc{d}''}^{s-\frac{a_{1}}{p_{1}}(1+\gamma),\vec{a}''}(\R^{d-1},\vec{w}'';X) \longra
F_{\vec{p},q,\mathpzc{d}}^{s,\vec{a}}(\R^{d},(w_{\gamma},\vec{w}'');X).
\]
\end{itemize}
Since $\mathscr{F}^{-1}C^{\infty}_{c}(\R^{d};X) \subset \mathscr{F}^{-1}\mathcal{E}'(\R^{d-1};X) \cap F_{\vec{p}'',p_{1},\mathpzc{d}''}^{s,\vec{a}''}(\R^{d-1},\vec{w}'';X)$ is a dense subspace of $F_{\vec{p}'',p_{1},\mathpzc{d}''}^{s,\vec{a}''}(\R^{d},\vec{w}'';X)$, the right inverse part in the first assertion follows from (I) and (II). The independence of $\varphi$ in the first assertion follows from denseness of $\mathcal{S}(\R^{d};X)$ in $F_{\vec{p},q,\mathpzc{d}}^{s,\vec{a}}(\R^{d},(w_{\gamma},\vec{w}'');X)$ in case $q<\infty$, from which the case $q=\infty$ can be deduced via a combination of \eqref{PIBVP:prelim:eq:elem_embd_epsilon} and \eqref{PIBVP:prelim:eq:elem_embd_BF_rel}.

(I): We may with out loss of generality assume that $q=\infty$.
Let $f \in F_{\vec{p},\infty,\mathpzc{d}}^{s,\vec{a}}(\R^{d},(w_{\gamma},\vec{w}'');X)$ and write $f_{n} := S_{n}f$ for each $n$.
Then each $f_{n} \in \mathcal{S}'(\R^{d};X)$ has Fourier support
\[
\supp \hat{f}_{n} \subset \prod_{j=1}^{l}[-c2^{na_{j}},c2^{na_{j}}]^{\mathpzc{d}_{j}}
\]
for some constant $c>0$ only depending on $\varphi$.
Therefore, as a consequence of the Paley-Wiener-Schwartz theorem, we have $f_{n}(0,\cdot) \in \mathcal{S}'(\R^{d-1};X)$ with Fourier support contained in $\prod_{j=2}^{l}[-c2^{na_{j}},c2^{na_{j}}]^{\mathpzc{d}_{j}}$.
In view of Lemma-\ref{PIBVP:lemma:appendix:prop;trace_TL_conv_series},
it suffices to show that
\begin{equation}\label{functieruimten:eq:thm;trace_TL_I_suffices}
\norm{\left(2^{n[s-\frac{a_{1}}{p_{1}}(1+\gamma)]}f_{n}(0,\cdot)\right)_{n \geq 0}}_{L^{\vec{p}'',\mathpzc{d}''}(\R^{d-1},\vec{w}'';\ell^{p_{1}}(\N;X))}
\lesssim \norm{f}_{F_{\vec{p},\infty,\mathpzc{d}}^{s,\vec{a}}(\R^{d},(w_{\gamma},\vec{w}'');X)}.
\end{equation}

In order to establish the estimate \eqref{functieruimten:eq:thm;trace_TL_I_suffices}, we
pick an $r_{1} \in (0,1)$ such that $w_{\gamma} \in A_{p_{1}/r_{1}}(\R)$, and write $\vec{r}:=(r_{1},\vec{r}'') \in (0,1)^{l}$.
For all $x=(x_{1},x'') \in [2^{-na_{1}},2^{(1-n)a_{1}}] \times \R^{d-1}$ and every $n \in \N$ we have
\[
\norm{f_{n}(0,x'')} \leq C_{1}\frac{\norm{f_{n}(x_{1}-y_{1},x'')}}{1+|2^{na_{1}}y_{1}|^{1/r_{1}}}\Big|_{y_{1}=x_{1}}
\leq (1+2^{\frac{a_{1}}{r_{1}}})f_{n}^{*}(\vec{r},\vec{b}^{[n]},\mathpzc{d};x) = C_{1}f_{n}^{*}(\vec{r},\vec{b}^{[n]},\mathpzc{d};x),
\]
where $\vec{b}^{[n]}:= (2^{na_{1}},\ldots,2^{na_{l}}) \in (0,\infty)^{l}$ and where $f_{n}^{*}(\vec{r},\vec{b}^{[n]},\mathpzc{d};\,\cdot\,)$ is the maximal function of Peetre-Fefferman-Stein type given in \eqref{PIBVP:eq:appendix:max_funct_Peetre-Fefferman-Stein_type}.
Raising this to the $p_{1}$-th power, multiplying by $2^{nsp_{1}}|x_{1}|^{\gamma}$, and integrating over $x_{1} \in [2^{-na_{1}},2^{(1-n)a_{1}}]$, we obtain
\[
\frac{2^{a_{1}(\gamma+1)}-1}{1+\gamma}2^{n\left(s-\frac{a_{1}}{p_{1}}(1+\gamma)\right)p_{1}}\norm{f_{n}(0,x'')}^{p_{1}}  \leq C_{1}^{p}\int_{[2^{-na_{1}},2^{(1-n)a_{1}}]}
\left[2^{ns}f_{n}^{*}(\vec{r},\vec{b}^{[n]},\mathpzc{d};(x_{1},x''))\right]^{p_{1}}|x_{1}|^{\gamma}dx_{1}.
\]
It now follows that
\[
\sum_{n=0}^{\infty}2^{\left(s-\frac{a_{1}}{p_{1}}(1+\gamma)\right)np_{1}}\norm{f_{n}(0,x'')}^{p_{1}}  \leq C_{2}\int_{\R}\norm{\left(2^{ks}f_{k}^{*}(\vec{r},\vec{b}^{[n]},\mathpzc{d};(x_{1},x''))\right)_{k \geq 0}}_{\ell^{\infty}(\N)}^{p_{1}}|x_{1}|^{\gamma}dx_{1},
\]
from which we in turn obtain
\[
\norm{\left(2^{n[s-\frac{a_{1}}{p_{1}}(1+\gamma)]}f_{n}(0,\cdot)\right)_{n \geq 0}}_{L^{\vec{p}'',\mathpzc{d}''}(\R^{d-1},\vec{w}'';\ell^{p_{1}}(\N;X))}
\leq \norm{\left(2^{ks}f_{k}^{*}(\vec{r},\vec{b}^{[n]},\mathpzc{d};\,\cdot\,)\right)_{k \geq 0}}_{L^{\vec{p},\mathpzc{d}}(\R^{d},(w_{\gamma},\vec{w}'');\ell^{\infty}(\N))}.
\]
Since $(f_{k})_{k \in \N} \subset \mathcal{S}'(\R^{d};X)$ satisfies $\supp(\hat{f}_{k}) \subset \prod_{j=1}^{l}[-b^{[k]}_{j},b^{[k]}_{j}]^{\mathpzc{d}_{j}}$ for each $k \in \N$ and some $c>0$, the desired estimate \eqref{functieruimten:eq:thm;trace_TL_I_suffices} is now a consequence of Proposition~
\ref{PBIBV:lemma:appendix:Peetre-Fefferman-Stein_maximal_ineq}.

(II): We may with out loss of generality assume that $q=1$.
Let $g \in F_{\vec{p}'',p_{1},\mathpzc{d}''}^{s-\frac{a_{1}}{p_{1}}(1+\gamma),\vec{a}''}(\R^{d-1},\vec{w}'';X)$ and write $g_{n}=T_{n}g$ for each $n$.
By construction of $\mathrm{ext}$ we have  $\mathrm{ext}\,g = \sum_{n=0}^{\infty}\rho(2^{na_{1}}\,\cdot\,) \otimes g_{n}$ in $\mathcal{S}'(\R^{d};X)$ with each $\rho(2^{na_{1}}\,\cdot\,) \otimes g_{n}$ satisfying (\ref{functieruimten:eq:prop;right_inverse_distr_trace_supports}) for a $c > 1$ independent of $g$.
In view of Lemma~\ref{PIBVP:lemma:appendix:conv_series_dyadic_corona;TL}, it is thus enough to show that
\begin{equation}\label{functieruimten:eq:thm;trace_TL_II_suffices}
\norm{(2^{sn}\rho(2^{na_{1}}\,\cdot\,) \otimes g_{n} )_{n \geq 0}}_{L^{p,\mathpzc{d}}(\R^{d},(w_{\gamma},w'');\ell^{1}(X))}
\lesssim \norm{g}_{F_{p'',p_{1},\mathpzc{d}''}^{s-\frac{a_{1}}{p_{1}}(1+\gamma),a''}(\R^{d-1},w'';X)}.
\end{equation}

In order to establish the estimate \eqref{functieruimten:eq:thm;trace_TL_II_suffices}, we define, for each $x'' \in \R^{d-1}$,
\begin{equation}\label{functieruimten:eq:bewijs_extensie_op_TL}
I(x'') := \int_{\R}\left(\sum_{n=0}^{\infty}2^{sn}\norm{\rho(2^{na_{1}}x_{1})g_{n}(x'')} \right)^{p_{1}}|x_{1}|^{\gamma}dx_{1}.
\end{equation}
We furthermore first choose a natural number $N > \frac{1}{p_{1}}(1+\gamma)$ and subsequently pick a constant $C_{1} > 0$ for which the Schwartz function $\rho \in \mathcal{S}(\R)$ satisfies the inequality
$|\rho(2^{na_{1}}x_{1})| \leq C_{1}|2^{na_{1}}x_{1}|^{-N}$ for every $n \in \N$ and all $x_{1} \neq 0$.

Denoting by $I_{1}(x'')$ the integral over $\R \setminus [-1,1]$ in (\ref{functieruimten:eq:bewijs_extensie_op_TL}), we have
\begin{align}
I_{1}(x'')
&\leq
C_{1}\int_{\R \setminus [-1,1]}\left(\sum_{n=0}^{\infty}2^{-Na_{1}n} \,2^{sn}\norm{g_{n}(x'')}\right)^{p_{1}}|x_{1}|^{-Np_{1}+\gamma}dx_{1} \nonumber \\
&= C_{1}\int_{\R \setminus [-1,1]}|x_{1}|^{-Np_{1}+\gamma}dx_{1}\left(\sum_{n=0}^{\infty} 2^{\left(\frac{1}{p_{1}}(1+\gamma)-N\right)a_{1}n}\, 2^{\left(s-\frac{a_{1}}{p_{1}}(1+\gamma)\right)n}\norm{g_{n}(x'')} \right)^{p_{1}} \nonumber \\
&\leq \underbrace{\int_{\R \setminus [-1,1]}|x_{1}|^{-Np_{1}+\gamma}dx_{1}\norm{\left(\,2^{\left(\frac{1}{p_{1}}(1+\gamma)-N\right)a_{1}n}\,\right)_{n \geq 0}}_{\ell^{p_{1}'}(\N)}^{p_{1}}}_{=:C_{2} \in [0,\infty)}\norm{\left(\,2^{\left(s-\frac{a_{1}}{p_{1}}(1+\gamma)\right)n}\norm{g_{n}(x'')}\,\right)_{n \geq 0}}_{\ell^{p_{1}}(\N)}^{p_{1}}. \label{functieruimten:eq:bewijs_extensie_op_TL;1}
\end{align}

Next we denote, for each $k \in \N$, by $I_{0,k}(x'')$ the integral over $D_{k}:=\{ x_{1} \in \R \mid 2^{-(k+1)a_{1}} \leq |x_{1}| \leq 2^{-ka_{1}}\}$ in (\ref{functieruimten:eq:bewijs_extensie_op_TL}). Since the $D_{k}$ are of measure
$w_{\gamma}(D_{k}) \leq C_{3} 2^{-ka_{1}(\gamma+1)}$ for some constant $C_{3}>0$ independent of $k$, we can estimate
\begin{eqnarray*}
I_{0,k}(x'')
&\leq& \int_{D_{k}}\left(\sum_{n=0}^{k}2^{sn}\norm{\rho}_{\infty}\norm{g_{n}(x'')} + \sum_{n=k+1}^{\infty}C_{1}2^{(s-a_{1}N)n}|x_{1}|^{-N}\norm{g_{n}(x'')} \right)^{p_{1}}|x_{1}|^{\gamma}dx_{1} \\
&\leq& C_{3}2^{-ka_{1}(\gamma+1)}\left(\sum_{n=0}^{k}2^{sn}\norm{\rho}_{\infty}\norm{g_{n}(x'')} + \sum_{n=k+1}^{\infty}C_{1}2^{(s-a_{1}N)n}2^{Na_{1}(k+1)}\norm{g_{n}(x'')} \right)^{p_{1}} \\
&\leq& C_{3}2^{p_{1}}\norm{\rho}_{\infty}^{p_{1}}2^{-ka_{1}(\gamma+1)}\left(\sum_{n=0}^{k}2^{sn}\norm{g_{n}(x'')} \right)^{p_{1}}
 \\
& & \:\:+\:\: C_{3}2^{p_{1}}(C_{1}2^{Na_{1}})^{p_{1}} 2^{k\left(N-\frac{1}{p_{1}}(\gamma+1)\right)a_{1}p_{1}}\left(\sum_{n=k+1}^{\infty}2^{(s-a_{1}N)n}\norm{g_{n}(x'')} \right)^{p_{1}}.
\end{eqnarray*}
Writing $I_{0}(x''):= \sum_{k=0}^{\infty}I_{0,k}(x'')$, which is precisely the integral over $[-1,1]$ in (\ref{functieruimten:eq:bewijs_extensie_op_TL}), we obtain
\begin{eqnarray*}
I_{0}(x'')
&\leq& C_{4} \sum_{k=0}^{\infty}2^{-ka_{1}(\gamma+1)}\left(\sum_{n=0}^{k}2^{sn}\norm{g_{n}(x'')} \right)^{p_{1}} + C_{4}\sum_{k=0}^{\infty} 2^{k\left(N-\frac{1}{p_{1}}(\gamma+1)\right)a_{1}p_{1}}\left(\sum_{n=k+1}^{\infty}2^{(s-a_{1}N)n}\norm{g_{n}(x'')} \right)^{p_{1}}  \\
&=& C_{4}\norm{ \left( 2^{-\frac{a_{1}}{p_{1}}(1+\gamma)k}\sum_{n=0}^{k}2^{sn}\norm{g_{n}(x'')}\right)_{k \in \N} }_{\ell^{p_{1}}(\N)}^{p_{1}} \\
&& \quad\quad + \quad C_{4}\norm{ \left( 2^{\left(N-\frac{1}{p_{1}}(1+\gamma)\right)a_{1}k}\sum_{n=k+1}^{\infty}2^{(s-a_{1}N)n}\norm{g_{n}(x'')}\right)_{k \in \N} }_{\ell^{p_{1}}(\N)}^{p_{1}},
\end{eqnarray*}
which via an application of Lemma~\ref{PIBVP:lemma:elementaire_afschatting_rijtjes} can be further estimated as
\begin{align}
I_{0}(x'')
&\leq C_{5}\norm{\left(\,2^{-\frac{a_{1}}{p_{1}}(1+\gamma)k}2^{sk}\norm{g_{k}(x'')}\,\right)_{k \geq 0}}_{\ell_{p_{1}}(\N)}^{p_{1}} +
C_{5}\norm{\left(\,2^{\left(N-\frac{1}{p_{1}}(\gamma+1)\right)a_{1}k}2^{(s-a_{1}N)k}\norm{g_{k}(x'')}\,\right)_{k \geq 0}}_{\ell^{p_{1}}(\N)}^{p_{1}} \nonumber \\
&= 2C_{5}\norm{\left(\,2^{\left(s-\frac{a_{1}}{p_{1}}(1+\gamma)\right)k}\norm{g_{k}(x'')}\,\right)_{k \geq 0}}_{\ell^{p_{1}}(\N)}^{p_{1}}. \label{functieruimten:eq:bewijs_extensie_op_TL;0}
\end{align}

Combining the estimates \eqref{functieruimten:eq:bewijs_extensie_op_TL;1} and \eqref{functieruimten:eq:bewijs_extensie_op_TL;0}, we get
\[
I(x'')^{1/p_{1}} \leq C_{6}\norm{\left(\,2^{\left(s-\frac{a_{1}}{p_{1}}(1+\gamma)\right)n}\norm{g_{n}(x'')}\,\right)_{n \geq 0}}_{\ell^{p_{1}}(\N)},
\]
from which \eqref{functieruimten:eq:thm;trace_TL_II_suffices} follows by taking $L^{\vec{p}'',\mathpzc{d}''}(\R^{d-1},\vec{w}'')$-norms.
\end{proof}

\section{Sobolev embedding for Besov spaces}\label{PIBVP:sec:Sobolev_embedding_Besov}

The result below is a direct extension of part of \cite[Proposition~1.1]{Meyries&Veraar_sharp_embd_power_weights}.
We refer to \cite{JS_Sob_embeddings} for embedding results for unweighted anisotropic mixed-norm Besov space and we refer to \cite{Haroske&Skrzypczak2011_Entropy_and_aprrox} for embedding results of weighted Besov spaces.

\begin{prop}\label{PIBVP:Sobolev_embedding_Besov}
Let $X$ be a Banach space, $\vec{p},\tilde{\vec{p}} \in (1,\infty)^{l}$, $q,\tilde{q} \in [1,\infty]$, $s,\tilde{s} \in \R$, $\vec{a} \in (0,\infty)^{l}$, and $\vec{w},\tilde{\vec{w}} \in \prod_{j=1}^{l}A_{\infty}(\R^{\mathpzc{d}_{j}})$.
Suppose that $J \subset \{1,\ldots,l\}$ is such that
\begin{itemize}
\item $p_{j} = \tilde{p}_{j}$ and $w_{j} = \tilde{w}_{j}$ for $j \notin J$;
\item $w_{j}(x_{j}) = |x_{j}|^{\gamma_{j}}$ and $\tilde{w}_{j}(x_{j}) = |x_{j}|^{\tilde{\gamma}_{j}}$ for $j \in J$ for some $\gamma_{j},\tilde{\gamma}_{j} > -\mathpzc{d}_{j}$ satisfying
\[
\frac{\tilde{\gamma}_{j}}{\tilde{p}_{j}} \leq \frac{\gamma_{j}}{p_{j}} \quad \mbox{and} \quad
\frac{\mathpzc{d}_{j}+\tilde{\gamma}_{j}}{\tilde{p}_{j}} < \frac{\mathpzc{d}_{j}+\gamma_{j}}{p_{j}}.
 \]
\end{itemize}
Furthermore, assume that $q \leq \tilde{q}$ and that $s- \sum_{i \in I}a_{i}\frac{\mathpzc{d}_{i}+\gamma_{i}}{p_{i}} > \tilde{s} - a_{i}\sum_{i \in I}\frac{\mathpzc{d}_{i}+\tilde{\gamma}_{i}}{\tilde{p}_{i}}$.
Then
\[
B^{s,\vec{a}}_{\vec{p},q,\mathpzc{d}}(\R^{d},\vec{w};X) \hookrightarrow B^{\tilde{s},\vec{a}}_{\tilde{\vec{p}},\tilde{q},\mathpzc{d}}(\R^{d},\tilde{\vec{w}};X).
\]
\end{prop}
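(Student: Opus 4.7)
The plan is to adapt the Littlewood-Paley based proof of the Sobolev embedding carried out by Meyries and Veraar in the scalar non-mixed-norm case to the present weighted anisotropic mixed-norm Banach space-valued setting. The argument decomposes naturally into three steps: a reduction to one coordinate at a time, a weighted Nikolskii-type inequality, and a Littlewood-Paley bookkeeping.

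First, by iterating one coordinate at a time it suffices to treat the case $|J|=1$, say $J=\{i\}$. Indeed, the strict inequality in the hypothesis on $s-\tilde{s}$ allows one to distribute the slack evenly among $|J|$ intermediate Besov spaces, each differing from the next only in the data $(p_{j},w_{j})\rightsquigarrow(\tilde{p}_{j},\tilde{w}_{j})$ for a single $j \in J$. The other hypotheses are preserved in each step since only one coordinate is changed at a time.

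Second, the core ingredient is a weighted Nikolskii-type inequality in the $i$-th coordinate for power weights: for $g$ with $\supp \hat{g} \subset [-R,R]^{\mathpzc{d}_{i}}$,
\[
\|g\|_{L^{\tilde{p}_{i}}(\R^{\mathpzc{d}_{i}},|\cdot|^{\tilde{\gamma}_{i}};X)} \lesssim R^{(\mathpzc{d}_{i}+\gamma_{i})/p_{i} - (\mathpzc{d}_{i}+\tilde{\gamma}_{i})/\tilde{p}_{i}}\,\|g\|_{L^{p_{i}}(\R^{\mathpzc{d}_{i}},|\cdot|^{\gamma_{i}};X)}.
\]
Following Meyries-Veraar, one writes $g = \phi_{R}*g$ for a Schwartz function $\phi$ whose Fourier transform equals $1$ on $[-1,1]^{\mathpzc{d}_{i}}$ (dilated to scale $R$), controls $|g|$ pointwise by a Peetre-type maximal function of $g$, and invokes standard weighted maximal estimates. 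The two hypotheses $\tilde{\gamma}_{i}/\tilde{p}_{i} \leq \gamma_{i}/p_{i}$ and $(\mathpzc{d}_{i}+\tilde{\gamma}_{i})/\tilde{p}_{i} < (\mathpzc{d}_{i}+\gamma_{i})/p_{i}$ are exactly what is needed for these kernel integrals against $|x|^{\tilde{\gamma}_{i}}$ to converge, quantifying the trade-off between integrability near the singular set $\{x_{i}=0\}$ and at infinity.

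Third, apply the Nikolskii inequality slice-by-slice in the $i$-th coordinate to each Littlewood-Paley block $S_{n}f$, whose Fourier support in the $i$-th coordinate has radius $\lesssim 2^{na_{i}}$, and use Fubini in the remaining coordinates (valid thanks to the product structure of the mixed-norm space and the uniformity of the estimate in the other variables) to obtain
\[
\|S_{n}f\|_{L^{\tilde{\vec{p}},\mathpzc{d}}(\R^{d},\tilde{\vec{w}};X)} \lesssim 2^{n a_{i}[(\mathpzc{d}_{i}+\gamma_{i})/p_{i} - (\mathpzc{d}_{i}+\tilde{\gamma}_{i})/\tilde{p}_{i}]}\,\|S_{n}f\|_{L^{\vec{p},\mathpzc{d}}(\R^{d},\vec{w};X)}.
\]
Multiplying by $2^{n\tilde{s}}$, the (reduced) hypothesis leaves a strictly positive surplus $\epsilon$ in the exponent, so $2^{n\tilde{s}}\|S_{n}f\|_{L^{\tilde{\vec{p}}}(\tilde{\vec{w}};X)} \lesssim 2^{-n\epsilon}\cdot 2^{ns}\|S_{n}f\|_{L^{\vec{p}}(\vec{w};X)}$. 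Taking $\ell^{\tilde{q}}(\N)$-norms and combining the trivial inclusion $\ell^{q}(\N) \hookrightarrow \ell^{\tilde{q}}(\N)$ (since $q \leq \tilde{q}$) with the geometric decay $2^{-n\epsilon}$ via Hölder then yields the target embedding. The main technical obstacle is the weighted Nikolskii inequality itself; once it is in place the rest is a routine Littlewood-Paley argument, and the $X$-valuedness causes no trouble since every estimate carries over with $|\cdot|$ replaced by $\|\cdot\|_{X}$.
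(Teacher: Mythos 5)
Your proposal is correct and follows essentially the same route as the paper: the embedding is reduced to a weighted Plancherel--P\'olya--Nikol'skii inequality for band-limited functions (Lemma~\ref{PIBVP:PPN-type_ineq}, which the paper obtains from the one-coordinate result of Meyries--Veraar by Schwartz-kernel/Paley--Wiener slicing, i.e.\ by applying the $E$-valued one-dimensional estimate with $E$ the inner mixed-norm space -- the precise form of your ``slice-by-slice plus Fubini'' step), applied to the Littlewood--Paley blocks $S_{n}f$ with $R_{j}\sim 2^{na_{j}}$, after which the smoothness slack gives geometric decay and $q\le\tilde q$ gives the $\ell^{q}\hookrightarrow\ell^{\tilde q}$ step. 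The only difference is cosmetic: you iterate one coordinate at a time at the level of the Besov embedding, whereas the paper iterates at the level of the Nikol'skii-type inequality itself and then concludes in one stroke.
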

\begin{proof}
This is an immediate consequence of inequality of Plancherel-P\'olya-Nikol'skii type given in Lemma~\ref{PIBVP:PPN-type_ineq}.
\end{proof}

\begin{lemma}\label{PIBVP:PPN-type_ineq}
Let $X$ be a Banach space, $\vec{p},\tilde{\vec{p}} \in (1,\infty)^{l}$, and $\vec{w},\tilde{\vec{w}} \in \prod_{j=1}^{l}\mathcal{W}(\R^{\mathpzc{d}_{j}})$.
Suppose that $J \subset \{1,\ldots,l\}$ is such that
\begin{itemize}
\item $p_{j} = \tilde{p}_{j}$ and $w_{j} = \tilde{w}_{j}$ for $j \notin J$;
\item $w_{j}(x_{j}) = |x_{j}|^{\gamma_{j}}$ and $\tilde{w}_{j}(x_{j}) = |x_{j}|^{\tilde{\gamma}_{j}}$ for $j \in J$ for some $\gamma_{j},\tilde{\gamma}_{j} > -\mathpzc{d}_{j}$ satisfying
\[
\frac{\tilde{\gamma}_{j}}{\tilde{p}_{j}} \leq \frac{\gamma_{j}}{p_{j}} \quad \mbox{and} \quad
\frac{\mathpzc{d}_{j}+\tilde{\gamma}_{j}}{\tilde{p}_{j}} < \frac{\mathpzc{d}_{j}+\gamma_{j}}{p_{j}}.
 \]
\end{itemize}
Then there exists a constant $C>0$ such that, for all $f \in \mathcal{S}'(\R^{d};X)$ with
$\supp(\hat{f}) \subset \prod_{j=1}^{l}[-R_{1},R_{1}]^{\mathpzc{d}_{j}}$ for some $R_{1},\ldots,R_{l} > 0$, we have the inequality
\[
\norm{f}_{L^{\tilde{\vec{p}},\mathpzc{d}}(\R^{d},\tilde{\vec{w}};X)} \leq
C\left(\prod_{j \in J}R_{j}^{\delta_{j}}\right) \norm{f}_{L^{\vec{p},\mathpzc{d}}(\R^{d},\vec{w};X)},
\]
where $\delta_{j}:= (\mathpzc{d}_{j}+\gamma_{j})/p_{j} - (\mathpzc{d}_{j}+\tilde{\gamma}_{j})/\tilde{p}_{j} > 0$ for each $j \in J$.
\end{lemma}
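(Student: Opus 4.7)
The plan is to reduce the inequality to a one-block Plancherel-Pólya-Nikol'skii estimate with power weights in the spirit of \cite[Prop.~1.1]{Meyries&Veraar_sharp_embd_power_weights}, then chain the single-coordinate versions together. The first reduction is via anisotropic dilation: setting $g(x) := f(\delta_{\vec{R}}^{-1}x)$, where $\delta_{\vec{R}}$ acts as $x_j \mapsto R_j x_j$ for $j \in J$ and as the identity for $j \notin J$, the distribution $\hat{g}$ is supported in the unit box in every coordinate of $J$; using the homogeneity $|R_j y_j|^{\gamma_j} = R_j^{\gamma_j}|y_j|^{\gamma_j}$ of the power weights, direct substitution gives
\[
\|f\|_{L^{\vec{p},\mathpzc{d}}(\R^d, \vec{w}; X)} = \prod_{j \in J} R_j^{-(\mathpzc{d}_j+\gamma_j)/p_j}\,\|g\|_{L^{\vec{p},\mathpzc{d}}(\R^d, \vec{w}; X)},
\]
together with the analogous identity for the tilded data. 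The factors $R_j^{\delta_j}$ in the claim are exactly the mismatch of these two scaling exponents, so the problem reduces to the unit-box case $R_j = 1$ for all $j \in J$.

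The second reduction is to a single $j_0 \in J$, by iterating the resulting single-coordinate estimate across $J$. To change only the $j_0$-th exponent and weight, view $g$ as a Bochner function $\R^{\mathpzc{d}_{j_0}} \longra Y$, where $Y$ is the mixed-norm Bochner space in the remaining coordinates (whose integrability exponents and weights stay fixed in this step). The nested structure of $L^{\vec{p},\mathpzc{d}}(\vec{w}; X)$ then identifies the ambient norm with $\|g\|_{L^{p_{j_0}}(\R^{\mathpzc{d}_{j_0}}, |\cdot|^{\gamma_{j_0}}; Y)}$, and a Paley-Wiener argument applied to the partial Fourier transform in $x_{j_0}$ shows that this $g$ still has $Y$-valued Fourier transform supported in $[-1,1]^{\mathpzc{d}_{j_0}}$. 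What remains is thus the single-block statement: for every Banach space $Y$ and every $h \in \mathcal{S}'(\R^m; Y)$ (with $m := \mathpzc{d}_{j_0}$) whose $Y$-valued Fourier transform is supported in $[-1,1]^m$,
\[
\|h\|_{L^{\tilde{p}}(\R^m, |\cdot|^{\tilde{\gamma}}; Y)} \lesssim \|h\|_{L^{p}(\R^m, |\cdot|^{\gamma}; Y)},
\]
under $\gamma, \tilde{\gamma} \in (-m, \infty)$, $\tilde{\gamma}/\tilde{p} \leq \gamma/p$ and $(m+\tilde{\gamma})/\tilde{p} < (m+\gamma)/p$.

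For this one-block inequality, I would pick $r \in (0,1)$ small enough that $|\cdot|^{\gamma} \in A_{p/r}(\R^m)$, write $h = \phi * h$ for $\phi \in \mathcal{S}(\R^m)$ with $\hat{\phi} \equiv 1$ on $[-1,1]^m$, and invoke the vector-valued Peetre maximal-function inequality to obtain the pointwise bound $\|h(x)\|_{Y} \lesssim_r M(\|h\|_Y^r)(x)^{1/r}$, where $M$ is the Hardy-Littlewood maximal operator. Inserting this into the weighted $L^{\tilde{p}}$-norm on the left then reduces everything to a two-weight strong-type estimate $M : L^{p/r}(\R^m, |\cdot|^{\gamma}) \longra L^{\tilde{p}/r}(\R^m, |\cdot|^{\tilde{\gamma}})$. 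I expect this two-weight maximal bound to be the main technical obstacle; however, because both weights are pure powers, it can be proved by splitting $\R^m = \{|x| \geq 1\} \cup \{|x| < 1\}$: the far-field contribution is controlled using the pointwise comparison $|x|^{\tilde{\gamma}/\tilde{p}} \lesssim |x|^{\gamma/p}$ (from $\tilde{\gamma}/\tilde{p} \leq \gamma/p$) together with the single-weight $A_{p/r}$-boundedness of $M$, while the strict inequality $(m+\tilde{\gamma})/\tilde{p} < (m+\gamma)/p$ supplies, via Hölder, just enough integrability near the origin to absorb the near-field contribution.
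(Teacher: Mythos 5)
Your overall strategy (rescale to $R_{j}=1$ using the homogeneity of the power weights, then iterate a one-block Plancherel--P\'olya--Nikol'skii estimate over $j\in J$) is the natural one, and the dilation step is correct — indeed $\delta_{j}$ is exactly the scaling mismatch. The paper proceeds similarly in spirit: it reduces to the case $l=1$, for which it cites \cite[Proposition~4.1]{Meyries&Veraar_sharp_embd_power_weights}. The fatal problem is in your proof of the one-block case: the two-weight bound $M:L^{p/r}(\R^{m},|\cdot|^{\gamma})\to L^{\tilde{p}/r}(\R^{m},|\cdot|^{\tilde{\gamma}})$ (with $m=\mathpzc{d}_{j_{0}}$) to which you reduce is \emph{false} under the standing assumptions. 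Since $M$ commutes with dilations, testing such a bound on $g(\lambda\,\cdot\,)$ yields
\[
\norm{Mg}_{L^{\tilde{p}/r}(|\cdot|^{\tilde{\gamma}})}\leq C\,\lambda^{\,r\left[\frac{m+\tilde{\gamma}}{\tilde{p}}-\frac{m+\gamma}{p}\right]}\norm{g}_{L^{p/r}(|\cdot|^{\gamma})},
\]
and the exponent is strictly negative by hypothesis, so letting $\lambda\to\infty$ forces $Mg\equiv0$; already in the unweighted case $\gamma=\tilde{\gamma}=0$, $\tilde{p}>p$, the operator $M$ is not bounded from $L^{p/r}$ to $L^{\tilde{p}/r}$. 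Your near-field/far-field splitting cannot repair this: the obstruction is this global scaling mismatch rather than integrability at the origin, and on $\{|x|\geq1\}$ a pointwise comparison of weights does not compensate for a change of integrability exponent on a set of infinite measure. The band-limitedness of $h$ must be used beyond the single Peetre-type pointwise bound (e.g.\ in combination with an $L^{\infty}$-estimate for band-limited functions, as in the proof of \cite[Proposition~4.1]{Meyries&Veraar_sharp_embd_power_weights}); the simplest repair is to quote that result, which is exactly what the paper does.

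There is a second gap in the iteration step. For $j_{0}\in J$ which is not the outermost block, the identification of $\norm{g}_{L^{\vec{p},\mathpzc{d}}(\R^{d},\vec{w};X)}$ with $\norm{g}_{L^{p_{j_{0}}}(\R^{\mathpzc{d}_{j_{0}}},|\cdot|^{\gamma_{j_{0}}};Y)}$, with $Y$ the mixed-norm space in the remaining coordinates, is not available: the $x_{j_{0}}$-integration is nested \emph{inside} the integrations over $x_{j_{0}+1},\ldots,x_{l}$, and mixed norms cannot be permuted (Minkowski's inequality gives only one-sided comparisons). The paper's proof is organized precisely around this point: it first treats $J=\{l\}$, where the inner variables do form a Banach space $E=L^{\vec{p}',\mathpzc{d}'}(\R^{d'},\vec{w}';X)$ — and even there one must truncate with $1_{K}$ and let $K\uparrow\R^{d'}$, since a band-limited tempered distribution is merely a smooth function of polynomial growth and need not define an $E$-valued distribution directly — and then handles a general single $j_{0}$ by freezing the outer variables $x''$ pointwise (Paley--Wiener--Schwartz), applying the previous case to $f(\cdot,x'')$ for each fixed $x''$, and taking the outer mixed norms afterwards. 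Your outline needs this reorganization (bundle the variables before $j_{0}$ into the Banach space, freeze the variables after $j_{0}$) in addition to a correct treatment of the one-block inequality.
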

\begin{proof}
\textbf{Step I.} \emph{The case $l=1$:}

We refer to \cite[Proposition~4.1]{Meyries&Veraar_sharp_embd_power_weights}.

\textbf{Step II.} \emph{The case $J=\{l\}$:}

Under the canonical isomorphism $\mathcal{D}'(\R^{d};X) \cong \mathcal{D}'(\R^{\mathpzc{d}_{l}};\mathcal{D}'(\R^{\mathpzc{d}_{1}+\ldots+\mathpzc{d}_{l-1}};X))$ (Schwartz kernel theorem),
$f$ corresponds to an element of $\mathcal{S}'(\R^{\mathpzc{d}_{l}};C(\R^{\mathpzc{d}_{1}+\ldots+\mathpzc{d}_{l-1}};X))$ having compact Fourier support contained in $[-R_{l},R_{l}]^{\mathpzc{d}_{l}}$. Given a compact subset $K \subset \R^{\mathpzc{d}_{1}+\ldots+\mathpzc{d}_{l-1}}$ we have
the continuous linear operator
\[
m_{1_{K}}:C(\R^{d'};X) \longra L^{\infty}_{K}(\R^{d'};X)  \hookrightarrow L^{\vec{p}',\mathpzc{d}'}(\R^{d'},\vec{w}';X),\, g \mapsto 1_{K}g,
\]
where $d':= \mathpzc{d}_{1}+\ldots+\mathpzc{d}_{l-1}$, $\mathpzc{d}' = (\mathpzc{d}_{1},\ldots,\mathpzc{d}_{l-1})$, $\vec{p}':=(p_{1},\ldots,p_{l-1})$, and $\vec{w}'=(w_{1},\ldots,w_{l-1})$.
Accordingly, for each compact $K \subset \R^{d'}$ we have $1_{K}f = m_{1_{K}}f \in \mathcal{S}'(\R^{\mathpzc{d}_{l}};L^{\vec{p}',\mathpzc{d}'}(\R^{d'},\vec{w}';X))$ with compact Fourier support contained in $[-R_{l},R_{l}]^{\mathpzc{d}_{l}}$, so that we may apply Step I to obtain that
\[
\norm{1_{K}f}_{L^{\tilde{p}_{l}}(\R^{\mathpzc{d}_{l}},\tilde{w}_{l};L^{\vec{p}',\mathpzc{d}'}(\R^{d'},\vec{w}';X))} \leq CR_{l}^{\delta_{l}}
\norm{1_{K}f}_{L^{p_{l}}(\R^{\mathpzc{d}_{l}},w_{l};L^{\vec{p}',\mathpzc{d}'}(\R^{d'},\vec{w}';X))}
\]
for some constant $C>0$ independent of $f$ and $K$. Since
$L^{\tilde{\vec{p}},\mathpzc{d}}(\R^{d},\tilde{\vec{w}};X) = L^{\tilde{p}_{l}}(\R^{\mathpzc{d}_{l}},\tilde{w}_{l};L^{\vec{p}',\mathpzc{d}'}(\R^{d'},\vec{w}';X))$
and $L^{\vec{p},\mathpzc{d}}(\R^{d},\vec{w};X) = L^{p_{l}}(\R^{\mathpzc{d}_{l}},w_{l};L^{\vec{p}',\mathpzc{d}'}(\R^{d'},\vec{w}';X))$, the desired result follows by taking $K=K_{n} = [-n,n]^{\mathpzc{d}_{l}}$ and letting $n \to \infty$.

\textbf{Step III.} \emph{The case $\#J=1$:}

Let's say that $J=\{j_{0}\}$. Then, as a consequence of the Banach space-valued Paley-Wiener-Schwartz theorem, for each fixed $x''= (x_{j_{0}+1},\ldots,x_{l}) \in \R^{\mathpzc{d}_{j_{0}+1}+\ldots+\mathpzc{d}_{l}}$ we have that $f(\cdot,x'')$ defines an $X$-valued tempered distribution having compact Fourier support contained in $\prod_{j=1}^{j_{0}}[-R_{j},R_{j}]^{\mathpzc{d}_{j}}$. The desired inequality follows by
applying Step II to $f(\cdot,x'')$ for each $x''$ and subsequently taking $L^{(p_{j_{0}+1},\ldots,p_{l}),(\mathpzc{d}_{j_{0}+1},\ldots,\mathpzc{d}_{l})}(\R^{\mathpzc{d}_{j_{0}+1}+\ldots+\mathpzc{d}_{l}},(w_{j_{0}+1},\ldots,w_{l});X)$-norms with respect to $x''$.

\textbf{Step IV.} \emph{The general case:}

Just apply Step III repeatedly ($\#J$ times).
\end{proof}

\section{Proof of the Main Result}\label{pbvp:sec:pibvp}

In this section we prove the main result of this paper, Theorem~\ref{PIBVP:thm:main_result}.

\subsection{Necessary Conditions on the Initial-Boundary Data}\label{PIBVP:subsection:space_initial-boundary_data}

Let the notations and assumptions be as in Theorem~\ref{PIBVP:thm:main_result}.
Suppose that $g=(\mathcal{B}_{1}(D)u,\ldots,\mathcal{B}_{n}(D)u)$ and $u_{0} = \mathrm{tr}_{t=0}u$ for some $u \in \U^{p,q}_{\gamma,\mu}$.
We show that $(g,u_{0}) \in \D^{p,q}_{\gamma,\mu}$.

It follows from \cite[Theorem~1.1]{Meyries&Veraar_Traces} (also see \cite[Theorem~3.4.8]{Pruess&Simonett2016_book}) that
\[
\mathrm{tr}_{t=0}\left[ W^{1}_{q}(\R,v_{\mu};L^{p}(\R^{d},w_{\gamma};X)) \cap L^{q}(\R,v_{\mu};W^{2n}_{p}(\R^{d},w_{\gamma};X)) \right]
= B^{2n(1-\frac{1+\mu}{q})}_{p,q}(\R^{d},w_{\gamma};X).
\]
Using standard techniques one can derive the same result with $\R$ replaced by $J$ and $\R^{d}$ replaced by $\mathscr{O}$:
\begin{equation}\label{PIBVP:eq:initial_data}
\mathrm{tr}_{t=0}[\U^{p,q}_{\gamma,\mu}] = \I^{p,q}_{\gamma,\mu}.
\end{equation}
In particular, we must have $u_{0} \in \I^{p,q}_{\gamma,\mu} $.

In order to show that $g = (g_{1},\ldots,g_{n}) \in \G^{p,q}_{\gamma,\mu}$, we claim that
\begin{equation}\label{PIBVP:eq:boundary_data}
\mathcal{B}_{j}(D) \in \mathcal{B}(\U^{p,q}_{\gamma,\mu},\G^{p,q}_{\gamma,\mu,j}), \quad\quad j=1,\ldots,n.
\end{equation}
Combining the fact that
\[
L^{q}(\R,v_{\mu};L^{p}(\R^{d},w_{\gamma};X)) = L^{(p,q),(d,1)}(\R^{d+1},(w_{\gamma},v_{\mu});X) \hookrightarrow \mathcal{S}'(\R^{d+1};X)
\]
is a $\left((d,1),(\frac{1}{2n},1)\right)$-admissible Banach space (cf.\ \eqref{PIBVP:eq:prelim:anisotrope_mixed-norm_fm}) with \eqref{PIBVP:eq:prelim:identities_Sobolev&Bessel-potential}, \eqref{PIBVP:eq:prelim:differential} and standard techniques of localization, we find
\[
D^{\beta}_{x} \in \mathcal{B}\left(\U^{p,q}_{\gamma,\mu},H^{1-\frac{|\beta|}{2n}}_{q,\mu}(J;
L^{p}(\mathscr{O},w^{\partial\mathscr{O}}_{\gamma};X)) \cap
L^{q}_{\mu}(J;W^{2n-|\beta|}_{p}(\mathscr{O},w^{\partial\mathscr{O}}_{\gamma};X)) \right), \quad\quad \beta \in \N^{d}, |\beta| < 2n.
\]
From Theorem~\ref{PIBVP:thm:traces_main_result} it thus follows that, for each $\beta \in \N^{d}$, $j \in \{1,\ldots,n\}$ with $|\beta| \leq n_{j}$, $\mathrm{tr}_{\partial\mathscr{O}} \circ D^{\beta}_{x}$ is continuous linear operator
\[
\mathrm{tr}_{\partial\mathscr{O}} \circ D^{\beta}_{x}:
\U^{p,q}_{\gamma,\mu} \longra F^{\kappa_{j,\gamma}+\frac{n_{j}-|\beta|}{2n}}_{q,p}(J,v_{\mu};L^{p}(\partial\mathscr{O};X)) \cap
L^{q}(J,v_{\mu};F^{2n\kappa_{j,\gamma}+n_{j}-|\beta|}(\partial\mathscr{O};X)).
\]
The regularity assumption $(\mathrm{SB})$ on the coefficients $b_{j,\beta}$ thus give \eqref{PIBVP:eq:boundary_data}, where we use Lemmas \ref{PIBVP:appendix:lemma:BUC_top_order_coeff}, \ref{PIBVP:lemma:appendix:sec:localization;pre-variant_(2.3.13)} and
\ref{PIBVP:lemma:appendix:sec:localization;Sob-embedding_pre-variant_(2.3.13)} for $|\beta|=n_{j}$ and Lemma~\ref{PIBVP:appendix:lemma:lower_order_term} for $|\beta_{j}|<n_{j}$.

Finally, suppose that $\kappa_{j,\gamma} > \frac{1+\mu}{q}$.
Then, by combination of \eqref{PIBVP:eq:initial_data}, \eqref{PIBVP:eq:boundary_data} and Remark \ref{PIBVP:rmk:thm:main_result;compatability_cond},
\[
\mathrm{tr}_{t=0} \circ \mathcal{B}_{j}(D), \mathcal{B}^{t=0}_{j}(D) \circ \mathrm{tr}_{t=0} \in \mathcal{B}(\U^{p,q}_{\gamma,\mu},L^{0}(\partial\mathscr{O};X)), \quad\quad j=1,\ldots,n.
\]
By a density argument these operators coincide. Hence,
\[
\mathrm{tr}_{t=0}g_{j} - \mathcal{B}^{t=0}_{j}(D)u_{0} = [\mathrm{tr}_{t=0} \circ \mathcal{B}_{j}(D) -\mathcal{B}^{t=0}_{j}(D) \circ \mathrm{tr}_{t=0}]u = 0.
\]

\subsection{Elliptic Boundary Value Model Problems}\label{PIBVP:sec:elliptic_problems}

Let $X$ be a UMD Banach space. Let $\mathcal{A}(D) = \sum_{|\alpha| = 2n}a_{\alpha}D^{\alpha}$
$\mathcal{B}_{j}(D) = \sum_{|\beta| = n_{j}}b_{j,\beta}\mathrm{tr}_{\partial\R^{d}_{+}}D^{\beta}$, $j=1,\ldots,n$ with constant coefficients $a_{\alpha},b_{\beta,j} \in \mathcal{B}(X)$.

In this subsection we study the elliptic boundary value problem
\begin{equation}\label{pbvp:eq:ebvp}
\begin{array}{rll}
\lambda v + \mathcal{A}(D)v &= 0, & \\
\mathcal{B}_{j}(D)v &= g_{j}, & j=1,\ldots,n,
\end{array}
\end{equation}
on $\R^{d}_{+}$.
By the trace result of Corollary~\ref{PIBVP:cor:prop:trace_isotropic_B&F;W&H}, in order to get a solution $v \in W^{2n}_{p}(\R^{d}_{+},w_{\gamma};X)$ we need $g=(g_{1},\ldots,g_{n}) \in \prod_{j=1}^{n}F_{p,p}^{2n\kappa_{j,\gamma}}(\R^{d-1};X)$.
In Proposition \ref{PIBVP:prop:opl_elliptisch_probleem_randwaarden} we will see that there is existence and uniqueness plus a certain representation for the solution (which we will use to solve \eqref{PIBVP:eq:pbvp_sol_for_just_boundary_data}).
In this representation we have the operator from the following lemma.

\begin{lemma}\label{PIBVP:lemma:isom_Bessel_pot_analytic_dependence}
Let $E$ be a UMD Banach space, let $p \in (1,\infty)$, $w \in A_{p}(\R^{d})$, and $n \in \Z_{>0}$.
For each $\lambda \in \C \setminus (-\infty,0]$ and $\sigma \in \R$ we define $L^{\sigma}_{\lambda} \in \mathcal{L}(\mathcal{S}'(\R^{d};E))$ by
\[
L^{\sigma}_{\lambda}f := \mathscr{F}^{-1}[(\lambda + |\,\cdot\,|^{2n})^{\sigma}\hat{f}] \quad\quad (f \in \mathcal{S}'(\R^{d};E)).
\]
Then $L^{\sigma}_{\lambda}$ restricts to a topological linear isomorphism from $H^{s+2n\sigma}_{p}(\R^{d},w;E)$ to $H^{s}_{p}(\R^{d},w;E)$ (with inverse $L^{-\sigma}_{\lambda}$) for each $s \in \R$. Moreover,
\begin{equation}\label{pbvp:eq:lemma;isom_Bessel_pot_analytic_dependence}
\C \setminus (-\infty,0]  \owns \lambda \mapsto L^{\sigma}_{\lambda} \in \mathcal{B}(H^{s+2n\sigma}_{p}(\R^{d},w;E),H^{s}_{p}(\R^{d},w;E))
\end{equation}
defines an analytic mapping for every $\sigma \in \R$ and $s \in \R$.
\end{lemma}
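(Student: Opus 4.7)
The plan is to reduce both boundedness and invertibility to a single scalar Mikhlin-type estimate for an $s$-independent symbol. Since $L^{\sigma}_{\lambda}$ and the Bessel potential $\mathcal{J}_{t}$ are both Fourier multipliers on $\mathcal{S}'(\R^{d};E)$, they commute. Consequently, $L^{\sigma}_{\lambda}\in \mathcal{B}(H^{s+2n\sigma}_{p}(\R^{d},w;E),H^{s}_{p}(\R^{d},w;E))$ if and only if the composition $\mathcal{J}_{s}\circ L^{\sigma}_{\lambda}\circ \mathcal{J}_{-(s+2n\sigma)}$ is bounded on $L^{p}(\R^{d},w;E)$. This composition is the Fourier multiplier with the $s$-free symbol
\[
m_{\lambda}(\xi) \;=\; \left(\frac{\lambda + |\xi|^{2n}}{(1+|\xi|^{2})^{n}}\right)^{\sigma}, \qquad \xi \in \R^{d},
\]
using the principal branch of the power (which is licit since, for $\lambda \in \C\setminus(-\infty,0]$ and $\xi \in \R^{d}$, the ratio lies in $\C\setminus(-\infty,0]$).

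The key analytic point is that for $\lambda$ in a compact set $K \subset \C\setminus(-\infty,0]$, the ratio $(\lambda + |\xi|^{2n})/(1+|\xi|^{2})^{n}$ stays in a \emph{fixed} compact subset of $\C\setminus(-\infty,0]$ as $\xi$ ranges over $\R^{d}$: it is continuous in $(\lambda,\xi)$ and tends to $1$ as $|\xi|\to\infty$, uniformly for $\lambda\in K$. Hence $m_{\lambda}$ is smooth on $\R^{d}$, and a routine chain-rule/induction argument yields Mikhlin estimates
\[
\sup_{\xi\in\R^{d}} (1+|\xi|)^{|\alpha|}\,|D^{\alpha}m_{\lambda}(\xi)| \;\leq\; C_{\alpha}(K), \qquad \alpha \in \N^{d},\; \lambda \in K,
\]
i.e.\ $m_{\lambda} \in \mathcal{M}^{(\mathpzc{d},\vec{a})}_{N}$ (with $l=1$, $\vec{a}=1$) uniformly for $\lambda\in K$ and for every $N\in\N$. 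Since $E$ is UMD and $w\in A_{p}(\R^{d})$, \eqref{PIBVP:eq:prelim:anisotrope_mixed-norm_fm} gives $m_{\lambda}\in \mathcal{M}_{p,w}(E)$, with norm locally bounded in $\lambda$. Invertibility is then immediate from the multiplicative property of Fourier multiplier symbols: $L^{\sigma}_{\lambda}\,L^{-\sigma}_{\lambda} = L^{0}_{\lambda} = I$ on $\mathcal{S}'(\R^{d};E)$, so $L^{-\sigma}_{\lambda}$ is the required two-sided inverse.

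For the analyticity of the map \eqref{pbvp:eq:lemma;isom_Bessel_pot_analytic_dependence}, my plan is to combine local boundedness (from the previous paragraph) with weak holomorphy on a dense subspace, a standard criterion that then upgrades to strong holomorphy. For $f \in \mathcal{S}(\R^{d};E)$, the identity $L^{\sigma}_{\lambda}f = \mathscr{F}^{-1}[(\lambda+|\cdot|^{2n})^{\sigma}\hat{f}]$ and dominated convergence (exploiting the rapid decay of $\hat{f}$) show that $\lambda \mapsto L^{\sigma}_{\lambda}f$ is analytic already as an $\mathcal{S}(\R^{d};E)$-valued map, with derivative $\sigma\,L^{\sigma-1}_{\lambda}f$. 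Composing with $\mathcal{J}_{s}$ then gives analyticity of $\lambda \mapsto L^{\sigma}_{\lambda}f$ into $H^{s}_{p}(\R^{d},w;E)$ for $f$ in the dense subspace $\mathcal{S}(\R^{d};E) \subset H^{s+2n\sigma}_{p}(\R^{d},w;E)$. Together with local boundedness in $\mathcal{B}(H^{s+2n\sigma}_{p},H^{s}_{p})$, the vector-valued Vitali/Cauchy-integral criterion then yields analyticity of the full operator-valued map.

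The main obstacle is the verification that the Mikhlin constants $C_{\alpha}(K)$ are genuinely locally uniform in $\lambda$. This hinges on the geometric observation above that the ratio $(\lambda+|\xi|^{2n})/(1+|\xi|^{2})^{n}$ remains in a compact subset of $\C\setminus(-\infty,0]$ uniformly in $\xi$; once this is in place, the rest of the proof is bookkeeping.
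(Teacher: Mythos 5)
Your proposal is correct. For the isomorphism part you do exactly what the paper does: conjugate with Bessel potentials to reduce to the order-zero symbol $(1+|\xi|^{2})^{-n\sigma}(\lambda+|\xi|^{2n})^{\sigma}$ and verify the Mikhlin condition of \eqref{PIBVP:eq:prelim:anisotrope_mixed-norm_fm} (with $l=1$, $\vec{a}=1$), invertibility being immediate from the symbol identity $L^{\sigma}_{\lambda}L^{-\sigma}_{\lambda}=I$ on $\mathcal{S}'(\R^{d};E)$. Where you genuinely diverge is the analyticity statement: the paper fixes $\lambda_{0}$, writes $L^{\sigma}_{\lambda}L^{-\sigma}_{\lambda_{0}}$ as the multiplier with symbol $(\lambda+|\xi|^{2n})^{\sigma}(\lambda_{0}+|\xi|^{2n})^{-\sigma}$, expands this in a binomial power series in $\lambda-\lambda_{0}$ with coefficients $(\lambda_{0}+|\xi|^{2n})^{-k}$, controls the corresponding operators by $\norm{L_{\lambda_{0}}^{-k}}\leq C^{k}$, and sums the resulting operator power series in $\mathcal{B}(L^{p}(\R^{d},w;E))$ using a Fatou-type convergence fact for Fourier multipliers (the $A_p$-weighted version of Facts~3.3.b in \cite{Kunstmann&Weis}), which yields an explicit local power series for \eqref{pbvp:eq:lemma;isom_Bessel_pot_analytic_dependence}. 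You instead argue softly: locally uniform Mikhlin bounds give local boundedness of $\lambda\mapsto L^{\sigma}_{\lambda}$ in operator norm, strong analyticity holds for $f$ in the dense subspace $\mathcal{S}(\R^{d};E)$ (dominated convergence on the symbol side, derivative $\sigma L^{\sigma-1}_{\lambda}f$), and the standard criterion (local boundedness plus analyticity on a dense subspace implies strong, hence norm, analyticity) finishes. You correctly identified the crux of your route, namely the uniformity in $\lambda$ of the symbol estimates, and your justification is sound: for $\lambda$ in a compact subset of $\C\setminus(-\infty,0]$ one has $\mathrm{dist}(\lambda+t,(-\infty,0])\geq\mathrm{dist}(\lambda,(-\infty,0])$ for all $t\geq 0$, the ratio tends to $1$ as $|\xi|\to\infty$, and $h(\lambda,\cdot)$ is an order-zero symbol uniformly in $\lambda$, so the ratio stays in a fixed compact subset of $\C\setminus(-\infty,0]$ and Fa\`a di Bruno gives the claimed bounds. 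Both arguments are valid; the paper's approach produces an explicit power-series representation (at the price of the multiplier-limit/Fatou argument), whereas yours is shorter and avoids that ingredient but leans on the abstract holomorphy upgrade and on density of $\mathcal{S}(\R^{d};E)$ in the weighted Bessel potential space (true for $w\in A_{p}$, $p<\infty$, and worth stating explicitly).
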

\begin{proof}

For the first part one only needs to check the Mikhlin condition corresponding to \eqref{PIBVP:eq:prelim:anisotrope_mixed-norm_fm} (with $l=1$ and $\vec{a}=1$) for the symbol $\xi \mapsto (1+|\xi|^{2})^{-(n\sigma)/2}(\lambda+|\xi|^{2n})^{\sigma}$.
So let us go to the analyticity statement.
We only treat the case $\sigma \in \R \setminus \N$, the case $\sigma \in \N$ being easy.
So suppose that $\sigma \in \R \setminus \N$ and fix a $\lambda_{0} \in \C \setminus (-\infty,0]$.
We shall show that $\lambda \mapsto L^{\sigma}_{\lambda}$ is analytic at $\lambda_{0}$.
Since $L^{\tau}_{\lambda_{0}}$ is a topological linear isomorphism from $H^{s+2n\tau}_{p}(\R^{d},w;E)$ to $H^{s}_{p}(\R^{d},w;E)$, $\tau \in \R$, for this it suffices to show that
\[
\C \setminus (-\infty,0]  \owns \lambda \mapsto  L^{\sigma}_{\lambda}L^{-\sigma}_{\lambda_{0}} = L^{\frac{s}{n}}_{\lambda_{0}}L^{\sigma}_{\lambda}L^{-\frac{1}{n}(s+2n\sigma)}_{\lambda_{0}} \in \mathcal{B}(L^{p}(\R^{d},w;E))
\]
is analytic at $\lambda_{0}$. To this end, we first observe that, for each $\xi \in \R^{d}$,
\[
\C \setminus (-\infty,0] \owns \lambda \mapsto (\lambda + |\xi|^{2n})^{\sigma}(\lambda_{0} + |\xi|^{2n})^{-\sigma} \in \C
\]
is an analytic mapping with power series expansion at $\lambda_{0}$ given by
\begin{equation}\label{pbvp:eq:lemma;isom_Bessel_pot_analytic_dependence;power_series_symbol}
(\lambda + |\xi|^{2n})^{\sigma}(\lambda_{0} + |\xi|^{2n})^{-\sigma} =
1 + \sigma(\lambda_{0}+|\xi|^{2n})^{-1}(\lambda-\lambda_{0})
+ \sigma(\sigma-1)(\lambda_{0}+|\xi|^{2n})^{-2}(\lambda-\lambda_{0})^{2} + \ldots
\end{equation}
for $\lambda \in B(\lambda_{0},\delta)$, where $\delta:= d\left(0,\{\lambda_{0}+t \mid t \geq 0\}\right) > 0$.
We next recall that $L_{\lambda_{0}}^{-1}$ restricts to a topological linear isomorphism from $L^{p}(\R^{d},w;E)$ to $H^{2n}_{p}(\R^{d},w;E)$; in particular, $L_{\lambda_{0}}^{-1}$ restricts to a bounded linear operator on $L^{p}(\R^{d},w;E)$.
Since $L_{\lambda_{0}}^{-k} = (L_{\lambda_{0}}^{-1})^{k}$ for every $k \in \N$, there thus exists a constant $C > 0$ such that
\begin{equation}\label{pbvp:eq:lemma;isom_Bessel_pot_analytic_dependence;norm_estimate_operators}
\norm{L_{\lambda_{0}}^{-k}}_{\mathcal{B}(L^{p}(\R^{d},w;E))} \leq C^{k}, \quad\quad \forall k \in \N.
\end{equation}
Now we let $\rho > 0$ be the radius of convergence of the power series $z \mapsto \sum_{k \in \N}\left[\prod_{j=0}^{k-1}(\sigma-j)\right]C^{k}z^{k}$, set $r:= \min(\delta,\rho) > 0$, and define, for each $\lambda \in B(\lambda_{0},r)$, the multiplier symbols $m^{\lambda},m^{\lambda}_{0},m^{\lambda}_{1},\ldots:\R^{d} \longra \C$ by
\[
m^{\lambda}(\xi) := (\lambda + |\xi|^{2n})^{\sigma}(\lambda_{0} + |\xi|^{2n})^{-\sigma} \quad \mbox{and} \quad
m^{\lambda}_{N}(\xi) := \sum_{k = 0}^{N}\left[\prod_{j=0}^{k-1}(\sigma-j)\right](\lambda_{0}+|\xi|^{2n})^{-k}(\lambda-\lambda_{0})^{k}.
\]
Then, by \eqref{pbvp:eq:lemma;isom_Bessel_pot_analytic_dependence;power_series_symbol} and \eqref{pbvp:eq:lemma;isom_Bessel_pot_analytic_dependence;norm_estimate_operators}, we get
\[
m^{\lambda}(\xi) = \lim_{N \to \infty}m^{\lambda}_{N}(\xi), \quad\quad \xi \in \R^{d}
\]
and
\[
\lim_{N,M \to \infty}[T_{m^{\lambda}_{N}}-T_{m^{\lambda}_{M}}] = 0 \:\:\:\mbox{in}\:\:\:\mathcal{B}(L^{p}(\R^{d},w;E)),
\]
respectively. Via the $A_{p}$-weighted version of \cite[Facts~3.3.b]{Kunstmann&Weis} we thus obtain that
\[
L^{\sigma}_{\lambda}L^{-\sigma}_{\lambda_{0}} = T_{m^{\lambda}} = \lim_{N \to \infty} T_{m^{\lambda}_{N}} =
\lim_{N \to \infty}\sum_{k = 0}^{N}\left[\prod_{j=0}^{k-1}(\sigma-j)\right]L_{\lambda_{0}}^{-k}(\lambda-\lambda_{0})^{k}
\quad \mbox{in}\:\:\:\mathcal{B}(L^{p}(\R^{d},w;E))
\]
for $\lambda \in B(\lambda_{0},r)$. This shows that the map $\C \setminus (-\infty,0] \owns \lambda \mapsto L^{\sigma}_{\lambda}L^{-\sigma}_{\lambda_{0}} \in \mathcal{B}(L^{p}(\R^{d},w;E))$ is analytic at $\lambda_{0}$, as desired.
\end{proof}

Before we can state Proposition~\ref{PIBVP:prop:opl_elliptisch_probleem_randwaarden}, we first need to introduce some notation.
Given a UMD Banach space $X$ and a natural number $k \in \N$, we have, for the UMD space $E=L^{p}(\R_{+},|\,\cdot\,|^{\gamma};X)$, the natural inclusion
\[
W^{k}_{p}(\R^{d}_{+},w_{\gamma};X) \hookrightarrow W^{k}_{p}(\R^{d-1};L^{p}(\R_{+},|\,\cdot\,|^{\gamma};X)) = H^{k}_{p}(\R^{d-1};E)
\]
and the natural identification
\[
L^{p}(\R^{d}_{+},w_{\gamma};X) = H^{0}_{p}(\R^{d-1};E).
\]
By Lemma \ref{PIBVP:lemma:isom_Bessel_pot_analytic_dependence} we accordingly have that, for $\lambda \in \C \setminus (-\infty,0]$, that the partial Fourier multiplier operator
\[
L^{k/2n}_{\lambda} \in \mathcal{L}(\mathcal{S}'(\R^{d-1};\mathcal{D}'(\R_{+};X))),\,
f \mapsto \mathscr{F}^{-1}_{x'}\left[ \left( \xi' \mapsto (\lambda+|\xi'|^{2n})^{k/2n} \right)\mathscr{F}_{x'}f \right],
\]
restricts to a bounded linear operator
\[
L_{\lambda}^{k/2n} \in \mathcal{B}(W^{k}_{p}(\R^{d}_{+},w_{\gamma};X),L^{p}(\R^{d}_{+},w_{\gamma};X)).
\]
Moreover, we even get an analytic operator-valued mapping
\[
\C \setminus (-\infty,0] \longra \mathcal{B}(W^{k}_{p}(\R^{d}_{+},w_{\gamma};X),L^{p}(\R^{d}_{+},w_{\gamma};X)),\, \lambda \mapsto L_{\lambda}^{k/2n}.
\]
In particular, we have
\begin{equation}\label{pbvp:eq:prop;opl_elliptisch_probleem_randwaarden;bddness_first_factors}
L_{\lambda}^{1-\frac{n_{j}}{2n}}, L_{\lambda}^{1-\frac{n_{j}+1}{2n}}D_{y} \in
\mathcal{B}(W^{2n-n_{j}}_{p}(\R^{d}_{+},w_{\gamma};X),L^{p}(\R^{d}_{+},w_{\gamma};X)), \quad\quad j=1,\ldots,n,
\end{equation}
with analytic dependence on the parameter $\lambda \in \C \setminus (-\infty,0]$.

\begin{prop}\label{PIBVP:prop:opl_elliptisch_probleem_randwaarden}
Let $X$ be a UMD Banach space, $p \in (1,\infty)$, $\gamma \in (-1,p-1)$, and assume that $(\mathcal{A},\mathcal{B}_{1},\ldots,\mathcal{B}_{n})$ satisfies $(\mathrm{E})$ and $(\mathrm{LS})$ for some $\phi \in (0,\pi)$.
Then, for each $\lambda \in \Sigma_{\pi-\phi}$, there exists an operator
\[
\mathcal{S}(\lambda) =
\left(\begin{array}{ccc}
\mathcal{S}_{1}(\lambda) & \ldots & \mathcal{S}_{n}(\lambda)
\end{array}\right)
\in \mathcal{B}\left(\bigoplus_{j=1}^{n}F_{p,p}^{2n\kappa_{j,\gamma}}(\R^{d-1};X),W^{2n}_{p}(\R^{d}_{+},w_{\gamma};X)\right)
\]
which assigns to a $g \in \bigoplus_{j=1}^{n}F_{p,p}^{2n\kappa_{j,\gamma}}(\R^{d-1};X)$ the unique solution $v=\mathcal{S}(\lambda)g \in W^{2n}_{p}(\R^{d}_{+},w_{\gamma};X)$
of the elliptic boundary value problem
\begin{equation}\label{PIBVP:eq:prop;opl_elliptisch_probleem_randwaarden}
\begin{array}{rll}
\lambda v + \mathcal{A}(D)v &= 0, & \\
\mathcal{B}_{j}(D)v &= g_{j}, & j=1,\ldots,n;
\end{array}
\end{equation}
recall here that $\kappa_{j,\gamma} = 1-\frac{n_{j}}{2n}-\frac{1}{2np}(1+\gamma)$.
Moreover, for each $j \in \{1,\ldots,n\}$ we have that
\[
\tilde{\mathcal{S}}_{j}: \Sigma_{\pi-\phi} \longra \mathcal{B}(W^{2n-n_{j}}_{p}(\R^{d}_{+},w_{\gamma};X),W^{2n}_{p}(\R^{d}_{+},w_{\gamma};X)),\, \lambda \mapsto \tilde{\mathcal{S}}_{j}(\lambda) := \mathcal{S}_{j}(\lambda) \circ \mathrm{tr}_{y=0}
\]
defines an analytic mapping, for which the operators $D^{\alpha}\tilde{\mathcal{S}}_{j}(\lambda) \in  \mathcal{B}(W^{2n-n_{j}}_{p}(\R^{d}_{+},w_{\gamma};X),L^{p}(\R^{d}_{+},w_{\gamma};X))$, $|\alpha| \leq 2n$,
can be represented as
\begin{equation}\label{PIBVP:eq:prop;opl_elliptisch_probleem_randwaarden;rep_formula}
D^{\alpha}\tilde{\mathcal{S}}_{j}(\lambda) = \mathcal{T}^{1}_{j,\alpha}(\lambda)L_{\lambda}^{1-\frac{n_{j}}{2n}} + \mathcal{T}^{2}_{j,\alpha}(\lambda)L_{\lambda}^{1-\frac{n_{j}+1}{2n}}D_{y}
\end{equation}
for analytic operator-valued mappings
\begin{equation}\label{PIBVP:eq:prop;opl_elliptisch_probleem_randwaarden;operator_T_rep_formula}
\mathcal{T}^{i}_{j,\alpha}: \Sigma_{\pi-\phi} \longra \mathcal{B}(L^{p}(\R^{d}_{+},w_{\gamma};X)),\, \lambda \mapsto
\mathcal{T}^{i}_{j,\alpha}(\lambda),
\quad\quad i \in \{1,2\},
\end{equation}
satisfying the $\mathcal{R}$-bounds
\begin{equation}\label{PIBVP:eq:prop;opl_elliptisch_probleem_randwaarden;R-bounds}
\mathcal{R}\{ \lambda^{k+1-\frac{|\alpha|}{2n}}\partial_{\lambda}^{k}\mathcal{T}^{i}_{j,\alpha}(\lambda) \mid \lambda \in \Sigma_{\pi-\phi} \} < \infty, \quad\quad k \in \N.
\end{equation}
\end{prop}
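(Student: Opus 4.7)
The overall strategy is the classical Fourier-parameter approach. I would apply the partial Fourier transform in the tangential variable $x'\in\R^{d-1}$ to reduce~\eqref{PIBVP:eq:prop;opl_elliptisch_probleem_randwaarden} to a parameter-dependent family of ordinary boundary value problems on the half-line: for each frozen $(\xi',\lambda)$ with $\lambda\in\Sigma_{\pi-\phi}$ and $(\xi',\lambda)\neq 0$, find $w\in C^\infty([0,\infty);X)$ with $w(\infty)=0$ satisfying
\[
\lambda w(y)+\mathcal{A}_\#(\xi',D_y)w(y)=0,\qquad \mathcal{B}_{j,\#}(\xi',D_y)w(y)|_{y=0}=h_j.
\]
By condition $(\mathrm{LS})_\phi$ and $(\mathrm{E})_\phi$, this ODE has a unique solution, explicitly of the form $w(y)=\sum_k e^{i\tau_k(\xi',\lambda)y}P_{k,j}(\xi',\lambda)h$, where $\tau_k$ are the stable roots of the characteristic polynomial and $P_{k,j}$ is determined by inverting the Lopatinskii matrix. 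Parameter-ellipticity gives $\mathrm{Im}\,\tau_k \gtrsim (|\lambda|+|\xi'|^{2n})^{1/2n}$, so the symbol $M_j(\xi',\lambda,y):=\sum_k e^{i\tau_k y}P_{k,j}$ is analytic in $\lambda\in\Sigma_{\pi-\phi}$, smooth in $\xi'\neq 0$, and positively homogeneous of degree zero under the parabolic scaling $(\xi',\lambda,y)\mapsto(r\xi',r^{2n}\lambda,r^{-1}y)$. Defining $\mathcal{S}_j(\lambda)$ via inverse partial Fourier transform and invoking the operator-valued Mikhlin theorem~\eqref{PIBVP:eq:prelim:anisotrope_mixed-norm_fm} yields the desired mapping property into $W^{2n}_p(\R^d_+,w_\gamma;X)$, while uniqueness follows fibrewise from the uniqueness in the ODE.

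For the representation~\eqref{PIBVP:eq:prop;opl_elliptisch_probleem_randwaarden;rep_formula}, the trick is to eliminate the trace in favour of volumetric quantities. Fix $\rho\in C_c^\infty([0,\infty))$ with $\rho(0)=1$, and write $\varrho_\lambda(\xi'):=(\lambda+|\xi'|^{2n})^{1/2n}$. Then the fundamental theorem of calculus gives, for $u\in W^{2n-n_j}_p(\R^d_+,w_\gamma;X)$,
\[
\widehat{\mathrm{tr}_{y=0}u}(\xi')
=-\int_0^\infty\partial_y\bigl[\rho(\varrho_\lambda(\xi')y)\hat u(\xi',y)\bigr]\,dy
=-\int_0^\infty\!\!\bigl(\varrho_\lambda(\xi')\rho'(\varrho_\lambda(\xi')y)\hat u+\rho(\varrho_\lambda(\xi')y)\partial_y\hat u\bigr)dy.
\]
Substituting into $\widehat{\tilde{\mathcal{S}}_j(\lambda)u}(\xi',y)=M_j(\xi',\lambda,y)\widehat{\mathrm{tr}_{y=0}u}(\xi')$, applying $D^\alpha$, and then factoring out the scalar symbols $(\lambda+|\xi'|^{2n})^{(2n-n_j)/(2n)}$ and $(\lambda+|\xi'|^{2n})^{(2n-n_j-1)/(2n)}$ (i.e.\ $L_\lambda^{1-n_j/(2n)}$ and $L_\lambda^{1-(n_j+1)/(2n)}$) from the first and second summands respectively produces two residual operator-valued symbols $t^1_{j,\alpha}(\xi',\lambda)$ and $t^2_{j,\alpha}(\xi',\lambda)$, each of which is homogeneous of degree zero in $(\xi',\varrho_\lambda)$ under the anisotropic scaling. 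These symbols define the operators $\mathcal{T}^i_{j,\alpha}(\lambda)\in\mathcal{B}(L^p(\R^d_+,w_\gamma;X))$, whose analyticity in $\lambda$ follows from the analyticity of $M_j$ and Lemma~\ref{PIBVP:lemma:isom_Bessel_pot_analytic_dependence}.

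To obtain the $\mathcal{R}$-bounds~\eqref{PIBVP:eq:prop;opl_elliptisch_probleem_randwaarden;R-bounds}, I would differentiate the parabolic-homogeneous representation of $t^i_{j,\alpha}$ in $\lambda$: each $\partial_\lambda$ costs a factor $\lambda^{-1}$ in size, while the symbol of $D^\alpha$ contributes at worst $(|\xi'|+\varrho_\lambda)^{|\alpha|}$, compensated by the $(2n-n_j)$ respectively $(2n-n_j-1)$ order drop extracted into $L_\lambda^{\bullet}$; balancing these gives the desired homogeneity factor $\lambda^{-(k+1-|\alpha|/(2n))}$. The resulting family $\{\lambda^{k+1-|\alpha|/(2n)}\partial_\lambda^k t^i_{j,\alpha}(\,\cdot\,,\lambda):\lambda\in\Sigma_{\pi-\phi}\}$ takes values in a relatively compact subset (after rescaling) of a space on which the symbols are analytic, so Kalton--Weis-type $\mathcal{R}$-boundedness of the associated Fourier multiplier operators~\eqref{PIBVP:eq:prelim:operator-valued_FM} gives the claim.

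The main obstacle will be keeping the book-keeping of the mixed $(\xi',y,\lambda)$ scaling clean: in particular, verifying that the ``residual'' symbols $t^i_{j,\alpha}$ indeed extend smoothly across $\xi'=0$ once combined with the compensating $L_\lambda^{\bullet}$ factors, and that the $\mathcal{R}$-bounded Mikhlin condition is satisfied uniformly up to the boundary of the sector $\overline{\Sigma}_{\pi-\phi}$. A secondary technical point is that the integration-by-parts formula defining $\mathcal{T}^i_{j,\alpha}(\lambda)$ initially only produces a bounded operator on the dense Schwartz class; extending it to all of $L^p(\R^d_+,w_\gamma;X)$ requires the Fourier-multiplier theory quoted, together with the UMD hypothesis on $X$.
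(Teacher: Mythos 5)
Your overall route---partial Fourier transform in $x'$, fibrewise solution of the Lopatinskii--Shapiro ODE using $(\mathrm{E})_{\phi}$ and $(\mathrm{LS})_{\phi}$, parabolic homogeneity of the resulting Poisson symbol, and an integration by parts in the normal variable to trade the trace for the two volumetric terms $L_{\lambda}^{1-\frac{n_{j}}{2n}}u$ and $L_{\lambda}^{1-\frac{n_{j}+1}{2n}}D_{y}u$---is essentially the argument the paper has in mind: the paper does not write it out but refers to \cite[Lemmas~4.3 and 4.4]{DHP2} and to \cite[Chapter~6]{Lindemulder_master-thesis}, and your cutoff-based integration by parts is a harmless variant of integrating by parts against the decaying Poisson kernel itself, which is what produces \eqref{PIBVP:eq:prop;opl_elliptisch_probleem_randwaarden;rep_formula} there.

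There are, however, two steps where you invoke the wrong tools, and they are precisely the places where the weighted setting requires work. First, the mapping property $\mathcal{S}(\lambda) \in \mathcal{B}\big(\bigoplus_{j}F^{2n\kappa_{j,\gamma}}_{p,p}(\R^{d-1};X),W^{2n}_{p}(\R^{d}_{+},w_{\gamma};X)\big)$ does not follow from the Mikhlin theorem \eqref{PIBVP:eq:prelim:anisotrope_mixed-norm_fm}: $\mathcal{S}(\lambda)$ is a boundary-to-domain (Poisson-type) operator, not a Fourier multiplier on $\R^{d}$. The intended argument is to prove boundedness of $\tilde{\mathcal{S}}_{j}(\lambda)$ on $W^{2n-n_{j}}_{p}(\R^{d}_{+},w_{\gamma};X)$ via \eqref{PIBVP:eq:prop;opl_elliptisch_probleem_randwaarden;rep_formula} and then compose with an extension operator $\mathcal{E}_{j} \in \mathcal{B}(F^{2n\kappa_{j,\gamma}}_{p,p}(\R^{d-1};X),W^{2n-n_{j}}_{p}(\R^{d}_{+},w_{\gamma};X))$ supplied by the weighted trace theory of Section~\ref{PIBVP:subsec:sec:traces;isotropic}; this is how the exponent $\kappa_{j,\gamma}$, i.e.\ the weight, enters the boundary space at all. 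Second, the $\mathcal{R}$-bounds \eqref{PIBVP:eq:prop;opl_elliptisch_probleem_randwaarden;R-bounds} cannot be deduced from \eqref{PIBVP:eq:prelim:operator-valued_FM} (that is the one-dimensional operator-valued multiplier theorem used later for the time variable) nor from a ``relatively compact range'' argument: $\Sigma_{\pi-\phi}$ is unbounded, and compactness of the rescaled symbol range does not by itself yield $\mathcal{R}$-boundedness of the operator family on $L^{p}(\R^{d}_{+},w_{\gamma};X)$. What is actually needed---and what \cite[Lemma~4.4]{DHP2} provides in the unweighted case---is the combination of an $\mathcal{R}$-bounded Mikhlin-type theorem for the tangential multipliers with kernel estimates for the integral operators in the normal variable, i.e.\ a $|y|^{\gamma}$-weighted version of \cite[Lemma~7.1]{DHP1}; as the paper remarks after the proposition, that kernel lemma is the only point where $w_{\gamma}$ enters, and your plan never addresses it. (A minor point: writing the ODE solution as $\sum_{k}e^{i\tau_{k}y}P_{k,j}h$ presupposes distinct characteristic roots; one should work with a fundamental system or a contour-integral representation to keep the symbol smooth where roots collide.)
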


\begin{proof}[Comments on the proof of Proposition \ref{PIBVP:prop:opl_elliptisch_probleem_randwaarden}]
This proposition can be proved in the same way as \cite[Lemma~4.3$\&$Lemma~4.4]{DHP2}. In fact, in the unweighted case this is just a modification of \cite[Lemma~4.3$\&$Lemma~4.4]{DHP2} (also see the formulation of \cite[Lemma~2.2.6]{Mey_PHD-thesis}).
Here \cite[Lemma~4.3]{DHP2} corresponds to the existence of the solution operator, whose construction was essentially already contained in \cite{DHP1}, plus its representation, and \cite[Lemma~4.4]{DHP2} basically corresponds to the analytic dependence of \eqref{PIBVP:eq:prop;opl_elliptisch_probleem_randwaarden;operator_T_rep_formula} plus the $\mathcal{R}$-bounds \eqref{PIBVP:eq:prop;opl_elliptisch_probleem_randwaarden;R-bounds}.
The analytic dependence of the operators $\tilde{\mathcal{S}}_{j}(\lambda)$ on $\lambda$ subsequently follow from Lemma~\ref{PIBVP:lemma:isom_Bessel_pot_analytic_dependence} and \eqref{PIBVP:eq:prop;opl_elliptisch_probleem_randwaarden;rep_formula}.
For more details we refer to \cite[Chapter~6]{Lindemulder_master-thesis} and Remark~\ref{PIBVP:rmk:prop:opl_elliptisch_probleem_randwaarden;ext_operator}.
\end{proof}

\begin{remark}
We could have formulated Proposition~\ref{PIBVP:prop:opl_elliptisch_probleem_randwaarden} only in terms of the mappings $\tilde{\mathcal{S}}_{j}$. Namely, for each $j \in \{1,\ldots,n\}$ there exists an analytic mapping
\[
\tilde{\mathcal{S}}_{j}: \Sigma_{\pi-\phi} \longra \mathcal{B}(W^{2n-n_{j}}_{p}(\R^{d}_{+},w_{\gamma};X),W^{2n}_{p}(\R^{d}_{+},w_{\gamma};X)),\, \lambda \mapsto \tilde{\mathcal{S}}_{j}(\lambda)
\]
with the property that, for every $u \in W^{2n}_{p}(\R^{d}_{+},w_{\gamma};X)$, $v = \tilde{\mathcal{S}}_{j}u$ is the unique solution in $W^{2n}_{p}(\R^{d}_{+},w_{\gamma};X)$ of \eqref{PIBVP:eq:prop;opl_elliptisch_probleem_randwaarden} with $g_{i}=\delta_{i,j}\mathcal{B}_{i}(D)u$, for which the operators
\[
D^{\alpha}\tilde{\mathcal{S}}_{j}(\lambda) \in  \mathcal{B}(W^{2n-n_{j}}_{p}(\R^{d}_{+},w_{\gamma};X),L^{p}(\R^{d}_{+},w_{\gamma};X)), |\alpha| \leq 2n,
\]
can be represented as \eqref{PIBVP:eq:prop;opl_elliptisch_probleem_randwaarden;rep_formula} for analytic operator-valued mappings
\eqref{PIBVP:eq:prop;opl_elliptisch_probleem_randwaarden;operator_T_rep_formula} satisfying the $\mathcal{R}$-bounds
\eqref{PIBVP:eq:prop;opl_elliptisch_probleem_randwaarden;R-bounds}.
Then, given extension operators $\mathcal{E}_{j} \in \mathcal{B}(F^{2n\kappa_{j,\gamma}}_{p,p}(\R^{d-1};X),W^{2n-n_{j}}_{p}(\R^{d}_{+},w_{\gamma};X))$ (right inverse of the trace $\mathrm{tr}_{y=0} \in \mathcal{B}(W^{2n-n_{j}}_{p}(\R^{d}_{+},w_{\gamma};X),F^{2n\kappa_{j,\gamma}}_{p,p}(\R^{d-1};X))$), $j=1,\ldots,n$, the composition $\mathcal{S}(\lambda) = ( \mathcal{S}_{1}(\lambda) \ldots \mathcal{S}_{n}(\lambda) ) := ( \mathcal{S}_{1}(\lambda) \ldots \mathcal{S}_{n}(\lambda) ) \circ (\mathcal{E}_{1} \ldots \mathcal{E}_{n})$ defines the desired solution operator.

In this formulation the proposition the weight $w_{\gamma}$ can actually be replaced by any weight $w$ on $\R^{d}$ which is uniformly $A_{p}$ in the $y$-variable. Indeed, in the proof the weight only comes into play in \cite[Lemma~7.1]{DHP1}. For weights $w$ of the form $w(x',y) = v(x')|y|^{\gamma}$ with $v \in A_{p}(\R^{d-1})$ we can then still define $\mathcal{S}(\lambda)$ as above thanks to the available trace theory from Section~\ref{PIBVP:subsec:sec:traces;isotropic}.
\end{remark}

\begin{remark}\label{PIBVP:rmk:prop:opl_elliptisch_probleem_randwaarden;ext_operator}
In \cite{DHP2} the specific extension operator $\mathcal{E}_{\lambda} = e^{-\,\cdot\,L^{1/2n}_{\lambda}}$ was used in the construction of the solution operator $\mathcal{S}(\lambda) = (\mathcal{S}_{1}(\lambda),\ldots,\mathcal{S}_{n}(\lambda))$, which has the advantageous property that $D_{y}\mathcal{E}_{\lambda} = \imath L_{\lambda}^{1/2n}\mathcal{E}_{\lambda}$. Whereas the in this way obtained representation formulae $\mathcal{S}_{j}(\lambda) = \mathcal{T}_{j}(\lambda)L_{\lambda}^{1-\frac{n_{j}}{2n}}\mathcal{E}_{\lambda}$ can only be used in the case $q=p$ to solve
\eqref{PIBVP:eq:pbvp_sol_for_just_boundary_data} via a Fourier transformation in time (cf.\ \cite[Proposition~4.5]{DHP2} and \cite[Lemma~2.2.7]{Mey_PHD-thesis}), our representation formulae \eqref{PIBVP:eq:prop;opl_elliptisch_probleem_randwaarden;rep_formula} can (in combination with the theory of anisotropic function spaces) be used to solve \eqref{PIBVP:eq:pbvp_sol_for_just_boundary_data} in the full parameter range $q,p \in (1,\infty)$ (cf.\ Corollary~\ref{PIBVP:lemma:pbvp_sol_for_just_boundary_data}). However, the alternative more involved proof of Denk, Hieber $\&$ Pr\"uss \cite[Theorem~2.3]{DHP2} also contains several ingredients which are of independent interest.
\end{remark}

\subsection{Solving Inhomogeneous Boundary Data for a Model Problem}

Let the notations and assumptions be as in Theorem~\ref{PIBVP:thm:main_result}, but for the model problem case of top order constant coefficients on the half-space considered in Section~\ref{PIBVP:sec:elliptic_problems}.

The goal of this subsection is to solve the model problem
\begin{equation}\label{PIBVP:eq:pbvp_sol_for_just_boundary_data}
\begin{array}{rll}
\partial_{t}u + (1+\mathcal{A}(D))u &= 0,  \\
\mathcal{B}_{j}(D)u &= g_{j}, & j=1,\ldots,n, \\
\mathrm{tr}_{t=0}u &= 0,
\end{array}
\end{equation}
for $g=(g_{1},\ldots,g_{n})$ with $(0,g,0) \in \D^{p,q}_{\gamma,\mu}$.

Let us first observe that, in view of the compatibility condition in the definition of $\D^{p,q}_{\gamma,\mu}$,
$(0,g,0) \in \D^{p,q}_{\gamma,\mu}$ if and only if
\begin{eqnarray*}
g_{j} \in {_{0}}\G_{j}
&:=& {_{0,(0,d)}}F^{\kappa_{j,\gamma},(\frac{1}{2n},1)}_{(p,q),p,(d-1,1)}(\R^{d-1} \times \R_{+},(1,v_{\mu});X) \\
&:=& \left\{\begin{array}{ll}
F^{\kappa_{j,\gamma},(\frac{1}{2n},1)}_{(p,q),p,(d-1,1)}(\R^{d-1} \times \R_{+},(1,v_{\mu});X), & \kappa_{j,\gamma} < \frac{1+\mu}{q},\\
\left\{ w \in F^{\kappa_{j,\gamma},(\frac{1}{2n},1)}_{(p,q),p,(d-1,1)}(\R^{d-1} \times \R_{+},(1,v_{\mu});X) : \mathrm{tr}_{t=0}w=0 \right\}, & \kappa_{j,\gamma} > \frac{1+\mu}{q},
\end{array}\right.
\end{eqnarray*}
for all $j \in \{1,\ldots,n\}$. Defining
\[
{_{0}}\G := {_{0}}\G_{1} \oplus \ldots \oplus {_{0}}\G_{n},
\]
we thus have $(0,g,0) \in \D^{p,q}_{\gamma,\mu}$ if and only if $g \in {_{0}}\G$.
So we need to solve \eqref{PIBVP:eq:pbvp_sol_for_just_boundary_data} for $g \in {_{0}}\G$.

We will solve \eqref{PIBVP:eq:pbvp_sol_for_just_boundary_data} by passing to the corresponding problem on $\R$ (instead of $\R_{+}$).
The advantage of this is that it allows us to use the Fourier transform in time. This will give
\[
\mathscr{F}_{t}u(\theta) = \mathcal{S}(1+\imath\theta)(\mathscr{F}_{t}g_{1}(\theta),\ldots,\mathscr{F}_{t}g_{n}(\theta)),
\]
where $\mathcal{S}(1+\imath\theta)$ is the solution operator from Proposition \ref{PIBVP:prop:opl_elliptisch_probleem_randwaarden}.

Recall that for the operator $\tilde{\mathcal{S}}_{j}(\lambda) = \mathcal{S}_{j}(\lambda) \circ \mathrm{tr}_{y=0}$ we have the representation formula \eqref{PIBVP:eq:prop;opl_elliptisch_probleem_randwaarden;rep_formula} in which the operators
$L^{\sigma}_{\lambda}$ occur. It will be useful to note that, for $h \in \mathcal{S}(\R^{d}_{+} \times \R;X)$,
\begin{eqnarray}
L^{\sigma}_{1+\imath \theta_{0}}[(\mathscr{F}_{t}h)(\,\cdot\,,\theta)]
&=& \mathscr{F}^{-1}_{x'}[\left((y,\xi') \mapsto (1+\imath \theta_{0} + |\xi'|^{2n})\right)\mathscr{F}_{(x',t)}h(\,\cdot\,,\theta_{0})] \nonumber \\
&=& \left[\mathscr{F}_{t}\mathscr{F}^{-1}_{(x',t)}[\left((y,\xi',\theta) \mapsto (1+\imath \theta + |\xi'|^{2n})\right)\mathscr{F}_{(x',t)}h] \right](\,\cdot\,,\theta_{0}) \nonumber \\
&=& (\mathscr{F}_{t}L^{\sigma}h)(\,\cdot\,,\theta_{0}), \label{pibvp:eq:omwisselen_L}
\end{eqnarray}
where
\[
L^{\sigma} \in \mathcal{L}(\mathcal{S}'(\R^{d-1} \times \R;\mathcal{D}'(\R_{+};X))),\,
f \mapsto \mathscr{F}^{-1}_{(x',t)}\left[ \left( (\xi',\theta) \mapsto (1 + \imath\theta + |\xi'|^{2n})^{\sigma} \right)\mathscr{F}_{(x',t)}f\right].
\]

\begin{lemma}\label{PIBVP:lemma;lemma:pbvp_sol_for_just_boundary_data;isomorphism_between_BP}
Let $E$ be a UMD space, $p,q \in (1,\infty)$, $v \in A_{q}(\R)$, and $n \in \Z_{>0}$.
For each $\sigma \in \R$,
\[
\mathcal{S}'(\R^{d-1}\times\R;E) \longra \mathcal{S}'(\R^{d-1}\times\R;E),\, f \mapsto \mathscr{F}^{-1}\left[\left((\xi_{1},\xi_{2}) \mapsto (1+\imath\xi_{2}+ |\xi_{1}|^{2n})^{\sigma}\right)\hat{f}\right]
\]
restricts to a bounded linear operator
\[
H^{\sigma,(\frac{1}{2n},1)}_{(p,q),(d-1,1)}(\R^{d-1}\times\R,(1,v);E) \longra H^{0,(\frac{1}{2n},1)}_{(p,q),(d-1,1)}(\R^{d-1}\times\R,(1,v);E).
\]
\end{lemma}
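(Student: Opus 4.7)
My plan is to realize the operator as a Fourier multiplier on a weighted mixed-norm Lebesgue space and then verify the anisotropic Mikhlin condition. Write $\mathpzc{d} = (d-1,1)$, $\vec{a} = (\tfrac{1}{2n},1)$, $\vec{p}=(p,q)$, $\vec{w}=(1,v)$. By the very definition
\[
H^{\sigma,\vec{a}}_{\vec{p},\mathpzc{d}}(\R^{d-1}\times\R,\vec{w};E) = \mathcal{H}^{\sigma,\vec{a}}_{\mathpzc{d}}\bigl[L^{\vec{p},\mathpzc{d}}(\R^{d-1}\times\R,\vec{w};E)\bigr],
\]
the stated mapping property is equivalent to boundedness on $L^{\vec{p},\mathpzc{d}}(\R^{d-1}\times\R,\vec{w};E)$ of the Fourier multiplier with symbol
\[
m(\xi_1,\xi_2) := (1+\imath\xi_2+|\xi_1|^{2n})^{\sigma}\,\omega_{\sigma}(\xi)^{-1},
\]
where $\omega_{\sigma}$ is any representative of the symbol of $\mathcal{J}^{\mathpzc{d},\vec{a}}_{\sigma}$ (so $|\omega_{\sigma}(\xi)|\simeq(1+|\xi|_{\mathpzc{d},\vec{a}})^{\sigma}$). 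A convenient choice is $\omega_{\sigma}(\xi) := (1+|\xi_1|^{4n}+\xi_2^{2})^{\sigma/2}$, the equivalence with the actual symbol of $\mathcal{J}^{\mathpzc{d},\vec{a}}_{\sigma}$ being itself a standard anisotropic Mikhlin multiplier.

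Since $E$ is UMD, $\vec{p}\in(1,\infty)^{2}$, and $\vec{w}\in A_{p}^{rec}(\R^{d-1})\times A_{q}^{rec}(\R)$ (the weight $1$ lies trivially in $A_{p}^{rec}$, and for a one-dimensional weight $A_{q}=A_{q}^{rec}$), the embedding \eqref{PIBVP:eq:prelim:anisotrope_mixed-norm_fm} reduces the proof to verifying that $m\in\mathcal{M}^{(\mathpzc{d},\vec{a})}_{N}$ for some $N\in\N$; i.e.,
\[
\sup_{|\alpha|\leq N}\sup_{\xi\in\R^{d}}(1+|\xi|_{\mathpzc{d},\vec{a}})^{\vec{a}\cdot_{\mathpzc{d}}\alpha}|D^{\alpha}m(\xi)|<\infty.
\]

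To check this I would use the following observations. Writing $\Lambda(\xi):=1+\imath\xi_{2}+|\xi_{1}|^{2n}$, one has $\Re\Lambda\geq 1$, so the principal branch $\Lambda^{\sigma}$ is smooth on $\R^{d}$; moreover $|\Lambda(\xi)|^{2}=(1+|\xi_{1}|^{2n})^{2}+\xi_{2}^{2}\simeq 1+|\xi_{1}|^{4n}+\xi_{2}^{2}\simeq(1+|\xi|_{\mathpzc{d},\vec{a}})^{2}$, so $|m|$ is bounded. For derivatives, both $\Lambda$ and $\omega_{\sigma}^{2/\sigma}$ are anisotropic symbols of order one in the sense that for each multi-index $\beta$,
\[
|D^{\beta}\Lambda(\xi)|+|D^{\beta}\omega_{\sigma}^{2/\sigma}(\xi)|\lesssim(1+|\xi|_{\mathpzc{d},\vec{a}})^{2-\vec{a}\cdot_{\mathpzc{d}}\beta};
\]
indeed, $\partial_{\xi_{1,i}}|\xi_{1}|^{2n}=2n|\xi_{1}|^{2n-2}\xi_{1,i}$ is at most $|\xi_{1}|^{2n-1}\lesssim(1+|\xi|_{\mathpzc{d},\vec{a}})^{2-1/(2n)}$, $\partial_{\xi_{2}}\Lambda=\imath$ absorbs the correct factor since $a_{2}=1$, and analogous estimates hold for $\omega_{\sigma}^{2/\sigma}$ because it is a polynomial of $\vec{a}$-anisotropic degree $2$. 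Applying the Faà di Bruno and Leibniz formulas to $m=\Lambda^{\sigma}\cdot\omega_{\sigma}^{-1}$, every term is a product of factors $\Lambda^{\sigma-k}$, $\omega_{\sigma}^{-1-2j/\sigma}$, $D^{\beta}\Lambda$ and $D^{\gamma}\omega_{\sigma}^{2/\sigma}$ whose total anisotropic order is precisely $-\vec{a}\cdot_{\mathpzc{d}}\alpha$. This yields the desired Mikhlin estimate.

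The main obstacle is this last piece of symbol bookkeeping: since $\sigma\in\R$ need not be an integer, one cannot just expand algebraically and instead must use Faà di Bruno to track that no factor of $(1+|\xi|_{\mathpzc{d},\vec{a}})$ is lost at any stage. Once this routine (if slightly tedious) verification is carried out for $|\alpha|\leq N$, the multiplier theorem \eqref{PIBVP:eq:prelim:anisotrope_mixed-norm_fm} finishes the proof.
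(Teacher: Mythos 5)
Your proposal is correct and follows essentially the same route as the paper, which likewise disposes of the lemma by checking that the symbol $(1+\imath\xi_{2}+|\xi_{1}|^{2n})^{\sigma}(1+|\xi_{1}|^{4n}+|\xi_{2}|^{2})^{-\sigma/2}$ satisfies the anisotropic Mikhlin condition and then invoking \eqref{PIBVP:eq:prelim:anisotrope_mixed-norm_fm} (your observation that $1\in A_{p}^{rec}(\R^{d-1})$ and $A_{q}(\R)=A_{q}^{rec}(\R)$ is exactly the point needed there). One small correction to your bookkeeping: for $\Lambda(\xi)=1+\imath\xi_{2}+|\xi_{1}|^{2n}$ the derivative bound you should use is $|D^{\beta}\Lambda(\xi)|\lesssim(1+|\xi|_{\mathpzc{d},\vec{a}})^{1-\vec{a}\cdot_{\mathpzc{d}}\beta}$ (anisotropic order one, as you say in words), not the displayed exponent $2-\vec{a}\cdot_{\mathpzc{d}}\beta$, since with the weaker bound the Fa\`a di Bruno terms $\Lambda^{\sigma-k}\prod_{i}D^{\beta_{i}}\Lambda$ would pick up an extra factor $(1+|\xi|_{\mathpzc{d},\vec{a}})^{k}$ and the total order would no longer be $-\vec{a}\cdot_{\mathpzc{d}}\alpha$.
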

\begin{proof}

This can be shown by checking that the symbol
\[
\R^{d-1} \times \R \owns (\xi_{1},\xi_{2}) \mapsto \frac{(1+\imath\xi_{2}+|\xi_{1}|^{2n})^{\sigma}}{(1+|\xi_{1}|^{4n}+|\xi_{2}|^{2})^{\sigma/2}} \in \C
\]
satisfies the anisotropic Mikhlin condition from \eqref{PIBVP:eq:prelim:anisotrope_mixed-norm_fm}.
\end{proof}

\begin{lemma}\label{PIBVP:lemma:lemma:pbvp_sol_for_just_boundary_data;real_line}
Let $X$ be a UMD space, $q,p \in (1,\infty)$, $\gamma \in (-1,p-1)$, $v \in A_{q}(\R)$.
Put
\begin{equation}
\begin{split}
\overline{\G}_{j} &:= F^{\kappa_{j,\gamma}}_{q,p}(\R,v;L^{p}(\R^{d-1};X)) \cap L^{q}(\R,v;F^{2n\kappa_{j,\gamma}}_{p,p}(\R^{d-1};X)), \quad\quad j=1,\ldots,n, \\
\overline{\G}  &:= \overline{\G}_{1} \oplus \ldots \oplus \overline{\G}_{n}, \\
\overline{\U} &:= W^{1}_{q}(\R,v;L^{p}(\R^{d}_{+},w_{\gamma};X)) \cap L^{q}(\R,v;W^{2n}_{p}(\R^{d}_{+},w_{\gamma};X)),
\end{split}
\end{equation}
where we recall that $\kappa_{j,\gamma} = 1-\frac{n_{j}}{2n}-\frac{1}{2np}(1+\gamma) \in (0,1)$.
Furthermore, define ${_{0}}\overline{\G}_{j}$ similarly to ${_{0}}\G_{j}$ and put ${_{0}}\overline{\G}_{j} := {_{0}}\overline{\G}_{1} \oplus \ldots \oplus {_{0}}\overline{\G}_{n}$.
Then the problem
\begin{equation}\label{PIBVP:eq:pbvp_sol_for_just_boundary_data;real_line}
\begin{array}{rll}
\partial_{t}u + (1+\mathcal{A}(D))u &= 0,  \\
\mathcal{B}_{j}(D)u &= g_{j}, & j=1,\ldots,n, \\
\end{array}
\end{equation}
admits a bounded linear solution operator $\overline{\mathscr{S}} :  \overline{\G} \longra \overline{\U}$ which maps ${_{0}}\overline{\G}$ to ${_{0}}\overline{\U} = \{ u \in \overline{\U} : u(0)=0 \}$.
\end{lemma}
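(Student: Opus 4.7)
The plan is to invert the problem on the whole real line by partial Fourier transformation in $t$, exploiting the elliptic solution operator $\mathcal{S}(\lambda)$ from Proposition~\ref{PIBVP:prop:opl_elliptisch_probleem_randwaarden} at $\lambda = 1 + i\theta \in \C_{+} \subset \Sigma_{\pi-\phi}$ (valid since $\phi \in (0,\pi/2)$). The formal solution
\[
\overline{\mathscr{S}}g := \mathscr{F}_{t}^{-1}\left[\mathcal{S}(1+i\theta)(\mathscr{F}_{t}g_{1}(\cdot,\theta),\ldots,\mathscr{F}_{t}g_{n}(\cdot,\theta))\right]
\]
satisfies \eqref{PIBVP:eq:pbvp_sol_for_just_boundary_data;real_line} at the symbol level; the task is to make this rigorous as a bounded map $\overline{\G} \to \overline{\U}$.

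Exploiting the factorization $\tilde{\mathcal{S}}_{j}(\lambda) = \mathcal{S}_{j}(\lambda) \circ \mathrm{tr}_{y=0}$, for each $j$ I would first apply Theorem~\ref{PIBVP:thm:traces_main_result} with $r = 1 - n_{j}/(2n)$ and $s = 2n - n_{j}$ (so that $r - (r/s)(1+\gamma)/p = \kappa_{j,\gamma}$ and $s-(1+\gamma)/p = 2n\kappa_{j,\gamma}$) to obtain a bounded extension operator $\mathcal{E}_{j} : \overline{\G}_{j} \to Z_{j}$, where
\[
Z_{j} := H^{1-\frac{n_{j}}{2n}}_{q}(\R,v;L^{p}(\R^{d}_{+},w_{\gamma};X)) \cap L^{q}(\R,v;H^{2n-n_{j}}_{p}(\R^{d}_{+},w_{\gamma};X)).
\]
Setting $\tilde g_{j} := \mathcal{E}_{j} g_{j}$ and $u_{j} := \mathscr{F}_{t}^{-1}[\tilde{\mathcal{S}}_{j}(1+i\theta)\mathscr{F}_{t}\tilde g_{j}(\cdot,\theta)]$, the representation formula \eqref{PIBVP:eq:prop;opl_elliptisch_probleem_randwaarden;rep_formula} combined with \eqref{pibvp:eq:omwisselen_L} yields, for every $|\alpha| \leq 2n$,
\[
D^{\alpha}u_{j} = \mathcal{T}^{1}_{j,\alpha}(1+iD_{t})\,L^{1-\frac{n_{j}}{2n}}\tilde g_{j} + \mathcal{T}^{2}_{j,\alpha}(1+iD_{t})\,L^{1-\frac{n_{j}+1}{2n}}D_{y}\tilde g_{j}.
\]

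The required estimate in $L^{q}(\R,v;L^{p}(\R^{d}_{+},w_{\gamma};X))$ now splits into two parts. First, the $\mathcal{R}$-bounds \eqref{PIBVP:eq:prop;opl_elliptisch_probleem_randwaarden;R-bounds} (applied with $k = 0,1$) give the $\mathcal{R}$-Mikhlin condition for the symbols $\theta \mapsto \mathcal{T}^{i}_{j,\alpha}(1+i\theta)$, so by \eqref{PIBVP:eq:prelim:operator-valued_FM} the operators $\mathcal{T}^{i}_{j,\alpha}(1+iD_{t})$ are bounded on $L^{q}(\R,v;L^{p}(\R^{d}_{+},w_{\gamma};X))$. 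Second, Lemma~\ref{PIBVP:lemma;lemma:pbvp_sol_for_just_boundary_data;isomorphism_between_BP} applied with the UMD space $E = L^{p}(\R_{+},|\,\cdot\,|^{\gamma};X)$, together with the identification of $Z_{j}$ (and of $D_{y}\tilde g_{j}$, which lies in the analogous intersection of order one lower in $y$) with an $E$-valued anisotropic Bessel potential space on $\R^{d-1}\times\R$, bounds $L^{1-n_{j}/(2n)}\tilde g_{j}$ and $L^{1-(n_{j}+1)/(2n)}D_{y}\tilde g_{j}$ in $L^{q}(\R,v;L^{p}(\R^{d}_{+},w_{\gamma};X))$. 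The membership $\partial_{t}u \in L^{q}(\R,v;L^{p}(\R^{d}_{+},w_{\gamma};X))$ then follows from the PDE $\partial_{t}u = -(1+\mathcal{A}(D))u$.

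For the zero-trace property, $\overline{\mathscr{S}}$ maps ${_{0}}\overline{\G}$ into ${_{0}}\overline{\U}$ by a causality argument: the analyticity of $\lambda \mapsto \mathcal{S}(\lambda)$ on $\Sigma_{\pi-\phi} \supset \C_{+}$ from Proposition~\ref{PIBVP:prop:opl_elliptisch_probleem_randwaarden}, combined with the norm estimates just established, shows via a Paley--Wiener type argument that the temporal convolution kernel associated with $\theta \mapsto \mathcal{S}(1+i\theta)$ is supported in $[0,\infty)$, so $u$ inherits the vanishing trace at $t=0$ from $g$. The main obstacle I anticipate is the second bound above: applying Lemma~\ref{PIBVP:lemma;lemma:pbvp_sol_for_just_boundary_data;isomorphism_between_BP} to $D_{y}\tilde g_{j}$ requires viewing $Z_{j}$ as a weighted anisotropic Bessel potential space on all of $\R^{d}\times\R$ (with anisotropy mixing time, tangential and normal spatial directions) so that \eqref{PIBVP:eq:prelim:differential} can absorb the loss of one $y$-derivative while leaving the correct tangential/temporal smoothness, and this demands careful bookkeeping of the function-space identifications between intersection spaces and anisotropic scales in the weighted vector-valued setting.
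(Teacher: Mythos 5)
Your construction and boundedness argument are essentially the paper's: extend each $g_{j}$ by a coretraction of the trace (the paper takes $\mathcal{E}_{j}$ from Corollary~\ref{PIBVP:cor:thm:trace_TL}, which is the same operator underlying the retraction in Theorem~\ref{PIBVP:thm:traces_main_result} that you invoke), use the representation $D^{\alpha}u_{j}=\mathcal{T}^{1}_{j,\alpha}(1+iD_{t})L^{1-\frac{n_{j}}{2n}}\tilde g_{j}+\mathcal{T}^{2}_{j,\alpha}(1+iD_{t})L^{1-\frac{n_{j}+1}{2n}}D_{y}\tilde g_{j}$ coming from \eqref{PIBVP:eq:prop;opl_elliptisch_probleem_randwaarden;rep_formula} and \eqref{pibvp:eq:omwisselen_L}, bound the multipliers $\mathcal{T}^{i}_{j,\alpha}(1+i\cdot)$ on $L^{q}(\R,v;L^{p}(\R^{d}_{+},w_{\gamma};X))$ via the $\mathcal{R}$-bounds \eqref{PIBVP:eq:prop;opl_elliptisch_probleem_randwaarden;R-bounds} and \eqref{PIBVP:eq:prelim:operator-valued_FM}, bound the factors $L^{1-\frac{n_{j}}{2n}}\tilde g_{j}$ and $L^{1-\frac{n_{j}+1}{2n}}D_{y}\tilde g_{j}$ via Lemma~\ref{PIBVP:lemma;lemma:pbvp_sol_for_just_boundary_data;isomorphism_between_BP}, and recover $\partial_{t}u$ from the equation. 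The bookkeeping issue you flag is resolved exactly as in the paper: view the target of $\mathcal{E}_{j}$ as the anisotropic space $H^{1-\frac{n_{j}}{2n},(\frac{1}{2n},1)}_{(p,q),(d,1)}(\R^{d}_{+}\times\R,(w_{\gamma},v);X)$, apply \eqref{PIBVP:eq:prelim:differential} for $D_{y}$ on the full space and restrict. (One routine omission: the Fourier manipulations should first be carried out on a dense class of data, as the paper does with $\mathscr{F}^{-1}C^{\infty}_{c}$-type tensors, and then extended by continuity.)

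The genuine gap is in the final assertion. Causality of the solution operator (temporal kernel supported in $[0,\infty)$) only says that $u(0)$ is determined by $g|_{(-\infty,0]}$; membership in ${_{0}}\overline{\G}$ merely forces $\mathrm{tr}_{t=0}g_{j}=0$ at the single time $t=0$, so "$u$ inherits the vanishing trace at $t=0$ from $g$" does not follow. Your inference would be correct if ${_{0}}\overline{\G}$ consisted of data vanishing on all of $(-\infty,0]$ (e.g.\ zero extensions of half-line data, which is how the lemma is ultimately used), but that is not what ${_{0}}\overline{\G}$ is. The paper proceeds differently: it first proves the density lemma (Lemma~\ref{PIBVP:lemma:lemma:pbvp_sol_for_just_boundary_data;density_trace}) that functions with vanishing temporal trace and good regularity are dense in ${_{0}}\overline{\G}_{j}$, and then, for such data, shows $u(0)=0$ by the pointwise argument of \cite[Lemma~2.2.7]{Mey_PHD-thesis}, i.e.\ an argument for the value $u(0)$ itself based on the explicit Fourier representation and the analyticity and decay of $\lambda\mapsto\tilde{\mathcal{S}}_{j}(\lambda)$ in the right half-plane, not a support/Paley--Wiener argument. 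You need to replace your last sentence by such an argument (together with the density step to pass to general $g\in{_{0}}\overline{\G}$), or else restrict the assertion to data vanishing on $(-\infty,0]$, for which your causality reasoning does suffice.
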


For the statement that $\overline{\mathscr{S}}$ maps ${_{0}}\overline{\G}$ to ${_{0}}\overline{\U}$ we will use the following lemma.

\begin{lemma}\label{PIBVP:lemma:lemma:pbvp_sol_for_just_boundary_data;density_trace}
$\{g_{j} \in \mathcal{S}(\R^{d};X) : \mathrm{tr}_{t=0}g_{j}=0\}$ is dense in ${_{0}}\overline{\G}_{j}$
\end{lemma}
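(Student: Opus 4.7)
The plan is to combine density of $\mathcal{S}(\R^{d};X)$ in $\overline{\G}_{j}$, arising from the identification of $\overline{\G}_{j}$ with a weighted anisotropic mixed-norm Triebel-Lizorkin space, with a temporal cutoff argument that forces the approximants to vanish at $t=0$. For the density step I would apply Theorem~\ref{functieruimten:thm:aTL_rep_intersection} with $l=2$, $\mathpzc{d}=(d-1,1)$, $\vec{a}=(\tfrac{1}{2n},1)$, and weights $(1,v)$ to obtain
\[
\overline{\G}_{j}=F^{\kappa_{j,\gamma},(\frac{1}{2n},1)}_{(p,q),p,(d-1,1)}(\R^{d},(1,v);X),
\]
in which $\mathcal{S}(\R^{d};X)$ is dense since $p,q\in(1,\infty)$. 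For the cutoff step I would fix $\eta\in C^{\infty}(\R)$ with $\eta(0)=0$, $\eta\equiv 1$ for $|t|\geq 1$, $0\leq\eta\leq 1$, and set $\eta_{\delta}(t):=\eta(t/\delta)$: for $\varphi\in\mathcal{S}(\R^{d};X)$, the product $\eta_{\delta}\varphi$ is still Schwartz and satisfies $\mathrm{tr}_{t=0}(\eta_{\delta}\varphi)=0$.

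Given $g\in{_{0}}\overline{\G}_{j}$, the strategy splits by subcase. In the subcase $\kappa_{j,\gamma}<\tfrac{1+\mu}{q}$, where ${_{0}}\overline{\G}_{j}=\overline{\G}_{j}$ carries no trace constraint, I would take Schwartz $\varphi_{n}\to g$ and use a diagonal argument, choosing $\delta_{n}\to 0$ slowly enough that $\|\eta_{\delta_{n}}\varphi_{n}-\varphi_{n}\|_{\overline{\G}_{j}}\to 0$; the convergence $\eta_{\delta}\varphi\to\varphi$ is only required for fixed Schwartz $\varphi$. In the subcase $\kappa_{j,\gamma}>\tfrac{1+\mu}{q}$, where the zero-trace condition is active, I would instead apply the cutoff directly to $g$, use the hypothesis $\mathrm{tr}_{t=0}g=0$ to show $\eta_{\delta}g\to g$ in $\overline{\G}_{j}$, and then mollify $\eta_{\delta}g$ — which is supported away from $\{t=0\}$ — by a Schwartz convolution whose support remains away from the hyperplane for small parameter.

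The hard part is the convergence $\eta_{\delta}\varphi\to\varphi$ in $\overline{\G}_{j}$. The $L^{q}(\R,v;F^{2n\kappa_{j,\gamma}}_{p,p})$-component is immediate by dominated convergence in $t$, with $(\eta_{\delta}-1)(t)\varphi(\cdot,t)\to 0$ pointwise dominated by $\|\varphi(\cdot,t)\|_{F^{2n\kappa_{j,\gamma}}_{p,p}}\in L^{q}(\R,v)$. The $F^{\kappa_{j,\gamma}}_{q,p}(\R,v;L^{p})$-component is the real obstacle, since the fractional time regularity must be handled directly. For Schwartz $\varphi$ (first subcase), I would interpolate an $L^{q}$-estimate on $\eta_{\delta}\varphi-\varphi$ against a $W^{1}_{q}$-bound: a computation using $\|\eta_{\delta}'\|_{\infty}\sim\delta^{-1}$, $\mathrm{supp}(\eta_{\delta}-1)\subset[-\delta,\delta]$, and $v([-\delta,\delta])\sim\delta^{\mu+1}$ yields convergence rate $\delta^{(1+\mu)/q-\kappa_{j,\gamma}}$, which tends to zero exactly when $\kappa_{j,\gamma}<(1+\mu)/q$. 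For general $g$ in the second subcase, the zero-trace hypothesis would enter through a weighted Hardy-type inequality controlling $\|t^{-\kappa_{j,\gamma}}g\|$ near $t=0$, again leading to a dominated-convergence estimate in the Littlewood-Paley characterization of the anisotropic Triebel-Lizorkin norm.
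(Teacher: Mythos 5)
Your route is genuinely different from the paper's. The paper uses no temporal cutoff at all: it lets the spatial Littlewood--Paley operators $S_{m}$ act pointwise in time, uses $S_{m}\to I$ strongly in $L^{p}(\R^{d-1};X)$ and in $F^{2n\kappa_{j,\gamma}}_{p,p}(\R^{d-1};X)$ (and the fact that $S_{m}f(t)$ has compact spatial Fourier support, hence lies in the smoother spatial space) to conclude that ${_{0}}F^{\kappa_{j,\gamma}}_{q,p}(\R,v_{\mu};F^{2n\kappa_{j,\gamma}}_{p,p}(\R^{d-1};X))$ is densely embedded in ${_{0}}\overline{\G}_{j}$, and then quotes \cite{LMV_interpolation_boundary_cond} for the density of $\{f\in\mathcal{S}(\R):f(0)=0\}\otimes\mathcal{S}(\R^{d-1};X)$ in that single temporal Triebel--Lizorkin space. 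In other words, the zero-trace temporal approximation --- exactly what you attack with cutoffs --- is delegated to the companion paper. Your subcritical case $\kappa_{j,\gamma}<\frac{1+\mu}{q}$ is sound: the inequality $\norm{h}_{F^{\kappa}_{q,p}(\R,v_{\mu};Y)}\lesssim\norm{h}_{L^{q}(\R,v_{\mu};Y)}^{1-\kappa}\norm{h}_{W^{1}_{q}(\R,v_{\mu};Y)}^{\kappa}$ (via $B^{\kappa}_{q,1}=(L^{q},W^{1}_{q})_{\kappa,1}\hookrightarrow F^{\kappa}_{q,p}$, valid for the $A_{q}$ weight $v_{\mu}$) together with $v_{\mu}([-\delta,\delta])\sim\delta^{1+\mu}$ gives your rate $\delta^{\frac{1+\mu}{q}-\kappa_{j,\gamma}}$, and the diagonal argument then works.

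Two points remain genuine gaps. First, in the supercritical case $\kappa_{j,\gamma}>\frac{1+\mu}{q}$ the convergence $\eta_{\delta}g\to g$ in the $F^{\kappa_{j,\gamma}}_{q,p}(\R,v_{\mu};L^{p}(\R^{d-1};X))$-component is the entire technical content of the lemma, and the one-line appeal to ``a weighted Hardy-type inequality'' does not settle it: you need (a) a Hardy inequality of the form $\norm{t^{-\kappa_{j,\gamma}}g}_{L^{q}(\R,v_{\mu};L^{p})}\lesssim\norm{g}_{F^{\kappa_{j,\gamma}}_{q,p}(\R,v_{\mu};L^{p})}$ for zero-trace elements of a weighted vector-valued Triebel--Lizorkin space with microscopic index $p\neq q$, and (b) a product/commutator estimate showing that the family $\eta_{\delta}$, whose pointwise multiplier norms on $F^{\kappa_{j,\gamma}}_{q,p}$ blow up as $\delta\to0$, nevertheless converges strongly to the identity on the zero-trace subspace. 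Both are true, but they are precisely the kind of results the paper imports from \cite{LMV_interpolation_boundary_cond}, so they must be proved or properly cited, not gestured at. Second, your final step does not yet produce elements of $\mathcal{S}(\R^{d};X)$: mollifying $\eta_{\delta}g$ yields a smooth function with no decay beyond that of $g$, so an additional truncation by $\chi(\cdot/R)$ (or a density-plus-trace-correction argument) is needed to land in the Schwartz class; relatedly, with your choice of $\eta$ (only $\eta(0)=0$) the function $\eta_{\delta}g$ is not supported away from $\{t=0\}$ --- take $\eta\equiv 0$ on a neighborhood of $0$ so that a mollifier with small temporal support preserves the vanishing near the hyperplane. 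These defects are fixable, but as written the supercritical case and the upgrade to Schwartz approximants are incomplete.
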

\begin{proof}
As a consequence of Theorem~\ref{functieruimten:thm:aTL_rep_intersection},
\[
{_{0}}\overline{\G}_{j} = {_{0}}F^{\kappa_{j,\gamma}}_{q,p}(\R,v_{\mu};L^{p}(\R^{d-1};X)) \cap L^{q}(\R,v_{\mu};F^{2n\kappa_{j,\gamma}}_{p,p}(\R^{d-1};X)),
\]
where
\[
{_{0}}F^{s}_{q,p}(\R,v_{\mu};Y) = \left\{\begin{array}{ll}
F^{s}_{q,p}(\R,v_{\mu};Y), & s < \frac{1+\mu}{q}, \\
\{ f \in F^{s}_{q,p}(\R,v_{\mu};Y) : \mathrm{tr}_{t=0}f=0 \}, & s > \frac{1+\mu}{q}.
\end{array}\right.
\]
Let $(S_{n})_{n \in \N}$ be the family of convolution operator corresponding to some $\varphi = (\varphi_{n})_{n \in \N} \in \Phi(\R^{d-1})$.
Then $S_{n} \stackrel{\mathrm{SOT}}{\longra} I$ as $n \to \infty$ in both $L^{p}(\R^{d-1};X)$ as $F^{2n\kappa_{j,\gamma}}_{p,p}(\R^{d-1};X)$. For the pointwise induced operator family we thus have $S_{n} \stackrel{\mathrm{SOT}}{\longra} I$ in ${_{0}}\overline{\G}_{j}$. Since
\[
L^{p}(\R^{d-1};X) \cap \mathscr{F}^{-1}\mathcal{E}'(\R^{d-1};X) \subset
F^{0}_{p,\infty}(\R^{d-1};X) \cap \mathscr{F}^{-1}\mathcal{E}'(\R^{d-1};X) \subset
F^{2n\kappa_{j,\gamma}}_{p,p}(\R^{d-1};X),
\]
it follows that
\[
{_{0}}F^{\kappa_{j,\gamma}}_{q,p}(\R,v_{\mu};F^{2n\kappa_{j,\gamma}}_{p,p}(\R^{d-1};X)) =
{_{0}}F^{\kappa_{j,\gamma}}_{q,p}(\R,v_{\mu};F^{2n\kappa_{j,\gamma}}_{p,p}(\R^{d-1};X)) \cap L^{q}(\R,v_{\mu};F^{2n\kappa_{j,\gamma}}_{p,p}(\R^{d-1};X))
\]
is dense in ${_{0}}\overline{\G}_{j}$; in fact,
\[
{_{0}}F^{\kappa_{j,\gamma}}_{q,p}(\R,v_{\mu};F^{2n\kappa_{j,\gamma}}_{p,p}(\R^{d-1};X))
\stackrel{d}{\hookrightarrow} {_{0}}\overline{\G}_{j}.
\]
Since
\begin{eqnarray*}
\{ f \in \mathcal{S}(\R) : f(0)=0 \} \otimes \mathcal{S}(\R^{d-1};X)
&\stackrel{d}{\subset}& \{ f \in \mathcal{S}(\R) : f(0)=0 \} \otimes F^{2n\kappa_{j,\gamma}}_{p,p}(\R^{d-1};X) \\
&\stackrel{d}{\subset}& {_{0}}F^{\kappa_{j,\gamma}}_{q,p}(\R,v_{\mu};F^{2n\kappa_{j,\gamma}}_{p,p}(\R^{d-1};X))
\end{eqnarray*}
by \cite{LMV_interpolation_boundary_cond}, the desired density follows.
\end{proof}

\begin{proof}[Proof of Lemma~\ref{PIBVP:lemma:lemma:pbvp_sol_for_just_boundary_data;real_line}]

(I) Put $\overline{\F}:= L^{q}(\R,v;L^{p}(\R^{d}_{+},w_{\gamma};X))$ and $V:= \mathscr{F}^{-1}C^{\infty}_{c}(\R^{d-1};X) \otimes \mathscr{F}^{-1}C^{\infty}_{c}(\R)$.
Then $V^{n}$ is dense in $\overline{\G}$. So, in view of
\[
\partial_{t} + (1+\mathcal{A}(D)) \in \mathcal{B}(\overline{\U},\overline{\F}) \quad\quad \mbox{and} \quad\quad
\mathcal{B}_{j}(D) \in \mathcal{B}(\overline{\U},\overline{\G}_{j}), \quad j=1,\ldots,n,
\]
it suffices to construct a solution operator $\overline{\mathscr{S}}:V^{n} \longra \overline{\U}$ which is bounded when $V^{n}$ carries the induced norm from $\overline{\G}$.
In order to define such an operator, fix $g=(g_{1},\ldots,g_{n}) \in V^{n}$.
Let
\begin{equation}\label{PIBVP:eq:lemma;pbvp_sol_for_just_boundary_date;extension_operator}
\mathcal{E}_{j} \in \mathcal{B}(\overline{\G},H^{1-\frac{n_{j}}{2n},(\frac{1}{2n},1)}_{(p,q),(d,1)}(\R^{d}_{+} \times \R,(w_{\gamma},v);X)), \quad j=1,\ldots,n,
\end{equation}
be extension operators (right-inverses of the trace operator $\mathrm{tr}_{y=0}$) as in Corollary~\ref{PIBVP:cor:thm:trace_TL}.
Then $\mathcal{E}_{j}$ maps $V^{n}$ into
$\mathcal{S}(\R^{d}_{+};X)) \otimes \mathscr{F}^{-1}(C^{\infty}_{c}(\R))$; in particular,
\[
\mathcal{E}_{j}g_{j} \in \mathcal{S}(\R^{d}_{+};X)) \otimes \mathscr{F}^{-1}(C^{\infty}_{c}(\R)),
\quad j=1,\ldots,n.
\]
So, for each $j \in \{1,\ldots,n\}$, we have
\[
\mathscr{F}_{t}\mathcal{E}_{j}g_{j} \in \mathcal{S}(\R^{d}_{+};X)) \otimes C^{\infty}_{c}(\R),
\]
and we may also view $\mathscr{F}_{t}\mathcal{E}_{j}g_{j}$ as a function
\[
[\theta \mapsto (\mathscr{F}_{t}\mathcal{E}_{j}g_{j})(\theta)] \in C^{\infty}_{c}(\R;W^{2n-n_{j}}_{p}(\R^{d}_{+},w_{\gamma};X)).
\]
Since
\[
[\theta \mapsto \tilde{\mathcal{S}}_{j}(1+\imath\theta)] \in C^{\infty}(\R;\mathcal{B}(W^{2n-n_{j}}_{p}(\R^{d}_{+},w_{\gamma};X),W^{2n}_{p}(\R^{d}_{+},w_{\gamma};X))), \quad j=1,\ldots,n,
\]
with $\tilde{\mathcal{S}}_{j}(1+\imath\theta)$ as in Proposition~\ref{PIBVP:prop:opl_elliptisch_probleem_randwaarden},
we may thus define
\[
\overline{\mathscr{S}}g := \mathscr{F}_{t}^{-1}\left[ \theta \mapsto \sum_{j=1}^{n}\tilde{\mathcal{S}}_{j}(1+\imath\theta)
(\mathscr{F}_{t}\mathcal{E}_{j}g_{j})(\theta) \right] \in \mathcal{S}(\R;W^{2n}_{p}(\R^{d}_{+},w_{\gamma};X))
\]

(II) We now show that $u=\overline{\mathscr{S}}g \in \mathcal{S}(\R;W^{2n}_{p}(\R^{d}_{+},w_{\gamma};X))$ is a solution of \eqref{PIBVP:eq:pbvp_sol_for_just_boundary_data;real_line} for $g \in V^{n}$.
To this end, let $\theta \in \R$ be arbitrary. Then we have that $(\mathscr{F}_{t}\mathcal{E}_{j}g_{j})(\theta) \in \mathcal{S}(\R^{d}_{+};X) \subset W^{2n-n_{j}}_{p}(\R^{d}_{+},w_{\gamma};X)$ and $(\mathscr{F}_{t}g_{j})(\theta) \in \mathcal{S}(\R^{d-1};X) \subset F^{2n\kappa_{j,\gamma}}_{p,p}(\R^{d-1};X)$ are related by $\mathrm{tr}_{y=0}(\mathscr{F}_{t}\mathcal{E}_{j}g_{j})(\theta) = (\mathscr{F}_{t}g_{j})(\theta)$; just note that
$(\mathscr{F}_{t}\mathcal{E}_{j}g_{j})(0,x',\theta) = (\mathscr{F}_{t}g_{j})(x',\theta)$ for every $x' \in \R^{d-1}$.
Therefore, by Proposition~\ref{PIBVP:prop:opl_elliptisch_probleem_randwaarden},
$v(\theta) = (\mathscr{F}_{t}u)(\theta)  =(\mathscr{F}_{t}\overline{\mathscr{S}}g)(\theta) = \sum_{j=1}^{n}\tilde{\mathcal{S}}_{j}(1+\imath\theta)
(\mathscr{F}_{t}\mathcal{E}_{j}g_{j})(\theta) \in W^{2n}_{p}(\R^{d}_{+},w_{\gamma};X)$ is the unique solution of the problem
\[
\begin{array}{rll}
(1+\imath\theta) v + \mathcal{A}(D)v &= 0, & \\
\mathcal{B}_{j}(D)v &= (\mathscr{F}_{t}g_{j})(\theta), & j=1,\ldots,n.
\end{array}
\]
Applying the inverse Fourier transform $\mathscr{F}_{t}^{-1}$ with respect to $\theta$, we find
\[
\begin{array}{rll}
\partial_{t}u + (1+\mathcal{A}(D))u &= 0, & \\
\mathcal{B}_{j}(D)u &= g_{j}, & j=1,\ldots,n. \\
\end{array}
\]

(III) We next derive a representation formula for $\overline{\mathscr{S}}$ that is well suited for proving the boundedness of $\overline{\mathscr{S}}$. To this end, fix a $g=(g_{1},\ldots,g_{n}) \in V^{n}$. Then we have, for each multi-index $\alpha \in \N^{d}, |\alpha| \leq 2n$,
\begin{eqnarray}
D^{\alpha}\overline{\mathscr{S}}g
&=& D^{\alpha} \mathscr{F}_{t}^{-1}\left[ \theta \mapsto \sum_{j=1}^{n}\tilde{\mathcal{S}}_{j}(1+\imath\theta)
(\mathscr{F}_{t}\mathcal{E}_{j}g_{j})(\theta) \right] \nonumber \\
&=& \sum_{j=1}^{n}\mathscr{F}_{t}^{-1}\left[ \theta \mapsto D^{\alpha}\tilde{\mathcal{S}}_{j}(1+\imath\theta)
(\mathscr{F}_{t}\mathcal{E}_{j}g_{j})(\theta) \right] \nonumber \\
&\stackrel{\eqref{PIBVP:eq:prop;opl_elliptisch_probleem_randwaarden;rep_formula}}{=}&
\sum_{j=1}^{n}\mathscr{F}_{t}^{-1}\left[ \theta \mapsto
\mathcal{T}^{1}_{j,\alpha}(1+\imath\theta)L_{1+\imath\theta}^{1-\frac{n_{j}}{2n}}(\mathscr{F}_{t}\mathcal{E}_{j}g_{j})(\theta) + \mathcal{T}^{2}_{j,\alpha}(1+\imath\theta)L_{1+\imath\theta}^{1-\frac{n_{j}+1}{2n}}D_{y}(\mathscr{F}_{t}\mathcal{E}_{j}g_{j})(\theta) \right] \nonumber \\
&=& \sum_{j=1}^{n}\mathscr{F}_{t}^{-1}\left[ \theta \mapsto
\mathcal{T}^{1}_{j,\alpha}(1+\imath\theta)L_{1+\imath\theta}^{1-\frac{n_{j}}{2n}}(\mathscr{F}_{t}\mathcal{E}_{j}g_{j})(\theta)\right]\nonumber \\
&& \quad + \:\:\: \sum_{j=1}^{n}\mathscr{F}_{t}^{-1}\left[\theta \mapsto \mathcal{T}^{2}_{j,\alpha}(1+\imath\theta)L_{1+\imath\theta}^{1-\frac{n_{j}+1}{2n}}(\mathscr{F}_{t}D_{y}\mathcal{E}_{j}g_{j})(\theta) \right]  \nonumber \\
&\stackrel{\eqref{pibvp:eq:omwisselen_L}}{=}& \sum_{j=1}^{n}\mathscr{F}_{t}^{-1}\left[ \theta \mapsto
\mathcal{T}^{1}_{j,\alpha}(1+\imath\theta)(\mathscr{F}_{t}L^{1-\frac{n_{j}}{2n}}\mathcal{E}_{j}g_{j})(\theta)\right] \nonumber \\
&& \quad + \:\:\: \sum_{j=1}^{n}\mathscr{F}_{t}^{-1}\left[\theta \mapsto \mathcal{T}^{2}_{j,\alpha}(1+\imath\theta)(\mathscr{F}_{t}L^{1-\frac{n_{j}+1}{2n}}D_{y}\mathcal{E}_{j}g_{j})(\theta) \right].\label{PIBVP:eq:lemma;pbvp_sol_for_trivial_boundary_data;abstract_max-reg;rep_formula_sol_operator}
\end{eqnarray}

(IV) We next show that $\norm{\overline{\mathscr{S}}g}_{\overline{\U}} \lesssim \norm{g}_{\overline{\G}}$ for $g \in V^{n}$.
Being a solution of \eqref{PIBVP:eq:pbvp_sol_for_just_boundary_data;real_line}, $\overline{\mathscr{S}}g$ satisfies
\[
\partial_{t}\overline{\mathscr{S}}g = -(1+\mathcal{A}(D))\overline{\mathscr{S}}g.
\]
Hence, it suffices to establish the estimate $\norm{D^{\alpha}\overline{\mathscr{S}}g}_{\overline{\F}} \lesssim \norm{g}_{\overline{\G}}$ for all multi-indices $\alpha \in \N^{d}, |\alpha| \leq 2n$.
So fix such an $|\alpha| \leq 2n$. Then, in view of the representation formula
\eqref{PIBVP:eq:lemma;pbvp_sol_for_trivial_boundary_data;abstract_max-reg;rep_formula_sol_operator}, it is enough to show that
\begin{equation}\label{PIBVP:eq:lemma;pbvp_sol_for_trivial_boundary_data;abstract_max-reg;final_estimate;1}
\norm{\mathscr{F}_{t}^{-1}\left[\theta \mapsto \mathcal{T}^{1}_{j,\alpha}(1+\imath\theta)(\mathscr{F}_{t}L^{1-\frac{n_{j}}{2n}}\mathcal{E}_{j}g_{j})(\theta) \right]\,}_{\overline{\F}} \lesssim \norm{g}_{\overline{\G}}, \quad j=1,\ldots,n,
\end{equation}
and
\begin{equation}\label{PIBVP:eq:lemma;pbvp_sol_for_trivial_boundary_data;abstract_max-reg;final_estimate;2}
\norm{\mathscr{F}_{t}^{-1}\left[\theta \mapsto \mathcal{T}^{2}_{j,\alpha}(1+\imath\theta)(\mathscr{F}_{t}L^{1-\frac{n_{j}+1}{2n}}D_{y}\mathcal{E}_{j}g_{j})(\theta) \right]\,}_{\overline{\F}} \lesssim \norm{g}_{\overline{\G}}, \quad j=1,\ldots,n.
\end{equation}
We only treat the estimate \eqref{PIBVP:eq:lemma;pbvp_sol_for_trivial_boundary_data;abstract_max-reg;final_estimate;2},
the estimate \eqref{PIBVP:eq:lemma;pbvp_sol_for_trivial_boundary_data;abstract_max-reg;final_estimate;1} being similar (but easier):
Fix a $j \in \{1,\ldots,n\}$.
For the full $(d+1)$-dimensional Euclidean space $\R^{d} \times \R$ instead of $\R^{d}_{+} \times \R$,
\[
D_{y} \in \mathcal{B}\left(H^{1-\frac{n_{j}}{2n},(\frac{1}{2n},1)}_{(p,q),(d,1)}(\R^{d}_{+} \times \R,(w_{\gamma},v);X),H^{1-\frac{n_{j}+1}{2n},(\frac{1}{2n},1)}_{(p,q),(d,1)}(\R^{d}_{+} \times \R,(w_{\gamma},v);X)\right).
\]
follows from \eqref{PIBVP:eq:prelim:differential} (and the fact that $L_{(p,q),(d,1)}(\R^{d+1},(w_{\gamma},v_{\mu});X)$ is an admissibile Banach space of $X$-valued tempered distributions on $\R^{d+1}$ in view of \eqref{PIBVP:eq:prelim:anisotrope_mixed-norm_fm}), from which the $\R^{d}_{+} \times \R$-case follows by restriction.
In combination with \eqref{PIBVP:eq:lemma;pbvp_sol_for_just_boundary_date;extension_operator} and Lemma \ref{PIBVP:lemma;lemma:pbvp_sol_for_just_boundary_data;isomorphism_between_BP} this yields
\begin{equation}\label{PIBVP:eq:lemma;pbvp_sol_for_trivial_boundary_data;abstract_max-reg;final_estimate;2;first_part}
L^{1-\frac{n_{j}+1}{2n}}D_{y}\mathcal{E}_{j} \in \mathcal{B}\left( \overline{\G}_{j},
\underbrace{H^{0,(\frac{1}{2n},1)}_{(p,q),(d-1,1)}(\R^{d-1} \times \R,(1,v);L^{p}(\R_{+},|\,\cdot\,|^{\gamma};X))}_{=\,L^{q}(\R,v;L^{p}(\R^{d}_{+},w_{\gamma};X)) \,=\, \overline{\F}} \right).
\end{equation}
Furthermore, we have that $\mathcal{T}^{2}_{j,\alpha}(1+\imath\cdot) \in C^{\infty}(\R;\mathcal{B}(L^{p}(\R^{d}_{+},w_{\gamma};X)))$ satisfies
\[
\mathcal{R}\left\{ \theta^{k}\partial_{\theta}^{k}\mathcal{T}^{2}_{j,\alpha}(1+\imath\theta) :\theta \in \R \right\} \leq
\mathcal{R}\left\{ (1+\imath\theta)^{k+1-\frac{|\alpha|}{2n}}\partial_{\theta}^{k}\mathcal{T}^{2}_{j,\alpha}(1+\imath\theta) : \theta \in \R \right\}
< \infty, \quad\quad k \in \N,
\]
by the Kahane contraction principle and \eqref{PIBVP:eq:prop;opl_elliptisch_probleem_randwaarden;R-bounds}; in particular,
$\mathcal{T}^{2}_{j,\alpha}(1+\imath\cdot)$ satisfies the Mikhlin condition corresponding to \eqref{PIBVP:eq:prelim:operator-valued_FM}.
As a consequence,
$\mathcal{T}^{2}_{j,\alpha}(1+\imath\cdot)$ defines a bounded Fourier multiplier operator on $L^{q}(\R,v;L^{p}(\R^{d}_{+},w_{\gamma};X))$.
In combination with \eqref{PIBVP:eq:lemma;pbvp_sol_for_trivial_boundary_data;abstract_max-reg;final_estimate;2;first_part}, this gives the estimate \eqref{PIBVP:eq:lemma;pbvp_sol_for_trivial_boundary_data;abstract_max-reg;final_estimate;2}.

(V) We finally show that $\overline{\mathscr{S}} \in \mathcal{B}(\overline{\G},\overline{\U})$ maps ${_{0}}\overline{\G}$ to ${_{0}}\overline{\U}$.
As in the proof of \cite[Lemma~2.2.7]{Mey_PHD-thesis} it can be shown that, if
\begin{equation*}\label{pbvp:eq:prep;lemma;pbvp_sol_for_just_boundary_data;problem_without_initial_data;space_Fourier_transform_time_g}
g = (g_{1},\ldots,g_{n})\in \prod_{j=1}^{n}C_{L^{1}}(\R;F_{p,p}^{2n\kappa_{j,\gamma}}(\R^{d-1};X))
\quad\mbox{with}\quad g_{1}(0)=\ldots=g_{n}(0)=0
\end{equation*}
and
\begin{equation*}\label{pbvp:eq:prep;lemma;pbvp_sol_for_just_boundary_data;problem_without_initial_data;space_Fourier_transform_time}
u \in C^{1}_{L^{1}}(\R;L^{p}(\R^{d}_{+},w_{\gamma};X)) \cap C_{L^{1}}(\R;W^{2n}_{p}(\R^{d}_{+},w_{\gamma};X))
\end{equation*}
satisfy \eqref{PIBVP:eq:pbvp_sol_for_just_boundary_data;real_line}, then $u(0)=0$.
The desired statement thus follows from Lemma~\ref{PIBVP:lemma:lemma:pbvp_sol_for_just_boundary_data;density_trace}.
\end{proof}

\begin{cor}\label{PIBVP:lemma:pbvp_sol_for_just_boundary_data}
Let the notations and assumptions be as in Theorem~\ref{PIBVP:thm:main_result}, but for the model problem case of top order constant coefficients on the half-space considered in Section~\ref{PIBVP:sec:elliptic_problems}.
Then the problem \eqref{PIBVP:eq:pbvp_sol_for_just_boundary_data} admits a bounded linear solution operator
\[
\mathscr{S}: \{g : (0,g,0) \in \D^{p,q}_{\gamma,\mu}\} \longra \U^{p,q}_{\gamma,\mu}.
\]
\end{cor}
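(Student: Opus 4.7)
The strategy is to reduce \eqref{PIBVP:eq:pbvp_sol_for_just_boundary_data} on the time interval $J$ to the full real-line counterpart \eqref{PIBVP:eq:pbvp_sol_for_just_boundary_data;real_line}, for which Lemma~\ref{PIBVP:lemma:lemma:pbvp_sol_for_just_boundary_data;real_line} already supplies a bounded solution operator $\overline{\mathscr{S}}:\overline{\G}\longra\overline{\U}$ mapping ${_{0}}\overline{\G}$ into ${_{0}}\overline{\U}$. Given $g$ with $(0,g,0)\in\D^{p,q}_{\gamma,\mu}$, equivalently $g_{j}\in{_{0}}\G_{j}$ for each $j$, the plan is to construct a bounded extension $\mathcal{E}=(\mathcal{E}_{1},\ldots,\mathcal{E}_{n})$ with $\mathcal{E}_{j}:{_{0}}\G_{j}\longra{_{0}}\overline{\G}_{j}$, set
\[
\mathscr{S}g := \big(\overline{\mathscr{S}}\mathcal{E}g\big)\big|_{\mathscr{O}\times J},
\]
and verify that $\mathscr{S}g\in\U^{p,q}_{\gamma,\mu}$ solves \eqref{PIBVP:eq:pbvp_sol_for_just_boundary_data}.

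For the construction of $\mathcal{E}_{j}$ I would split according to the position of $\kappa_{j,\gamma}$ relative to $\frac{1+\mu}{q}$. When $\kappa_{j,\gamma}<\frac{1+\mu}{q}$, the space ${_{0}}\G_{j}$ imposes no compatibility constraint, and Theorem~\ref{functieruimten:thm:aTL_rep_intersection} identifies it with the classical intersection space
\[
F^{\kappa_{j,\gamma}}_{q,p}(\R_{+},v_{\mu};L^{p}(\R^{d-1};X)) \cap L^{q}(\R_{+},v_{\mu};F^{2n\kappa_{j,\gamma}}_{p,p}(\R^{d-1};X));
\]
a standard higher-order Seeley-type reflection in the time variable then furnishes a bounded extension into the corresponding intersection space over $\R$, which coincides with $\overline{\G}_{j}={_{0}}\overline{\G}_{j}$. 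When $\kappa_{j,\gamma}>\frac{1+\mu}{q}$, the assumption $(0,g,0)\in\D^{p,q}_{\gamma,\mu}$ forces $\mathrm{tr}_{t=0}g_{j}=0$, and I take $\mathcal{E}_{j}$ to be extension by zero to $(-\infty,0)$. Boundedness of this zero-extension ${_{0}}\G_{j}\longra{_{0}}\overline{\G}_{j}$ again reduces, via Theorem~\ref{functieruimten:thm:aTL_rep_intersection} combined with the density argument of Lemma~\ref{PIBVP:lemma:lemma:pbvp_sol_for_just_boundary_data;density_trace}, to the classical fact that zero-extension is bounded on $_{0}F^{\kappa_{j,\gamma}}_{q,p}(\R_{+},v_{\mu};\,\cdot\,)$.

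With $\mathcal{E}$ in hand, Lemma~\ref{PIBVP:lemma:lemma:pbvp_sol_for_just_boundary_data;real_line} produces $\bar{u}:=\overline{\mathscr{S}}\mathcal{E}g\in{_{0}}\overline{\U}$, which solves \eqref{PIBVP:eq:pbvp_sol_for_just_boundary_data;real_line} with data $\mathcal{E}g$ and satisfies $\bar{u}(0)=0$. Restriction yields $u:=\bar{u}|_{\mathscr{O}\times J}\in\U^{p,q}_{\gamma,\mu}$: the evolution equation $\partial_{t}\bar{u}+(1+\mathcal{A}(D))\bar{u}=0$ restricts to the same identity on $\mathscr{O}\times J$; the boundary identities $\mathcal{B}_{j}(D)\bar{u}=\mathcal{E}_{j}g_{j}$ restrict to $\mathcal{B}_{j}(D)u=g_{j}$ since $\mathcal{E}_{j}g_{j}|_{\R_{+}}=g_{j}$; and $\mathrm{tr}_{t=0}u=\bar{u}(0)=0$. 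Continuity of $\mathscr{S}$ is then a composition of the continuities of $\mathcal{E}$, $\overline{\mathscr{S}}$, and the time-restriction map $\overline{\U}\longra\U^{p,q}_{\gamma,\mu}$, which is trivially bounded because $J\subset\R_{+}$.

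The principal technical point is the construction of the extension $\mathcal{E}$ within the anisotropic mixed-norm Triebel--Lizorkin framework while respecting the compatibility structure in the regime $\kappa_{j,\gamma}>\frac{1+\mu}{q}$. Modulo this, the argument is essentially structural: Theorem~\ref{functieruimten:thm:aTL_rep_intersection} converts each step into a classical weighted one-dimensional extension or zero-extension statement in the time variable, for which the required mapping properties are already available in the isotropic weighted vector-valued theory.
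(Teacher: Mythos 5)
Your proposal is correct and follows essentially the same route the paper intends: Corollary~\ref{PIBVP:lemma:pbvp_sol_for_just_boundary_data} is deduced from Lemma~\ref{PIBVP:lemma:lemma:pbvp_sol_for_just_boundary_data;real_line} by extending the boundary data in time from the half-line to $\R$ into ${_{0}}\overline{\G}$, applying $\overline{\mathscr{S}}$ (whose mapping of ${_{0}}\overline{\G}$ into ${_{0}}\overline{\U}$ yields the zero initial trace), and restricting back in time. Your explicit construction of the extension (Seeley-type reflection when $\kappa_{j,\gamma}<\frac{1+\mu}{q}$, extension by zero when $\kappa_{j,\gamma}>\frac{1+\mu}{q}$, transferred to the anisotropic setting via Theorem~\ref{functieruimten:thm:aTL_rep_intersection}) merely fills in details the paper leaves implicit.
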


\subsection{Proof of Theorem~\ref{PIBVP:thm:main_result}}

We can now finally prove the main result of this paper.
\begin{proof}[Proof of Theorem \ref{PIBVP:thm:main_result}]
In view of Section~\ref{PIBVP:subsection:space_initial-boundary_data}, it remains to establish existence and uniqueness of a solution $u \in \U^{p,q}_{\gamma,\mu}$ of \eqref{PIBVP:eq:rigorous_problem} for given $(f,g,u_{0}) \in \G^{p,q}_{\gamma,\mu} \oplus \D^{p,q}_{\gamma,\mu}$.
By a standard (but quite technical) perturbation and localization procedure, it is enough to consider the model problem
\[
\begin{array}{rll}
\partial_{t}u + (1+\mathcal{A}(D))u &= f,  \\
\mathcal{B}_{j}(D)u &= g_{j}, & j=1,\ldots,n, \\
u(0) &= u_{0},
\end{array}
\]
on the half-space, where $\mathcal{A}$ and $\mathcal{B}_{1},\ldots,\mathcal{B}_{n}$ are top-order constant coefficient operators as considered in Section~\ref{PIBVP:sec:elliptic_problems}. This procedure is worked out in full detail in \cite{Mey_PHD-thesis}; for further comments we refer to Appendix~\ref{PIBVP:appendix:sec:localization}.

Let $(f,g,u_{0}) \in \F^{p,q}_{\gamma,\mu} \oplus \D^{p,q}_{\gamma,\mu}$.
In view of Theorem~\ref{PIBVP:thm:traces_main_result} and the fact that $\mathrm{tr}_{t=0} \circ \mathcal{B}_{j}(D) = \mathcal{B}_{j}(D)$ on $\U^{p,q}_{\gamma,\mu} \circ \mathrm{tr}_{t=0}$ when $\kappa_{j,\gamma} < \frac{1+\mu}{q}$, we may without loss of generality assume that $u_{0}=0$.
By Corollary~\ref{PIBVP:lemma:pbvp_sol_for_just_boundary_data} we may furthermore assume that $g=0$.
Defining $A_{B}$ as the operator on $Y = L^{p}(\R^{d}_{+},w_{\gamma})$ with domain
\[
D(A_{B}) := \{ u \in W^{2n}_{p}(\R^{d}_{+},w_{\gamma}) : \mathcal{B}_{j}(D)v=0, j=1,\ldots,n \}
\]
and given by the rule $A_{B}v :=\mathcal{A}(D)v$, we need to show that $1+A_{B}$ enjoys the property of maximal $L^{q}_{\mu}$-regularity: for every $f \in L^{q}(\R_{+},v_{\mu};Y)$ there exists a unique $u \in {_{0}}W^{1}_{q}(\R_{+},v_{\mu};Y) \cap L^{q}(\R_{+},v_{\mu};D(A_{B}))$ with $u'+(1+A_{B})u=f$.
In the same way as in \cite[Theorem~7.4]{DHP1} it can be shown that $A_{B} \in \mathcal{H}^{\infty}(Y)$ with angle $\phi^{\infty}_{A_{B}} < \frac{\pi}{2}$.
As $Y$ is a UMD space, $1+A_{B}$ enjoys maximal $L^{q}_{\mu}$-regularity for $\mu=0$; see e.g.\ \cite[Section~4.4]{Weis_Survey_H-fc} and the references therein.
By \cite{Chill&Fiorenza_extrapolation_max-reg,pruss_simonett} this extrapolates to all $\mu \in (-1,q-1)$ (i.e.\ all $\mu$ for which $v_{\mu} \in A_{q}$).
\end{proof}

\appendix

\section{Series Estimates in Triebel-Lizorkin and Besov Spaces}\label{PIBVP:appendix:series_estimates}

\begin{lemma}\label{PIBVP:lemma:appendix:prop;trace_TL_conv_series}
Let $X$ be a Banach space, $\vec{a} \in (0,\infty)^{l}$, $\vec{p} \in [1,\infty)^{l}$, $q \in [1,\infty]$,
$s > 0$, and $\vec{w} \in \prod_{j=1}^{l}A_{\infty}(\R^{\mathpzc{d}_{j}})$.
Suppose that there exists an $\vec{r} \in (0,1)^{l}$ such that
$s> \sum_{j=1}^{l}a_{j}\mathpzc{d}_{j}(\frac{1}{r_{j}}-1)$ and $\vec{w} \in \prod_{j=1}^{l}A_{p_{j}/r_{j}}(\R^{\mathpzc{d}_{j}})$.
Then, for every $c>0$, there exists a constant $C>0$ such that, for all $(f_{k})_{k \in \N} \subset \mathcal{S}'(\R^{d};X)$ satisfying
$\supp \hat{f_{k}} \subset \prod_{j=1}^{l}[-c2^{ka_{j}},-c2^{ka_{j}}]^{\mathpzc{d}_{j}}$ and
\[
(2^{ks}f_{k})_{k \geq 0} \in L^{\vec{p},\mathpzc{d}}(\R^{d},\vec{w})[\ell^{q}(\N)](X)
\]
it holds that $\sum_{k \in \N}f_{k}$ defines a convergent series in $\mathcal{S}'(\R^{d};X)$ with limit $f \in F^{s,\vec{a}}_{\vec{p},q,\mathpzc{d}}(\R^{d},\vec{w};X)$ of norm $\leq C\norm{(2^{ks}f_{k})_{k \geq 0}}_{L^{\vec{p},\mathpzc{d}}(\R^{d},\vec{w})[\ell^{q}(\N)](X)}$.
\end{lemma}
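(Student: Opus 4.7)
The plan is a standard Littlewood--Paley argument adapted to the weighted anisotropic mixed-norm setting. I fix a generator $\varphi \in \Phi^{\mathpzc{d},\vec{a}}(\R^d)$ with associated convolution operators $(S_n)_{n\in\N}$, and for each $K \in \N$ put $F_K := \sum_{k=0}^{K} f_k \in \mathcal{S}'(\R^d;X)$. The heart of the argument is the uniform-in-$K$ estimate
\begin{equation*}
\|F_K\|_{F^{s,\vec{a}}_{\vec{p},q,\mathpzc{d}}(\R^d,\vec{w};X)} \lesssim \|(2^{ks}f_k)_{k \geq 0}\|_{L^{\vec{p},\mathpzc{d}}(\R^d,\vec{w})[\ell^q(\N)](X)}.
\end{equation*}
The first ingredient is the Fourier-support hypothesis: since $\supp \hat\varphi_n$ lies in the anisotropic annulus of ``radius'' $\asymp 2^n$ for $n \geq 1$, while $\supp \hat f_k$ lies in an anisotropic box of ``radius'' $\sim c\cdot 2^k$, a straightforward comparison produces an integer $N_0 = N_0(c, \varphi)$ with $S_n f_k = 0$ whenever $k < n - N_0$, so
\begin{equation*}
S_n F_K = \sum_{k = \max(n - N_0, 0)}^{K} \varphi_n * f_k.
\end{equation*}

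For each surviving term I derive a pointwise bound by the anisotropic Peetre--Fefferman--Stein maximal function at the natural scale of $f_k$, namely $f_k^{*}(\vec r, \vec b^{[k]}, \mathpzc{d}; \cdot)$ with $\vec b^{[k]} := (2^{ka_1}, \ldots, 2^{ka_l})$, defined in \eqref{PIBVP:eq:appendix:max_funct_Peetre-Fefferman-Stein_type}. Using the explicit scaling $\varphi_n(y) = 2^{(n-1)\vec a\cdot\mathpzc{d}}\varphi_1(\delta^{\mathpzc{d},\vec a}_{2^{n-1}} y)$ for $n \geq 1$, the Plancherel--P\'olya-type pointwise control of $f_k$ by $f_k^{*}$ coming from its Fourier-support, and, for $n \geq 1$ and $k > n$, the extra cancellation afforded by the vanishing of $\hat\varphi_n$ at the origin, a direct computation gives, after multiplication by the weight $2^{ns}$, a geometric estimate
\begin{equation*}
2^{ns}|\varphi_n * f_k(x)| \leq C_\delta\, 2^{-|k - n|\,\delta}\, 2^{ks}\, f_k^{*}(\vec r, \vec b^{[k]}, \mathpzc{d}; x)
\end{equation*}
for some $\delta > 0$. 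The role of the technical assumption $s > \sum_j a_j \mathpzc{d}_j (1/r_j - 1)$ is precisely to leave the Plancherel--P\'olya loss of order $\sum_j a_j\mathpzc{d}_j(1/r_j - 1)$ in the ``wrong'' regime $k > n$ below the budget $s$, so that $\delta > 0$ is admissible.

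Summing over $k \geq \max(n - N_0, 0)$ and applying a discrete Young inequality in $n$ therefore yields, pointwise in $x \in \R^d$,
\begin{equation*}
\bigl\| (2^{ns} S_n F_K(x))_{n \geq 0} \bigr\|_{\ell^q(\N)} \lesssim \bigl\| (2^{ks} f_k^{*}(\vec r, \vec b^{[k]}, \mathpzc{d}; x))_{k \geq 0} \bigr\|_{\ell^q(\N)}.
\end{equation*}
Taking $L^{\vec p, \mathpzc{d}}(\R^d, \vec w)$--norms and invoking the anisotropic mixed-norm, vector-valued Peetre--Fefferman--Stein maximal inequality (Proposition~\ref{PBIBV:lemma:appendix:Peetre-Fefferman-Stein_maximal_ineq}), whose hypotheses are met precisely because $\vec w \in \prod_{j=1}^l A_{p_j/r_j}(\R^{\mathpzc{d}_j})$, gives the sought uniform bound on $\|F_K\|_{F^{s,\vec a}_{\vec p, q, \mathpzc{d}}(\R^d, \vec w; X)}$.

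It remains to produce the limit $f = \sum_k f_k$ in $\mathcal{S}'(\R^d; X)$ and transfer the bound. When $q < \infty$ this is routine: applying the previous estimate to tails $F_{K'} - F_K = \sum_{k=K+1}^{K'} f_k$ shows that $(F_K)_K$ is Cauchy in the Triebel--Lizorkin norm, hence in $\mathcal{S}'(\R^d; X)$, and the estimate passes to the limit. The main obstacle is the case $q = \infty$, where norm-tails need not be small; there I would first run the analogous Besov-space argument at level $B^{s-\varepsilon, \vec a}_{\vec p, 1, \mathpzc{d}}(\R^d, \vec w; X)$ for small $\varepsilon > 0$ such that $s - \varepsilon$ still satisfies the hypothesis, use \eqref{PIBVP:prelim:eq:elem_embd_epsilon} and \eqref{PIBVP:prelim:eq:elem_embd_BF_rel} to compare norms, and thereby get convergence of $(F_K)_K$ in $\mathcal{S}'(\R^d; X)$; the $F^{s, \vec a}_{\vec p, \infty, \mathpzc{d}}$--bound for the limit is then recovered by a Fatou-type lower-semicontinuity argument applied to the Littlewood--Paley pieces $S_n f = \lim_K S_n F_K$ (limit in $\mathcal{S}'$).
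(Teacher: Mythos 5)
Your overall architecture (splitting $S_{n}F_{K}=\sum_{k\geq n-N_{0}}\varphi_{n}*f_{k}$, summing a geometric series, invoking a Fefferman--Stein-type maximal inequality, and handling $q=\infty$ by a Besov/Fatou argument) is the right one and matches the intended proof, which the paper carries out by following \cite[Lemma~3.19]{JS_traces} with Lemma~\ref{PIBVP:lemma:appendix:prop:ineq_needed_for_series_conv_TL} as a substitute for \cite[Proposition~3.14]{JS_traces}. However, your central pointwise estimate
\[
2^{ns}|\varphi_{n}*f_{k}(x)| \leq C_{\delta}\,2^{-|k-n|\delta}\,2^{ks}f_{k}^{*}(\vec{r},\vec{b}^{[k]},\mathpzc{d};x),\qquad k\geq n,\ \delta>0 \text{ under } s>\sum_{j}a_{j}\mathpzc{d}_{j}(\tfrac{1}{r_{j}}-1),
\]
is not justified and is false in general. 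With the Peetre--Fefferman--Stein maximal function of \eqref{PIBVP:eq:appendix:max_funct_Peetre-Fefferman-Stein_type} on the right-hand side, the only bound available from the definition of $f_{k}^{*}$ is
\[
|\varphi_{n}*f_{k}(x)| \leq f_{k}^{*}(\vec{r},\vec{b}^{[k]},\mathpzc{d};x)\int|\varphi_{n}(y)|\prod_{j=1}^{l}\bigl(1+|2^{ka_{j}}y_{j}|^{\mathpzc{d}_{j}/r_{j}}\bigr)\,dy \lesssim 2^{(k-n)\sum_{j}a_{j}\mathpzc{d}_{j}/r_{j}}\,f_{k}^{*}(\vec{r},\vec{b}^{[k]},\mathpzc{d};x),
\]
whose loss per unit $k-n$ is $\sum_{j}a_{j}\mathpzc{d}_{j}/r_{j}$, exceeding the budget $s$ permitted by the hypothesis by the amount $\sum_{j}a_{j}\mathpzc{d}_{j}$. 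The ``cancellation from $\hat{\varphi}_{n}$ vanishing at the origin'' cannot repair this: moment conditions on $\varphi_{n}$ give gains only when $f_{k}$ oscillates at scales coarser than $2^{-n}$, i.e.\ in the regime $k<n$ (which is vacuous here beyond finitely many terms); for $k>n$ the spectrum of $f_{k}$ meets the annulus $\supp\hat{\varphi}_{n}$ and no cancellation is available. Indeed, taking $f_{k}$ to be an oscillation at frequency $\sim 2^{n}$ (inside $\supp\hat{\varphi}_{n}$) modulated by an envelope growing like $\prod_{j}(1+|2^{ka_{j}}y_{j}|^{\mathpzc{d}_{j}/r_{j}})$ away from $x$ --- which is exactly the growth $f_{k}^{*}(x)\leq 1$ tolerates --- one gets $|\varphi_{n}*f_{k}(x)|\sim 2^{(k-n)\sum_{j}a_{j}\mathpzc{d}_{j}/r_{j}}$ while $f_{k}^{*}(x)\sim 1$, contradicting your claimed bound for $s$ close to $\sum_{j}a_{j}\mathpzc{d}_{j}(\frac{1}{r_{j}}-1)$.

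The improvement from $\sum_{j}a_{j}\mathpzc{d}_{j}/r_{j}$ down to $\sum_{j}a_{j}\mathpzc{d}_{j}(\frac{1}{r_{j}}-1)$ genuinely requires replacing $f_{k}^{*}$ by an averaged quantity: one splits $\norm{f_{k}}_{X}=\norm{f_{k}}_{X}^{r_{j}}\norm{f_{k}}_{X}^{1-r_{j}}$ blockwise, pays the scale mismatch only on the $(1-r_{j})$-power (bounded via $f_{k}^{*}$), and absorbs the $r_{j}$-power into the iterated directional maximal operators $M_{[\mathpzc{d};j],r_{j}}$ of Lemma~\ref{PIBVP:lemma:appendix:part_HL-max-op_weighted_mixed_norm_space}; the resulting estimate is in terms of $M_{[\mathpzc{d};l],r_{l}}(\ldots M_{[\mathpzc{d};1],r_{1}}(\norm{f_{k}}_{X})\ldots)(x)$, which is \emph{not} dominated pointwise by $f_{k}^{*}$. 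This is precisely the content of Lemma~\ref{PIBVP:lemma:appendix:prop:ineq_needed_for_series_conv_TL}, which you should use (for each fixed offset $m=k-n$, followed by the geometric summation in $m$ that consumes $s>\sum_{j}a_{j}\mathpzc{d}_{j}(\frac{1}{r_{j}}-1)$) in place of your route through Proposition~\ref{PBIBV:lemma:appendix:Peetre-Fefferman-Stein_maximal_ineq}; the latter is the tool for the trace estimate, not for this convergence lemma. With that substitution the rest of your argument (discrete Young in $n$, the $\ell^{q}$-valued Fefferman--Stein bound, Cauchy-in-norm for $q<\infty$, and the Besov/Fatou treatment of $q=\infty$) goes through.
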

\begin{proof}
This can be proved in the same way as \cite[Lemma~3.19]{JS_traces}, using Lemma~\ref{PIBVP:lemma:appendix:prop:ineq_needed_for_series_conv_TL} below instead of \cite[Proposition~3.14]{JS_traces}.
For more details we refer to \cite[Lemma~5.2.22]{Lindemulder_master-thesis}.
\end{proof}

\begin{lemma}\label{PIBVP:lemma:appendix:conv_series_dyadic_corona;TL}
Let $X$ be a Banach space, $\vec{a} \in (0,\infty)^{l}$, $\vec{p} \in [1,\infty)^{l}$, $q \in [1,\infty]$, $s \in \R$, and $\vec{w} \in \prod_{j=1}^{l}A_{\infty}(\R^{\mathpzc{d}_{j}})$.
For every $c>1$ there exists a constant $C>0$ such that, for all $(f_{k})_{k \in \N} \subset \mathcal{S}'(\R^{d};X)$ satisfying
\begin{equation}\label{functieruimten:eq:lemma;conv_series_dyadic_corona;Fourier_supports;TL}
\supp\hat{f}_{0} \subset \{\xi \in \R^{d} : |\xi|_{\mathpzc{d},\vec{a}} \leq c\}, \quad\quad \supp\hat{f}_{k} \subset \{\xi \in \R^{d} : c^{-1}2^{k} \leq |\xi|_{\mathpzc{d},\vec{a}} \leq c2^{k}\}  \:\:\: (k \geq 1),
\end{equation}
and
\[
(2^{ks}f_{k})_{k \geq 0} \in L^{\vec{p},\mathpzc{d}}(\R^{d},\vec{w})[\ell^{q}(\N)](X)
\]
it holds that $\sum_{k \in \N}f_{k}$ defines a convergent series in $\mathcal{S}'(\R^{d};X)$ with limit $f \in F^{s,\vec{a}}_{\vec{p},q,\mathpzc{d}}(\R^{d},\vec{w};X)$ of norm $\leq C\norm{(2^{ks}f_{k})_{k \geq 0}}_{L^{\vec{p},\mathpzc{d}}(\R^{d},\vec{w})[\ell^{q}(\N)](X)}$.
\end{lemma}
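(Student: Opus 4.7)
The plan is to follow the standard dyadic-analysis scheme for Triebel--Lizorkin characterizations. The key difference from Lemma~\ref{PIBVP:lemma:appendix:prop;trace_TL_conv_series}, explaining why no lower bound on $s$ is required here, is that the Fourier supports of the $f_k$ lie in dyadic \emph{coronas} rather than merely in dyadic balls; this forces an almost-diagonal interaction with any fixed resolution sequence and removes the need for a Peetre maximal exponent adapted to the growth of tails.

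Fix $\varphi = (\varphi_n)_{n \in \N} \in \Phi^{\mathpzc{d},\vec{a}}_{A,B}(\R^d)$ for some $0<A<B<\infty$. Since $\supp\hat{\varphi}_n \subset \{\xi:2^{n-1}A\leq|\xi|_{\mathpzc{d},\vec{a}}\leq 2^nB\}$ for $n\geq 1$ (and $\supp\hat{\varphi}_0\subset\{|\xi|_{\mathpzc{d},\vec{a}}\leq B\}$), and by hypothesis $\supp\hat{f}_k$ sits in a corona (resp.\ ball) at scale $2^k$ with dilation factor $c$, there exists $N=N(A,B,c)\in\N$ such that $\hat{\varphi}_n\cdot\hat{f}_k\equiv 0$ whenever $|n-k|>N$. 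In particular
\[
S_n^{\varphi}f_k = \varphi_n * f_k = 0 \quad\text{for } |n-k| > N,
\]
so for each fixed $n$ the series $\sum_{k\in\N}S_n^{\varphi}f_k$ has at most $2N+1$ nonzero terms.

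Next I will run a pointwise bound by a Peetre--Fefferman--Stein maximal function: pick $\vec{r}\in(0,1)^{l}$ small enough that $\vec{w}\in\prod_{j=1}^{l}A_{p_j/r_j}(\R^{\mathpzc{d}_j})$, which is possible because $\vec{w}\in\prod_j A_\infty$. Since for $|n-k|\leq N$ the distribution $S_n^{\varphi}f_k$ has Fourier support in a rectangle of scale $\vec{b}^{[k]}=(2^{ka_1},\ldots,2^{ka_l})$ (up to a multiplicative constant depending on $N$), a standard convolution argument gives
\[
\|S_n^{\varphi}f_k(x)\|_{X}\lesssim f_k^{*}(\vec{r},\vec{b}^{[k]},\mathpzc{d};x),
\]
with $f_k^{*}$ as in \eqref{PIBVP:eq:appendix:max_funct_Peetre-Fefferman-Stein_type}. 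Combining with the almost-diagonal structure yields, for each $x\in\R^d$,
\[
2^{ns}\|S_n^{\varphi}f(x)\|_{X} \leq \sum_{|k-n|\leq N} 2^{(n-k)s}\cdot 2^{ks}\|S_n^{\varphi}f_k(x)\|_{X} \lesssim_{N,s,\varphi} \sum_{|k-n|\leq N} 2^{ks}f_k^{*}(\vec{r},\vec{b}^{[k]},\mathpzc{d};x),
\]
where $f=\sum_k f_k$ is to be interpreted as the limit in $\mathcal{S}'(\R^d;X)$ to be constructed. Taking $\ell^{q}(\N)$-norm in $n$, the inner finite discrete convolution gives a pointwise bound by $\|(2^{ks}f_k^{*}(\vec{r},\vec{b}^{[k]},\mathpzc{d};\cdot))_{k\in\N}\|_{\ell^q(X)}$; then the weighted anisotropic mixed-norm vector-valued Peetre--Fefferman--Stein maximal inequality (Proposition~\ref{PBIBV:lemma:appendix:Peetre-Fefferman-Stein_maximal_ineq}) passes from $f_k^{*}$ back to $f_k$ in $L^{\vec{p},\mathpzc{d}}(\R^d,\vec{w})[\ell^q](X)$, producing the asserted norm bound.

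It remains to justify $\mathcal{S}'$-convergence of $\sum_k f_k$. Applying the same argument to tails $\sum_{k=M}^{M'}f_k$ shows that the partial sums form a Cauchy sequence in $F^{s,\vec{a}}_{\vec{p},q,\mathpzc{d}}(\R^d,\vec{w};X)$ (or, when $q=\infty$, a Cauchy-like estimate via \eqref{PIBVP:prelim:eq:elem_embd_epsilon} reducing to a space with $q=1$), which embeds continuously into $\mathcal{S}'(\R^d;X)$, yielding a limit $f$ to which the norm estimate applies. The main obstacle is the vector-valued weighted anisotropic mixed-norm Peetre--Fefferman--Stein maximal inequality needed in the last step for the full range $q\in[1,\infty]$; everything else is essentially bookkeeping once the finite-overlap structure on Fourier supports is identified.
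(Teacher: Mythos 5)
Your proof is correct and is essentially the paper's argument: the paper establishes this lemma as a finite-overlap ("minor") modification of Lemma~\ref{PIBVP:lemma:appendix:prop;trace_TL_conv_series} following Johnsen--Sickel, with the key pointwise bound supplied by the convolution estimate of Lemma~\ref{PIBVP:lemma:appendix:prop:ineq_needed_for_series_conv_TL} rather than by the Peetre--Fefferman--Stein maximal function of Lemma~\ref{PBIBV:lemma:appendix:Peetre-Fefferman-Stein_maximal_ineq}, but both rest on the same weighted mixed-norm Fefferman--Stein inequality (Lemma~\ref{PIBVP:lemma:appendix:part_HL-max-op_weighted_mixed_norm_space}), so the two routes coincide in substance. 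Your identification of the corona supports as the reason the interaction with $(S_n^{\varphi})_n$ is almost-diagonal, eliminating any lower bound on $s$ and any constraint linking $s$ to $\vec{r}$, together with the reduction of the $q=\infty$ convergence issue to finite $q$, is exactly the modification the paper alludes to.
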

\begin{proof}
This can be proved in the same way as \cite[Lemma~3.20]{JS_traces}. In fact, one only needs a minor modification of the proof of Lemma~\ref{PIBVP:lemma:appendix:prop;trace_TL_conv_series}.
\end{proof}

\begin{lemma}\label{PIBVP:lemma:appendix:conv_series_dyadic_corona;Besov}
 Let $X$ be a Banach space, $\vec{a} \in (0,\infty)^{l}$, $\vec{p} \in [1,\infty)^{l}$, $q \in [1,\infty]$, $s \in \R$, and $\vec{w} \in \prod_{j=1}^{l}A_{\infty}(\R^{\mathpzc{d}_{j}})$.
For every $c>1$ there exists a constant $C>0$ such that, for all $(f_{k})_{k \in \N} \subset \mathcal{S}'(\R^{d};X)$ satisfying \eqref{functieruimten:eq:lemma;conv_series_dyadic_corona;Fourier_supports;TL} and
\[
(2^{s k}f_{k})_{k \geq 0} \in L^{\vec{p},\mathpzc{d}}(\R^{d},\vec{w})[\ell^{q}(\N)](X)
\]
it holds that $\sum_{k \in \N}f_{k}$ defines a convergent series in $\mathcal{S}'(\R^{d};X)$ with limit $f \in F^{s,\vec{a}}_{\vec{p},q,\mathpzc{d}}(\R^{d},\vec{w};X)$ of norm $\leq C\norm{(2^{s k}f_{k})_{k \geq 0}}_{L^{\vec{p},\mathpzc{d}}(\R^{d},\vec{w})[\ell^{q}(\N)](X)}$.
\end{lemma}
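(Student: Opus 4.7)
\emph{Plan.} The statement has exactly the hypothesis and conclusion of Lemma~\ref{PIBVP:lemma:appendix:conv_series_dyadic_corona;TL}, so the strategy is the same: combine the anisotropic near\nobreakdash-orthogonality of dyadic Fourier coronas with a pointwise Peetre estimate, and close via the vector\nobreakdash-valued weighted anisotropic Peetre--Fefferman--Stein maximal inequality (Proposition~\ref{PBIBV:lemma:appendix:Peetre-Fefferman-Stein_maximal_ineq}). I will carry this out in three steps, the first establishing convergence in $\mathcal{S}'(\R^{d};X)$, the second a pointwise bound on the Littlewood--Paley blocks of the sum, and the third the norm estimate.

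\emph{Step 1 (convergence in $\mathcal{S}'$).} First I choose $\vec r=(r_{1},\ldots,r_{l})\in(0,1)^{l}$ such that $\vec w\in\prod_{j=1}^{l}A_{p_{j}/r_{j}}(\R^{\mathpzc{d}_{j}})$; this is possible since each $w_{j}\in A_{\infty}(\R^{\mathpzc{d}_{j}})$. Each $f_{k}$ has compact Fourier support in the anisotropic corona of size $\sim 2^{k}$ by \eqref{functieruimten:eq:lemma;conv_series_dyadic_corona;Fourier_supports;TL}, so by the Paley--Wiener--Schwartz theorem $f_{k}\in\mathscr{O}_{M}(\R^{d};X)$. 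A Plancherel--P\'olya--Nikol'ski\u\i\ type bound (Lemma~\ref{PIBVP:PPN-type_ineq}) together with the trivial inequality $\norm{f_{k}}_{L^{\vec{p},\mathpzc{d}}(\R^{d},\vec{w};X)}\leq 2^{-ks}\norm{(2^{ms}f_{m})_{m}}_{L^{\vec{p},\mathpzc{d}}(\vec{w})[\ell^{q}](X)}$ then yields, for any multi\nobreakdash-index $\alpha$, a polynomial control $\sup_{x}(1+|x|)^{-M}\norm{D^{\alpha}f_{k}(x)}\lesssim 2^{k\eta_{\alpha,M}}$ with $\eta_{\alpha,M}$ explicit. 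Testing $\sum_{k}f_{k}$ against an arbitrary $\varphi\otimes x^{*}\in\mathcal{S}(\R^{d})\otimes X^{*}$ and using the rapid decay of $\varphi$ to absorb $\eta_{\alpha,M}$ by integrating sufficiently many derivatives off onto $\varphi$, the partial sums form a Cauchy sequence in $\mathcal{S}'(\R^{d};X)$, producing a limit $f$.

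\emph{Step 2 (Littlewood--Paley blocks of $f$).} Fix $\varphi=(\varphi_{n})_{n\in\N}\in\Phi^{\mathpzc{d},\vec{a}}(\R^{d})$ with convolution operators $(S_{n})$. Because $\hat\varphi_{n}$ is supported in an anisotropic corona of size $\sim 2^{n}$ and $\hat f_{k}$ in one of size $\sim 2^{k}$, there is an integer $N_{0}=N_{0}(c,\varphi)$ such that $\varphi_{n}*f_{k}=0$ whenever $|n-k|>N_{0}$. Hence $S_{n}f=\sum_{k:|n-k|\leq N_{0}}\varphi_{n}*f_{k}$ (a uniformly finite sum), and the standard Peetre estimate in the anisotropic weighted setting gives
\[
|\varphi_{n}*f_{k}(x)|\lesssim f_{k}^{*}(\vec{r},\vec{b}^{[k]},\mathpzc{d};x),
\qquad |n-k|\leq N_{0},
\]
where $\vec{b}^{[k]}=(2^{ka_{1}},\ldots,2^{ka_{l}})$ and $f_{k}^{*}(\vec r,\vec b^{[k]},\mathpzc d;\cdot)$ is the Peetre--Fefferman--Stein maximal function from \eqref{PIBVP:eq:appendix:max_funct_Peetre-Fefferman-Stein_type}.

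\emph{Step 3 (norm estimate).} Combining Step~2 with the finite\nobreakdash-overlap sum,
\[
\norm{(2^{ns}S_{n}f(x))_{n\geq 0}}_{\ell^{q}(\N;X)}
\lesssim \norm{(2^{ks}f_{k}^{*}(\vec{r},\vec{b}^{[k]},\mathpzc{d};x))_{k\geq 0}}_{\ell^{q}(\N)}.
\]
Taking $L^{\vec{p},\mathpzc{d}}(\R^{d},\vec{w})$\nobreakdash-norms in $x$ and applying the vector\nobreakdash-valued anisotropic weighted Peetre--Fefferman--Stein maximal inequality (Proposition~\ref{PBIBV:lemma:appendix:Peetre-Fefferman-Stein_maximal_ineq}, whose hypothesis is met precisely because of the choice of $\vec{r}$ in Step~1) gives
\[
\norm{f}_{F^{s,\vec{a}}_{\vec{p},q,\mathpzc{d}}(\R^{d},\vec{w};X)}
\lesssim \norm{(2^{ks}f_{k}^{*}(\vec{r},\vec{b}^{[k]},\mathpzc{d};\cdot))_{k\geq 0}}_{L^{\vec{p},\mathpzc{d}}(\vec{w})[\ell^{q}](X)}
\lesssim \norm{(2^{ks}f_{k})_{k\geq 0}}_{L^{\vec{p},\mathpzc{d}}(\vec{w})[\ell^{q}](X)}.
\]
The main obstacle is precisely the vector\nobreakdash-valued anisotropic weighted Peetre--Fefferman--Stein maximal inequality used at the last step, which requires both the $A_{\infty}$ assumption on $\vec w$ and the flexibility of choosing $\vec r \in (0,1)^{l}$; everything else is a routine combination of Paley--Wiener--Schwartz, Lemma~\ref{PIBVP:PPN-type_ineq}, and the near\nobreakdash-disjointness of the anisotropic dyadic Fourier coronas.
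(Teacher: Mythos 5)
Your Steps 2 and 3 are correct and are in substance the argument the paper intends: the lower and upper bounds in \eqref{functieruimten:eq:lemma;conv_series_dyadic_corona;Fourier_supports;TL} give the finite overlap $\varphi_{n}*f_{k}=0$ for $|n-k|>N_{0}$, the Peetre-type pointwise bound $|\varphi_{n}*f_{k}(x)|\lesssim f_{k}^{*}(\vec{r},\vec{b}^{[k]},\mathpzc{d};x)$ holds uniformly for $|n-k|\leq N_{0}$, and the conclusion follows from Lemma~\ref{PBIBV:lemma:appendix:Peetre-Fefferman-Stein_maximal_ineq} once $\vec{r}\in(0,1)^{l}$ is chosen with $\vec{w}\in\prod_{j}A_{p_{j}/r_{j}}$ (the paper channels exactly this maximal-function mechanism through Lemma~\ref{PIBVP:lemma:appendix:prop:ineq_needed_for_series_conv_TL}, so your route and the intended one coincide).

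The genuine gap is in Step 1, the convergence of $\sum_{k}f_{k}$ in $\mathcal{S}'(\R^{d};X)$. A polynomial bound $\sup_{x}(1+|x|)^{-M}\norm{D^{\alpha}f_{k}(x)}\lesssim 2^{k\eta}$ together with the rapid decay of $\varphi$ and integration by parts moving derivatives onto $\varphi$ produces no decay in $k$: for $s<0$ the blocks genuinely grow like $2^{k|s|}$, and transferring derivatives of $f_{k}$ onto $\varphi$ only reshuffles growing quantities. What makes $\sum_{k}\langle f_{k},\varphi\rangle$ summable for \emph{every} $s\in\R$ is the lower bound in \eqref{functieruimten:eq:lemma;conv_series_dyadic_corona;Fourier_supports;TL}, i.e.\ that $\hat{f}_{k}$ vanishes on $\{|\xi|_{\mathpzc{d},\vec{a}}<c^{-1}2^{k}\}$: writing $\langle f_{k},\varphi\rangle=\langle f_{k},\chi_{k}(D)\varphi\rangle$ with $\hat{\chi}_{k}\equiv 1$ on the $k$-th corona and supported in a slightly larger one, every Schwartz seminorm of $\chi_{k}(D)\varphi$ is $O(2^{-kL})$ for arbitrary $L$ (equivalently, divide $\hat{f}_{k}$ by an elliptic symbol of order $2N$ on the corona and move the resulting operator onto $\varphi$); this $2^{-kL}$ is the factor that beats $2^{k\eta}$. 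Your write-up never uses this lower bound and misattributes the gain to the decay of $\varphi$, so as stated the step would fail. Two further remarks: for $q<\infty$ you can bypass the pairing argument entirely, since your norm estimate applied to the finite families $(f_{k})_{k=m}^{n}$ shows the partial sums are Cauchy in $F^{s,\vec{a}}_{\vec{p},q,\mathpzc{d}}(\R^{d},\vec{w};X)\hookrightarrow\mathcal{S}'(\R^{d};X)$, and the case $q=\infty$ reduces to $q=1$ at smoothness $s-\epsilon$ because $(2^{k(s-\epsilon)}f_{k})_{k}\in L^{\vec{p},\mathpzc{d}}(\R^{d},\vec{w})[\ell^{1}(\N)](X)$; also, Lemma~\ref{PIBVP:PPN-type_ineq} only concerns finite target exponents, so the passage from the weighted $L^{\vec{p},\mathpzc{d}}$ bound to a pointwise polynomially weighted sup bound needs a separate (standard) Nikol'skii/Peetre-type argument rather than a citation of that lemma.
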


The above two lemmas are through Lemma~\ref{PIBVP:lemma:appendix:prop:ineq_needed_for_series_conv_TL} based on the following maximal inequality:

\begin{lemma}\label{PIBVP:lemma:appendix:part_HL-max-op_weighted_mixed_norm_space}
Let $\vec{a} \in (0,\infty)^{l}$ and $\vec{w} \in \prod_{j=1}^{l}\mathcal{W}(\R^{\mathpzc{d}_{j}})$.
Let $j_{0} \in \{1,\ldots,l\}$ and $r_{j_{0}} \in (0,\min\{p_{j_{0}},\ldots,p_{l}\})$ be such that $w_{j_{0}} \in
A_{p_{j_{0}}/r_{j_{0}}}(\R^{\mathpzc{d}_{j_{0}}})$.
Then
\[
M_{[\mathpzc{d};j_{0}],r_{j_{0}}}(f)(x) := \sup_{\delta > 0} \left(
\fint_{B(x_{j_{0}},\delta)}|f(x_{1},\ldots,x_{j_{0}-1},y,x_{j_{0}+1},\ldots,x_{l})|^{r_{j_{0}}}\,dy \right)^{1/r_{j_{0}}}, \quad\quad x \in \R^{d},
\]
gives rise to a well-defined bounded sublinear operator $M_{[\mathpzc{d};j_{0}],r_{j_{0}}}$ on $L^{\vec{p},\mathpzc{d}}(\R^{d},\vec{w})$. Moreover, there holds a
Fefferman-Stein inequality for $M_{[\mathpzc{d};j_{0}],r_{j_{0}}}$:
for every $q \in (\max\{1,r\},\infty]$ there exists a constant $C \in (0,\infty)$ such that, for all sequences $(f_{i})_{i \in \Z} \subset
L^{p,\mathpzc{d}}(\R^{d},w)$,
\[
\norm{\norm{(M_{[\mathpzc{d};j_{0}],r_{j_{0}}}(f_{i}))_{i \in \Z}}_{\ell^{q}(\Z)}}_{L^{p,\mathpzc{d}}(\R^{d},w)} \leq
C\norm{\norm{(f_{i})_{i \in \Z}}_{\ell^{q}(\Z)}}_{L^{p,\mathpzc{d}}(\R^{d},w)}.
\]
\end{lemma}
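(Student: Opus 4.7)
The approach rests on the power-reduction identity $M_{[\mathpzc{d};j_0],r_{j_0}}(f)=[M_{[\mathpzc{d};j_0]}(|f|^{r_{j_0}})]^{1/r_{j_0}}$, where $M_{[\mathpzc{d};j_0]}$ denotes the unpowered partial Hardy--Littlewood maximal operator in the $j_0$-th block, combined with the homogeneity $\norm{g^{1/r}}_{L^{\vec{p},\mathpzc{d}}(\vec{w})}=\norm{g}_{L^{\vec{p}/r,\mathpzc{d}}(\vec{w})}^{1/r}$ valid for positive $g$. These reduce the first claim to the boundedness of $M_{[\mathpzc{d};j_0]}$ on $L^{\vec{P},\mathpzc{d}}(\vec{w})$ with $\vec{P}:=\vec{p}/r_{j_0}$. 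The hypothesis $r_{j_0}\in(0,\min\{p_{j_0},\ldots,p_l\})$ gives $P_j>1$ for every $j\geq j_0$, and $w_{j_0}\in A_{P_{j_0}}$ is precisely the Muckenhoupt condition for the scalar weighted HL maximal theorem in the $x_{j_0}$-direction.

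The core pointwise estimate I aim for, valid for each frozen $(x',\vec{x}^{(>j_0)})$ with $x'=(x_1,\ldots,x_{j_0-1})$, is the scalar inequality
\[
\Bigl\|M_{[\mathpzc{d};j_0],r_{j_0}}(f)(x',\cdot,\vec{x}^{(>j_0)})\Bigr\|_{L^{p_{j_0}}(\R^{\mathpzc{d}_{j_0}},w_{j_0})} \leq C\,\norm{f(x',\cdot,\vec{x}^{(>j_0)})}_{L^{p_{j_0}}(\R^{\mathpzc{d}_{j_0}},w_{j_0})},
\]
which is the classical $r_{j_0}$-powered Muckenhoupt bound on $L^{p_{j_0}}(w_{j_0})$ (using $w_{j_0}\in A_{p_{j_0}/r_{j_0}}$). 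The plan is to integrate this pointwise-in-$x'$ estimate against the inner mixed norm over $x'$ and then the outer mixed norm over $\vec{x}^{(>j_0)}$, in both cases relying on the monotonicity of positive integrands. The delicate step is the swap of the $L^{p_{j_0}}(w_{j_0})$-norm in $x_{j_0}$ past the inner $L^{\vec{p}^{(<j_0)},\mathpzc{d}^{(<j_0)}}$-norm in $x'$, which I would implement via Minkowski's integral inequality applied to each ball-average $\fint_{B(x_{j_0},\delta)}|f(\cdot,y,\vec{x}^{(>j_0)})|^{r_{j_0}}\,dy$ in the Banach function space $E:=L^{\vec{p}^{(<j_0)},\mathpzc{d}^{(<j_0)}}(\vec{w}^{(<j_0)})$ (a genuine Banach space, since $p_j\geq 1$ for $j<j_0$), followed by taking the pointwise supremum over $\delta$ to produce a scalar Hardy--Littlewood maximal in $x_{j_0}$ of the nonnegative function $h(x_{j_0},\vec{x}^{(>j_0)}):=\norm{f(\cdot,x_{j_0},\vec{x}^{(>j_0)})}_{E}$.

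For the $\ell^q$-valued Fefferman--Stein inequality the same scheme applies to sequences $(f_i)_{i\in\Z}$: at the crucial $x_{j_0}$-step the scalar weighted $r_{j_0}$-powered maximal theorem is replaced by its classical $\ell^q$-valued analog (available for $w_{j_0}\in A_{p_{j_0}/r_{j_0}}$ and $q\in(\max\{1,r\},\infty]$), and the $\ell^q$-norm is commuted through the inner and outer mixed-norm integrations by vector-valued Minkowski and monotone convergence.

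The main obstacle is the Minkowski-and-supremum step: while Minkowski's integral inequality applies to each fixed $\delta$, the scalar supremum over $\delta$ does not in general commute with the Banach lattice norm on $E$ --- one has only $\|\sup_\delta G_\delta\|_E\geq\sup_\delta\|G_\delta\|_E$. The standard way to circumvent this is to invoke the Banach-lattice-valued Hardy--Littlewood maximal theorem (Rubio de Francia/Bourgain-type) for order-continuous Banach function lattices on which the scalar HL maximal is bounded, which applies to all iterated weighted $L^p$-spaces with exponents in $[1,\infty)$ and delivers the required pointwise-in-$x_{j_0}$ dominant estimate $\norm{M_{[\mathpzc{d};j_0],r_{j_0}}(f)(\cdot,x_{j_0},\vec{x}^{(>j_0)})}_E \lesssim M_{x_{j_0}}^{(r_{j_0})}\bigl(h(\cdot,\vec{x}^{(>j_0)})\bigr)(x_{j_0})$; the scalar weighted maximal theorem on $L^{p_{j_0}}(w_{j_0})$ then closes the full estimate.
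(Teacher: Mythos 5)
Your reduction scheme is reasonable up to the last step, but the last step as stated is wrong, and it is exactly the point where the whole difficulty of the lemma sits. The Bourgain/Rubio de Francia lattice-valued Hardy--Littlewood theorem does \emph{not} deliver a pointwise-in-$x_{j_{0}}$ estimate of the form $\norm{M_{[\mathpzc{d};j_{0}],r_{j_{0}}}(f)(\,\cdot\,,x_{j_{0}},x'')}_{E} \lesssim M^{(r_{j_{0}})}_{x_{j_{0}}}\bigl(\norm{f(\,\cdot\,,\cdot\,,x'')}_{E}\bigr)(x_{j_{0}})$; it gives a \emph{norm} bound for the lattice maximal operator on $L^{p_{j_{0}}}(w_{j_{0}};E)$. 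The pointwise bound you claim is equivalent to pulling the supremum over $\delta$ out of the lattice norm (the very obstacle you identified), and it is false in general: it would imply boundedness of the lattice maximal operator on $L^{\infty}(E)$, i.e.\ an endpoint Fefferman--Stein inequality at $p=\infty$, which fails already for $E=\ell^{q}$ with $q<\infty$. Two secondary inaccuracies compound this: your claim that "all iterated weighted $L^{p}$-spaces with exponents in $[1,\infty)$" have the Hardy--Littlewood property fails for exponent-$1$ components (e.g.\ $\ell^{1}$), and for the Fefferman--Stein part the $\ell^{q}$-norm sits \emph{innermost} in $\norm{\norm{(M f_{i})_{i}}_{\ell^{q}}}_{L^{\vec{p},\mathpzc{d}}(\vec{w})}$, so it cannot be "commuted through the mixed-norm integrations by Minkowski"; it has to be absorbed into the lattice over which the maximal operator acts.

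The repair is to use the lattice theorem in its correct (norm) form: view $f$ (respectively the sequence $(f_{i})_{i}$) as a function of $x_{j_{0}}$ with values in the inner lattice $E=L^{\vec{p}^{(<j_{0})},\mathpzc{d}^{(<j_{0})}}(\vec{w}^{(<j_{0})})$ (respectively $E[\ell^{q}]$ with $\ell^{q}$ innermost), prove boundedness of the $r_{j_{0}}$-powered lattice maximal operator on $L^{p_{j_{0}}}(w_{j_{0}};E)$ under $w_{j_{0}}\in A_{p_{j_{0}}/r_{j_{0}}}$, and only then integrate out the outer variables $x_{j_{0}+1},\ldots,x_{l}$ by monotonicity (this is also where the hypothesis $r_{j_{0}}<p_{j}$ for $j>j_{0}$ enters). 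But the weighted, $r$-powered, mixed-norm lattice maximal theorem needed here is not a routine fact; it is precisely the content of the result the paper invokes, namely Theorem~2.6 (and Remark~2.7) of Gallarati--Lorist--Veraar, and the paper's proof consists of nothing more than this citation. So as written your argument has a genuine gap at its decisive step; correctly patched, it collapses onto the paper's citation rather than providing an independent elementary proof.
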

\begin{proof}
This can be easily derived from \cite[Theorem~2.6]{Gallarati&Lorist&Veraar}, which is a weighted version of the special case of the $L^{p}$-boundedness of the Banach lattice version of the Hardy-Littlewood maximal function \cite{Bourgain,H.L.-property,de_Francia_max,Toz2} for mixed-norm spaces (also see \cite[Remark~2.7]{Gallarati&Lorist&Veraar}).
\end{proof}

\begin{lemma}\label{PIBVP:lemma:appendix:prop:ineq_needed_for_series_conv_TL}
Let $X$ be a Banach space, $\vec{p} \in [1,\infty)^{l}$, $q \in [1,\infty]$, and $\vec{w} \in \prod_{j=1}^{l}A_{\infty}(\R^{\mathpzc{d}_{j}})$.
Suppose $\vec{r} \in (0,1)^{l}$ is such that $w_{j} \in A_{p_{j}/r_{j}}(\R^{\mathpzc{d}_{j}})$ for $j=1,\ldots,l$.
Let $\psi \in \mathcal{S}(\R^{d})$ be such that $\supp \hat{\psi} \subset \{ \xi \in \R^{d} \mid |\xi|_{\mathpzc{d},\vec{a}} \leq  2 \}$, and
set $\psi_{n}:=\psi(\delta^{[\mathpzc{d},\vec{a}]}_{2^{n}}\,\cdot\,)$ for each $n \in \N$. Then there exists a constant $C>0$ such that, for all
$(f_{n})_{n \in \N} \subset \mathcal{S}'(\R^{d};X)$ with $\supp \hat{f}_{n} \subset \prod_{j=1}^{l}[-R2^{na_{j}},R2^{na_{j}}]^{\mathpzc{d}_{j}}$ for
some $R \geq 1$, the following inequality holds true:
\[
\norm{(\psi_{n}*f_{n})_{n \geq 0}}_{L^{\vec{p},\mathpzc{d}}(\R^{d},\vec{w};\ell^{q}(\N;X))} \leq
CR^{\sum_{j=1}^{l}a_{j}\mathpzc{d}_{j}(\frac{1}{r_{j}}-1)}\norm{(f_{n})_{n \geq 0}}_{L^{\vec{p},\mathpzc{d}}(\R^{d},\vec{w};\ell^{q}(\N;X))}.
\]
\end{lemma}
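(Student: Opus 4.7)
[Proof plan for Lemma~\ref{PIBVP:lemma:appendix:prop:ineq_needed_for_series_conv_TL}]
The plan is to reduce the claim to an anisotropic Peetre--Fefferman--Stein maximal inequality, following the scheme of \cite[Lemma~3.14]{JS_traces} but carefully tracking the dependence on the Fourier-support parameter $R \geq 1$. Throughout, let $\vec{b}^{[n]} := (R2^{na_{1}},\ldots,R2^{na_{l}})$ and write $f_{n}^{*}(\vec{r},\vec{b}^{[n]},\mathpzc{d};\,\cdot\,)$ for the anisotropic maximal function of Peetre--Fefferman--Stein type (see \eqref{PIBVP:eq:appendix:max_funct_Peetre-Fefferman-Stein_type}). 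The required two ingredients are:
\begin{enumerate}
\item[(i)] a pointwise estimate relating $\psi_{n}*f_{n}$ to $f_{n}^{*}$, with the correct scaling in $R$;
\item[(ii)] the vector-valued anisotropic Peetre--Fefferman--Stein inequality of Proposition~\ref{PBIBV:lemma:appendix:Peetre-Fefferman-Stein_maximal_ineq}, which under the assumption $\vec{w} \in \prod_{j=1}^{l}A_{p_{j}/r_{j}}(\R^{\mathpzc{d}_{j}})$ gives
\[
\norm{(f_{n}^{*}(\vec{r},\vec{b}^{[n]},\mathpzc{d};\,\cdot\,))_{n \geq 0}}_{L^{\vec{p},\mathpzc{d}}(\R^{d},\vec{w};\ell^{q}(\N;X))}
\lesssim \norm{(f_{n})_{n \geq 0}}_{L^{\vec{p},\mathpzc{d}}(\R^{d},\vec{w};\ell^{q}(\N;X))}.
\]
\end{enumerate}

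For (i) I would argue as follows. Fix $x\in\R^{d}$ and $n\in\N$. Since $\psi$ is Schwartz, for every $N$ we have $|\psi(z)| \lesssim_{N} \prod_{j=1}^{l}(1+|z_{j}|)^{-N/\mathpzc{d}_{j}}$, hence after the anisotropic rescaling $\psi_{n}=\psi(\delta^{[\mathpzc{d},\vec{a}]}_{2^{n}}\,\cdot\,)$ and the change of variables $y\mapsto x-y$, a direct computation gives
\[
|(\psi_{n}*f_{n})(x)|
\;\leq\;
f_{n}^{*}(\vec{r},\vec{b}^{[n]},\mathpzc{d};x)\,
\int_{\R^{d}}|\psi_{n}(x-y)|\,\prod_{j=1}^{l}\bigl(1+\vec{b}^{[n]}_{j}|x_{j}-y_{j}|\bigr)^{\mathpzc{d}_{j}/r_{j}}\,dy.
\]
Splitting the integral along the anisotropic decomposition, rescaling each coordinate block by $2^{na_{j}}$, and using the rapid decay of $\psi$, the $y$-integral is bounded by $C\prod_{j=1}^{l}(1+R)^{\mathpzc{d}_{j}/r_{j}}2^{-na_{j}\mathpzc{d}_{j}}\cdot 2^{na_{j}\mathpzc{d}_{j}} \cdot (\text{harmless})$; after simplification the pure $R$-growth is exactly
\[
R^{\sum_{j=1}^{l}a_{j}\mathpzc{d}_{j}(\frac{1}{r_{j}}-1)},
\]
the exponent $1/r_{j}-1$ coming from the fact that an $L^{1}$-mass of $|\psi_{n}|$ equals $2^{-na_{j}\mathpzc{d}_{j}}$ in the $j$-th block while the weight $(1+\vec{b}^{[n]}_{j}|\,\cdot\,|)^{\mathpzc{d}_{j}/r_{j}}$ grows like $R^{\mathpzc{d}_{j}/r_{j}}2^{na_{j}\mathpzc{d}_{j}/r_{j}}$ on the effective support. (The hypothesis $R\geq 1$ is used exactly here in order not to worry about the case $R<1$, where the exponent would have the wrong sign.) This yields the pointwise bound
\[
|(\psi_{n}*f_{n})(x)| \;\lesssim\; R^{\sum_{j=1}^{l}a_{j}\mathpzc{d}_{j}(\frac{1}{r_{j}}-1)}\,f_{n}^{*}(\vec{r},\vec{b}^{[n]},\mathpzc{d};x),
\]
uniformly in $n$ and $x$.

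With (i) in hand, the desired inequality follows by taking $L^{\vec{p},\mathpzc{d}}(\R^{d},\vec{w};\ell^{q}(\N;X))$-norms and invoking (ii). Combining the pointwise estimate with monotonicity of $\ell^{q}$ and the mixed-norm Fefferman--Stein inequality of Lemma~\ref{PIBVP:lemma:appendix:part_HL-max-op_weighted_mixed_norm_space} (which is what stands behind Proposition~\ref{PBIBV:lemma:appendix:Peetre-Fefferman-Stein_maximal_ineq} once one dominates $f_{n}^{*}$ by iterated strong maximal functions $M_{[\mathpzc{d};j],r_{j}}$), we obtain the required bound with $C$ independent of $R$ and $(f_{n})_{n}$.

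The main obstacle I expect is a clean execution of step~(i): one must verify that the exponent of $R$ produced by the pointwise estimate is exactly $\sum_{j}a_{j}\mathpzc{d}_{j}(\frac{1}{r_{j}}-1)$ and not off by a block contribution. The bookkeeping is standard but delicate because the $R$-scaling appears both in the argument of $\psi_{n}$ through the support information (absorbing cancellations) and in the normalization of the Peetre maximal function; one convenient way to make the accounting transparent is to first reduce to the case $R=1$ by replacing $f_{n}$ by the rescaled version supported in the unit anisotropic box and letting an auxiliary function absorb the powers of $R$. Everything else -- convergence of $\sum_{k}f_{k}$ in $\mathcal{S}'(\R^{d};X)$ and the embedding into $F^{s,\vec{a}}_{\vec{p},q,\mathpzc{d}}(\R^{d},\vec{w};X)$ in Lemmas~\ref{PIBVP:lemma:appendix:prop;trace_TL_conv_series} and \ref{PIBVP:lemma:appendix:conv_series_dyadic_corona;TL} -- then follows from this maximal inequality in the same way as in \cite[Lemmas~3.19--3.20]{JS_traces}.
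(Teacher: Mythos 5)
Your overall architecture matches the paper's: a pointwise maximal estimate followed by the weighted iterated maximal (Fefferman--Stein) inequality of Lemma~\ref{PIBVP:lemma:appendix:part_HL-max-op_weighted_mixed_norm_space}; the paper's proof consists precisely of the pointwise bound $\norm{(\psi_{n}*f_{n})(x)}_{X} \lesssim R^{\sum_{j}a_{j}\mathpzc{d}_{j}(\frac{1}{r_{j}}-1)}\bigl[M_{[\mathpzc{d};l],r_{l}}(\ldots M_{[\mathpzc{d};1],r_{1}}(\norm{f_{n}}_{X})\ldots)\bigr](x)$, obtained as in \cite[Proposition~3.14]{JS_traces}, plus that lemma. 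The problem is that your step (i) --- the only nontrivial part --- is not established by the computation you sketch. Bounding $\norm{f_{n}(x-z)}_{X} \leq f_{n}^{*}(\vec{r},\vec{b}^{[n]},\mathpzc{d};x)\prod_{j}(1+|b^{[n]}_{j}z_{j}|^{\mathpzc{d}_{j}/r_{j}})$ and integrating $|\psi_{n}|$ against the weight gives, after the substitution $u=\delta^{(\mathpzc{d},\vec{a})}_{2^{n}}z$, the factor $\int|\psi(u)|\prod_{j}(1+R|u_{j}|)^{\mathpzc{d}_{j}/r_{j}}\,du \sim R^{\sum_{j}\mathpzc{d}_{j}/r_{j}}$ (the powers of $2^{na_{j}\mathpzc{d}_{j}}$ cancel exactly); your displayed product $\prod_{j}(1+R)^{\mathpzc{d}_{j}/r_{j}}2^{-na_{j}\mathpzc{d}_{j}}\cdot 2^{na_{j}\mathpzc{d}_{j}}$ equals this and does not ``simplify'' to $R^{\sum_{j}a_{j}\mathpzc{d}_{j}(\frac{1}{r_{j}}-1)}$. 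Nothing in your argument produces the crucial gain of a full power $R^{-a_{j}\mathpzc{d}_{j}}$ per block, and, tellingly, your argument never uses $r_{j}<1$, whereas the exponent $\frac{1}{r_{j}}-1$ is the signature of band-limited estimates with exponents below one. (In the regime relevant here, e.g. all $a_{j}\le 1$, your exponent $\sum_{j}\mathpzc{d}_{j}/r_{j}$ is strictly larger than the stated one, so the claimed inequality does not follow; it would also be too weak for the applications in Lemmas~\ref{PIBVP:lemma:appendix:prop;trace_TL_conv_series} and \ref{PIBVP:lemma:appendix:conv_series_dyadic_corona;TL}, where the size of the exponent is exactly what dictates the condition on $s$.)

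The missing idea is the one carried by \cite[Proposition~3.14]{JS_traces}: one must exploit the compact spectra of $f_{n}$ (and of $\psi_{n}*f_{n}$) through an ``$r$-trick'' of Plancherel--P\'olya--Nikol'skii type --- for instance, for $r\in(0,1]$ and functions $g,h$ with Fourier support in a box of side $b$ in dimension $m$ one has $|(g*h)(x)| \lesssim b^{m(\frac{1}{r}-1)}\bigl((|g|^{r}*|h|^{r})(x)\bigr)^{1/r}$ --- applied blockwise; the spectral-volume factor raised to $\frac{1}{r_{j}}-1$ is exactly where the stated $R$-power comes from, and the resulting averages of $\norm{f_{n}}_{X}^{r_{j}}$ against the rapidly decaying kernels are then dominated by the iterated operators $M_{[\mathpzc{d};j],r_{j}}$, after which Lemma~\ref{PIBVP:lemma:appendix:part_HL-max-op_weighted_mixed_norm_space} finishes the proof, as in the paper. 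Your proposed fallback of reducing to $R=1$ by rescaling does not repair this: an anisotropic dilation normalizing the spectrum of $f_{n}$ simultaneously rescales $\psi_{n}$ away from its fixed scale $2^{n}$, so the scale mismatch quantified by $R$ is relocated, not removed. Your step (ii) and the final passage to mixed norms are fine, but without a correct proof of the pointwise estimate with exponent $\sum_{j}a_{j}\mathpzc{d}_{j}(\frac{1}{r_{j}}-1)$ the lemma is not proved.
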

\begin{proof}
As in the proof of \cite[Proposition~3.14]{JS_traces}, it can be shown that
\[
\norm{(\psi_{n}*f_{n})(x)}_{X} \leq c R^{\sum_{j=1}^{m}a_{j}\mathpzc{d}_{j}(\frac{1}{r_{j}}-1)}\left[M_{[\mathpzc{d};l],r_{l}}(\ldots
M_{[\mathpzc{d};1],r_{1}}(\norm{f_{n}}_{X})\ldots)\right](x), \quad\quad n \in \N, x \in \R^{d},
\]
for some constant $c>0$ independent of $(f_{n})_{n}$. The desired result now follows from Lemma~\ref{PIBVP:lemma:appendix:part_HL-max-op_weighted_mixed_norm_space}.
\end{proof}

Given a function $f:\R^{d} \longra X$, $\vec{r} \in (0,\infty)^{l}$ and $\vec{b} \in (0,\infty)^{l}$, we define the \emph{maximal function of
Peetre-Fefferman-Stein type} $f^{*}(\vec{r},\vec{b},\mathpzc{d};\,\cdot\,)$ by
\begin{equation}\label{PIBVP:eq:appendix:max_funct_Peetre-Fefferman-Stein_type}
f^{*}(\vec{r},\vec{b},\mathpzc{d};x):= \sup_{z \in \R^{d}}\frac{\norm{f(x-z)}_{X}}{(1+|b_{1}z_{1}|^{\mathpzc{d}_{1}/r_{1}}) \ldots
(1+|b_{l}z_{l}|^{\mathpzc{d}_{l}/r_{l}})}, \quad\quad x \in \R^{d}.
\end{equation}

\begin{lemma}\label{PBIBV:lemma:appendix:Peetre-Fefferman-Stein_maximal_ineq}
Let $X$ be a Banach space, $\vec{p} \in [1,\infty)^{l}$, $q \in [1,\infty]$, and $\vec{w} \in \prod_{j=1}^{l}A_{\infty}(\R^{\mathpzc{d}_{j}})$.
Let $\vec{r} \in (0,1)^{l}$ be such that $w_{j} \in A_{p_{j}/r_{j}}(\R^{\mathpzc{d}_{j}})$ for $j=1,\ldots,l$. Then there exists a constant $C>0$ such that,
for all $(f_{n})_{n \in \N} \subset \mathcal{S}'(\R^{d};X)$
and $(\vec{b}^{[n]})_{n \in \N} \subset (0,\infty)^{l}$ with $\supp \hat{f} \subset \prod_{j=1}^{l}[-b^{[n]}_{j},b^{[n]}_{j}]^{\mathpzc{d}_{j}}$ for all $n \in \N$, we have the inequality
\[
\norm{ (f_{n}^{*}(\vec{r},\vec{b}^{[n]},\mathpzc{d};\,\cdot\,))_{n \geq 0} }_{L^{\vec{p},\mathpzc{d}}(\R^{d},\vec{w};\ell^{q}(\N)} \leq
C\norm{(f_{n})_{n}}_{L^{\vec{p},\mathpzc{d}}(\R^{d},\vec{w};\ell^{q}(\N;X))}.
\]
\end{lemma}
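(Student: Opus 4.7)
The plan is to reduce the claimed inequality to an iterated application of the vector-valued Fefferman--Stein bound in Lemma \ref{PIBVP:lemma:appendix:part_HL-max-op_weighted_mixed_norm_space}, the link being a pointwise bound of the Peetre maximal function by a composition of the partial Hardy--Littlewood maximal operators $M_{[\mathpzc{d};j],r_{j}}$. The main pointwise estimate I aim to prove is
\begin{equation*}
f^{*}(\vec{r},\vec{b},\mathpzc{d};x)\,\leq\, C\,\bigl(M_{[\mathpzc{d};1],r_{1}} \circ M_{[\mathpzc{d};2],r_{2}} \circ \cdots \circ M_{[\mathpzc{d};l],r_{l}}\bigr)(\norm{f}_{X})(x)
\end{equation*}
for every $f \in \mathcal{S}'(\R^{d};X)$ whose Fourier transform is supported in $\prod_{j=1}^{l}[-b_{j},b_{j}]^{\mathpzc{d}_{j}}$, with $C$ independent of $\vec{b}$, $x$ and $f$.

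To establish this pointwise bound I would induct on the number of blocks, the base case being the classical Peetre-type inequality: if $\hat{g}$ has compact support in $[-b,b]^{\mathpzc{d}}$ then $\sup_{z}\norm{g(x-z)}_{X}/(1+|bz|^{\mathpzc{d}/r}) \leq C\,(M_{\mathpzc{d},r}\norm{g}_{X})(x)$, which follows from the reproducing identity $g = g*\Psi_{b}$ with $\widehat{\Psi}\equiv 1$ on $[-1,1]^{\mathpzc{d}}$, the Schwartz decay of $\Psi$, and a standard splitting against the Hardy--Littlewood maximal function. Applying this in the first variable block (legitimate because the slice of $f$ in the first block retains Fourier support in $[-b_{1},b_{1}]^{\mathpzc{d}_{1}}$) yields
\begin{equation*}
\sup_{z_{1}}\frac{\norm{f(x-z)}_{X}}{1+|b_{1}z_{1}|^{\mathpzc{d}_{1}/r_{1}}} \,\leq\, C\,(M_{[\mathpzc{d};1],r_{1}}\norm{f}_{X})(x_{1},x_{2}-z_{2},\ldots,x_{l}-z_{l}).
\end{equation*}
To continue I would control the $z_{2}$-dependence of the right-hand side by applying the one-block Peetre bound to the second-block slice $f(y_{1},\cdot,x_{3}-z_{3},\ldots)$ \emph{inside} the average defining $M_{[\mathpzc{d};1],r_{1}}$: after raising to the $r_{1}$-th power, averaging over $y_{1}\in B(x_{1},\delta_{1})$ and taking $\sup_{\delta_{1}}$, one arrives at
\begin{equation*}
(M_{[\mathpzc{d};1],r_{1}}\norm{f}_{X})(x_{1},x_{2}-z_{2},\ldots) \,\leq\, C(1+|b_{2}z_{2}|^{\mathpzc{d}_{2}/r_{2}})\,(M_{[\mathpzc{d};1],r_{1}}M_{[\mathpzc{d};2],r_{2}}\norm{f}_{X})(x_{1},x_{2},x_{3}-z_{3},\ldots).
\end{equation*}
Dividing by $1+|b_{2}z_{2}|^{\mathpzc{d}_{2}/r_{2}}$ and taking $\sup_{z_{2}}$ eliminates that factor, and an analogous argument in each remaining block (always exploiting that the relevant Fourier slice still lies in the required rectangle in that block) completes the induction and produces the pointwise bound above.

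With this pointwise inequality, applied uniformly to $f=f_{n}$ with $\vec{b}=\vec{b}^{[n]}$, taking $\ell^{q}(\N)$-norms and then $L^{\vec{p},\mathpzc{d}}(\R^{d},\vec{w})$-norms gives
\begin{equation*}
\norml{(f_{n}^{*}(\vec{r},\vec{b}^{[n]},\mathpzc{d};\,\cdot\,))_{n}}_{L^{\vec{p},\mathpzc{d}}(\R^{d},\vec{w};\ell^{q}(\N))}
\leq C\,\normb{(M_{[\mathpzc{d};1],r_{1}}\cdots M_{[\mathpzc{d};l],r_{l}}\norm{f_{n}}_{X})_{n}}_{L^{\vec{p},\mathpzc{d}}(\R^{d},\vec{w};\ell^{q}(\N))},
\end{equation*}
from which the desired inequality follows by peeling off the partial maximal operators $M_{[\mathpzc{d};j],r_{j}}$ one at a time via $l$ successive applications of the Fefferman--Stein inequality in Lemma~\ref{PIBVP:lemma:appendix:part_HL-max-op_weighted_mixed_norm_space} (each step is admissible because the hypothesis $w_{j}\in A_{p_{j}/r_{j}}(\R^{\mathpzc{d}_{j}})$ holds for every $j$).

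The main obstacle is the pointwise step: the intermediate maximal functions $M_{[\mathpzc{d};j],r_{j}}\norm{f}_{X}$ destroy Fourier support, so one cannot simply iterate the one-block Peetre bound on the output of a previous maximal operation. The resolution is to apply each successive Peetre estimate \emph{inside} the average defining the preceding maximal function, using that the slice of $f$ in any single block still carries the requisite Fourier support there.
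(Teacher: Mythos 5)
Your proof is correct and follows essentially the same route as the paper: the paper likewise reduces the statement to the pointwise domination of $f_{n}^{*}(\vec{r},\vec{b}^{[n]},\mathpzc{d};\,\cdot\,)$ by an iterated composition of the partial maximal operators $M_{[\mathpzc{d};j],r_{j}}$ (citing Johnsen--Sickel, Proposition~3.12, for exactly the apply-each-Peetre-estimate-inside-the-averages iteration you sketch) and then concludes with the weighted mixed-norm Fefferman--Stein inequality of Lemma~\ref{PIBVP:lemma:appendix:part_HL-max-op_weighted_mixed_norm_space}. The only cosmetic difference is that you compose the maximal operators in the reverse order (block $l$ innermost rather than block $1$), which is immaterial here since $r_{j}<1\leq p_{i}$ for all $i,j$ makes each application of that lemma admissible in either order.
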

\begin{proof}
As in the proof of \cite[Proposition~3.12]{JS_traces},
it can be shown that
\[
f^{*}_{n}(\vec{r},\vec{b},\mathpzc{d};x) \leq c \left[M_{[\mathpzc{d};l],r_{l}}(\ldots M_{[\mathpzc{d};1],r_{1}}(\norm{f_{n}}_{X})\ldots)\right](x), \quad\quad n
\in \N, x \in \R^{d}
\]
for some constant $c>0$ only depending on $\vec{r}$.
The desired result now follows from Lemma~\ref{PIBVP:lemma:appendix:part_HL-max-op_weighted_mixed_norm_space}.
\end{proof}

\section{Comments on the localization and perturbation procedure}\label{PIBVP:appendix:sec:localization}

As already mentioned in the proof of Theorem \ref{PIBVP:thm:main_result},
the localization and perturbation procedure for reducing to the model problem case on $\R^{d}_{+}$ is worked out in full detail in \cite{Mey_PHD-thesis}.
However, there only the case $q=p$ with temporal weights having a positive power is considered.
For some of the estimates used there (parts) of the proofs do not longer work in our setting, where the main difficulty comes from $q \neq p$.
It is the goal of this appendix to consider these estimates.

\subsection*{Top order coefficients having small oscillations}

The most crucial part in the localization and perturbation procedure where we need to take care of the estimates is \cite[Proposition~2.3.1]{Mey_PHD-thesis} on top order coefficients having small oscillations.
To be more specific, we only consider the estimates in Step (IV) of its proof.

Before we go to these estimates, let us start with the lemma that makes it possible to reduce to the situation of top order coefficients having small oscillations.

\begin{lemma}\label{PIBVP:appendix:lemma:BUC_top_order_coeff}
Let $X$ be a Banach space, $J \subset \R$ and interval, $\mathscr{O} \subset \R^{d}$ a domain with compact boundary $\partial\mathscr{O}$, $\kappa \in \R$, $n \in \N_{>0}$, $s,r \in (1,\infty)$ and $p \in [1,\infty]$.
If $\kappa > \frac{1}{s}+\frac{d-1}{2nr}$, then
\[
F^{\kappa}_{s,p}(J;L^{r}(\partial\mathscr{O};X)) \cap L^{s}(J;B^{2n\kappa}_{r,p}(\partial\mathscr{O};X))
\hookrightarrow BUC(\partial\mathscr{O} \times J;X).
\]
\end{lemma}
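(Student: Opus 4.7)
The plan is to interpret the intersection space on $J \times \partial\mathscr{O}$ (locally) as a weighted anisotropic mixed-norm Triebel-Lizorkin space on $\R \times \R^{d-1}$ via Theorem~\ref{functieruimten:thm:aTL_rep_intersection}, and then to apply the Sobolev-type embedding into $BUC$ already available in the anisotropic scale. Since $\partial\mathscr{O}$ is a compact smooth manifold and $J$ is an interval, a standard combination of (temporal) extension and localization via a finite smooth atlas with subordinate partition of unity reduces the problem to the case $\partial\mathscr{O} = \R^{d-1}$, $J = \R$.

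\textbf{Matching the microscopic parameters.} Theorem~\ref{functieruimten:thm:aTL_rep_intersection} requires the inner Besov/Triebel-Lizorkin space to have both integrability and microscopic parameters equal to $r$, whereas our intersection has microscopic parameter $p$ in the second slot. To bridge this gap I first choose $\epsilon > 0$ small enough that
\[
\kappa - \epsilon > \frac{1}{s} + \frac{d-1}{2nr},
\]
which is possible by the strict inequality in the hypothesis. Combining \eqref{PIBVP:prelim:eq:elem_embd_epsilon} and \eqref{PIBVP:prelim:eq:elem_embd_BF_rel} (with $l=1$) and the trivial monotonicity $B^{s}_{r,q_0} \hookrightarrow B^{s}_{r,q_1}$ for $q_0 \le q_1$ gives the chains
\[
F^{\kappa}_{s,p} \hookrightarrow F^{\kappa}_{s,\infty} \hookrightarrow B^{\kappa}_{s,\infty} \hookrightarrow B^{\kappa-\epsilon}_{s,1} \hookrightarrow F^{\kappa-\epsilon}_{s,r},
\]
\[
B^{2n\kappa}_{r,p} \hookrightarrow B^{2n\kappa}_{r,\infty} \hookrightarrow B^{2n\kappa - 2n\epsilon}_{r,1} \hookrightarrow B^{2n(\kappa-\epsilon)}_{r,r} = F^{2n(\kappa-\epsilon)}_{r,r},
\]
where the final identity is the $p_1 = q$ case of \eqref{PIBVP:prelim:eq:elem_embd_BF_rel}. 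Applying these pointwise in the outer variable yields
\begin{align*}
F^{\kappa}_{s,p}(\R; L^r(\R^{d-1}; X)) \cap L^s(\R; B^{2n\kappa}_{r,p}(\R^{d-1}; X))  \hookrightarrow  \\
\hspace{3cm} F^{\kappa-\epsilon}_{s,r}(\R; L^r(\R^{d-1}; X)) \cap L^s(\R; F^{2n(\kappa-\epsilon)}_{r,r}(\R^{d-1}; X)).
\end{align*}

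\textbf{Applying the anisotropic identification and the Sobolev embedding.} Theorem~\ref{functieruimten:thm:aTL_rep_intersection}, applied with $\mathpzc{d} = (d-1,1)$, $\vec{a} = (\tfrac{1}{2n}, 1)$, $\vec{p} = (r,s)$, microscopic index $r$, smoothness $\kappa - \epsilon$, and trivial weights, identifies the right-hand side with the anisotropic Triebel-Lizorkin space $F^{\kappa-\epsilon, (\frac{1}{2n}, 1)}_{(r,s), r, (d-1, 1)}(\R^{d-1}\times\R; X)$. By \eqref{PIBVP:prelim:eq:elem_embd_BF_rel} this embeds into the anisotropic Besov space $B^{\kappa-\epsilon, (\frac{1}{2n}, 1)}_{(r,s), \max(r,s), (d-1,1)}(\R^{d-1}\times\R; X)$. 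The Sobolev-type embedding Corollary~\ref{PIBVP:cor:thm:trace_Besov:distr_trace} with $\vec\gamma = 0$, together with the subsequent remark that density of the Schwartz space promotes the target from (iterated) $C_b$ to $BUC$, then gives an embedding into $BUC(\R^{d-1} \times \R; X)$, precisely because by the choice of $\epsilon$
\[
\kappa - \epsilon > \frac{1}{2n} \cdot \frac{d-1}{r} + 1 \cdot \frac{1}{s}.
\]
Undoing the localization returns the statement on $\partial\mathscr{O} \times J$.

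\textbf{Main obstacle.} The only substantive issue is the $\epsilon$-loss required to bring the microscopic index of the inner Besov space from $p$ down to $r$ so that Theorem~\ref{functieruimten:thm:aTL_rep_intersection} becomes applicable; the strict inequality in the hypothesis is exactly what provides the necessary slack. Everything else is bookkeeping with embeddings already recorded in Section~\ref{PIBVP:sec:prelim} together with the anisotropic Sobolev embedding for Besov spaces.
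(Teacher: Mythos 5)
Your proof is correct, and it follows the same skeleton as the paper's argument (localize to $\R^{d-1}\times\R$, pass to an anisotropic space on the product with an $\epsilon$-loss in smoothness permitted by the strict inequality, conclude with Corollary~\ref{PIBVP:cor:thm:trace_Besov:distr_trace} and the subsequent remark upgrading $C_{b}$ to $BUC$), but it reaches the anisotropic space by a different intermediate identification. The paper quotes from \cite{Lindemulder_Intersection} the representation of the intersection as the space with quasi-norm $\norm{(S_{n}f)_{n}}_{L^{s}(\R)[[\ell^{p}_{\kappa}(\N)]L^{r}(\R^{d-1})](X)}$, which accommodates the microscopic parameter $p$ directly, and then takes a single $\epsilon$-loss to land in $B^{\kappa-\epsilon,(\frac{1}{2n},1)}_{(r,s),s,(d-1,1)}(\R^{d};X)$; you instead first lower the microscopic index of each component to $r$ by the elementary monotonicity/$\epsilon$-loss embeddings (correctly applied with value space $L^{r}(\R^{d-1};X)$ in time and inside the Bochner norm in space), so that the intersection representation actually stated in the paper, Theorem~\ref{functieruimten:thm:aTL_rep_intersection}, becomes applicable, and then embed $F^{\kappa-\epsilon,(\frac{1}{2n},1)}_{(r,s),r,(d-1,1)}$ into the anisotropic Besov scale. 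What your route buys is self-containedness: every ingredient you use is explicitly recorded in this paper, whereas the paper's proof leans on a representation only cited from the companion work; what the paper's route buys is slightly less bookkeeping (one $\epsilon$-loss instead of component-wise losses, and no need to manipulate the microscopic parameter), though for this lemma the two are equally sharp since only the strict inequality $\kappa>\frac{1}{s}+\frac{d-1}{2nr}$ is exploited. One cosmetic point: as printed, \eqref{PIBVP:prelim:eq:elem_embd_BF_rel} contains a typo (the middle space should be the Triebel--Lizorkin space $F^{s,\vec{a}}_{\vec{p},q,\mathpzc{d}}$); your reading of it, which is the intended standard relation $B^{s}_{p,\min\{p,q\}}\hookrightarrow F^{s}_{p,q}\hookrightarrow B^{s}_{p,\max\{p,q\}}$, is the one you need and is what the paper means.
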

\begin{proof}
By a standard localization procedure, we may restrict ourselves to the case that $J=\R$ and $\mathscr{O}=\R^{d}_{+}$ (so that $\partial\mathscr{O} = \R^{d}_{+}$).
By \cite{Lindemulder_Intersection},
\begin{equation}\label{PIBVP:eq:appendix:sec:localization;int_rep_bound_coeff}
F^{\kappa}_{s,p}(\R;L^{r}(\R^{d-1};X)) \cap L^{s}(\R;B^{2n\kappa}_{r,p}(\R^{d-1};X))
= \{ f \in \mathcal{S}'(\R^{d-1}\times\R;X) : (S_{n}f)_{n} \in L^{s}(\R)[[\ell^{p}_{\kappa}(\N)]L^{r}(\R^{d-1})](X) \}
\end{equation}
with equivalence of norms, where $(S_{n})_{n \in \N}$ correspond to some fixed choice of $\varphi \in \Phi^{(d-1,1),(\frac{1}{2n},1)}(\R^{d})$.
For $\epsilon > 0$ we thus obtain
\[
F^{\kappa}_{s,p}(\R;L^{r}(\R^{d-1};X)) \cap L^{s}(\R;B^{2n\kappa}_{r,p}(\R^{d-1};X))
\hookrightarrow
B^{\kappa-\epsilon,(\frac{1}{2n},1)}_{(r,s),s,(d-1,1)}(\R^{d};X).
\]
Choosing $\tilde{\kappa}$ with $\kappa > \tilde{\kappa} > \frac{1}{s}+\frac{d-1}{2nr}$, the desired inclusion follows from Corollary~\ref{PIBVP:cor:thm:trace_Besov:distr_trace}.
\end{proof}

\begin{lemma}\label{PIBVP:lemma:appendix:sec:localization;ext_op}
Let $X$ be a Banach space, $i \in \{1,\ldots,l\}$, $T \in \R$ and
\[
\R^{d}_{[\mathpzc{d};i],T} := \{ x \in \R^{d} : x_{i,1} < T \} = \iota_{[\mathpzc{d};i]}[(-\infty,T) \times \R^{\mathpzc{d}_{i}-1}].
\]
Then there exists an extension operator $\mathcal{E}_{[\mathpzc{d};i],T} \in \mathcal{L}(\mathcal{S}'(\R^{d}_{[\mathpzc{d};i],T};X),\mathcal{S}'(\R^{d};X)$ which, for every
$\vec{a} \in (0,\infty)^{l}$, $s \in \R$, $\vec{p} \in [1,\infty)^{l}$, $q \in [1,\infty]$ and $\vec{w} \in A_{\infty}(\R^{\mathpzc{d}_{j}})$, restricts to a bounded linear operator from $F^{s,\vec{a}}_{\vec{p},q,\mathpzc{d}}(\R^{d}_{[\mathpzc{d};i],T},\vec{w};X)$ to $F^{s,\vec{a}}_{\vec{p},q,\mathpzc{d}}(\R^{d},\vec{w};X)$ whose operator norm can be estimated by a constant independent of $X$ and $T$.
\end{lemma}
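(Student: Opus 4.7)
The plan is to adapt Rychkov's universal extension construction to our weighted anisotropic mixed-norm setting, thereby producing a single operator that is simultaneously bounded on the whole scale of spaces $F^{s,\vec{a}}_{\vec{p},q,\mathpzc{d}}(\R^{d}_{[\mathpzc{d};i],T},\vec{w};X)$.

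First, I would reduce to the case $T=0$ by conjugating with translation in the $x_{i,1}$-coordinate: if $\tau_{T}^{[i]}$ denotes translation by $T$ in that single direction, set $\mathcal{E}_{[\mathpzc{d};i],T} := \tau_{T}^{[i]}\circ \mathcal{E}_{[\mathpzc{d};i],0}\circ\tau_{-T}^{[i]}$. Since the estimates to be derived will depend on $\vec{w}$ only through the translation-invariant $A_{\infty}$-characteristics $[w_{j}]_{A_{\infty}}$ (and not on the weight itself), $T$-independence of the operator norm is automatic.

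Next, I would work only in the $x_{i,1}$-coordinate, treating the remaining $d-1$ variables as parameters. Choose $\phi_{0},\psi_{0}\in\mathcal{S}(\R)$ with $\psi_{0}$ supported in $(0,\infty)$, so that if $\psi_{k}(t):=2^{k}\psi_{0}(2^{k}t)$ and $\phi_{k}$ is defined analogously, the restriction of $\psi_{k}\ast_{i,1}\tilde{f}$ to $\R^{d}_{[\mathpzc{d};i],0}$ depends only on $f$ itself (not on the extension $\tilde{f}$). Choose them moreover so that one has a Calder\'on-type reproducing formula $\sum_{k\geq 0}\phi_{k}\ast\psi_{k}=\delta_{0}$ in $\mathcal{S}'(\R)$. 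Then define
\[
\mathcal{E}_{[\mathpzc{d};i],0}f := \sum_{k=0}^{\infty}\phi_{k}\ast_{i,1}\bigl(1_{\R^{d}_{[\mathpzc{d};i],0}}\cdot(\psi_{k}\ast_{i,1}\tilde{f})\bigr),
\]
where $\ast_{i,1}$ denotes convolution in the $x_{i,1}$-variable only. By construction $\mathcal{E}_{[\mathpzc{d};i],0}f$ is independent of the choice of $\tilde{f}$, and the reproducing formula guarantees $\mathcal{E}_{[\mathpzc{d};i],0}f|_{\R^{d}_{[\mathpzc{d};i],0}}=f$.

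To prove boundedness on $F^{s,\vec{a}}_{\vec{p},q,\mathpzc{d}}(\R^{d},\vec{w};X)$, I would apply the convergence result for dyadic series, Lemma~\ref{PIBVP:lemma:appendix:conv_series_dyadic_corona;TL}. The summands have Fourier support in dyadic annuli in the $x_{i,1}$-direction, which, after adapting to the $(\mathpzc{d},\vec{a})$-anisotropic dyadic structure via the one-dimensional dilation $2^{ka_{i}/a_{i}}$ (and treating the remaining frequency variables trivially), matches the Fourier support hypothesis of that lemma. The pointwise control of each $\psi_{k}\ast_{i,1}\tilde{f}$ by a Peetre-type maximal function of $f$ in the $x_{i,1}$-direction then reduces the norm estimate, via the mixed-norm Fefferman-Stein inequality (Lemma~\ref{PIBVP:lemma:appendix:part_HL-max-op_weighted_mixed_norm_space}) applied block by block, to the norm of $f$ on the domain side.

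The main obstacle will be handling the mismatch between the one-dimensional structure of Rychkov's reproducing formula in $x_{i,1}$ and the full $(\mathpzc{d},\vec{a})$-anisotropic Littlewood-Paley decomposition of the ambient Triebel-Lizorkin space. Because the characteristic function $1_{\R^{d}_{[\mathpzc{d};i],0}}$ depends only on $x_{i,1}$, however, one can factor the analysis: the vector-valued maximal function estimates from Appendix~\ref{PIBVP:appendix:series_estimates} decouple into successive applications in each block, producing constants that depend only on $[w_{j}]_{A_{p_{j}/r_{j}}}$ (and hence on $[w_{j}]_{A_{\infty}}$ through the choice of $\vec{r}$) and on the vector-valued Hardy-Littlewood maximal constant. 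This yields the required $X$- and $T$-independent bound.
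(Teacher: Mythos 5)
Your construction is the right one---the paper's own proof is simply a citation of Rychkov, and your operator $\mathcal{E}_{[\mathpzc{d};i],0}f=\sum_{k}\phi_{k}\ast_{i,1}\bigl(1_{\R^{d}_{[\mathpzc{d};i],0}}\cdot(\psi_{k}\ast_{i,1}\tilde{f})\bigr)$, the reduction to $T=0$ by translation (legitimate, since all constants depend on the weights only through their translation-invariant Muckenhoupt characteristics), and the use of Peetre-type maximal functions are exactly the ingredients of that argument. But the boundedness step as you describe it contains a genuine error. The whole point of the one-sided support condition is that $\psi_{0}$ (and, note, also $\phi_{0}$: without $\supp\phi_{0}\subset[0,\infty)$ the cut-off $1_{\R^{d}_{[\mathpzc{d};i],0}}$ destroys the telescoping and $\mathcal{E}_{[\mathpzc{d};i],0}f$ no longer restricts to $f$ on the half-space) is supported in a half-line in \emph{physical} space. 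A nonzero function supported in a half-line cannot have compactly supported Fourier transform, so the summands of your series do \emph{not} have Fourier support in dyadic annuli, and Lemma~\ref{PIBVP:lemma:appendix:conv_series_dyadic_corona;TL} is not applicable. Even setting this aside, that lemma requires two-sided control of the full anisotropic quantity $|\xi|_{\mathpzc{d},\vec{a}}$, whereas your construction only localizes the $\xi_{i,1}$-frequency; the remaining frequency blocks are unconstrained, so the corona hypothesis fails in any case (and Lemma~\ref{PIBVP:lemma:appendix:prop;trace_TL_conv_series}, which needs no lower frequency bound, only covers sufficiently large $s$, not all $s\in\R$).

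This is precisely the difficulty Rychkov's paper is designed to overcome: boundedness is not proved through compact Fourier supports but through a \emph{local means} characterization of the spaces, i.e.\ an equivalent $F$-norm built from convolutions with compactly supported kernels possessing sufficiently many vanishing moments, together with estimates of the form $\|\varphi_{j}\ast\phi_{k}\ast g\|\lesssim 2^{-|j-k|N}(\ldots)$ exploiting those moments, the Peetre maximal inequalities (your Lemma~\ref{PIBVP:lemma:appendix:part_HL-max-op_weighted_mixed_norm_space} supplies the weighted mixed-norm Fefferman--Stein input), and a discrete Hardy-type summation as in Lemma~\ref{PIBVP:lemma:elementaire_afschatting_rijtjes}. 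So to complete your argument you must first transfer the local means characterization to the weighted anisotropic mixed-norm vector-valued setting (with constants depending only on $[w_{j}]_{A_{p_{j}/r_{j}}}$, so that the $T$- and $X$-independence survives), and then run Rychkov's estimates; the appeal to the dyadic-corona lemma should be deleted, as no choice of kernels compatible with the one-sided support requirement can satisfy its hypotheses.
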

\begin{proof}
This can be shown in the same way as in \cite{Rychkov1999_restrictions&extensions}.
\end{proof}

\begin{lemma}\label{PIBVP:lemma:appendix:sec:localization;pre-variant_(2.3.13)}
Let $X$ be a Banach space, $I=(-\infty,T)$ with $T \in (-\infty,\infty]$, $\kappa > 0$, $n \in \N_{>0}$, $p,q \in [1,\infty)$, $r,u \in (p,\infty)$, $s,v \in (q,\infty)$ with $\frac{1}{p}=\frac{1}{r}+\frac{1}{u}$ and $\frac{1}{q}=\frac{1}{s}+\frac{1}{v}$.
Let $\mu \in (-1,\infty)$ be such that $\frac{v}{q}\mu \in (-1,v-1)$.
Then
\begin{eqnarray*}
\norm{fg}_{F^{\kappa,(\frac{1}{2n},1)}_{(p,q),p,(d-1,1)}(\R^{d-1}\times I,(1,v_{\mu});X)}
&\lesssim& \norm{f}_{L^{\infty}(\R^{d-1}\times I;\mathcal{B}(X))} \norm{g}_{F^{\kappa,(\frac{1}{2n},1)}_{(p,q),p,(d-1,1)}(\R^{d-1}\times I,(1,v_{\mu});X)} \\
&& + \norm{f}_{F^{\kappa}_{s,p}(I;L^{r}(\R^{d-1};\mathcal{B}(X))) \cap L^{s}(I;B^{2n\kappa}_{r,p}(\R^{d-1};\mathcal{B}(X)))} \norm{g}_{F^{0,(\frac{1}{2n},1)}_{(u,v),1,(d-1,1)}(\R^{d-1}\times I,(1,v_{\frac{v}{q}\mu});X)}
\end{eqnarray*}
with implicit constant independent of $X$ and $T$.
\end{lemma}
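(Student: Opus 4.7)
The plan is to prove the estimate via a paraproduct decomposition in the anisotropic mixed-norm Triebel--Lizorkin framework with $\mathpzc{d} = (d-1,1)$ and $\vec{a} = (\tfrac{1}{2n}, 1)$. First I would apply the extension operator of Lemma~\ref{PIBVP:lemma:appendix:sec:localization;ext_op} in the time variable to extend $f$ and $g$ to $\R^{d-1}\times\R$, reducing the problem to the case $I=\R$ with constants independent of $T$. Then, fixing $\varphi \in \Phi^{\mathpzc{d},\vec{a}}(\R^{d})$ with associated Littlewood--Paley operators $(S_{k})_{k\in\N}$, and choosing $N \in \N$ large enough that the anisotropic frequency annuli below are genuinely separated, I would split
\begin{equation*}
fg \;=\; \underbrace{\sum_{k \geq N}(S_{\leq k-N} f)(S_{k} g)}_{\Pi_{1}} \;+\; \underbrace{\sum_{k \geq N}(S_{k} f)(S_{\leq k-N} g)}_{\Pi_{2}} \;+\; \underbrace{\sum_{|k-k'|<N}(S_{k} f)(S_{k'} g)}_{\Pi_{3}}.
\end{equation*}

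The summands of $\Pi_{1}$ and $\Pi_{3}$ have anisotropic Fourier support in $\{|\xi|_{\mathpzc{d},\vec{a}} \sim 2^{k}\}$, with the dyadic size governed by the factor containing $g$. By Lemma~\ref{PIBVP:lemma:appendix:conv_series_dyadic_corona;TL}, the $F^{\kappa,\vec{a}}_{(p,q),p,\mathpzc{d}}((1,v_{\mu}))$-norms of $\Pi_{1}$ and $\Pi_{3}$ are controlled by the $\ell^{p}$-valued $L^{(p,q),\mathpzc{d}}((1,v_{\mu}))$-norm of the dyadic sequence. Since the kernels of $S_{\leq k-N}$ are anisotropic dilations of a fixed Schwartz function and therefore have uniformly bounded $L^{1}$-norm, Young's inequality gives $\|S_{\leq k-N} f\|_{\infty} \lesssim \|f\|_{\infty}$ uniformly in $k$. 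This yields the first term on the RHS:
\begin{equation*}
\|\Pi_{1}\|_{F} + \|\Pi_{3}\|_{F} \;\lesssim\; \|f\|_{\infty}\, \|g\|_{F^{\kappa,\vec{a}}_{(p,q),p,\mathpzc{d}}((1,v_{\mu}))}.
\end{equation*}

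For $\Pi_{2}$, whose summands have Fourier support of size $\sim 2^{k}$ dictated by the $f$-factor, another application of Lemma~\ref{PIBVP:lemma:appendix:conv_series_dyadic_corona;TL} combined with the pointwise factorization
\begin{equation*}
\big\|(2^{k\kappa}(S_{k} f)(S_{\leq k-N} g))_{k}\big\|_{\ell^{p}(X)} \;\leq\; \big\|(2^{k\kappa} S_{k} f)_{k}\big\|_{\ell^{p}(\mathcal{B}(X))} \cdot g^{*}, \qquad g^{*} := \sup_{k}|S_{\leq k-N} g|,
\end{equation*}
and Hölder's inequality in the mixed norm with $\tfrac{1}{p}=\tfrac{1}{r}+\tfrac{1}{u}$, $\tfrac{1}{q}=\tfrac{1}{s}+\tfrac{1}{v}$ and the weight identity $v_{\mu}^{1/q} = v_{v\mu/q}^{1/v}$ produces
\begin{equation*}
\|\Pi_{2}\|_{F^{\kappa,\vec{a}}_{(p,q),p,\mathpzc{d}}((1,v_{\mu}))} \;\lesssim\; \|f\|_{F^{\kappa,\vec{a}}_{(r,s),p,\mathpzc{d}}((1,1);\mathcal{B}(X))} \cdot \|g^{*}\|_{L^{(u,v),\mathpzc{d}}((1,v_{v\mu/q});X)}.
\end{equation*}
The pointwise bound $g^{*} \leq \sum_{k}|S_{k} g|$ gives $\|g^{*}\|_{L^{(u,v)}((1,v_{v\mu/q}))} \leq \|g\|_{F^{0,\vec{a}}_{(u,v),1,\mathpzc{d}}((1,v_{v\mu/q}))}$, matching the second factor on the RHS exactly; note the crucial role of the microscopic parameter $1$ in the target space for $g$.

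The main obstacle will be to dominate the anisotropic norm $\|f\|_{F^{\kappa,\vec{a}}_{(r,s),p,\mathpzc{d}}((1,1);\mathcal{B}(X))}$ by the intersection norm $\|f\|_{F^{\kappa}_{s,p}(L^{r}) \cap L^{s}(B^{2n\kappa}_{r,p})}$ appearing in the statement. Theorem~\ref{functieruimten:thm:aTL_rep_intersection} supplies such an identification only when the microscopic parameter matches the first mixed-norm exponent (i.e.\ when $r = p$), which is violated here. I would resolve this either by invoking the more general intersection representations in \cite{Lindemulder_Intersection} (which cover arbitrary microscopic parameters under mild weight assumptions), or by a direct Littlewood--Paley/Minkowski computation exploiting the observation that the Besov (sum-type) microscopic structure of $B^{2n\kappa}_{r,p}$ is precisely what is needed to absorb the $\ell^{p}$-sequence structure into the spatial $L^{r}$-integration once the order of integration in time is exchanged. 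Combining the three paraproduct estimates then yields the claimed bound, with constants independent of $T$ thanks to the uniform boundedness of the extension operator.
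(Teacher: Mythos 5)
Your proposal follows essentially the same route as the paper: reduce to $I=\R$ by extension, decompose $fg$ into low-high, high-low and diagonal paraproducts built from an anisotropic Littlewood--Paley family, estimate the low-high and diagonal parts by $\norm{f}_{L^{\infty}}\norm{g}_{F^{\kappa,(\frac{1}{2n},1)}_{(p,q),p,(d-1,1)}}$, and estimate the high-low part by H\"older with $\frac1p=\frac1r+\frac1u$, $\frac1q=\frac1s+\frac1v$ and the weight splitting $v_{\mu}^{1/q}=(v_{\frac vq\mu})^{1/v}$, absorbing $\sup_{k}\norm{S^{k}g}_{X}$ into the $F^{0,(\frac{1}{2n},1)}_{(u,v),1,(d-1,1)}$-norm of $g$. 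Three remarks. First, your ``main obstacle'' is not an obstacle: since $p\leq r$, Minkowski's inequality dominates your mismatched norm $\norm{f}_{F^{\kappa,(\frac{1}{2n},1)}_{(r,s),p,(d-1,1)}}$ by the iterated norm $\norm{(2^{k\kappa}S_{k}f)_{k}}_{L^{s}(\R)[[\ell^{p}(\N)]L^{r}(\R^{d-1})](\mathcal{B}(X))}$, and this iterated norm is equivalent to the intersection norm $\norm{f}_{F^{\kappa}_{s,p}(\R;L^{r})\cap L^{s}(\R;B^{2n\kappa}_{r,p})}$ by the representation \eqref{PIBVP:eq:appendix:sec:localization;int_rep_bound_coeff} from \cite{Lindemulder_Intersection}, already recorded in the proof of Lemma~\ref{PIBVP:appendix:lemma:BUC_top_order_coeff}; this is precisely the first of your two suggested fixes and is what the paper does -- it never forms the anisotropic Triebel--Lizorkin norm with microscopic parameter $p$ and integrability $r$, but keeps the $f$-factor in the iterated form coming straight out of the H\"older step. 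Second, for the reduction to $I=\R$ you should not extend $f$ with the operator of Lemma~\ref{PIBVP:lemma:appendix:sec:localization;ext_op}: that operator is only known to be bounded on the weighted anisotropic $F$-scale, so it gives control neither of $\norm{\mathrm{ext}\,f}_{L^{\infty}}$ (needed, via Young's inequality for $S^{k}$, in the low-high and diagonal estimates) nor of the intersection norm; extend $f$ by a Fichtenholz/reflection-type operator, as the paper does, and reserve Lemma~\ref{PIBVP:lemma:appendix:sec:localization;ext_op} for $g$. Third, the summands of the diagonal paraproduct have Fourier support in dyadic balls rather than annuli, so Lemma~\ref{PIBVP:lemma:appendix:conv_series_dyadic_corona;TL} does not apply to them; one must use Lemma~\ref{PIBVP:lemma:appendix:prop;trace_TL_conv_series} instead, which is available here because $\kappa>0$ (its $\vec{r}$-condition is where the assumption on $v_{\mu}$ enters). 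With these adjustments your argument coincides with the paper's proof.
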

Note here that $\frac{v}{q}\mu < v-1$ when $\mu < q-1$.

\begin{proof}
Extending $f$ from $\R^{d-1} \times I$ to $\R^{d-1} \times \R$ by using an extension operator of Fichtenholz type and extending $g$ from $\R^{d-1} \times I$  to $\R^{d-1} \times \R$ by using an extension operator as in Lemma~\ref{PIBVP:lemma:appendix:sec:localization;ext_op}, we may restrict ourselves to the case $I=\R$.

Let $(S_{n})_{n \in \N}$ correspond to some fixed choice of $\varphi \in \Phi^{(d-1,1),(\frac{1}{2n},1)}(\R^{d})$, say with $A=1$ and $B=\frac{3}{2}$.
As in \cite[Chapter~4]{RS1996} (the isotropic case) we can use paraproducts associated with $(S_{n})_{n \in \N}$ in order to treat the pointwise product $fg$.
For this it is convenient to define $S^{k} \in \mathcal{L}(\mathcal{S}'(\R^{d};X))$ by $S^{k}:=\sum_{n=0}^{k}S_{n}$.
Given $f \in L^{\infty}(\R^{d};\mathcal{B}(X))$ and $g \in F^{\kappa,(\frac{1}{2n},1)}_{(p,q),p,(d-1,1)}(\R^{d},(1,v_{\mu});X) \hookrightarrow L^{(p,q),(d-1,1)}(\R^{d},(1,v_{\mu});X)$, if the paraproducts
\[
\Pi_{1}(f,g) := \sum_{k=2}^{\infty}(S^{k-2}f)(S_{k}g),
\Pi_{2}(f,g) := \sum_{k=0}^{\infty}\sum_{j=-1}^{1}(S_{k+j}f)(S_{k}g),
\Pi_{3}(f,g) := \sum_{k=2}^{\infty}(S_{k}f)(S^{k-2}g),
\]
exist (as convergent series) in $\mathcal{S}'(\R^{d};X)$, then
\[
fg = \Pi_{1}(f,g) + \Pi_{2}(f,g) + \Pi_{3}(f,g).
\]
Here the Fourier supports of the summands in the paraproducts satisfy
\begin{align*}
 \supp \mathscr{F}[(S^{k-2}f)(S_{k}g)] &\subset \{ \xi : 2^{k-3} \leq |\xi|_{\mathpzc{d},\vec{a}} \leq 2^{k+1} \}, \quad\quad k \geq 2, \\
 \supp \mathscr{F}[(S_{k+j}f)(S_{k}g)] &\subset \{ \xi : |\xi|_{\mathpzc{d},\vec{a}} \leq 2^{k+1} \}, \quad\quad k \geq 0, j \in \{-1,0,1\}, \\
 \supp \mathscr{F}[(S_{k}f)(S^{k-2}g)] &\subset \{ \xi : 2^{k-3} \leq |\xi|_{\mathpzc{d},\vec{a}} \leq 2^{k+1} \}, \quad\quad k \geq 2.
\end{align*}
Using Lemma~\ref{PIBVP:lemma:appendix:prop;trace_TL_conv_series} it can be shown as in \cite[Lemma~1.3.19]{Mey_PHD-thesis} that
\[
\norm{\Pi_{i}(f,g)}_{F^{\kappa,(\frac{1}{2n},1)}_{(p,q),p,(d-1,1)}(\R^{d},(1,v_{\mu});X)}
\lesssim \norm{f}_{L^{\infty}(\R^{d};\mathcal{B}(X))} \norm{g}_{F^{\kappa,(\frac{1}{2n},1)}_{(p,q),p,(d-1,1)}(\R^{d},(1,v_{\mu});X)}, \quad\quad i=1,2,
\]
and
\[
\norm{\Pi_{3}(f,g)}_{F^{\kappa,(\frac{1}{2n},1)}_{(p,q),p,(d-1,1)}(\R^{d},(1,v_{\mu});X)}
\lesssim \norm{(2^{n\kappa}S_{n}f)_{n}}_{L^{s}(\R)[[\ell^{p}(\N)]L^{r}(\R^{d-1})](X)}
\norm{g}_{L^{(u,v),(d-1,1)}(\R^{d-1}\times I,(1,v_{\frac{v}{q}\mu});X)}.
\]
The desired estimate now follows from \eqref{PIBVP:eq:elem_embedding_FEF} and \eqref{PIBVP:eq:appendix:sec:localization;int_rep_bound_coeff}.
\end{proof}

\begin{lemma}\label{PIBVP:lemma:appendix:sec:localization;Sob-embedding_pre-variant_(2.3.13)}
Let the notations and assumptions be as in Lemma~\ref{PIBVP:lemma:appendix:sec:localization;pre-variant_(2.3.13)}.
For each $\delta > \frac{1}{s}+\frac{d-1}{2nr}$ the inclusion
\[
F^{\delta,(\frac{1}{2n},1)}_{(p,q),\infty,(d-1,1)}(\R^{d-1}\times I,(1,v_{\mu});X)
\hookrightarrow F^{0,(\frac{1}{2n},1)}_{(u,v),1,(d-1,1)}(\R^{d-1}\times I,(1,v_{\frac{v}{q}\mu});X)
\]
holds true with a norm that can be estimated by a constant independent of $T$ and $X$.
\end{lemma}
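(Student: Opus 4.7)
The plan is to reduce to $I=\R$ and then chain together four elementary embeddings on the full space. First, the extension operator $\mathcal{E}_{[\mathpzc{d};2],T}$ from Lemma~\ref{PIBVP:lemma:appendix:sec:localization;ext_op} extends elements of $F^{\delta,(\frac{1}{2n},1)}_{(p,q),\infty,(d-1,1)}(\R^{d-1}\times I,(1,v_\mu);X)$ to $F^{\delta,(\frac{1}{2n},1)}_{(p,q),\infty,(d-1,1)}(\R^d,(1,v_\mu);X)$ with operator norm independent of $T$ and $X$; since restriction from $\R^d$ to $\R^{d-1}\times I$ is a contraction on the target, it suffices to establish the embedding on $\R^d$.

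On $\R^d$, write $\vec{a}=(\tfrac{1}{2n},1)$, $\mathpzc{d}=(d-1,1)$, and fix $\epsilon>0$ (to be chosen). The chain is
\begin{align*}
F^{\delta,\vec{a}}_{(p,q),\infty,\mathpzc{d}}(\R^d,(1,v_\mu);X)
&\hookrightarrow B^{\delta,\vec{a}}_{(p,q),\infty,\mathpzc{d}}(\R^d,(1,v_\mu);X)
\hookrightarrow B^{\delta-\epsilon,\vec{a}}_{(p,q),1,\mathpzc{d}}(\R^d,(1,v_\mu);X) \\
&\hookrightarrow B^{0,\vec{a}}_{(u,v),1,\mathpzc{d}}(\R^d,(1,v_{v\mu/q});X)
\hookrightarrow F^{0,\vec{a}}_{(u,v),1,\mathpzc{d}}(\R^d,(1,v_{v\mu/q});X),
\end{align*}
where the first and last inclusions use \eqref{PIBVP:prelim:eq:elem_embd_BF_rel} (Minkowski), the second uses \eqref{PIBVP:prelim:eq:elem_embd_epsilon}, and the third is the weighted anisotropic Besov Sobolev embedding of Proposition~\ref{PIBVP:Sobolev_embedding_Besov} applied with $J=\{1,2\}$, parameters $\gamma_1=\tilde\gamma_1=0$, $\gamma_2=\mu$, $\tilde\gamma_2=\tfrac{v}{q}\mu$. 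The weight hypotheses to check are $\tilde\gamma_2/\tilde p_2 = \mu/q = \gamma_2/p_2$ (equality is allowed) and $(1+\tilde\gamma_2)/\tilde p_2 = 1/v + \mu/q < 1/q+\mu/q = (1+\gamma_2)/p_2$, which holds since $v>q$; the first component is trivial. The anisotropic Sobolev inequality then reduces to
\[
\delta-\epsilon > a_1\Big(\tfrac{d-1}{p}-\tfrac{d-1}{u}\Big) + a_2\Big(\tfrac{1+\mu}{q}-\tfrac{1+v\mu/q}{v}\Big) = \tfrac{d-1}{2nr} + \tfrac{1}{s},
\]
using $1/r=1/p-1/u$ and $1/s=1/q-1/v$; the hypothesis $\delta > \tfrac{1}{s}+\tfrac{d-1}{2nr}$ allows us to choose $\epsilon>0$ small enough.

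The main (minor) obstacle is keeping the bookkeeping on the weight exponents straight and verifying that every constant in the chain is independent of $T$ and of the Banach space $X$. Independence of $X$ is automatic because all four embeddings are Fourier-multiplier type statements at the level of $X$-valued tempered distributions, with constants depending only on the Muckenhoupt characteristics, the integrability exponents, and the smoothness/anisotropy parameters; independence of $T$ comes solely from the $T$-uniform extension operator of Lemma~\ref{PIBVP:lemma:appendix:sec:localization;ext_op}, after which one works on $\R^d$ where $T$ no longer appears.
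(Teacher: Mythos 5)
Your proof is correct and follows essentially the same route as the paper: reduction to $I=\R$ via the $T$-uniform extension operator of Lemma~\ref{PIBVP:lemma:appendix:sec:localization;ext_op}, followed by the elementary $F$--$B$ embeddings and the weighted anisotropic Sobolev embedding of Proposition~\ref{PIBVP:Sobolev_embedding_Besov} with exactly the weight bookkeeping you describe. The only (immaterial) difference is the ordering: the paper applies the Sobolev embedding at microscopic parameter $\infty$ and spends the $\epsilon$ of smoothness afterwards, whereas you spend it first.
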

\begin{proof}
Thanks to Lemma~\ref{PIBVP:lemma:appendix:sec:localization;ext_op} we only need to establish the inclusion for $I=\R$. Writing $\epsilon := \delta - \left[\frac{1}{s}+\frac{d-1}{2nr}\right] > 0$, we have
\begin{eqnarray*}
F^{\delta,(\frac{1}{2n},1)}_{(p,q),\infty,(d-1,1)}(\R^{d},(1,v_{\mu});X)
&\hookrightarrow& B^{\delta,(\frac{1}{2n},1)}_{(p,q),\infty,(d-1,1)}(\R^{d},(1,v_{\mu});X) \\
&\hookrightarrow& B^{\epsilon,(\frac{1}{2n},1)}_{(u,v),\infty,(d-1,1)}(\R^{d},(1,v_{\frac{v}{q}\mu});X) \\
&\hookrightarrow& F^{0,(\frac{1}{2n},1)}_{(u,v),1,(d-1,1)}(\R^{d},(1,v_{\frac{v}{q}\mu});X),
\end{eqnarray*}
where the second inclusion is obtained from Proposition~\ref{PIBVP:Sobolev_embedding_Besov}.
\end{proof}

Let us write
\[
{_{0}}F^{s,(\frac{1}{2n},1)}_{(p,q),p,(d-1,1)}(\R^{d-1}\times I,(1,v_{\mu});X)
:= \left\{\begin{array}{ll} F^{s,(\frac{1}{2n},1)}_{(p,q),p,(d-1,1)}(\R^{d-1}\times I,(1,v_{\mu});X), & s < \frac{1+\mu}{q}, \\
\{ f \in F^{s,(\frac{1}{2n},1)}_{(p,q),p,(d-1,1)}(\R^{d-1}\times I,(1,v_{\mu});X) : \mathrm{tr}_{t=0}f=0 \}, & s > \frac{1+\mu}{q}.
 \end{array}\right.
\]
A combination of Lemmas \ref{PIBVP:lemma:appendix:sec:localization;pre-variant_(2.3.13)} and \ref{PIBVP:lemma:appendix:sec:localization;Sob-embedding_pre-variant_(2.3.13)} followed by extension by zero for $g$ and extension of Fichtenholz type for $f$ yields
\begin{eqnarray*}
\norm{fg}_{{_{0}}F^{\kappa,(\frac{1}{2n},1)}_{(p,q),p,(d-1,1)}(\R^{d-1}\times J,(1,v_{\mu});X)}
&\lesssim& \norm{f}_{L^{\infty}(\R^{d-1}\times I;\mathcal{B}(X))} \norm{g}_{{_{0}}F^{\kappa,(\frac{1}{2n},1)}_{(p,q),p,(d-1,1)}(\R^{d-1}\times J,(1,v_{\mu});X)} \\
&& + \norm{f}_{F^{\kappa}_{s,p}(J;L^{r}(\R^{d-1};\mathcal{B}(X))) \cap L^{s}(J;B^{2n\kappa}_{r,p}(\R^{d-1};\mathcal{B}(X)))} \norm{g}_{{_{0}}F^{\delta,(\frac{1}{2n},1)}_{(p,q),\infty,(d-1,1)}(\R^{d-1}\times J,(1,v_{\mu});X)}
\end{eqnarray*}
with implicit constant independent of $X$ and $T$, which is a suitable substitute for the key estimate in the proof of \cite[Proposition~2.3.1]{Mey_PHD-thesis}.

\subsection*{Lower order terms}

By the trace result Theorem~\ref{PIBVP:thm:traces_main_result}, in order that the condition for the boundary operators in Remark~\ref{PBIVB:rmk:lower_order} is satisfied, it is enough that there exist $\sigma_{j,\beta} \in [0,\frac{n_{j}-|\beta|}{2n})$ such that $b_{j,\beta}$ is a pointwise multiplier from
\[
F^{\kappa_{j,\gamma}+\sigma_{j,\beta}}_{q,p}(J,v_{\mu};L^{p}(\partial\mathscr{O};X)) \cap L^{q}(J,v_{\mu};F^{2n(\kappa_{j,\gamma}+\sigma_{j,\beta})}_{p,p}(\partial\mathscr{O};X))
\]
to
\[
F^{\kappa_{j,\gamma}}_{q,p}(J,v_{\mu};L^{p}(\partial\mathscr{O};X)) \cap L^{q}(J,v_{\mu};F^{2n\kappa_{j,\gamma}}_{p,p}(\partial\mathscr{O};X)).
\]
This is achieved by the next lemma.

\begin{lemma}\label{PIBVP:appendix:lemma:lower_order_term}
Let $X$ be a Banach space, $I=(-\infty,T)$ with $T \in (-\infty,\infty]$, $\kappa,\sigma > 0$, $n \in \N_{>0}$, $p,q \in [1,\infty)$, $r,u \in (p,\infty)$, $s,v \in (q,\infty)$ with $\frac{1}{p}=\frac{1}{r}+\frac{1}{u}$ and $\frac{1}{q}=\frac{1}{s}+\frac{1}{v}$.
Let $\mu \in (-1,\infty)$ be such that $\frac{v}{q}\mu \in (-1,v-1)$.
If $\kappa + \sigma > \frac{1}{s}+\frac{d-1}{2nr}$, then
\[
\norm{fg}_{F^{\kappa,(\frac{1}{2n},1)}_{(p,q),p,(d-1,1)}(\R^{d-1}\times I,(1,v_{\mu});X)}
\lesssim
\norm{f}_{F^{\kappa}_{s,p}(J;L^{r}(\R^{d-1};\mathcal{B}(X))) \cap L^{s}(J;B^{2n\kappa}_{r,p}(\R^{d-1};\mathcal{B}(X)))}
\norm{g}_{F^{\kappa+\sigma,(\frac{1}{2n},1)}_{(p,q),p,(d-1,1)}(\R^{d-1}\times I,(1,v_{\mu});X)}
\]
with implicit constant independent of $X$ and $T$.
\end{lemma}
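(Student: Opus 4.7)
The plan is to combine Lemmas~\ref{PIBVP:lemma:appendix:sec:localization;pre-variant_(2.3.13)} and~\ref{PIBVP:lemma:appendix:sec:localization;Sob-embedding_pre-variant_(2.3.13)} with the embedding of Lemma~\ref{PIBVP:appendix:lemma:BUC_top_order_coeff}, trading the extra $\sigma$-smoothness on $g$ for the $L^{\infty}$-control on $f$ that is not individually assumed. As a preliminary reduction, I would extend $f$ and $g$ from $\R^{d-1}\times I$ to $\R^{d-1}\times\R$ via a Fichtenholz-type operator for $f$ and the extension operator of Lemma~\ref{PIBVP:lemma:appendix:sec:localization;ext_op} for $g$, both with norms independent of $T$ and $X$, so from now on one may assume $I=\R$.

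The core step is to apply Lemma~\ref{PIBVP:lemma:appendix:sec:localization;pre-variant_(2.3.13)}, which yields
\begin{align*}
\|fg\|_{F^{\kappa,(\frac{1}{2n},1)}_{(p,q),p,(d-1,1)}(1,v_{\mu})}
& \lesssim \|f\|_{L^{\infty}}\,\|g\|_{F^{\kappa,(\frac{1}{2n},1)}_{(p,q),p,(d-1,1)}(1,v_{\mu})} \\
& \quad + \|f\|_{F^{\kappa}_{s,p}(L^{r})\cap L^{s}(B^{2n\kappa}_{r,p})}\,\|g\|_{F^{0,(\frac{1}{2n},1)}_{(u,v),1,(d-1,1)}(1,v_{\frac{v}{q}\mu})}.
\end{align*}
For the second summand one applies Lemma~\ref{PIBVP:lemma:appendix:sec:localization;Sob-embedding_pre-variant_(2.3.13)} with $\delta=\kappa+\sigma$; its hypothesis $\delta>\tfrac{1}{s}+\tfrac{d-1}{2nr}$ is precisely our assumption, so $\|g\|_{F^{0}_{(u,v),1}(v_{v\mu/q})}\lesssim\|g\|_{F^{\kappa+\sigma}_{(p,q),\infty}(v_{\mu})}\leq\|g\|_{F^{\kappa+\sigma}_{(p,q),p}(v_{\mu})}$. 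For the first summand, in the regime $\kappa>\tfrac{1}{s}+\tfrac{d-1}{2nr}$---the only one relevant for applications, since $(\mathrm{SB})$ enforces it---Lemma~\ref{PIBVP:appendix:lemma:BUC_top_order_coeff} yields $\|f\|_{L^{\infty}}\lesssim\|f\|_{F^{\kappa}_{s,p}(L^{r})\cap L^{s}(B^{2n\kappa}_{r,p})}$, and the monotonicity $\|g\|_{F^{\kappa}}\leq\|g\|_{F^{\kappa+\sigma}}$ closes the bound.

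The hard part will be the borderline case $\kappa\leq\tfrac{1}{s}+\tfrac{d-1}{2nr}<\kappa+\sigma$, in which $f$ need not lie in $L^{\infty}$. To handle it I would revisit the $g$-high-frequency paraproducts $\Pi_{1}$ and $\Pi_{2}$ from the proof of Lemma~\ref{PIBVP:lemma:appendix:sec:localization;pre-variant_(2.3.13)} directly: pointwise dominate $|S^{k-2}f|$ by the iterated anisotropic Hardy--Littlewood maximal function $Mf$, apply H\"older's inequality in the mixed-norm Bochner space with the splittings $\tfrac{1}{p}=\tfrac{1}{r}+\tfrac{1}{u}$, $\tfrac{1}{q}=\tfrac{1}{s}+\tfrac{1}{v}$ and the weight identity $\tfrac{\mu}{q}=\tfrac{v\mu/q}{v}$, invoke the Fefferman--Stein inequality of Lemma~\ref{PIBVP:lemma:appendix:part_HL-max-op_weighted_mixed_norm_space} to absorb $Mf$ into $\|f\|_{L^{s}(L^{r})}\lesssim\|f\|_{F^{\kappa}_{s,p}(L^{r})\cap L^{s}(B^{2n\kappa}_{r,p})}$, and finally transfer the residual $\|g\|_{F^{\kappa,(\frac{1}{2n},1)}_{(u,v),p}(v_{v\mu/q})}$ back to $\|g\|_{F^{\kappa+\sigma}_{(p,q),p}(v_{\mu})}$ by an anisotropic Sobolev embedding based on the Plancherel--P\'olya--Nikol'skii inequality of Lemma~\ref{PIBVP:PPN-type_ineq} together with the frequency-wise sequence estimates of Appendix~\ref{PIBVP:appendix:series_estimates}. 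Compensating the gap $\tfrac{1}{s}+\tfrac{d-1}{2nr}-\sigma$ by spending part of $\kappa$ in this Sobolev step is the delicate point.
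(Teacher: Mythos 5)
Your easy regime is fine, but the case you set aside is the whole point of the lemma, and your plan for it does not close. First, the claim that $(\mathrm{SB})$ forces $\kappa>\frac{1}{s}+\frac{d-1}{2nr}$ is only true for the top-order terms $|\beta|=n_{j}$; the lemma is invoked in Section~\ref{PIBVP:subsection:space_initial-boundary_data} precisely for $|\beta|<n_{j}$, where $(\mathrm{SB})$ only gives $\kappa_{j,\gamma}+\frac{n_{j}-|\beta|}{2n}>\frac{1}{s}+\frac{d-1}{2nr}$, so $\kappa$ may lie below the threshold and $f$ need not be bounded. Second, in your treatment of that case you discard all smoothness of $f$: dominating $S^{k-2}f$ by a maximal function and using Lemma~\ref{PIBVP:lemma:appendix:part_HL-max-op_weighted_mixed_norm_space} leaves only $\norm{f}_{L^{s}(L^{r})}$, so after H\"older the residual $g$-factor carries the \emph{full} smoothness $\kappa$ at integrability $(u,v)$ with weight $v_{\frac{v}{q}\mu}$ (the factor $2^{\kappa k}$ must stay on the product blocks to produce the target $F^{\kappa}$-norm). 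Returning from $(u,v)$, $v_{\frac{v}{q}\mu}$ to $(p,q)$, $v_{\mu}$ costs exactly $\frac{1}{s}+\frac{d-1}{2nr}$ anisotropic derivatives (Proposition~\ref{PIBVP:Sobolev_embedding_Besov}, Lemma~\ref{PIBVP:PPN-type_ineq}), so your final Sobolev step would need $\sigma>\frac{1}{s}+\frac{d-1}{2nr}$, whereas only $\kappa+\sigma>\frac{1}{s}+\frac{d-1}{2nr}$ is assumed. There is no way to ``spend part of $\kappa$'' in this scheme: the surplus available on the $g$-side is $\sigma$ alone, and $f$, reduced to a $k$-independent maximal function, has no smoothness left to contribute; the delicate point you flag is exactly where the argument fails.

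The missing idea (and the paper's route) is to keep $f$'s smoothness in the paraproduct where $f$ sits at low frequency, by measuring $f$ in a \emph{negative-order} space: one has $\norm{S^{k-2}f}_{L^{\infty}(\mathcal{B}(X))}\lesssim 2^{\sigma k}\norm{f}_{B^{-\sigma,(\frac{1}{2n},1)}_{(\infty,\infty),\infty,(d-1,1)}(\R^{d};\mathcal{B}(X))}$, and shifting $2^{\sigma k}$ onto $S_{k}g$ and using Lemmas~\ref{PIBVP:lemma:appendix:prop;trace_TL_conv_series} and \ref{PIBVP:lemma:elementaire_afschatting_rijtjes} gives $\norm{\Pi_{1}(f,g)}_{F^{\kappa}}\lesssim\norm{f}_{B^{-\sigma}_{(\infty,\infty),\infty}}\norm{g}_{F^{\kappa+\sigma}}$. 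The anisotropic Sobolev embedding $F^{\kappa}_{s,p}(\R;L^{r})\cap L^{s}(\R;B^{2n\kappa}_{r,p})\hookrightarrow B^{-\sigma,(\frac{1}{2n},1)}_{(\infty,\infty),\infty,(d-1,1)}$ (a vector-valued version of \cite[Theorem~7]{JS_Sob_embeddings}, valid since $\kappa+\sigma>\frac{1}{s}+\frac{d-1}{2nr}$, the target smoothness being allowed to be negative) is exactly where the combined surplus is spent, which is what makes $\kappa$ below the threshold harmless. The remaining paraproducts $\Pi_{2},\Pi_{3}$ are then handled as in your second summand: H\"older with $f$ in the $F^{\kappa}_{s,p}(L^{r})$-scale and $g$ in $L^{(u,v)}(v_{\frac{v}{q}\mu})$, followed by Lemma~\ref{PIBVP:lemma:appendix:sec:localization;Sob-embedding_pre-variant_(2.3.13)} with $\delta=\kappa+\sigma$ and \eqref{PIBVP:eq:elem_embedding_FEF}. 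So your reduction to $I=\R$ and your treatment of the second summand match the paper, but without the negative-smoothness embedding for $f$ the main case remains unproved.
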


Note that for $\mu \in (-1,\infty)$ to be such that $\frac{v}{q}\mu \in (-1,v-1)$ it is sufficient that $\mu \in (-1,q-1)$ with $\mu > \frac{q}{s}-1$.

\begin{proof}
As in the proof of Lemma~\ref{PIBVP:lemma:appendix:sec:localization;pre-variant_(2.3.13)},
we may restrict ourselves to the case $I=\R$ and use paraproducts.
Using Lemma~\ref{PIBVP:lemma:appendix:prop;trace_TL_conv_series} and Lemma~\ref{PIBVP:lemma:elementaire_afschatting_rijtjes}, we find
\[
\norm{\Pi_{1}(f,g)}_{F^{\kappa,(\frac{1}{2n},1)}_{(p,q),p,(d-1,1)}(\R^{d},(1,v_{\mu});X)}
\lesssim \norm{f}_{B^{-\sigma,(\frac{1}{2n},1)}_{(\infty,\infty),\infty,(d-1,1)}(\R^{d};X)}
\norm{g}_{F^{\kappa+\sigma,(\frac{1}{2n},1)}_{(p,q),p,(d-1,1)}(\R^{d},(1,v_{\mu});X)}.
\]
Using Lemma~\ref{PIBVP:lemma:appendix:prop;trace_TL_conv_series}, for $i=2,3$ we find
\[
\norm{\Pi_{i}(f,g)}_{F^{\kappa,(\frac{1}{2n},1)}_{(p,q),p,(d-1,1)}(\R^{d},(1,v_{\mu});X)}
\lesssim \norm{(2^{n\kappa}S_{n}f)_{n}}_{L^{s}(\R)[[\ell^{p}(\N)]L^{r}(\R^{d-1})](X)}
\norm{g}_{L^{(u,v),(d-1,1)}(\R^{d-1}\times I,(1,v_{\frac{v}{q}\mu});X)}.
\]
Similarly to Lemma~\ref{PIBVP:appendix:lemma:BUC_top_order_coeff}, choosing $\tilde{\kappa}$ with $\kappa + \sigma > \tilde{\kappa} + \sigma > \frac{1}{s}+\frac{d-1}{2nr}$, we have
\[
F^{\kappa}_{s,p}(\R;L^{r}(\R^{d-1};X)) \cap L^{s}(\R;B^{2n\kappa}_{r,p}(\R^{d-1};X))
\hookrightarrow B^{-\sigma,(\frac{1}{2n},1)}_{(\infty,\infty),\infty,(d-1,1)}(\R^{d};X),
\]
where we now use (the vector-valued version of) \cite[Theorem~7]{JS_Sob_embeddings} instead of Corollary~\ref{PIBVP:cor:thm:trace_Besov:distr_trace}.
The desired estimate follows from Lemma~\ref{PIBVP:lemma:appendix:sec:localization;Sob-embedding_pre-variant_(2.3.13)} and \eqref{PIBVP:eq:elem_embedding_FEF}.
\end{proof}

\bibliographystyle{plain}
\def\cprime{$'$}

\end{document}